\providecommand{\U}[1]{\protect\rule{.1in}{.1in}}
\def \N {\mathbb{N}}
\def \G {\mathbb{G}}
\def \e {\varepsilon}
\def \de {\partial}
\def \LL {\mathcal{L}}
\def \bd {\boldsymbol}
\theoremstyle{definition}
\newtheorem{definition}{Definition}[section]
\newtheorem{remark}[definition]{Remark}
\theoremstyle{plain}
\newtheorem{theorem}[definition]{Theorem}
\newtheorem{proposition}[definition]{Proposition}
\newtheorem{lemma}[definition]{Lemma}
\newtheorem{corollary}[definition]{Corollary}
\numberwithin{equation}{subsection}
\begin{document}
\title[Schauder estimates for KFP operators]{Schauder estimates for Kolmogorov-Fokker-Planck operators with coefficients
measurable in time and H\"{o}lder continuous in space}
\author[S.\,Biagi]{Stefano Biagi}
\author[M.\thinspace Bramanti]{Marco Bramanti}
\address[S.\thinspace Biagi and M.\thinspace Bramanti]{Dipartimento di Matematica \\
\indent Politecnico di Milano, Via Bonardi 9, 20133 Milano, Italy}
\email{stefano.biagi@polimi.it; marco.bramanti@polimi.it}
\subjclass[2010]{35K65, 35K70, 35B45, 35A08, 42B20}
\keywords{Kolmogorov-Fokker-Planck operators; Schauder estimates; measurable coefficients}

\begin{abstract}
We consider degenerate Kolmogorov-Fokker-Planck operators%
\[
\mathcal{L}u=\sum_{i,j=1}^{q}a_{ij}(x,t)\partial_{x_{i}x_{j}}^{2}%
u+\sum_{k,j=1}^{N}b_{jk}x_{k}\partial_{x_{j}}u-\partial_{t}u,\text{
\ \ }(x,t)\in\mathbb{R}^{N+1},N\geq q\geq1
\]
such that the corresponding model operator having constant $a_{ij}$ is
hypoelliptic, translation invariant w.r.t. a Lie group operation in
$\mathbb{R}^{N+1}$ and $2$-homogeneous w.r.t. a family of nonisotropic
dilations. The coefficients $a_{ij}$ are bounded and H\"{o}lder continuous in
space (w.r.t. some distance induced by $\mathcal{L}$ in $\mathbb{R}^{N}$) and
only bounded measurable in time; the matrix $\left\{  a_{ij}\right\}
_{i,j=1}^{q}$ is symmetric and uniformly positive on $\mathbb{R}^{q}$. We
prove \textquotedblleft partial Schauder a priori estimates\textquotedblright%
\ of the kind%
\[
\sum_{i,j=1}^{q}\Vert\de_{x_{i}x_{j}}^{2}u\Vert_{C_{x}^{\alpha}(S_{T})}+\Vert
Yu\Vert_{C_{x}^{\alpha}(S_{T})}\leq c\left\{  \Vert\mathcal{L}u\Vert
_{C_{x}^{\alpha}(S_{T})}+\Vert u\Vert_{C^{0}(S_{T})}\right\}
\]
for suitable functions $u$, where
\[
\Vert f\Vert_{C_{x}^{\alpha}(S_{T})}=\sup_{t\leq T}\sup_{x_{1},x_{2}%
\in\mathbb{R}^{N},x_{1}\neq x_{2}}\frac{\left\vert f\left(  x_{1},t\right)
-f\left(  x_{2},t\right)  \right\vert }{\left\Vert x_{1}-x_{2}\right\Vert
^{\alpha}}+\left\Vert f\right\Vert _{L^{\infty}(S_{T})}.
\]
We also prove that the derivatives $\de_{x_{i}x_{j}}^{2}u$ are locally
H\"{o}lder continuous in space and time while $\de_{x_{i}}u$ and $u$ are
globally H\"{o}lder continuous in space and time.

\end{abstract}
\maketitle
\tableofcontents




\section{Introduction and main results\label{sec intro}}

\subsection{The problem and its context}

Let $N\geq q\geq1\ $be fixed. We consider a Kolmogorov-Fokker-Planck (KFP, in
short) operator of the form%
\begin{equation}
\mathcal{L}u=\sum_{i,j=1}^{q}a_{ij}(x,t)\partial_{x_{i}x_{j}}^{2}%
u+\sum_{k,j=1}^{N}b_{jk}x_{k}\partial_{x_{j}}u-\partial_{t}u,\qquad
(x,t)\in\mathbb{R}^{N+1}. \label{L}%
\end{equation}
The first-order part of the operator, also called \emph{the drift term}, will
be briefly denoted by%
\begin{equation}
Yu=\sum_{k,j=1}^{N}b_{jk}x_{k}\partial_{x_{j}}u-\partial_{t}u.
\label{eq:driftY}%
\end{equation}
Throughout the paper, points of $\mathbb{R}^{N+1}$ will be sometimes denoted
by the compact notation%
\[
\xi=(x,t),\quad\eta=(y,s).
\]
We will make the following assumptions:

\begin{itemize}
\item[\textbf{(H1)}] $A_{0}(x,t)=(a_{ij}(x,t))_{i,j=1}^{q}$ is a symmetric
uniformly positive matrix on $\mathbb{R}^{q}$ of bounded coefficients defined
in $\mathbb{R}^{N+1}$, so that
\begin{equation}
\nu|\xi|^{2}\leq\sum_{i,j=1}^{q}a_{ij}(x,t)\xi_{i}\xi_{j}\leq\nu^{-1}|\xi|^{2}
\label{nu}%
\end{equation}
for some constant $\nu>0$, every $\xi\in\mathbb{R}^{q}$, every $x\in
\mathbb{R}^{N}$ and a.e.\thinspace$t\in\mathbb{R}$. The co\-ef\-fi\-cients
will be assumed measurable w.r.t.\thinspace$t$ and H\"{o}lder continuous
w.r.t.\thinspace$x$, in a sense that will be made precise later.
\vspace*{0.1cm}(See Assumption (H3)).

\item[\textbf{(H2)}] The matrix $B= (b_{ij})_{i,j=1}^{N}$ satisfies the
following condition: for $m_{0}=q$ and suitable positive integers $m_{1}%
,\dots,m_{k}$ such that
\begin{equation}
\label{m-cond}m_{0}\geq m_{1}\geq\ldots\geq m_{k}\geq1\quad\mathrm{and}\quad
m_{0}+m_{1}+\ldots+m_{k}=N,
\end{equation}
we have
\begin{equation}
\label{B}B=
\begin{pmatrix}
\mathbb{O} & \mathbb{O} & \ldots & \mathbb{O} & \mathbb{O}\\
B_{1} & \mathbb{O} & \ldots & \ldots & \ldots\\
\mathbb{O} & B_{2} & \ldots & \mathbb{O} & \mathbb{O}\\
\vdots & \vdots & \ddots & \vdots & \vdots\\
\mathbb{O} & \mathbb{O} & \ldots & B_{k} & \mathbb{O}%
\end{pmatrix}
\end{equation}
where every block $B_{j}$ is an $m_{j}\times m_{j-1}$ matrix of rank $m_{j}$
(for $j=1,2,\ldots,k$).
\end{itemize}

\medskip

\noindent We explicitly note that, when $q<N$, the operator $\mathcal{L}$ is
\emph{ultraparabolic}; in this context, the model operator is the so-called
\emph{Kolmogorov operator} $\mathcal{K}$, which arose in the seminal paper by
Kolmogorov \cite{Kolmo} on Brownian motion and the theory of gases.
As\-su\-ming that the ambient space $\mathbb{R}^{N}$ has even dimension, say
$N=2n$, this model operator $\mathcal{K}$ has the following explicit
expression
\[
\mathcal{K}=\Delta_{u}+\langle u,\nabla_{v}\rangle-\partial_{t},\quad
\text{with $u,v\in\mathbb{R}^{n}$ and $t\in\mathbb{R}$}.
\]
Clearly, $\mathcal{K}$ can be obtained from \eqref{L} by choosing
\[
q=n<N,\quad A_{0}=\mathrm{Id}_{n},\quad m_{0}=m_{1}=n,\quad B=%
\begin{pmatrix}
\mathbb{O}_{n} & \mathbb{O}_{n}\\
\mathrm{Id}_{n} & \mathbb{O}_{n}%
\end{pmatrix}
.
\]
Even if it fails to be parabolic, one can easily check that $\mathcal{K}$
satisfies \emph{H\"{o}rmander's rank condition}, and thus $\mathcal{K}$ is
$C^{\infty}$-hypoelliptic by H\"{o}rmander's hypoellipticity theorem
\cite{Horm}; however, this fact was implicitly proved by Kolmogorov himself
several years prior to \cite{Horm} by exhibiting the explicit fundamental
solution for $\mathcal{K}$. It is worth men\-tio\-ning that, in the
introduction of his paper \cite{Horm}, H\"{o}rmander presents the operator
$\mathcal{K}$ as the main `inspiration' for his study: in fact, $\mathcal{K}$
is a hypoelliptic operator \emph{not satisfying} the sufficient conditions for
the hypoellipticity established by H\"{o}rmander himself in his previous work
\cite{HormPrev}. \vspace*{0.1cm}

Starting with the results by H\"{o}rmander, at the beginning of the '90s the
class of KFP operators \emph{with constant coefficients} $a_{ij}$ (of which
the degenerate Kolmogorov operator $\mathcal{K}$ is a particular example) has
been deeply studied by Lanconelli and Polidoro \cite{LP} under a
\emph{geometric viewpoint}. More precisely, they proved that the operator
\[
\mathcal{L}u=\sum_{i,j=1}^{q}a_{ij}\partial_{x_{i}x_{j}}^{2}u+Yu
\]
possesses the following rich underlying geometric structure:

\begin{itemize}
\item[(a)] $\mathcal{L}$ is left-invariant on the \emph{non-commutative Lie
group} $\G=(\mathbb{R}^{N+1},\circ)$, where the composition law $\circ$ is
defined as follows
\begin{align*}
(y,s)\circ(x,t)  &  =(x+E(t)y,t+s)\\
(y,s)^{-1}  &  =(-E(-s)y,-s),
\end{align*}
and $E(t)=\exp(-tB)$ (which is defined for every $t\in\mathbb{R}$ since the
matrix $B$ is nilpotent). For a future reference, we explicitly notice that
\begin{equation}
(y,s)^{-1}\circ(x,t)=(x-E(t-s)y,t-s), \label{eq:convolutionG}%
\end{equation}
and that the Lebesgue measure is the Haar measure, which is also invariant
with respect to the inversion. \vspace*{0.1cm}

\item[(b)] $\mathcal{L}$ is homogeneous of degree $2$ with respect to a
nonisotropic family of \emph{dilations} in $\mathbb{R}^{N+1}$, which are
automorphisms of $\G$ and are defined by
\begin{equation}
D(\lambda)(x,t)\equiv(D_{0}(\lambda)(x),\lambda^{2}t)=(\lambda^{q_{1}}%
x_{1},\ldots,\lambda^{q_{N}}x_{N},\lambda^{2}t), \label{dilations}%
\end{equation}
where the $N$-tuple $(q_{1},\ldots,q_{N})$ is given by
\[
(q_{1},\ldots,q_{N})=(\underbrace{1,\ldots,1}_{m_{0}},\,\underbrace{3,\ldots
,3}_{m_{1}},\ldots,\underbrace{2k+1,\ldots,2k+1}_{m_{k}}).
\]
The integer%
\begin{equation}
\textstyle Q=\sum_{i=1}^{N}q_{i}>N \label{eq:defQhomdim}%
\end{equation}
is called the \emph{homogeneous dimension} of $\mathbb{R}^{N}$, while $Q+2$ is
the homogeneous dimension of $\mathbb{R}^{N+1}$. We explicitly point out that
the exponential matrix $E(t)$ satisfies the following homogeneity property
\begin{equation}
E(\lambda^{2}t)=D_{0}(\lambda)E(t)D_{0}\Big(\frac{1}{\lambda}\Big),
\label{LP 2.20}%
\end{equation}
for every $\lambda>0$ and every $t\in\mathbb{R}$ (see \cite[Rem.\,2.1.]{LP}).
\end{itemize}

Actually, in \cite{LP} the Authors study constant-coefficients KFP operators
corresponding to a wider class of matrices $B$, which are not nilpotent; these
more general operators are hypoelliptic, left-invariant with respect to the
above operation $\circ$, but they are not necessarily homogeneous. For these
operators, an explicit fun\-da\-men\-tal solution is exhibited in \cite{LP}.
We refer to the introduction of the paper \cite{BP} for more details and
references about the quest of a fundamental solution for KFP operators
\emph{before }the paper \cite{LP}.

After the seminal paper \cite{LP}, more general families of degenerate KFP
operators of the kind \eqref{L}, satisfying the same structural conditions on
the matrices $A_{0}$ and $B$ but with variable coefficients $a_{ij}\left(
x,t\right)  $, have been studied by several authors. In particular, Schauder
estimates have been investigated by Di Francesco-Polidoro in \cite{DP}, on
bounded domains, assuming the coefficients $a_{ij}$ H\"{o}lder continuous with
respect to the intrinsic distance induced in $\mathbb{R}^{N+1}$ by the vector
fields $\partial_{x_{1}},...\partial_{x_{q}},Y$. We point out also the papers
by Lunardi \cite{Lu}, Priola \cite{Pr}, Imbert-Mouhot \cite{IM}, Wang-Zhang
\cite{WZ}, and the references therein, on related issues about Schauder
estimates for KFP operators.

Recent researches, especially in the field of stochastic differential
equations (see e.g. \cite{PP}), which are the main motivation to study KFP
operators, suggest the importance of developing a theory allowing the
coefficients $a_{ij}$ to be rough in $t$ (say, $L^{\infty}$), and H\"{o}lder
continuous (in a suitable sense) only w.r.t. the space variables. The Schauder
estimates that one can reasonably expect under this mild assumption consist in
controlling the H\"{o}lder seminorms w.r.t. $x$ of the derivatives involved in
the equations, uniformly in time (we will be more precise in a moment).

For \emph{uniformly parabolic }operators, \emph{partial Schauder estimates},
i.e. the control of the supremum in $t$ of the H\"{o}lder quotient in space of
$\partial_{x_{i}x_{j}}^{2}u$, under the analogous assumption on the
coefficients and the right-hand side of the equation, have been proved already
in 1969 by Brandt \cite{Br69}. In 1980 Knerr \cite{K80} proved that, under the
same assumptions, $\partial_{x_{i}x_{j}}^{2}u$ are actually H\"{o}lder
continuous also in time, on bounded cylinders. See also the paper \cite{Lib}
by Lieberman, 1992, containing a unified presentation of these and related
results. More recently, Krylov and Priola \cite{KP}, 2010, have extended
partial Schauder estimates (on the whole space) to parabolic operators with
lower order unbounded terms while Lorenzi \cite{LL}, 2011, has proved similar
global estimates for operators with possibly unbounded coefficients. We also
point out the more recent paper \cite{DK} by Dong-Kim, 2019, containing futher
generalizations to operators with coefficients merely measurable w.r.t.
\emph{several} variables.

In the present paper we establish global partial Schauder estimates for
degenerate KFP operators \eqref{L} satisfying assumptions (H1)-(H2), with
coefficients $a_{ij}$ H\"{o}lder continuous in space, bounded measurable in
time (see Assumption (H3) here below and Theorem
\ref{Thm main a priori estimates} for the precise statement). We also show
that the second derivatives $\partial_{x_{i}x_{j}}^{2}u$ (for $i,j=1,2,...,q$)
are actually locally H\"{o}lder continuous also w.r.t. time, so extending to
this degenerate context the result proved for uniformly parabolic operators by
Knerr \cite{K80}.

Our technique to establish partial Schauder estimates is deeply rooted in the
study of the model operator
\begin{equation}
\mathcal{L}u=\sum_{i,j=1}^{q}a_{ij}(t)\partial_{x_{i}x_{j}}^{2}u+\sum
_{k,j=1}^{N}b_{jk}x_{k}\partial_{x_{j}}u-\partial_{t}u, \label{L model t}%
\end{equation}
with coefficients \emph{only depending on time }(in a merely $L^{\infty}$
way), which has been started in \cite{BP}. In that paper, an explicit
fundamental solution is built for operators \eqref{L model t}. This
fundamental solution will be the key tool used in the present paper.

Partial Schauder estimates for degenerate KFP operators have been proved also
in the recent paper \cite{CRHM} by Chaudru de Raynal, Honor\'{e}, Menozzi,
with different techniques and without getting the H\"{o}lder control in time
of second order derivatives.

We also quote the preprints of other two papers on KFP operators with
coefficients H\"{o}lder continuous in space and $L^{\infty}$ in time:
\cite{LPP}, by Lucertini, Pagliarani, Pascucci, dealing with the construction
of a fundamental solution for these operators, with consequent results about
the Cauchy problem; and \cite{HW}, by Henderson and Wang, containing partial
Schauder estimates for a special class of KFP operators, with applications to
the Landau equation. The results in \cite{LPP}, \cite{HW} are independent from
and do not contain our results.

Finally, we point out the paper \cite{M} by Menozzi, containing $L^{p}$
estimates for the second order derivatives for KFP operators with coefficients
$a_{ij}$ continuous in space and $L^{\infty}$ in time.

\subsection{Assumptions and main results}

We can now start giving some precise definitions which will allow to state our
main result.

Let us introduce the metric structure related to the operator $\mathcal{L}$
that will be used throughout the following. The vector fields
\[
X_{1}=\partial_{x_{1}},\ldots,X_{q}=\partial_{q},X_{0}=Y
\]
form a system of H\"{o}rmander vector fields in $\mathbb{R}^{N+1}$,
left-invariant w.r.t.\thinspace the com\-po\-si\-tion law $\circ$. The vector
fields $X_{i}=\partial_{x_{i}}$ (with $i=1,...,q$) are homogeneous of degree
$1$, while $X_{0}=Y$ is homogeneous of degree $2$ w.r.t.\thinspace the
dilations $D(\lambda)$. As every set of H\"{o}rmander vector fields with
drift, the system
\[
\mathbf{X}=\{X_{0},X_{1},\ldots,X_{q}\}
\]
induces a (weighted) control distance $d_{\mathbf{X}}$ in $\mathbb{R}^{N+1}$;
we now review this definition in our special case. First of all, given
$\xi=(x,t),\,\eta=(y,s)\in\mathbb{R}^{N+1}$ and $\delta>0$, we denote by
$C_{\xi,\eta}(\delta)$ the class of \emph{absolutely continuous} curves
\[
\varphi:[0,1]\longrightarrow\mathbb{R}^{N+1}%
\]
which satisfy the following properties:

\begin{itemize}
\item[(i)] $\varphi(0) = \xi$ and $\varphi(1) = \eta$; \vspace*{0.1cm}

\item[(ii)] for almost every $t\in\lbrack0,1]$ one has
\[
\textstyle\varphi^{\prime}(t)=\sum_{i=1}^{q}a_{i}(t)\varphi_{i}(t)+a_{0}%
(t)Y_{\varphi(t)},
\]
where $a_{0},\ldots,a_{q}:[0,1]\rightarrow\mathbb{R}$ are measurable functions
such that
\[
\text{$|a_{i}(t)|\leq\delta$ (for $i=1,\ldots,q$)\quad and \quad
$|a_{0}(t)|\leq\delta^{2}$}\qquad\text{a.e. on $[0,1]$}.
\]

\end{itemize}

We then define
\[
d_{\mathbf{X}}(\xi,\eta)=\inf\big\{\delta>0:\,\exists\,\,\varphi\in
C_{\xi,\eta}(\delta)\big\}.
\]
Since $X_{0},X_{1},\ldots,X_{q}$ satisfy H\"{o}rmander's rank condition, it is
well-known that the function $d_{\mathbf{X}}$ is a distance in $\mathbb{R}%
^{N+1}$ (see, e.g., \cite[Prop.\,1.1]{NSW}); in particular, for every fixed
$\xi,\eta\in\mathbb{R}^{N+1}$ there always exists $\delta>0$ such that
$C_{\xi,\eta}(\delta)\neq\varnothing$. In addition, by the
invariance/homogeneity properties of the $X_{i}$'s, we see that

\begin{itemize}
\item[(a)] $d_{\mathbf{X}}$ is left-invariant with respect to $\circ$, that
is,
\begin{equation}
\label{d_X}d_{\mathbf{X}}(\xi,\eta) =d_{\mathbf{X}}( \eta^{-1}\circ\xi,0)
\end{equation}

\item[(b)] $d_{\mathbf{X}}$ is is jointly $1$-homogeneous with respect to
$D(\lambda)$, that is
\begin{equation}
\label{rho_X}d_{\mathbf{X}}( D(\lambda)\xi,D(\lambda)\eta) = \lambda
d_{\mathbf{X}}(\xi,\eta)\qquad\text{ for every $\lambda>0$}.
\end{equation}

\end{itemize}

As a consequence of \eqref{d_X}, the function $\rho_{\mathbf{X}}(\xi) :=
d_{\mathbf{X}}(\xi,0)$ satisfies

\begin{enumerate}
\item $\rho_{\mathbf{X}}(\xi^{-1}) = \rho_{\mathbf{X}}(\xi)$; \vspace*{0.05cm}

\item $\rho_{\mathbf{X}}(\xi\circ\eta) \leq\rho_{\mathbf{X}}(\xi)
+\rho_{\mathbf{X}}(\eta)$;
\end{enumerate}

moreover, by \eqref{rho_X} we also have

\begin{itemize}
\item[(1)'] $\rho_{X}(\xi) \geq0$ and $\rho_{\mathbf{X}}(\xi)
=0\,\Leftrightarrow\,\xi=0$; \vspace*{0.05cm}

\item[(2)'] $\rho_{\mathbf{X}}(D(\lambda)\xi) = \lambda\rho_{\mathbf{X}}(\xi)$,
\end{itemize}

and this means that $\rho_{\mathbf{X}}$ is a \emph{homogeneous norm} in
$\mathbb{R}^{N+1}$. \medskip

We now observe that also the function
\begin{equation}
\rho(\xi)=\rho(x,t):=\Vert x\Vert+\sqrt{|t|}=\sum_{i=1}^{N}|x_{i}|^{1/q_{i}%
}+\sqrt{|t|} \label{eq:defrhonorm}%
\end{equation}
is a homogeneous norm in $\mathbb{R}^{N+1}$ (i.e., it satisfies properties
(1)'-(2)' above), and therefore it is \emph{globally equivalent} to $\rho_{X}%
$: there exist $c_{1},c_{2}>0$ such that
\[
c_{1}\rho_{\mathbf{X}}(\xi)\leq\rho(\xi)\leq c_{2}\rho_{\mathbf{X}}(\xi
)\qquad\forall\,\,\xi\in\mathbb{R}^{N+1}.
\]
As a consequence of this fact, the map
\begin{equation}
d(\xi,\eta):=\rho(\eta^{-1}\circ\xi) \label{d}%
\end{equation}
is a left-invariant, $1$-homogeneous \emph{quasi-distance} on $\mathbb{R}%
^{N+1}$. This means, precisely, that there exists a `structural constant'
$\bd{\kappa}>0$ such that
\begin{align}
d(\xi,\eta)  &  \leq\bd{\kappa}\big(d(\xi,\zeta)+d(\eta,\zeta)\big)\qquad
\forall\,\,\xi,\eta,\zeta\in\mathbb{R}^{N+1};\label{quasitriangle}\\
d(\xi,\eta)  &  \leq\bd{\kappa}\,d(\eta,\xi)\qquad\forall\,\,\xi,\eta
\in\mathbb{R}^{N+1}. \label{quasisymmetric}%
\end{align}
The quasi-distance $d$ is \emph{globally equivalent} to the control distance
$d_{\mathbf{X}}$; hence, we will systematically use this quasi-distance $d$
and the associated balls
\[
B_{r}(\xi):=\big\{\eta\in\mathbb{R}^{N+1}:\,d(\eta,\xi)<r\big\}\qquad
(\text{for $\xi\in\mathbb{R}^{N+1}$ and $r>0$}).
\]

\begin{remark}
\label{rem:propd} For a future reference, we list below some properties $d$.

\begin{enumerate}
\item Owing to \eqref{eq:convolutionG}, we see that $d$ has the following
explicit expression
\begin{equation}
d(\xi,\eta)=\Vert x-E(t-s)y\Vert+\sqrt{|t-s|}, \label{eq:explicitd}%
\end{equation}
for every $\xi=(x,t),\,\eta=(y,s)\in\mathbb{R}^{N+1}$. \medskip

\item Since $E(0)=\mathbb{O}$, from \eqref{eq:explicitd} we get
\begin{equation}
d((x,t),(y,t))=\Vert x-y\Vert\qquad\text{for every $x,y\in\mathbb{R}^{N}$ and
$t\in\mathbb{R}$}, \label{d stesso t}%
\end{equation}
from which we derive that the quasi-distance $d$ \emph{is sym\-me\-tric when
applied to points with the same $t$-coordinate}. We explicitly emphasize that
an a\-na\-lo\-gous property for points with the same $x$-coordinate \emph{does
not hold}: in fact, for every fixed $x\in\mathbb{R}^{N}$ and $t,s\in
\mathbb{R}$ we have
\[
d((x,t),(x,s))=\Vert x-E(t-s)x\Vert+\sqrt{|t-s|}\neq\sqrt{|t-s|}.
\]

\item Let $\xi\in\mathbb{R}^{N+1}$ be fixed, and let $r>0$. Since $d$
satisfies the quasi-triangular inequality \eqref{quasitriangle}, if $\eta
_{1},\eta_{2}\in B_{r}(\xi)$ we have
\[
d(\eta_{1},\eta_{2})<2\bd{\kappa}r.
\]

\item Taking into account the very definition of $d$, and bearing in mind that
$\rho$ is a \emph{homogeneous norm} in $\mathbb{R}^{N+1}$, it is readily seen
that
\begin{equation}
B_{r}(\xi)=\xi\circ B_{r}(0)=\xi\circ D_{r}\big(B_{1}(0)\big)\quad
\forall\,\,\xi\in\mathbb{R}^{N+1},\,r>0. \label{eq:balltraslD}%
\end{equation}
From this, since the Lebesgue measure is a Haar measure on $\G=(\mathbb{R}%
^{N+1},\circ)$, we immediately obtain the following identity
\begin{equation}
|B_{r}(\xi)|=|B_{r}(0)|=\omega\,r^{Q+2} \label{measure ball}%
\end{equation}
where $\omega:=|B_{1}(0)|>0$. Identity \eqref{measure ball} illustrates the
role of $Q+2$ as the homogeneous dimension of $\mathbb{R}^{N+1}$
(w.r.t.\thinspace the dilations $D(\lambda)$).
\end{enumerate}
\end{remark}

The quasi-distance $d$ allows us to define the H\"{o}lder spaces which will be
used in the paper. We are interested both in H\"{o}lder norms which measure
the \emph{joint continuity in $(x,t)$} and in H\"{o}lder norms which measure
the \emph{continuity in $x$ alone, for fixed $t$}.

\begin{definition}
\label{def:Holderspacesd} For $-\infty\leq\tau<T\leq+\infty$, let
$\Omega=\mathbb{R}^{N}\times\left(  \tau,T\right)  $, and let $f:\Omega
\rightarrow\mathbb{R}$. Given any number $\alpha\in(0,1)$, we introduce the
notation:
\begin{align*}
|f|_{C^{\alpha}(\Omega)}  &  =\sup\left\{  \frac{|f(\xi)-f(\eta)|}{d(\xi
,\eta)^{\alpha}}:\,\text{$\xi,\eta\in\Omega$ and $\xi\neq\eta$}\right\}
\\[0.15cm]
|f|_{C_{x}^{\alpha}(\Omega)}  &  =\operatorname*{supess}_{t\in\left(
\tau,T\right)  }\sup\left\{  \frac{|f(x,t)-f(y,t)|}{d((x,t),(y,t))^{\alpha}%
}:\,\text{$x,y\in$ }\mathbb{R}^{N},\text{$x\neq y$}\right\} \\
&  =\operatorname*{supess}_{t\in\left(  \tau,T\right)  }\sup\left\{
\frac{|f(x,t)-f(y,t)|}{\Vert x-y\Vert^{\alpha}}:\,\text{$x,y\in$ }%
\mathbb{R}^{N},\text{$x\neq y$}\right\}
\end{align*}
(where the last equality holds by \eqref{d stesso t}). Accordingly, we define
the spaces $C^{\alpha}(\Omega)$ and $C_{x}^{\alpha}(\Omega)$ as follows:
\begin{align}
&  C^{\alpha}(\Omega):=\left\{  f\in C(\Omega)\cap L^{\infty}(\Omega
):\,|f|_{C^{\alpha}(\Omega)}<\infty\right\} \label{eq:defCalfa}\\[0.1cm]
&  C_{x}^{\alpha}(\Omega):=\left\{  f\in L^{\infty}(\Omega):\,|f|_{C_{x}%
^{\alpha}(\Omega)}<\infty\right\}  \label{eq:defCalfax}%
\end{align}

\end{definition}

\begin{remark}
\label{rem:distance} The space $C^{\alpha}(\Omega)$ endowed with the following
norm
\[
\Vert f\Vert_{C^{\alpha}(\Omega)}:=\Vert f\Vert_{L^{\infty}(\Omega
)}+|f|_{C^{\alpha}(\Omega)}\qquad(f\in C^{\alpha}(\Omega))
\]
is a Banach space. Analogously, the space $C_{x}^{\alpha}(\Omega)$ endowed
with the norm
\[
\Vert f\Vert_{C_{x}^{\alpha}(\Omega)}:=\Vert f\Vert_{L^{\infty}(\Omega
)}+|f|_{C_{x}^{\alpha}(\Omega)}\qquad(f\in C_{x}^{\alpha}(\Omega))
\]
is a Banach space.
\end{remark}

We can now make precise our regularity assumption on the coefficients $a_{ij}$.

\begin{itemize}
\item[\textbf{(H3)}] There exists $\alpha\in(0,1)$ such that
\[
a_{ij}\in C_{x}^{\alpha}\left(  \mathbb{R}^{N+1}\right)  \qquad\text{for every
$1\leq i,j\leq q$}.
\]
In all the estimates appearing in next sections, the number
\begin{equation}
\Lambda=\max_{i,j=1,...,q}\Vert a_{ij}\Vert_{C_{x}^{\alpha}(\mathbb{R}^{N+1}%
)}, \label{Lambda}%
\end{equation}
together with the ellipticity constant $\nu$ in \eqref{nu}, will quantify the
de\-pen\-den\-ce of the constants on the coefficients $a_{ij}$.
\end{itemize}

We now turn to define the functions spaces to which our \emph{solution $u$}
will belong.

\begin{definition}
\label{def:spacesS} Throughout the following, given $T\in\mathbb{R}$ we set
\[
S_{T}:=\mathbb{R}^{N}\times(-\infty,T).
\]
We then define $\mathcal{S}^{0}(S_{T})$ as the space of all fun\-cti\-ons
$u:\overline{S}_{T}\rightarrow\mathbb{R}$ such that

\begin{itemize}
\item[(i)] $u\in C(\overline{S_{T}}) \cap L^{\infty}(S_{T})$;

\item[(ii)] for every $1\leq i,j\leq q$, the \emph{distributional derivatives}
$\partial_{x_{i}}u,\partial_{x_{i}x_{j}}^{2}u\in L^{\infty}(S_{T})$;

\item[(iii)] the \emph{distributional derivative} $Yu\in L^{\infty}(S_{T})$.
\end{itemize}

Moreover, given any number $\alpha\in(0,1)$, we define
\[
\mathcal{S}^{\alpha}(S_{T}):=\{u\in\mathcal{S}^{0}(S_{T}):\,\partial_{x_{i}%
}u,\text{$\partial_{x_{i}x_{j}}u,\,Yu\in C_{x}^{\alpha}(S_{T})$ for $1\leq
i,j\leq q$}\}.
\]
Finally, given any $\tau\in\mathbb{R}$ with $\tau<T$, we define
\begin{align*}
&  \mathcal{S}^{0}(\tau;T)=\{u\in\mathcal{S}^{0}(S_{T}):\,\text{$u(x,t)=0$ for
every $t\leq\tau$}\},\\
&  \mathcal{S}^{\alpha}(\tau;T):=\mathcal{S}^{\alpha}(S_{T})\cap
\mathcal{S}^{0}(\tau;T)\qquad(\text{for $\alpha\in(0,1)$}).
\end{align*}

\end{definition}

\begin{remark}
\label{rem:regulLLu} On account of assumption (H3), we immediately obtain the
fol\-low\-ing facts which shall be repeatedly used throughout the rest of the paper.

\begin{itemize}
\item[(1)] If $u\in\mathcal{S}^{0}(S_{T})$, then $\mathcal{L}u\in L^{\infty
}(S_{T})$. \vspace*{0.1cm}

\item[(2)] If $u\in\mathcal{S}^{\alpha}(S_{T})$, then $\mathcal{L}u\in
C_{x}^{\alpha}(S_{T})$. \vspace*{0.1cm}

\item[(3)] If $u\in\mathcal{S}^{0}(S_{T})$ and $\partial_{x_{i}x_{j}}%
^{2}u,\,\mathcal{L}u\in C_{x}^{\alpha}(S_{T})$ ($1\leq i,j\leq q$), then
$Yu\in C_{x}^{\alpha}(S_{T})$.
\end{itemize}
\end{remark}

Some of the results in the next sections are proved under the assumption that
$u\in\mathcal{S}^{0}(S_{T})$ and $\mathcal{L}u\in C_{x}^{\alpha}(S_{T})$; this
is slightly weaker than assuming $u\in\mathcal{S}^{\alpha}(S_{T})$.

\begin{remark}
[Regularity of functions in $S^{\alpha}(S_{T})$]%
\label{Remark regularity S-alfa} We will prove in the subsequent sections the
following `higher-regularity' results:

\begin{itemize}
\item[(1)] if $u\in\mathcal{S}^{0}(S_{T})$, then $u$ and $\partial_{x_{1}%
}u,\ldots,\partial_{x_{q}}u$ are locally H\"{o}lder-continuous in the joint
variables (see, precisely, Proposition \ref{Prop interpolaz sharp});
\vspace*{0.1cm}

\item[(2)] if $u\in\mathcal{S}^{\alpha}(S_{T})$ for some $\alpha\in(0,1)$,
then the distributional derivatives $\partial_{x_{i}x_{j}}^{2}u$ (for $1\leq
i,j\leq q$) are actually continuous (and locally H\"{o}lder continuous in a
weaker sense) on $S_{T}$ (see Theorem
\ref{Thm global Schauder in space time 2}).
\end{itemize}

As a consequence, every function $u\in\mathcal{S}^{\alpha}(S_{T})$ actually
has \emph{classical continuous derivatives} $\partial_{x_{i}}^{2}%
u,\partial_{x_{i}x_{j}}^{2}u$ (for $1\leq i,j\leq q$); instead, the
distributional derivative $Yu$ is continuous in space for every fixed $t$, but
it may be only $L^{\infty}$ w.r.t.\thinspace time.
\end{remark}

We are finally in position to state our main result.

\begin{theorem}
[Schauder estimates]\label{Thm main a priori estimates} Let $\mathcal{L}$ be
an operator as in \eqref{L}, and assume that \emph{(H1)}, \emph{(H2)},
\emph{(H3)} hold, for some $\alpha\in(0,1)$.

Then, the following Schauder-type estimates hold true.

\begin{enumerate}
\item For every $T>0$ there exists a constant $c>0$, depending on $T$,
$\alpha$, the matrix $B$ in \eqref{B} and the numbers $\nu$ and $\Lambda$ in
\eqref{nu}-\eqref{Lambda}, respectively, such that
\begin{align*}
\sum_{i,j=1}^{q}\Vert\partial_{x_{i}x_{j}}^{2}u\Vert_{C_{x}^{\alpha}(S_{T})}
&  +\Vert Yu\Vert_{C_{x}^{\alpha}(S_{T})}+\sum_{i=1}^{q}\Vert\partial_{x_{i}%
}u\Vert_{C^{\alpha}(S_{T})}+\Vert u\Vert_{C^{\alpha}(S_{T})}\\
&  \leq c\big(\Vert\mathcal{L}u\Vert_{C_{x}^{\alpha}(S_{T})}+\Vert
u\Vert_{C^{0}(S_{T})}\big).
\end{align*}
{for every function $u\in\mathcal{S}^{\alpha}(S_{T})$}. \vspace*{0.1cm}

\item For every $T>\tau>-\infty$ and every compact set $K\subset\mathbb{R}%
^{N}$ there exists a constant $c>0$, depending on $K,\tau,T,\alpha
,B,\nu,\Lambda$, such that
\begin{align*}
&  |\partial_{x_{i}x_{j}}^{2}u(\xi)-\partial_{x_{i}x_{j}}^{2}u(\eta)|\\
&  \qquad\leq c\big(\Vert\mathcal{L}u\Vert_{C_{x}^{\alpha}(S_{T})}+\Vert
u\Vert_{C^{\alpha}(S_{T})}\big)\big(d(\xi,\eta)^{\alpha}+|t-s|^{\alpha/q_{N}%
}\big)
\end{align*}
for every $\xi=(x,t),\,\eta=(y,s)\in K\times\left[  \tau,T\right]  $ and every
$u\in\mathcal{S}^{\alpha}(S_{T})$. Here, the number $q_{N}$ is the largest
exponent in the dilations $D(\lambda)$, see \eqref{dilations}.
\end{enumerate}
\end{theorem}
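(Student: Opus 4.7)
The plan is to follow the classical freezing-plus-singular-integral strategy, but adapted to the degenerate geometry of $\mathcal{L}$ and to the anisotropy between $x$ and $t$. The starting point is the model operator with coefficients depending only on $t$,
\[
\mathcal{L}_0 u = \sum_{i,j=1}^{q} a_{ij}(t)\,\partial^2_{x_ix_j} u + Yu,
\]
for which \cite{BP} provides an explicit fundamental solution $\Gamma(\xi;\eta)$ that is $(Q+2)$-homogeneous and left-invariant on $\mathbb{G}$. The first step is to prove the Schauder estimate in part (1) for $\mathcal{L}_0$. For a suitable $u$ one writes $u(\xi)=\int \Gamma(\xi;\eta)\,\mathcal{L}_0 u(\eta)\,d\eta$, and then one differentiates twice in $x_i,x_j$ under the integral sign. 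The resulting kernel $\partial^2_{x_ix_j}\Gamma$ is a singular kernel of mixed homogeneity (behaving like $d(\xi,\eta)^{-(Q+2)}$) and is smooth off the diagonal; the Hölder estimate $\|\partial^2_{x_ix_j}u\|_{C^\alpha_x}\le c\|\mathcal{L}_0 u\|_{C^\alpha_x}$ then follows from a standard Calderón–Zygmund Hölder-continuity argument adapted to the quasi-metric $d$, with the extra subtlety that we only take Hölder quotients in $x$ at frozen $t$. The bound on $Yu$ is automatic from the PDE once the second-order part is controlled.

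The second step is the freezing argument. Given $\xi_0=(x_0,t_0)$ and a small ball $B_r(\xi_0)$, define
\[
\widetilde{a}_{ij}(t) := a_{ij}(x_0,t), \qquad \widetilde{\mathcal{L}}u := \sum_{i,j=1}^q \widetilde{a}_{ij}(t)\,\partial^2_{x_ix_j}u + Yu,
\]
write $\mathcal{L}u = \widetilde{\mathcal{L}}u + \sum_{i,j}(a_{ij}-\widetilde{a}_{ij})\,\partial^2_{x_ix_j}u$, and multiply $u$ by a cutoff supported near $\xi_0$. Apply the model estimate to $\widetilde{\mathcal{L}}$ and absorb the perturbation using assumption (H3): since $|a_{ij}(x,t)-a_{ij}(x_0,t)|\le\Lambda\|x-x_0\|^\alpha$, on a ball of $d$-radius $r$ the Hölder norm of the frozen-coefficient difference is $O(r^\alpha)$, so for $r$ small enough the perturbation term can be absorbed in the left-hand side. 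Terms produced by the cutoff give lower-order contributions $\|\partial_{x_i}u\|$, $\|u\|$, $\|Yu\|$ inside balls, which are handled by interpolation inequalities between $C^0$, $C^\alpha_x$ and the second-order $C^\alpha_x$ seminorm; a standard covering of $S_T$ by such balls, combined with these interpolations and the smallness of $r$ (quantified by $\nu$, $\Lambda$, $B$, $T$, $\alpha$), yields the global estimate in part (1). The bounds on $\|\partial_{x_i}u\|_{C^\alpha}$ and $\|u\|_{C^\alpha}$ in the \emph{joint} variables come from Proposition \ref{Prop interpolaz sharp} promised in Remark \ref{Remark regularity S-alfa}.

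For part (2), the local Hölder continuity of $\partial^2_{x_ix_j}u$ in the joint variables, the estimate already established gives $\partial^2_{x_ix_j}u(\cdot,t)\in C^\alpha_x$ uniformly in $t$, so the only thing to control is the oscillation in $t$ at fixed $x$. For this I would exploit the PDE: at a fixed point $x$, the bound on $Yu=\partial_t u - \langle Bx,\nabla_x u\rangle$ gives control on the time variation of $u$, and iterating this argument on $\partial^2_{x_ix_j}u$ through the model fundamental solution representation yields an estimate of the form $|\partial^2_{x_ix_j}u(x,t)-\partial^2_{x_ix_j}u(x,s)|\le c|t-s|^{\alpha/q_N}$. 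The exponent $\alpha/q_N$ is exactly what the anisotropy forces: moving in time by $|t-s|$ corresponds, via the dilations $D(\lambda)$, to a spatial displacement controlled only by $|t-s|^{1/q_N}$ along the highest-weight coordinate, so a spatial $C^\alpha$ bound becomes a time $C^{\alpha/q_N}$ bound. Combining with the joint-variable spatial estimate and the triangle inequality gives the asserted bound $d(\xi,\eta)^\alpha+|t-s|^{\alpha/q_N}$ on compact sets $K\times[\tau,T]$.

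The main obstacle is the Calderón–Zygmund/Hölder analysis of the kernel $\partial^2_{x_ix_j}\Gamma$ when only $x$-increments are taken. The fundamental solution from \cite{BP} is not translation invariant in $t$ (because the $a_{ij}$ depend on $t$), so the usual homogeneity-based kernel estimates must be replaced by $t$-uniform ones, and the cancellation producing the singular-integral boundedness must be checked carefully at fixed $t$; this, together with the delicate interplay between the quasi-distance $d$ (which is not symmetric) and the $x$-only Hölder seminorm, is the real technical heart of the proof.
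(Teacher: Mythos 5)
Your outline for part (1) matches the paper's route essentially step by step: the representation formula for the frozen-in-$t$ model operator, the Calder\'on--Zygmund/cancellation analysis of the kernel $\de^2_{x_ix_j}\Gamma$ at fixed $t$, freezing the coefficient at a spatial point $x_0$, absorbing the perturbation $[a_{ij}-a_{ij}(x_0,\cdot)]\de^2_{x_ix_j}u$ using the $O(r^\alpha)$ smallness guaranteed by (H3) on a small ball, the cutoff construction, the interpolation inequalities for $\Vert\de_{x_i}u\Vert_{C^\alpha}$ and $\Vert u\Vert_{C^\alpha}$, and the covering argument. You also correctly single out the cancellation of $\int\de^2_{x_ix_j}\Gamma\,dy$ at fixed $t$ as the real technical point, which in the paper is Theorem~\ref{Thm cancellation property}. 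This part is sound.

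For part (2), however, the mechanism you sketch does not hold up. You propose to control the $t$-oscillation of $\de^2_{x_ix_j}u$ by first observing that $Yu\in L^\infty$ controls the time variation of $u$, and then ``iterating this argument'' on $\de^2_{x_ix_j}u$ through the representation formula. The first step only yields Lipschitz continuity of $u(x,\cdot)$ in $t$; it does not transfer to $\de^2_{x_ix_j}u$, because you cannot differentiate the equation twice in $x$ under the mere $C^\alpha_x$ hypothesis on $a_{ij}$, and $\de^2_{x_ix_j}u$ itself satisfies no usable PDE of the same type. The paper bypasses this entirely: it applies the representation formula \eqref{repr formula u_xx} for $\de^2_{x_ix_j}u$ at the two time levels $(x,t_1)$ and $(x,t_2)$ directly, splits the difference into three pieces (a far region, a near region, and the layer $\mathbb{R}^N\times(t_1,t_2)$), and estimates each using the cancellation property and the mean-value inequalities for $\de^2_{x_ix_j}\Gamma$. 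The exponent $\alpha/q_N$ enters through a concrete technical estimate, namely Lemma~\ref{lem:stimaEx}: for $x$ in a compact set, $\Vert x-E(t)x\Vert\le c|t|^{1/q_N}$ and $\Vert(E(t)-E(s))x\Vert\le c|t-s|^{1/q_N}$, which is the precise quantitative form of the dilation heuristic you invoke but which your proposal never actually uses or derives. Without this lemma and the three-piece decomposition, the argument for $|t-s|^{\alpha/q_N}$ is missing, and the route you indicate (through $Yu$) would not produce it.
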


\begin{remark}
(i). As observed in Remark \ref{Remark regularity S-alfa}, the finiteness of
the quantities
\[
\textstyle\sum_{i=1}^{q}\Vert\partial_{x_{i}}u\Vert_{C^{\alpha}(S_{T})}%
,\quad\Vert u\Vert_{C^{\alpha}(S_{T})},
\]
as well as the finiteness of the space-time H\"{o}lder quotient in point (2)
of the above theorem, are not obvious \emph{a priori} for a function in
$\mathcal{S}^{\alpha}(S_{T})$, but they will be actually proved.

(ii). While $\partial_{x_{i}x_{j}}^{2}u$ ($i,j=1,2,...,q$) are locally
H\"{o}lder continuous in space and time, note that a similar property cannot
be assured, in general, for $Yu$. To see this, it is enough to consider an
equation of the kind%
\[
\mathcal{L}u(x,t)=f(t)
\]
with $f$ bounded discontinuous function, and $u$ independent of $x$.

(iii). Since, in the degenerate case, $q_{N}\geq3$, the term $|t_{1}%
-t_{2}|^{\alpha/q_{N}}$ in the right-hand side of
\eqref{eq:schauderspacetimeuxixj} is larger than the `expected'
\[
|t_{1}-t_{2}|^{\alpha/2},
\]
(at least when $|t_{1}-t_{2}|\leq1$). Also, the constant ${c}>0$ depends on
the fixed compact set $K\times\lbrack\tau,T]\subseteq S_{T}$. On the other
hand, we observe that this mild $t$\--con\-ti\-nui\-ty of $\partial
_{x_{i}x_{j}}^{2}u$ is obtained \emph{without any $t$-continuity assumption on
$\mathcal{L}u$}. Moreover, from the proof of Theorem
\ref{Thm local Schauder time} it will be apparent that in the uniformly
parabolic case ($B=0$ and $q=N$) our argument would give exactly%
\[
|\partial_{x_{i}x_{j}}^{2}u(\xi)-\partial_{x_{i}x_{j}}^{2}u(\eta)|\leq
c\big\{\Vert\mathcal{L}u\Vert_{C_{x}^{\alpha}(S_{T})}+\Vert u\Vert_{C^{\alpha
}(S_{T})}\big\}\big(|x-y|+|t-s|^{\alpha/2}\big).
\]
Our result is therefore consistent with the classical result by Knerr
\cite{K80} which holds for uniformly parabolic operators on bounded cylinders.
\end{remark}

\subsection{Structure of the paper}

After a short section of preliminaries (\S \ref{sec prelim}), the paper will
proceed in two main steps: the study of the model operator \eqref{L model t}
with coefficients only depending on $t$ (\S \ref{sec operators t}) and the
study of operators \eqref{L} of general type (\S \ \ref{Sec operators a(x,t)}%
). In section \ref{sec operators t} we deepen the study of the fundamental
solution for model operators \eqref{L model t} computed in the previous paper
\cite{BP}. Thanks to the stronger assumption that we make in this paper on the
matrix $B$ with respect to those assumed in \cite{BP} (the corresponding
operators with \emph{constant }$a_{ij}$ in this paper are both left invariant
and homogeneous, while in \cite{BP} they are only left invariant) it is
possible to sharpen the estimates on the fundamental solutions. Actually, in
\S \ \ref{sec sharp estimates fund sol} we establish sharp upper bounds on the
fundamental solution and its space derivatives \emph{of every order}, and
other relevant propertis of this kernels. These upper bounds and properties
allow us to establish, in \S \ \ref{sec repr formulas}, suitable
representation formulas for a function $u$ and its derivatives $\partial
_{x_{i}x_{j}}^{2}u$ in terms of $\mathcal{L}u$. In turn, thanks to these
representation formulas we will establish H\"{o}lder estimates in space for
$\partial_{x_{i}x_{j}}^{2}u$ in \S \ \ref{sec schauder space model op}, and
local H\"{o}lder estimates in space and time for $\partial_{x_{i}x_{j}}^{2}u$
in \S \ \ref{section Schauder time}. These results are established with
techniques of singular integrals, and refer to operators with coefficients
only depending on $t$. Starting with these results, in
\S \ref{Sec operators a(x,t)} analogous results are established for operators
with coefficients $a_{ij}\left(  x,t\right)  $, exploiting the classical
perturbative method used for Schauder estimates. First, in
\S \ref{sec local schauder space}, H\"{o}lder estimates for $\partial
_{x_{i}x_{j}}^{2}u$ are proved for functions with small support.\ Then, in
\S \ \ref{sec interpolation}, some interpolation inequalities on first order
derivatives are proved, which allow to get, in
\S \ \ref{sec global schauder space}, global Schauder estimates in space,
extended in \S \ \ref{sec schauder time} to H\"{o}lder estimates in space and
time on $\partial_{x_{i}x_{j}}^{2}u$.

\bigskip

\noindent\textbf{Acknowledgements.} We wish to thank Sergio Polidoro for
several useful discussions on the subject of this paper.

\section{Preliminaries and known results\label{sec prelim}}

The following \textquotedblleft Lagrange' theorem\textquotedblright, which is
well known for systems of left-invariant homogeneous H\"{o}rmander vector
fields, will be useful.

\begin{theorem}
\label{Lagrange} There exist an absolute constant $c>0$ and a number
$\delta\in(0,1)$, depending on $\bd{\kappa}$ in
\eqref{quasitriangle}-\eqref{quasisymmetric}, such that, for every fixed
${\xi_{0}}\in\mathbb{R}^{N+1}$, every $r>0$ and every $f$ Lipschitz-continuous
in $\overline{B}_{r}(\xi_{0})$, one has
\[
|f(\xi)-f({\xi_{0}})|\leq c\Big(d(\xi,{\xi_{0}})\cdot\sup_{B_{R}({\xi_{0}}%
)}\sqrt{\sum_{i=1}^{q}|\partial_{x_{i}}f|^{2}}+d(\xi,{\xi_{0}})^{2}\cdot
\sup_{B_{R}({\xi_{0}})}|Yf|\Big),
\]
for every $\xi\in B_{r}({\xi_{0}})$. Moreover, one also has
\[
|f(\xi)-f(\eta)|\leq c\Big(d(\xi,\eta)\cdot\sup_{B_{R}({\xi_{0}})}\sqrt
{\sum_{i=1}^{q}|\partial_{x_{i}}f|^{2}}+d(\xi,\eta)^{2}\cdot\sup_{B_{R}%
({\xi_{0}})}|Yf|\Big)
\]
for every $\xi,\eta\in B_{\delta r}({\xi_{0}})$.
\end{theorem}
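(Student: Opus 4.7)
The plan is to exploit the definition of the control distance $d_{\mathbf{X}}$, which is globally equivalent to $d$, and to connect the relevant points by a sub-unit curve along which $f$ can be integrated. Fix constants $c_{1},c_{2}>0$ with $c_{1}d_{\mathbf{X}}\leq d\leq c_{2}d_{\mathbf{X}}$.

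For the first inequality, let $\xi\in B_{r}(\xi_{0})$ and fix $\varepsilon>0$. By the definition of $d_{\mathbf{X}}$ there exists $\varphi\in C_{\xi_{0},\xi}(\delta_{0})$ with $\delta_{0}:=d_{\mathbf{X}}(\xi_{0},\xi)+\varepsilon\leq d(\xi_{0},\xi)/c_{1}+\varepsilon$. The key sub-unit observation is that the reparametrization $s\mapsto\varphi(ts)$ still belongs to $C_{\xi_{0},\varphi(t)}(\delta_{0})$ for every $t\in[0,1]$: its coefficients become $ta_{i}(ts)$ and $ta_{0}(ts)$, and the factor $t\leq 1$ preserves both bounds $|a_{i}|\leq\delta_{0}$ and $|a_{0}|\leq\delta_{0}^{2}$. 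Hence $d_{\mathbf{X}}(\xi_{0},\varphi(t))\leq\delta_{0}$ for every $t$, so $d(\xi_{0},\varphi(t))\leq c_{2}\delta_{0}$ and the whole image of $\varphi$ lies in a ball $B_{R}(\xi_{0})$ with $R$ comparable to $r$ (up to a factor depending only on $c_{1},c_{2}$). Since $f$ is Lipschitz on this ball, $f\circ\varphi$ is absolutely continuous and, by Rademacher's theorem together with the chain rule, for a.e.\ $s\in[0,1]$
\[
\frac{d}{ds}f(\varphi(s))=\sum_{i=1}^{q}a_{i}(s)\,\partial_{x_{i}}f(\varphi(s))+a_{0}(s)\,Yf(\varphi(s)).
\]
Integrating over $[0,1]$, using the triangle inequality together with the Cauchy--Schwarz bound $\bigl|\sum_{i}a_{i}\partial_{x_{i}}f\bigr|\leq\sqrt{q}\,\delta_{0}\bigl(\sum_{i}|\partial_{x_{i}}f|^{2}\bigr)^{1/2}$ and the estimate $|a_{0}|\leq\delta_{0}^{2}$, one obtains
\[
|f(\xi)-f(\xi_{0})|\leq\sqrt{q}\,\delta_{0}\sup_{B_{R}(\xi_{0})}\sqrt{\sum_{i=1}^{q}|\partial_{x_{i}}f|^{2}}+\delta_{0}^{2}\sup_{B_{R}(\xi_{0})}|Yf|.
\]
Letting $\varepsilon\to 0$ and recalling $\delta_{0}\to d_{\mathbf{X}}(\xi_{0},\xi)\leq d(\xi_{0},\xi)/c_{1}$ yields the first estimate.

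For the second inequality, given $\xi,\eta\in B_{\delta r}(\xi_{0})$, the quasi-triangular inequality \eqref{quasitriangle} gives $d(\xi,\eta)<2\bd{\kappa}\delta r$, hence $d_{\mathbf{X}}(\xi,\eta)<2\bd{\kappa}\delta r/c_{1}$. We repeat the construction above with $\xi_{0}$ replaced by $\xi$: the resulting sub-unit curve lies in a $d$-ball of radius $\lesssim \bd{\kappa}\delta r$ around $\xi$, and a further application of \eqref{quasitriangle} places this ball inside $B_{C\bd{\kappa}^{2}\delta r}(\xi_{0})$ for an explicit $C$ depending only on $c_{1},c_{2}$. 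Choosing $\delta=\delta(\bd{\kappa},c_{1},c_{2})\in(0,1)$ small enough that $C\bd{\kappa}^{2}\delta\leq 1$ guarantees that the whole curve stays in $B_{r}(\xi_{0})$, where $f$ is Lipschitz and its weak derivatives are controlled. The same integration then produces the claimed estimate with $d(\xi,\eta)$ in place of $d(\xi,\xi_{0})$.

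The main technical point is geometric: one must force the connecting sub-unit curves to remain inside $\overline{B}_{r}(\xi_{0})$, where $f$ is Lipschitz and the suprema of the derivatives make sense. This is precisely what requires, in the first inequality, the sup to be taken over the slight enlargement $B_{R}$ of $B_{r}$, and what motivates, in the second inequality, the restriction of the two points to $B_{\delta r}$ with $\delta$ chosen as a function of $\bd{\kappa}$ (and of the equivalence constants between $d$ and $d_{\mathbf{X}}$). The remaining step, the validity of the chain rule for $f\circ\varphi$ with $f$ merely Lipschitz and $\varphi$ absolutely continuous, is standard via Rademacher's theorem, or can be obtained by mollifying $f$ and passing to the limit with dominated convergence.
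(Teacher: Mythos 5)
The paper states Theorem \ref{Lagrange} without proof, calling it a well-known property of systems of left-invariant homogeneous H\"ormander vector fields; so there is no in-paper argument to compare against. Your proposal is the standard proof and it is essentially correct: the reparametrization observation (that $s\mapsto\varphi(ts)$ remains in $C_{\xi_{0},\varphi(t)}(\delta_{0})$ because the coefficients become $ta_{i}(ts),ta_{0}(ts)$ and the factor $t\leq1$ preserves both bounds $\leq\delta_{0},\leq\delta_{0}^{2}$) is exactly what confines the whole curve to a ball of radius comparable to $d_{\mathbf{X}}(\xi_{0},\xi)$, and integration of $\frac{d}{ds}f(\varphi(s))$ with Cauchy--Schwarz on the first-order terms gives the stated bound after letting $\varepsilon\to0$.

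Two small points worth tightening. First, the paper's balls are $B_{R}(\xi_{0})=\{\eta:d(\eta,\xi_{0})<R\}$, whereas your estimate bounds $d(\xi_{0},\varphi(t))\leq c_{2}\delta_{0}$; you need one application of the quasi-symmetry \eqref{quasisymmetric} to convert this to a bound on $d(\varphi(t),\xi_{0})$. Likewise, the theorem's right-hand side carries $d(\xi,\xi_{0})$ while your $\delta_{0}$ is controlled by $d(\xi_{0},\xi)$, so another factor of $\boldsymbol{\kappa}$ appears in the constant. As a consequence, $R$ and $\delta$ depend on $\boldsymbol{\kappa}$ as well as on the equivalence constants $c_{1},c_{2}$, not only on $c_{1},c_{2}$ as written. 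Second, the chain rule for $f$ merely Lipschitz along an absolutely continuous curve is indeed the only genuinely delicate step, since the image of $\varphi$ may lie in the null set where $f$ fails to be differentiable; your remark that one should mollify and pass to the limit is the right fix, but the mollification must be the \emph{group} mollification (as used in Step III of the proof of Theorem \ref{Thm repr formula u}), because $Y$ has variable coefficients and does not commute with Euclidean convolution, while it does commute with left convolution and preserves the relevant $L^{\infty}$ bounds. With these adjustments your argument goes through.
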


The next geometric lemma follows by standard computations in doubling metric
measure spaces, recalling \eqref{measure ball}.

\begin{lemma}
\label{lem:doubling} Let $\alpha>0$ be fixed, and let $Q$ be as in
\eqref{eq:defQhomdim}. Then, there exists a constant $c_{\alpha}>0$ such that,
for every $\xi\in\mathbb{R}^{N+1}$ and every $r>0$, one has
\begin{align}
\int_{\{\eta:\,d(\xi,\eta)<r\}}\frac{1}{d(\xi,\eta)^{Q+2-\alpha}}\,\eta &
\leq c_{\alpha}r^{\alpha}\label{doubling <R}\\[0.1cm]
\int_{\{\eta:\,d(\xi,\eta)>r\}}\frac{1}{d(\xi,\eta)^{Q+2+\alpha}}\,d\eta &
\leq\frac{c_{\alpha}}{r^{\alpha}}. \label{doubling >R}%
\end{align}

\end{lemma}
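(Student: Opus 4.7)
The plan is a standard dyadic-annuli argument. The key (and essentially only) ingredient is the exact formula \eqref{measure ball}, i.e.\ $|B_r(\xi)| = \omega\, r^{Q+2}$ for every $\xi \in \mathbb{R}^{N+1}$ and $r > 0$, which makes the quasi-metric measure space $(\mathbb{R}^{N+1},d,d\eta)$ Ahlfors-regular of dimension $Q+2$. Under this condition both estimates are classical, and I would not use any other specific property of $d$.

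For the first estimate \eqref{doubling <R}, I decompose the ball $\{d(\xi,\eta) < r\}$ into the dyadic annuli
\[
A_k := \{\eta : 2^{-k-1} r \leq d(\xi,\eta) < 2^{-k} r\}, \qquad k = 0,1,2,\ldots,
\]
noting that $A_k \subset B_{2^{-k}r}(\xi)$, so $|A_k| \leq \omega (2^{-k} r)^{Q+2}$ by \eqref{measure ball}. On $A_k$ one has $d(\xi,\eta)^{-(Q+2-\alpha)} \leq (2^{-k-1} r)^{-(Q+2-\alpha)}$, hence
\[
\int_{A_k} \frac{d\eta}{d(\xi,\eta)^{Q+2-\alpha}} \leq \omega\, 2^{Q+2-\alpha} \cdot 2^{-k\alpha}\, r^{\alpha}.
\]
Summing the geometric series in $k$, which converges precisely because $\alpha > 0$, yields \eqref{doubling <R} with $c_\alpha = \omega\, 2^{Q+2-\alpha}/(1-2^{-\alpha})$.

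For the second estimate \eqref{doubling >R}, I would decompose the complement symmetrically: write $\{d(\xi,\eta) > r\} = \bigsqcup_{k=0}^\infty \widetilde A_k$ with
\[
\widetilde A_k := \{\eta : 2^k r < d(\xi,\eta) \leq 2^{k+1} r\} \subset B_{2^{k+1}r}(\xi),
\]
so that $|\widetilde A_k| \leq \omega\, (2^{k+1} r)^{Q+2}$ and $d(\xi,\eta)^{-(Q+2+\alpha)} \leq (2^k r)^{-(Q+2+\alpha)}$ on $\widetilde A_k$. Multiplying and collecting powers of $2^k$ gives $\int_{\widetilde A_k} d(\xi,\eta)^{-(Q+2+\alpha)}\, d\eta \leq \omega\, 2^{Q+2} \cdot 2^{-k\alpha}\, r^{-\alpha}$, and summing over $k$ yields \eqref{doubling >R} with the same kind of constant.

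There is no genuine obstacle here; the only thing to verify is that one may indeed apply \eqref{measure ball} to the dyadic balls (which is immediate, since \eqref{measure ball} is uniform in $\xi$ and holds for \emph{every} radius), and that the series converges, which requires and uses $\alpha > 0$. The final constant $c_\alpha$ depends only on $\alpha$, $Q$, and $\omega$, as required by the statement. The quasi-triangle constant $\bd{\kappa}$ plays no role because we are measuring balls centered at the same point $\xi$.
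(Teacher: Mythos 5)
Your proof is correct and is precisely the ``standard computation in doubling metric measure spaces'' that the paper invokes without writing out, with \eqref{measure ball} as the one geometric input; the dyadic decomposition into annuli $A_k$ (resp.\ $\widetilde A_k$) and the geometric series in $k$ are exactly what is meant. The only microscopic caveat is that your pointwise bound $d(\xi,\eta)^{-(Q+2-\alpha)}\le(2^{-k-1}r)^{-(Q+2-\alpha)}$ on $A_k$ presumes $Q+2-\alpha>0$; for $\alpha\ge Q+2$ the integrand is nonsingular and \eqref{doubling <R} follows trivially from $|B_r(\xi)|=\omega r^{Q+2}$, so nothing is lost -- and indeed the paper only ever applies the lemma with $\alpha<Q+2$.
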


We also state the following simple fact which shall be repeatedly used
throughout the rest of the paper.

\begin{lemma}
\label{lem:equivalentd} There exists an absolute constant $\bd{\vartheta}>0$
such that, if $\xi_{1},\xi_{2}$ and $\eta$ are points in $\mathbb{R}^{N+1}$
which satisfy $d(\xi_{1},\eta)\geq2\bd{\kappa}\,d(\xi_{1},\xi_{2})$, one has
\begin{equation}
\bd{\vartheta}^{-1}d(\xi_{2},\eta)\leq d(\xi_{1},\eta)\leq\bd{\vartheta}d(\xi
_{2},\eta), \label{constant M}%
\end{equation}
Here, $\bd{\kappa}>0$ is the constant appearing in \eqref{quasitriangle}-\eqref{quasisymmetric}.
\end{lemma}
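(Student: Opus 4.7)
The plan is the standard computation for quasi-metric spaces: combine the quasi-triangle inequality \eqref{quasitriangle} with the quasi-symmetry \eqref{quasisymmetric} and use the hypothesis $d(\xi_{1},\eta)\geq 2\bd{\kappa}\,d(\xi_{1},\xi_{2})$ to absorb the error term involving $d(\xi_{1},\xi_{2})$ into $d(\xi_{1},\eta)$. Both inequalities in \eqref{constant M} are obtained by applying the quasi-triangle inequality with $\xi_{1}$ (resp.\,$\xi_{2}$) as the intermediate point, so the argument is entirely symmetric in its structure; the only subtlety is that each time we pivot through the intermediate point we pick up a factor of $\bd{\kappa}$ from \eqref{quasisymmetric}, since $d$ is not symmetric.

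For the lower bound on $d(\xi_{1},\eta)$, I would first write
\[
d(\xi_{2},\eta)\leq\bd{\kappa}\bigl(d(\xi_{2},\xi_{1})+d(\eta,\xi_{1})\bigr)\leq\bd{\kappa}\bigl(\bd{\kappa}\,d(\xi_{1},\xi_{2})+d(\xi_{1},\eta)\bigr),
\]
using \eqref{quasitriangle} and then \eqref{quasisymmetric} on the term $d(\xi_{2},\xi_{1})$. Invoking the hypothesis $d(\xi_{1},\xi_{2})\leq (2\bd{\kappa})^{-1}d(\xi_{1},\eta)$ yields $d(\xi_{2},\eta)\leq\tfrac{3\bd{\kappa}}{2}\,d(\xi_{1},\eta)$, which is the left-hand inequality with constant $3\bd{\kappa}/2$.

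For the upper bound on $d(\xi_{1},\eta)$, I would argue symmetrically:
\[
d(\xi_{1},\eta)\leq\bd{\kappa}\bigl(d(\xi_{1},\xi_{2})+d(\eta,\xi_{2})\bigr)\leq\bd{\kappa}\,d(\xi_{1},\xi_{2})+\bd{\kappa}^{2}\,d(\xi_{2},\eta),
\]
after applying \eqref{quasisymmetric} to $d(\eta,\xi_{2})$. The hypothesis now gives $\bd{\kappa}\,d(\xi_{1},\xi_{2})\leq\tfrac{1}{2}\,d(\xi_{1},\eta)$, so this term can be absorbed into the left-hand side, producing $\tfrac{1}{2}\,d(\xi_{1},\eta)\leq\bd{\kappa}^{2}\,d(\xi_{2},\eta)$, i.e.\ $d(\xi_{1},\eta)\leq 2\bd{\kappa}^{2}\,d(\xi_{2},\eta)$. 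Taking $\bd{\vartheta}:=\max\{3\bd{\kappa}/2,\,2\bd{\kappa}^{2}\}$ then gives \eqref{constant M}.

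There is no real obstacle here: the lemma is a purely metric statement, the hypothesis is tailored precisely to allow the absorption step, and the only constants in play are $\bd{\kappa}$ (which is structural, so $\bd{\vartheta}$ depends only on it, as the word ``absolute'' is meant in this context). The main point to double-check is the direction in which \eqref{quasisymmetric} must be applied: since $d$ is only quasi-symmetric, one must orient the quasi-triangle inequality so that the term one wishes to bound from above appears on the left and the flip costs at most one factor of $\bd{\kappa}$, which is what both computations above accomplish.
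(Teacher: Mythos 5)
The paper states this lemma without proof, so there is no argument to compare against; your approach is the standard one for quasi-metric spaces and is correct in its structure. There is, however, a small oversight in the first chain of inequalities: after applying \eqref{quasitriangle} you obtain the term $d(\eta,\xi_{1})$, but you then silently replace it by $d(\xi_{1},\eta)$ without paying the quasi-symmetry cost. You stated that \eqref{quasisymmetric} is applied "on the term $d(\xi_{2},\xi_{1})$" only, yet the substitution $d(\eta,\xi_{1})\rightsquigarrow d(\xi_{1},\eta)$ also needs a factor $\bd{\kappa}$. The corrected estimate is
\[
d(\xi_{2},\eta)\leq\bd{\kappa}\bigl(\bd{\kappa}\,d(\xi_{1},\xi_{2})+\bd{\kappa}\,d(\xi_{1},\eta)\bigr)\leq\Bigl(\tfrac{\bd{\kappa}}{2}+\bd{\kappa}^{2}\Bigr)d(\xi_{1},\eta),
\]
not $\tfrac{3\bd{\kappa}}{2}\,d(\xi_{1},\eta)$. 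This does not invalidate your conclusion: the axioms \eqref{quasitriangle}--\eqref{quasisymmetric} force $\bd{\kappa}\geq 1$ (take $\zeta=\xi$ in \eqref{quasitriangle} and then apply \eqref{quasisymmetric} twice), so $\tfrac{\bd{\kappa}}{2}+\bd{\kappa}^{2}\leq 2\bd{\kappa}^{2}$, and your final choice $\bd{\vartheta}=2\bd{\kappa}^{2}$ works for both directions. You should nevertheless fix the intermediate step so that the stated constant is actually justified by the displayed computation.
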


Thanks to Lemmas \ref{lem:doubling}-\ref{lem:equivalentd} we can establish the
following $C^{\alpha}$ continuity result about \textquotedblleft fractional
integrals\textquotedblright\, which will be useful in our estimates.

\begin{proposition}
[Fractional integrals]\label{Prop fractional generale}Let $Q$ be as in
\eqref{eq:defQhomdim} and $\beta\in\lbrack1,Q+2)$. Moreover, let $k=k(\xi
,\eta)$ be a kernel satisfying the following properties:

\begin{enumerate}
\item there exists a constant $c_{1}>0$ such that
\begin{equation}
|k(\xi,\eta)|\leq\frac{c_{1}}{d(\xi,\eta)^{Q+2-\beta}}\quad\forall\,\,\xi
\neq\eta\in\mathbb{R}^{N+1}; \label{Frac1}%
\end{equation}

\item there exist constants $\sigma,\,c_{2} > 0$ such that
\begin{equation}
\label{Frac2}|k(\xi_{1},\eta)-k(\xi_{2},\eta)| \leq c_{2} \frac{d(\xi_{1}%
,\xi_{2})}{d(\xi_{1},\eta)^{Q+3-\beta}} \quad\forall\,\,d(\xi_{1},\eta)
\geq\sigma\,d(\xi_{1},\xi_{2}).
\end{equation}

\end{enumerate}

For every fixed $\overline{\xi}\in\mathbb{R}^{N+1}$ and $r>0$, we introduce
the function space
\[
\mathbb{X}_{\infty}(B_{r}(\overline{\xi})):=\{f\in L^{\infty}(\mathbb{R}%
^{N+1}):\,\text{$f\equiv0$ a.e.\thinspace in $\mathbb{R}^{N+1}\setminus
B_{r}(\overline{\xi})$}\},
\]
and we define the linear operator
\[
\mathbb{X}_{\infty}(B_{r}(\overline{\xi}))\ni f\mapsto Tf(\xi)=\int%
_{\mathbb{R}^{N+1}}k(\xi,\eta)f(\eta)\,d\eta.
\]
Then, for every $\alpha\in(0,1)$ there exists an `absolute' constant $c>0$,
depending on $\alpha,\beta$ but \emph{in\-de\-pen\-dent of $f,\overline{\xi
},r$ and of the kernel $k$}, such that
\begin{align}
\Vert Tf\Vert_{L^{\infty}(B_{r}(\overline{\xi}))}  &  \leq cc_{1}r^{\beta
}\Vert f\Vert_{L^{\infty}(B_{R}(\overline{\xi}))}\label{eq:estimsupfSI}\\
|Tf|_{C^{\alpha}(B_{r}(\overline{\xi}))}  &  \leq c(c_{1}+c_{2})r^{\beta
-\alpha}\Vert f\Vert_{L^{\infty}(B_{R}(\overline{\xi}))}.
\label{eq:estimCalfafSI}%
\end{align}

\end{proposition}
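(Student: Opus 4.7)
The plan is to derive both estimates directly from the size bound \eqref{Frac1} and the smoothness bound \eqref{Frac2}, using Lemma~\ref{lem:doubling} to evaluate the resulting integrals and the quasi-triangle inequality \eqref{quasitriangle} to keep track of the balls involved. The $L^\infty$-bound \eqref{eq:estimsupfSI} is immediate: fix $\xi\in B_r(\overline{\xi})$; since $f$ is supported in $B_r(\overline{\xi})$, the quasi-triangle inequality confines the integration variable $\eta$ to $d(\xi,\eta)\leq 2\bd{\kappa}r$, so inserting \eqref{Frac1} and invoking Lemma~\ref{lem:doubling} with exponent $\beta\in(0,Q+2)$ yields the claimed bound.

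For the H\"older seminorm \eqref{eq:estimCalfafSI} the plan is a standard near/far decomposition. I would fix $\xi_1,\xi_2\in B_r(\overline{\xi})$, set $\delta:=d(\xi_1,\xi_2)$ and $\sigma':=\max(\sigma,2\bd{\kappa})$, and split
\[
|Tf(\xi_1)-Tf(\xi_2)|\;\leq\;I_{\mathrm{near}}+I_{\mathrm{far}},
\]
where $I_{\mathrm{near}}$ integrates $|k(\xi_1,\eta)-k(\xi_2,\eta)||f(\eta)|$ over $\{d(\xi_1,\eta)<\sigma'\delta\}$ and $I_{\mathrm{far}}$ over the complement. On the near set I would estimate $|k(\xi_1,\eta)-k(\xi_2,\eta)|\leq |k(\xi_1,\eta)|+|k(\xi_2,\eta)|$ via \eqref{Frac1}; the integration region for the second summand is contained, by \eqref{quasitriangle}, in the ball $\{d(\xi_2,\eta)\leq\bd{\kappa}(1+\sigma')\delta\}$, so Lemma~\ref{lem:doubling} (applied at both $\xi_1$ and $\xi_2$, with exponent $\beta$) gives $I_{\mathrm{near}}\leq Cc_1\delta^\beta\|f\|_\infty$. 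On the far set the smoothness bound \eqref{Frac2} is active; combining it with the support constraint $d(\xi_1,\eta)\leq 2\bd{\kappa}r$ (once again from the quasi-triangle inequality) and applying Lemma~\ref{lem:doubling} with exponent $\beta-1>0$ yields $I_{\mathrm{far}}\leq Cc_2\,\delta\,r^{\beta-1}\|f\|_\infty$. Dividing by $\delta^\alpha$ and using $\delta\leq 2\bd{\kappa}r$ together with $\alpha<1<\beta$, both bounds collapse to the common scaling $C(c_1+c_2)r^{\beta-\alpha}\|f\|_\infty$.

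I expect the main (and essentially only) subtlety to be the role of the hypothesis $\beta>1$. It is precisely what makes the kernel in the far estimate, namely $1/d^{Q+3-\beta}$, behave as a fractional-integral kernel rather than a singular Calder\'on--Zygmund one; equivalently, it is what allows Lemma~\ref{lem:doubling} to be applied with exponent $\beta-1>0$. Without this assumption one would get at most a logarithmic factor in $r/\delta$ and the two contributions would no longer balance into the single scaling $r^{\beta-\alpha}$. Everything else is bookkeeping: choosing the splitting threshold $\sigma'$ compatible both with \eqref{Frac2} and with the quasi-triangle inequality (so that Lemma~\ref{lem:equivalentd} could equally be invoked on the far set if one preferred to re-express integrals in terms of $d(\xi_2,\eta)$), and then the elementary inequalities $\delta^{\beta-\alpha}\leq Cr^{\beta-\alpha}$ and $\delta^{1-\alpha}r^{\beta-1}\leq Cr^{\beta-\alpha}$.
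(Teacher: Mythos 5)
Your proof is correct and follows essentially the same plan as the paper's: the $L^\infty$ bound via Lemma~\ref{lem:doubling} after confining $\eta$ to a ball of radius $2\bd{\kappa}r$, and the near/far decomposition at a threshold comparable to $\sigma\,d(\xi_1,\xi_2)$, using \eqref{Frac1} on the near part and \eqref{Frac2} together with the support constraint on the far part. The only (cosmetic) difference is in the far-set bookkeeping: the paper absorbs $d(\xi_1,\xi_2)^{1-\alpha}$ into the integrand so as to apply Lemma~\ref{lem:doubling} with exponent $\beta-\alpha$, whereas you apply it with exponent $\beta-1$ and then use $\delta^{1-\alpha}\le(2\bd{\kappa}r)^{1-\alpha}$ at the very end; both rely on $\beta>1$ and $\alpha<1$ in exactly the same structural way.
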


\begin{proof}
Let $f\in\mathbb{X}_{\infty}(B_{r}(\overline{\xi}))$ be arbitrarily fixed.
Using \eqref{doubling <R} and \eqref{Frac1}, and taking into account Remark
\ref{rem:propd}-(3), for every $\xi\in B_{r}(\overline{\xi})$ we have
\begin{align*}
|Tf(\xi)|  &  \leq\int_{\{d(\eta,\overline{\xi})<r\}}\frac{c_{1}}{d(\xi
,\eta)^{Q+2-\beta}}|f(\eta)|\,d\eta\\
&  \leq c_{1}\Vert f\Vert_{L^{\infty}(B_{r}(\overline{\xi}))}\int%
_{\{d(\eta,\overline{\xi})<r\}}\frac{1}{d(\xi,\eta)^{Q+2-\beta}}\,d\eta\\
&  \leq c_{1}\Vert f\Vert_{L^{\infty}(B_{r}(\overline{\xi}))}\int%
_{\{d(\xi,\eta)<2\bd{\kappa}r\}}\frac{1}{d(\xi,\eta)^{Q+2-\beta}}\,d\eta\\
&  \leq c_{1}c_{\beta}(2\bd{\kappa})^{\beta}r^{\beta}\Vert f\Vert_{L^{\infty
}(B_{R}(\overline{\xi}))},
\end{align*}
hence
\[
\Vert Tf\Vert_{L^{\infty}(B_{r}(\overline{\xi}))}\leq c^{\prime}c_{1}r^{\beta
}\Vert f\Vert_{L^{\infty}(B_{R}(\overline{\xi}))}\qquad\text{(with $c^{\prime
}:=c_{\beta}(2\bd{\kappa})^{\beta}$)},
\]
which is \eqref{eq:estimsupfSI}. Moreover, for every $\xi_{1},\xi_{2}\in
B_{r}(\overline{\xi})$ one has
\begin{equation}%
\begin{split}
&  |Tf(\xi_{1})-Tf(\xi_{2})|\leq\Vert f\Vert_{L^{\infty}(B_{r}(\overline{\xi
}))}\int_{B_{r}(\overline{\xi})}|k(\xi_{1},\eta)-k(\xi_{2},\eta)|\,d\eta\\
&  \quad=\Vert f\Vert_{L^{\infty}(B_{r}(\overline{\xi}))}\bigg(\int%
_{\{\eta:\,d(\xi_{1},\eta)\geq\sigma d(\xi_{1},\xi_{2})\}}+\int_{\{\eta\in
B_{r}(\overline{\xi}):\,d(\xi_{1},\eta)<\sigma d(\xi_{1},\xi_{2}%
)\}}\bigg)\{\cdots\}\,d\eta\\[0.2cm]
&  \quad\equiv\Vert f\Vert_{L^{\infty}(B_{r}(\overline{\xi}))}\cdot
\big(\mathrm{A}+\mathrm{B}\big),
\end{split}
\label{eq:estimAB}%
\end{equation}
Next, by \eqref{doubling <R}, \eqref{Frac2} and Remark \ref{rem:propd}-(3), we
get
\begin{equation}%
\begin{split}
\mathrm{A}  &  \leq c_{2}\int_{B_{r}^{\prime}(\overline{\xi})}\frac{d(\xi
_{1},\xi_{2})}{d(\xi_{1},\eta)^{Q+3-\beta}}\,d\eta\leq\frac{c_{2}}%
{\sigma^{\alpha}}\cdot d(\xi_{1},\xi_{2})^{\alpha}\int_{B_{r}(\overline{\xi}%
)}\frac{d(\xi_{1},\eta)^{1-\alpha}}{d(\xi_{1},\eta)^{Q+3-\beta}}\,d\eta\\
&  \leq\frac{c_{2}}{\sigma^{\alpha}}\cdot d(\xi_{1},\xi_{2})^{\alpha}%
\int_{\{d(\xi_{1},\eta)<2\bd{\kappa}r\}}\frac{1}{d\left(  \xi_{1},\eta\right)
^{Q+2-(\beta-\alpha)}}\,d\eta\\[0.2cm]
&  (\text{by \eqref{doubling <R}, since $0<\alpha<1\leq\beta$})\\[0.2cm]
&  \leq c\,d(\xi_{1},\xi_{2})^{\alpha}r^{\beta-\alpha}.
\end{split}
\label{eq:estimIntegralASI}%
\end{equation}
As to $\mathrm{B}$, again by \eqref{doubling <R} and \eqref{Frac1} we get
\begin{equation}%
\begin{split}
\mathrm{B}  &  \leq\int_{B_{r}^{\prime\prime}(\overline{\xi})}(|k(\xi_{1}%
,\eta)|+|k(\xi_{2},\eta)|\,d\eta\\
&  \leq c_{1}\int_{B_{r}^{\prime\prime}(\overline{\xi})}\Big(\frac{1}%
{d(\xi_{1},\eta)^{Q+2-\beta}}+\frac{1}{d(\xi_{2},\eta)^{Q+2-\beta}}%
\Big)d\eta\\
&  \leq c_{1}\bigg(\int_{\{d(\xi_{1},\eta)<\sigma d(\xi_{1},\xi_{2})\}}%
\frac{1}{d(\xi_{1},\eta)^{Q+2-\beta}}\,d\eta\\
&  \qquad\qquad+\int_{\{d(\xi_{2},\eta)<\bd{\kappa}^{2}(\sigma+1)d(\xi_{1}%
,\xi_{2})\}}\frac{1}{d(\xi_{1},\eta)^{Q+2-\beta}}\,d\eta\bigg)\\[0.2cm]
&  \leq c\,d(\xi_{1},\xi_{2})^{\beta}\\[0.2cm]
&  (\text{by Remark \ref{rem:propd}-(3), since $\xi_{1},\xi_{2}\in
B_{r}(\overline{\xi})$})\\[0.2cm]
&  \leq c\,d(\xi_{1},\xi_{2})^{\alpha}r^{\beta-\alpha}.
\end{split}
\label{eq:estimIntegralBSI}%
\end{equation}
Due to the arbitrariness of $\xi_{1},\xi_{2}\in B_{r}(\overline{\xi})$, by
\eqref{eq:estimAB}-to-\eqref{eq:estimIntegralBSI} we get
\[
|Tf|_{C^{\alpha}(B_{r}(\overline{\xi}))}\leq c\,r^{\beta-\alpha}\Vert
f\Vert_{L^{\infty}(B_{r}(\overline{\xi}))},
\]
so the proof is complete.
\end{proof}

We end this section with another useful technical lemma.

\begin{lemma}
\label{Lemma E(s)x} There exists an absolute constant $c>0$ such that
\begin{equation}
\label{eq:Etxhom}\|E(t)x\| \leq c\rho(x,t) = c\big(\|x\|+\sqrt{|t|}%
\big) \qquad\forall\,\,x\in\mathbb{R}^{N},\,t\in\mathbb{R}.
\end{equation}

\end{lemma}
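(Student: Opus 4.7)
The plan is to exploit the joint homogeneity of both sides. Define $\phi:\mathbb{R}^{N+1}\to[0,\infty)$ by $\phi(x,t):=\|E(t)x\|$. The proof reduces, by scaling, to a boundedness statement on the `unit sphere' of $\rho$, where compactness and continuity do the work.

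First I would verify that $\phi$ is $1$-homogeneous with respect to the dilations $D(\lambda)$. Using the structural identity \eqref{LP 2.20}, for any $\lambda>0$ and $(x,t)\in\mathbb{R}^{N+1}$,
\[
E(\lambda^{2}t)\,D_{0}(\lambda)x=D_{0}(\lambda)\,E(t)\,D_{0}(1/\lambda)\,D_{0}(\lambda)x=D_{0}(\lambda)\,E(t)x.
\]
Combined with the elementary identity $\|D_{0}(\lambda)y\|=\lambda\|y\|$, which follows at once from $\|y\|=\sum_{i}|y_{i}|^{1/q_{i}}$ and the definition of $D_{0}(\lambda)$, this yields
\[
\phi\bigl(D(\lambda)(x,t)\bigr)=\bigl\|D_{0}(\lambda)E(t)x\bigr\|=\lambda\,\phi(x,t).
\]

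Next, since $\rho$ is itself $1$-homogeneous and vanishes only at the origin, the conclusion $\phi\leq c\rho$ reduces to a bound of $\phi$ on the set $\Sigma:=\{(x,t)\in\mathbb{R}^{N+1}:\rho(x,t)=1\}$. I would note that $\Sigma$ is compact: it is closed by continuity of $\rho$, and bounded in the Euclidean sense because $|x_{i}|^{1/q_{i}}\leq 1$ and $|t|\leq 1$ for every $(x,t)\in\Sigma$. Since $B$ is nilpotent, $t\mapsto E(t)$ is a polynomial matrix-valued function, so $\phi$ is continuous on $\mathbb{R}^{N+1}$, and therefore
\[
c:=\sup_{\Sigma}\phi<+\infty.
\]

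Finally, for an arbitrary $(x,t)\neq(0,0)$ I would set $\lambda:=1/\rho(x,t)>0$. Then $\rho\bigl(D(\lambda)(x,t)\bigr)=\lambda\,\rho(x,t)=1$, so $D(\lambda)(x,t)\in\Sigma$, and the homogeneity of $\phi$ gives
\[
\lambda\,\phi(x,t)=\phi\bigl(D(\lambda)(x,t)\bigr)\leq c,
\]
i.e.\ $\|E(t)x\|\leq c\,\rho(x,t)$. The case $(x,t)=(0,0)$ is trivial. There is no substantial obstacle; the only nontrivial verification is the $1$-homogeneity of $\phi$, which is a direct consequence of \eqref{LP 2.20}.
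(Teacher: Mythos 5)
Your proof is correct and follows essentially the same route as the paper's: both arguments exploit \eqref{LP 2.20} to obtain the $1$-homogeneity of $(x,t)\mapsto\|E(t)x\|$ with respect to $D(\lambda)$, bound this continuous function on a compact level set (you use $\{\rho=1\}$, the paper uses $\{\|\xi\|\leq 1,\,|\tau|\leq 1\}$), and then scale. The packaging as an explicit homogeneity-plus-compactness argument is slightly cleaner conceptually but mathematically identical.
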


\begin{proof}
First of all, since the function $(x,t)\mapsto\Vert E(t)x\Vert$ is continuous
on $\mathbb{R}^{N+1}$, it is possible to find a constant $M>0$ such that
\[
\Vert E(\tau)\xi\Vert\leq M\quad\text{for every $\Vert\xi\Vert\leq1$ and
$|\tau|\leq1$}.
\]
We then fix $(x,t)\in\mathbb{R}^{N+1}\setminus\{(0,0)\}$ and define%
\[
\lambda=\Vert x\Vert+\sqrt{|t|}\quad\text{and}\quad(\xi,\tau):=\left(
D_{0}\left(  \frac{1}{\lambda}\right)  x,\frac{t}{\lambda^{2}}\right)  ,
\]
Since $\Vert\cdot\Vert$ is $D_{0}$-homogeneous of degree $1$, it is immediate
to recognize that $\Vert\xi\Vert,\,|\tau|\leq1$; thus, by \eqref{LP 2.20} we
get
\[
M\geq\Vert E(\tau)\xi\Vert=\left\Vert E\left(  \frac{t}{\lambda^{2}}\right)
D_{0}\left(  \frac{1}{\lambda}\right)  x\right\Vert =\left\Vert D_{0}\left(
\frac{1}{\lambda}\right)  E(t)x\right\Vert =\frac{1}{\lambda}\Vert
E(t)x\Vert,
\]
so that%
\[
\Vert E(t)x\Vert\leq M\lambda=c\left(  \Vert x\Vert+\sqrt{|t|}\right)  ,
\]
and this gives the desired \eqref{eq:Etxhom} for $(x,t)\neq(0,0)$. Since this
estimate is clearly satisfied when $x=t=0$, the proof is complete.
\end{proof}

\section{Operators with measurable coefficients $a_{ij}\left(  t\right)
$\label{sec operators t}}

\subsection{Known results on the fundamental solution}

Throughout this section, we consider an operator $\mathcal{L}$ of the form
\eqref{L} and satisfying (H1)-(H2), with bounded me\-a\-su\-ra\-ble
coefficients $a_{ij}$ only depending on $t$, that is,
\begin{equation}
\mathcal{L}u=\sum_{i,j=1}^{q}a_{ij}\left(  t\right)  \partial_{x_{i}x_{j}}%
^{2}u+\sum_{k,j=1}^{N}b_{jk}x_{k}\partial_{x_{j}}u-\partial_{t}u,\qquad
(x,t)\in\mathbb{R}^{N+1}. \label{eq:LLsolot}%
\end{equation}
In \cite{BP}, an explicit fundamental solution for $\mathcal{L}$ is computed,
and its properties are studied. The next theorem summarizes some results in
\cite{BP} that we will need.

We point out that, since our assumption (H1) on the matrix $B$ is stronger
than the one made in \cite{BP} (here the model operator with constant $a_{ij}$
is both left invariant and homogeneous, while in \cite{BP} it is only left
invariant), here we specialize the formulas and results to our simpler situation.

\begin{theorem}
[Fundamental solution for operators with $t$-variable coefficients]%
\label{Thm fund sol coeff t dip} Under assumptions \emph{(H1)-(H2)} above, let
$C(t,s)$ be the $N\times N$ matrix defined as
\begin{equation}
\label{eq-EC}C(t,s) =\int_{s}^{t}E(t-\sigma)\cdot%
\begin{pmatrix}
A_{0}(\sigma) & 0\\
0 & 0
\end{pmatrix}
\cdot E(t-\sigma)^{T}\,d\sigma\quad(\text{with $t > s$})
\end{equation}
\emph{(}we recall that $E(\sigma) = \exp(-\sigma B)$, see \eqref{B}\emph{)}.
Then, the matrix $C(t,s)$ is \emph{sym\-me\-tric and positive definite} for
every $t>s$. Moreover, if we define
\begin{equation}%
\begin{split}
&  \Gamma(x,t;y,s)\\
&  \quad= \frac{1}{(4\pi)^{N/2}\sqrt{\det C(t,s)}} e^{-\frac{1}{4}\langle
C(t,s)^{-1}(x-E(t-s)y),\,x-E(t-s)y\rangle} \cdot\mathbf{1}_{\{t > s\}}%
\end{split}
\label{eq.exprGammapernoi}%
\end{equation}
\emph{(}where $\mathbf{1}_{A}$ denotes the indicator function of a set
$A$\emph{)}, then $\Gamma$ enjoys the following properties, so that $\Gamma$
is the \emph{fundamental solution} for $\mathcal{L}$ with pole at $(y,s)$.

\begin{enumerate}
\item In the open set $\mathcal{O}:=\{(x,t;y,s)\in\mathbb{R}^{2N+2}%
:\,(x,t)\neq(y,s)\}$, the function $\Gamma$ is \emph{jointly continuous} in
$(x,t;y,s)$ and $C^{\infty}$ with respect to $x,y$. Mo\-re\-o\-ver, for every
multi-indexes $\alpha,\beta$ the functions
\[
\partial_{x}^{\alpha}\partial_{y}^{\beta}\Gamma=\frac{\partial^{\alpha+\beta
}\Gamma}{\partial x^{\alpha}\partial y^{\beta}}%
\]
are \emph{jointly continuous} in $(x,t;y,s)\in\mathcal{O}$. Finally, $\Gamma$
and $\partial_{x}^{\alpha}\partial_{y}^{\beta}\Gamma$ are \emph{Lip\-schitz
continuous} with respect to $t,s$ in any region $\mathcal{R}$ of the form
\[
\mathcal{R}=\{(x,t;y,s)\in\mathbb{R}^{2N+2}:\,H\leq s+\delta\leq t\leq K\},
\]
where $H,K\in\mathbb{R}$ and $\delta>0$ are arbitrarily fixed.

\item For every fixed $y\in\mathbb{R}^{N}$ and $t>s$, we have
\[
\lim_{|x|\rightarrow+\infty}\Gamma(x,t;y,s)=0.
\]

\item For every fixed $(y,s)\in\mathbb{R}^{N+1}$, we have
\[
\big(\mathcal{L}\Gamma(\cdot;y,s)\big)(x,t)=0\qquad\text{for every
$x\in\mathbb{R}^{N}$ and a.e.\thinspace$t$}.
\]

\item For every fixed $x\in\mathbb{R}^{N}$ and every $t>s$, we have
\begin{equation}
\int_{\mathbb{R}^{N}}\Gamma(x,t;y,s)\,dy=1. \label{eq:integralGamma1}%
\end{equation}

\item For every $f\in C(\mathbb{R}^{N})\cap L^{\infty}(\mathbb{R}^{N})$ and
every $s\in\mathbb{R}$, the function
\[
u(x,t)=\int_{\mathbb{R}^{N}}\Gamma(x,t;y,s)f(y)\,dy
\]
is the unique solution to the Cauchy problem
\begin{equation}%
\begin{cases}
\mathcal{L}u=0 & \text{in $\mathbb{R}^{N}\times(s,\infty)$}\\
u(\cdot,s)=f &
\end{cases}
\label{eq:pbCauchyThmBP}%
\end{equation}
In particular, $u(\cdot,s)\rightarrow f$ uniformly in $\mathbb{R}^{N}$ as
$t\rightarrow s^{+}$.
\end{enumerate}

Finally, the function $\Gamma^{\ast}(x,t;y,s):=\Gamma(y,s;x,t)$ satisfies dual
properties of \emph{(2)-(4)} with respect to the \emph{formal adjoint} of
$\mathcal{L}$, that is,
\[
\textstyle\mathcal{L}^{\ast}=\sum_{i,j=1}^{q}a_{ij}(s)\partial_{y_{i}y_{j}%
}-\sum_{k,j=1}^{N}b_{jk}y_{k}\partial_{y_{i}}+\partial_{s},
\]
and thus $\Gamma^{\ast}$ is the fundamental solution of $\mathcal{L}^{\ast}$.
\end{theorem}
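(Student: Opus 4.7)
The plan is to verify each of the claims by direct computation, exploiting the explicit Gaussian form of $\Gamma$ and the variation-of-parameters formula for $C(t,s)$. Since the theorem summarizes results in \cite{BP}, in practice the proof we would give is essentially a reference to that paper, with minor simplifications due to the stronger homogeneity assumption on $B$ adopted here; what follows is an outline of the reasoning.

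First I would establish that $C(t,s)$ is symmetric and positive definite for every $t>s$. Symmetry is immediate since each integrand is a symmetric matrix. For positive definiteness, the key identity is
\[
\langle C(t,s)\xi,\xi\rangle=\int_{s}^{t}\bigl\langle A_{0}(\sigma)\pi\bigl(E(t-\sigma)^{T}\xi\bigr),\,\pi\bigl(E(t-\sigma)^{T}\xi\bigr)\bigr\rangle\,d\sigma,
\]
where $\pi:\mathbb{R}^{N}\to\mathbb{R}^{q}$ is the projection onto the first $q$ coordinates. Using the ellipticity \eqref{nu}, vanishing of the left-hand side would force $\pi\bigl(E(t-\sigma)^{T}\xi\bigr)\equiv0$ on $(s,t)$; differentiating in $\sigma$ at $\sigma=t$ and using the block structure of $B$ in \textbf{(H2)} (each $B_{j}$ of maximal rank $m_{j}$), one checks that the iterated derivatives $\pi B^{k}\xi$ span $\mathbb{R}^{N}$, a Kalman-type rank condition equivalent to Hörmander's condition for the lifted system. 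This forces $\xi=0$.

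Next I would verify claim (3) by direct differentiation of $\Gamma$. The principal tool is the matrix ODE $\partial_{t}C(t,s)=A(t)+BC(t,s)+C(t,s)B^{T}$ a.e. in $t$, where $A(t)=\mathrm{diag}(A_{0}(t),0)$, together with $\partial_{t}E(t-s)=-BE(t-s)$. Writing $z:=x-E(t-s)y$ and computing $\partial_{x_{i}}\Gamma$, $\partial_{x_{i}x_{j}}^{2}\Gamma$, and $\partial_{t}\Gamma$ (the last via Jacobi's formula for $\partial_{t}\log\det C$ and the identity $\partial_{t}C^{-1}=-C^{-1}(\partial_{t}C)C^{-1}$), and collecting terms, one checks that the second-order, first-order, and zeroth-order contributions cancel exactly, yielding $\mathcal{L}\Gamma(\cdot;y,s)=0$. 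I expect this to be the main obstacle: the two drift contributions $BC+CB^{T}$ must interact with $\sum b_{jk}x_{k}\partial_{x_{j}}$ to annihilate the cross-terms $\langle C^{-1}z,Bz\rangle$, and rewriting everything in terms of $z$ makes this cancellation transparent but requires care.

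For the regularity in (1), once $C(t,s)$ is positive definite with eigenvalues bounded below on $\{t-s\geq\delta\}$, the explicit formula shows $\Gamma$ is $C^{\infty}$ in $x,y$, and Lipschitz in $t,s$ on such regions (with constants depending on $\|A_{0}\|_{\infty}$ and $\delta$) because $C(t,s)$ and $\det C(t,s)$ are Lipschitz in $t$. Claim (2) is immediate from the Gaussian factor. Claim (4) follows by the change of variables $x-E(t-s)y=2C(t,s)^{1/2}z$, reducing the integral to the standard Gaussian normalization. For claim (5), differentiation under the integral (legitimate by the uniform Gaussian bounds on derivatives of $\Gamma$ away from $t=s$) shows that $u(x,t)=\int\Gamma(x,t;y,s)f(y)\,dy$ satisfies $\mathcal{L}u=0$ classically in $x$ and a.e. in $t$; the initial condition $u(\cdot,t)\to f$ uniformly follows from \eqref{eq:integralGamma1} combined with an approximation-of-identity argument based on the Gaussian decay of $\Gamma$ and the uniform continuity of $f$, while uniqueness follows from a maximum-principle-type argument in the class of bounded solutions. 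Finally, the dual statements for $\Gamma^{\ast}$ follow by inspecting the formula: since $\mathcal{L}^{\ast}$ has the same block structure (with $-B^{T}$ in the drift and $+\partial_{s}$ in place of $-\partial_{t}$), the construction applied to $\mathcal{L}^{\ast}$ yields $\Gamma^{\ast}(x,t;y,s)=\Gamma(y,s;x,t)$, so the analogous properties transfer automatically.
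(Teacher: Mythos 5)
The paper itself offers no proof of this theorem: it explicitly states that the result ``summarizes some results in \cite{BP}'' and delegates the details (including the precise notion of solution to the Cauchy problem) to that reference. You correctly recognize this and then sketch a plausible reconstruction of the argument, which is a reasonable thing to do; the outline follows the standard Lyapunov-equation/Gaussian-kernel route used for Kolmogorov-type operators and is, in its essentials, the correct one.

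One slip worth flagging: with $E(\tau)=\exp(-\tau B)$, so that $\partial_t E(t-\sigma)=-BE(t-\sigma)$ as you correctly write a line later, the Leibniz rule applied to $C(t,s)=\int_s^t E(t-\sigma)A(\sigma)E(t-\sigma)^T\,d\sigma$ gives
\[
\partial_t C(t,s)=A(t)-BC(t,s)-C(t,s)B^T ,
\]
not $A(t)+BC+CB^T$ as written in your proposal; the two $-B$ factors pulled out of the integrand do not change sign. Carrying the wrong sign through the verification that $\mathcal{L}\Gamma(\cdot;y,s)=0$ would spoil the cancellation between the drift term $\sum_{j,k}b_{jk}x_k\partial_{x_j}$ and the $\partial_t$ contributions, so this should be corrected before attempting the full computation. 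The rest of the outline (Kalman rank argument for positivity of $C$, Gaussian normalization via $x-E(t-s)y=2C(t,s)^{1/2}z$ together with $\det E\equiv 1$, Lipschitz regularity in $t,s$ on $\{t-s\geq\delta\}$, approximation-of-identity for the Cauchy data, and the duality argument) is sound.
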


The precise definition of \emph{solution to the Cauchy problem}
\eqref{eq:pbCauchyThmBP} requires some care, see \cite[Definitions 1.2 and
1.3]{BP} for the details.
Let us now further specialize our class of operators to the model operators
with \emph{constant coefficients} $a_{ij}$. Keeping our assumption (H2) on the
matrix $B,$ let%
\begin{equation}
\mathcal{L}_{\alpha}u=\alpha\sum_{i=1}^{q}\partial_{x_{i}x_{i}}^{2}%
u+\sum_{k,j=1}^{N}b_{jk}x_{k}\partial_{x_{j}}u-\partial_{t}u \label{L-alpha}%
\end{equation}
for some $\alpha>0$. Then the results of the above theorem apply in a simpler
form. Actually, the following facts are proved already in \cite{LP}.

\begin{theorem}
[Fundamental solution for operators with constant coefficients]%
\label{Thm fund sol cost coeff} Let $\alpha>0$ be fixed, and let
$\Gamma_{\alpha}$ be the fundamental solution of the operator $\mathcal{L}%
_{\alpha}$ in \eqref{L-alpha}, whose existence is guaranteed by Theorem
\ref{Thm fund sol coeff t dip}. Then:

\begin{enumerate}
\item $\Gamma_{\alpha}$ is a \emph{kernel of convolution type}, that is,
\begin{equation}
\label{eq:Gammaalfaconvolution}%
\begin{split}
\Gamma_{\alpha}(x,t;y,s)  &  =\Gamma_{\alpha}\big(x-E(t-s)y,t-s;0,0\big)\\
&  = \Gamma_{\alpha}\big((y,s)^{-1}\circ(x,t);0,0\big);
\end{split}
\end{equation}

\item The matrix $C(t,s)$ in \eqref{eq-EC} takes the simpler form
\begin{equation}
\label{C_0}C(t,s) =C_{0}(t-s),
\end{equation}
where $C_{0}(\tau)$ is the $N\times N$ matrix defined as
\[
C_{0}(\tau) = \alpha\int_{0}^{\tau}E(t-\sigma)\cdot%
\begin{pmatrix}
I_{q} & 0\\
0 & 0
\end{pmatrix}
\cdot E(t-\sigma)^{T}d\sigma\qquad(\tau> 0).
\]
Furthermore, one has the `homogeneity property'
\begin{equation}
\label{C omogenea}C_{0}(\tau) =D_{0}(\sqrt{\tau})C_{0}(1) D_{0}(\sqrt{\tau
})\qquad\forall\,\,\tau> 0.
\end{equation}

\end{enumerate}

In particular, by combining \eqref{eq.exprGammapernoi} with
\eqref{C_0}-\eqref{C omogenea}, we can write
\begin{equation}
\label{eq.exprGammaalfa}%
\begin{split}
&  \Gamma_{\alpha}(x,t;0,0) =\frac{1}{( 4\pi\alpha)^{N/2} \sqrt{\det C_{0}%
(t)}}e^{-\frac{1}{4\alpha}x^{T}C_{0}(t)^{-1}x}\\
&  \qquad=\frac{1}{(4\pi\alpha)^{N/2}t^{Q/2}\sqrt{\det C_{0}(1)}} e^{-\frac
{1}{4\alpha}\langle C_{0}(1)^{-1} \big(D_{0}\big(\frac{1}{\sqrt{t}%
}\big)x\big),\,D_{0}\big(\frac{1}{\sqrt{t}}\big)x\rangle}.
\end{split}
\end{equation}

\end{theorem}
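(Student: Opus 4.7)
The plan is to derive all three assertions from the explicit Gaussian formula \eqref{eq.exprGammapernoi} provided by Theorem \ref{Thm fund sol coeff t dip}, specialized to the constant matrix $A_0(\sigma)\equiv \alpha I_q$, combined with the homogeneity relation \eqref{LP 2.20} for the matrix $E(\cdot)$. Throughout, let $\hat I_q$ denote the $N\times N$ block matrix having $I_q$ in the upper-left $q\times q$ corner and zeros elsewhere; in the constant-coefficient case, \eqref{eq-EC} becomes
\[
C(t,s) \;=\; \alpha\int_s^t E(t-\sigma)\,\hat I_q\,E(t-\sigma)^T\,d\sigma.
\]

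The first step is to establish the identity $C(t,s) = C_0(t-s)$: the change of variable $\tau = t-\sigma$ in this integral turns it into $\alpha\int_0^{t-s} E(\tau)\,\hat I_q\,E(\tau)^T\,d\tau$, matching the definition of $C_0(t-s)$ in \eqref{C_0}. With this identity in hand, assertion (1) is immediate from \eqref{eq.exprGammapernoi}: once $C(t,s)$ is replaced by $C_0(t-s)$, the right-hand side depends on $(x,t,y,s)$ only through $x - E(t-s)y$ and $t-s$, which by \eqref{eq:convolutionG} are the space and time components of $(y,s)^{-1}\circ(x,t)$; this yields \eqref{eq:Gammaalfaconvolution}. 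An equivalent, more conceptual route is to invoke the left-invariance of $\mathcal{L}_\alpha$ together with the uniqueness of the Cauchy problem in Theorem \ref{Thm fund sol coeff t dip}(5), noting that $(x,t)\mapsto \Gamma_\alpha\big((y,s)^{-1}\circ(x,t);0,0\big)$ solves the same Cauchy problem as $\Gamma_\alpha(\cdot;y,s)$.

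The homogeneity property \eqref{C omogenea} is the one step that requires a genuine, if short, computation, and is the main technical obstacle. I would change variable $\sigma = \tau\lambda$ inside $C_0(\tau)$ to get $C_0(\tau) = \alpha\,\tau\int_0^1 E(\tau\lambda)\,\hat I_q\,E(\tau\lambda)^T\,d\lambda$, and then apply \eqref{LP 2.20} in the form $E(\tau\lambda) = D_0(\sqrt\tau)\,E(\lambda)\,D_0(1/\sqrt\tau)$. The decisive algebraic observation is that, since $q_1 = \cdots = q_q = 1$ in \eqref{dilations}, one has $D_0(1/\sqrt\tau)\,\hat I_q\,D_0(1/\sqrt\tau) = \tau^{-1}\hat I_q$; the factor $\tau^{-1}$ exactly cancels the Jacobian $\tau$ produced by the substitution, while the outer (symmetric, $\lambda$-independent) matrices $D_0(\sqrt\tau)$ pull out of the integral on either side, leaving precisely $D_0(\sqrt\tau)\,C_0(1)\,D_0(\sqrt\tau)$.

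Finally, the closed-form expression \eqref{eq.exprGammaalfa} for $\Gamma_\alpha(x,t;0,0)$ is pure bookkeeping: from \eqref{C omogenea} one gets $\det C_0(t) = [\det D_0(\sqrt t)]^2\det C_0(1) = t^Q\det C_0(1)$, using $\sum_i q_i = Q$ from \eqref{eq:defQhomdim}, together with $C_0(t)^{-1} = D_0(1/\sqrt t)\,C_0(1)^{-1}\,D_0(1/\sqrt t)$. Substituting these into \eqref{eq.exprGammapernoi} evaluated at $(y,s) = (0,0)$ and rewriting the quadratic form as $\langle C_0(1)^{-1} D_0(1/\sqrt t)x,\,D_0(1/\sqrt t)x\rangle$ produces the stated formula, up to the routine tracking of the scalar factor $\alpha$ throughout the computation.
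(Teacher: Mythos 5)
The paper does not actually give a proof of this theorem: it cites Lanconelli--Polidoro \cite{LP} and moves on, so there is no in-paper argument to compare against. Your derivation is self-contained, correct, and is essentially the route one would take. The change of variables $\tau = t-\sigma$ in \eqref{eq-EC} proves $C(t,s)=C_0(t-s)$, which immediately upgrades \eqref{eq.exprGammapernoi} into a convolution formula via \eqref{eq:convolutionG}; your alternative route by left-invariance and uniqueness of the Cauchy problem is also valid. The computation for \eqref{C omogenea} is the right one, and the crucial observation that $D_0(1/\sqrt\tau)\,\hat I_q\,D_0(1/\sqrt\tau) = \tau^{-1}\hat I_q$ because $q_1=\cdots=q_q=1$ is exactly where the hypothesis on the block structure enters; this factor cancels the Jacobian from $\sigma=\tau\lambda$ and yields the dilation identity. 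The determinant count $\det D_0(\sqrt t)=t^{Q/2}$ gives the second line of \eqref{eq.exprGammaalfa}.

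One point you wave through as ``routine tracking of the scalar factor $\alpha$'' is worth pinning down, because the theorem statement is actually internally inconsistent there. With $C_0(\tau)$ defined \emph{with} the prefactor $\alpha$ (as displayed in item (2)) and $C(t,s)=C_0(t-s)$, plugging into \eqref{eq.exprGammapernoi} gives
\[
\Gamma_\alpha(x,t;0,0) = \frac{1}{(4\pi)^{N/2}\sqrt{\det C_0(t)}}\;e^{-\frac14\langle C_0(t)^{-1}x,\,x\rangle},
\]
with no extra $\alpha$ appearing explicitly; whereas \eqref{eq.exprGammaalfa} writes $(4\pi\alpha)^{N/2}$ and $\frac{1}{4\alpha}$. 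Those two displays agree only if in \eqref{eq.exprGammaalfa} the symbol $C_0$ denotes the $\alpha$-free integral $\int_0^\tau E(\sigma)\hat I_q E(\sigma)^T\,d\sigma$, in which case \eqref{C_0} should read $C(t,s)=\alpha\,C_0(t-s)$. Carrying your bookkeeping through to the end exposes this typo; it is harmless for the rest of the paper, but worth flagging rather than glossing over, since the ambiguity sits precisely in the step you called routine.
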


In \cite[Thm.1.7]{BP}, the next useful comparison result is proved.

\begin{theorem}
\label{Thm comparison Gammas} Let $\Gamma$ be as in Theorem
\ref{Thm fund sol coeff t dip}, and let $\nu>0$ be as in \eqref{nu}. Then, for
every $s,t\in\mathbb{R}$ with $s<t$, one has the following estimate
\begin{equation}
\nu\,C_{0}(t-s)^{-1}\leq C(t,s)^{-1}\leq\nu^{-1}C_{0}(t-s)^{-1},
\label{eq:estimCCzero}%
\end{equation}
\emph{in the sense of quadratic forms in $\mathbb{R}^{N}$}. As a consequence,
we obtain
\begin{equation}
\nu^{N}\Gamma_{\nu}(x,t;y,s)\leq\Gamma(x,t;y,s)\leq\frac{1}{\nu^{N}}%
\Gamma_{\nu^{-1}}(x,t;y,s), \label{G G_nu}%
\end{equation}
where $\Gamma_{\nu}$ is the fundamental solution of the operator
$\mathcal{L}_{\nu}$ in \eqref{L-alpha}.
\end{theorem}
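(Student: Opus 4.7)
The proof of Theorem~\ref{Thm comparison Gammas} splits naturally into two parts: (i) establish the matrix inequality on $C(t,s)^{-1}$ from assumption (H1), and (ii) transfer that inequality to a pointwise comparison between the Gaussian-type kernels $\Gamma$ and $\Gamma_{\nu},\,\Gamma_{\nu^{-1}}$.

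\medskip

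For part (i), the plan is to start from the uniform ellipticity \eqref{nu}, which, when read as an inequality of $q\times q$ quadratic forms, says $\nu I_q\leq A_0(\sigma)\leq \nu^{-1} I_q$ for a.e.~$\sigma$. Padding with zeros this gives the $N\times N$ block inequality
\[
\nu \begin{pmatrix} I_q & 0\\ 0 & 0\end{pmatrix}\leq \begin{pmatrix} A_0(\sigma) & 0\\ 0 & 0\end{pmatrix}\leq \nu^{-1}\begin{pmatrix} I_q & 0\\ 0 & 0\end{pmatrix}
\]
in the sense of (positive semi-definite) quadratic forms on $\mathbb{R}^N$. I would then observe that the map $M\mapsto E(t-\sigma)\,M\,E(t-\sigma)^{T}$ is a congruence transformation and hence preserves the positive semi-definite order; applying it to the three terms and integrating $\sigma$ over $[s,t]$ gives
\[
\nu\,C_0(t-s)\;\leq\;C(t,s)\;\leq\;\nu^{-1}\,C_0(t-s),
\]
where $C_0(\tau)$ is the matrix in \eqref{C_0} taken with $\alpha=1$. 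Since $C(t,s)$ and $C_0(t-s)$ are both positive definite for $t>s$ (by Theorem~\ref{Thm fund sol coeff t dip}), I can invert—using the standard fact that inversion reverses the order among positive definite matrices—to get exactly \eqref{eq:estimCCzero}.

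\medskip

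For part (ii), the strategy is to plug \eqref{eq:estimCCzero} into the explicit Gaussian expressions \eqref{eq.exprGammapernoi} and \eqref{eq.exprGammaalfa}. Writing $w:=x-E(t-s)y$, the inequality on quadratic forms gives
\[
\nu\,\langle C_0(t-s)^{-1}w,w\rangle\;\leq\;\langle C(t,s)^{-1}w,w\rangle\;\leq\;\nu^{-1}\langle C_0(t-s)^{-1}w,w\rangle,
\]
which controls the exponential factor in $\Gamma$ from above and below. Simultaneously, the inequality $\nu\,C_0\leq C\leq \nu^{-1}C_0$ implies $\nu^N\det C_0(t-s)\leq \det C(t,s)\leq \nu^{-N}\det C_0(t-s)$, hence a two-sided bound for the normalizing factor $(\det C(t,s))^{-1/2}$. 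Combining the two, matching the exponent $\nu$ (resp.~$\nu^{-1}$) with the scaling $\alpha=\nu$ (resp.~$\alpha=\nu^{-1}$) in the formula for $\Gamma_\alpha$, and tracking the powers of $\nu$ coming from the determinant and from the $(4\pi\alpha)^{N/2}$ prefactor gives exactly \eqref{G G_nu}.

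\medskip

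There is no real obstacle here: the argument is essentially algebraic and the only care required is bookkeeping the constants $\nu^{\pm N/2}$ arising from the prefactor versus the exponent so that they combine to the single factor $\nu^{\pm N}$ in the statement. In particular, no PDE input is needed beyond the explicit formula for $\Gamma$ already recorded in Theorem~\ref{Thm fund sol coeff t dip} and Theorem~\ref{Thm fund sol cost coeff}, together with the ellipticity assumption (H1).
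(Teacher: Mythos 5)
Your proof is correct. Note first that the paper itself gives no proof of Theorem~\ref{Thm comparison Gammas}; it is stated as a quotation of Theorem~1.7 in the cited reference \cite{BP}, so there is no in-paper argument to compare against. That said, your self-contained argument is sound and is the natural way to establish the result: padding the ellipticity inequality \eqref{nu} to $N\times N$ block form, applying the congruence $M\mapsto E(t-\sigma)ME(t-\sigma)^T$, integrating in $\sigma$, and then inverting using the fact that matrix inversion is order-reversing on positive-definite matrices yields \eqref{eq:estimCCzero}. For \eqref{G G_nu}, the bookkeeping does work out: the exponent identification $\frac{1}{4\alpha}\langle \widetilde C_0^{-1}w,w\rangle$ with $\alpha=\nu^{\mp 1}$ matches the factor $\nu^{\pm 1}$ coming from \eqref{eq:estimCCzero}, while the prefactor comparison reduces to $\nu^N\det\widetilde C_0(t-s)\leq\det C(t,s)\leq\nu^{-N}\det\widetilde C_0(t-s)$, which is exactly the monotonicity of the determinant under the Loewner order applied to $\nu\widetilde C_0\leq C\leq\nu^{-1}\widetilde C_0$; combined with the $\nu^{\pm N/2}$ absorbed from the $(4\pi\alpha)^{N/2}$ in \eqref{eq.exprGammaalfa} this gives the factors $\nu^{\pm N}$ in \eqref{G G_nu}. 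The only point worth being explicit about (which you only gesture at) is the determinant monotonicity step, i.e.\ that $0<A\leq B$ implies $\det A\leq\det B$; this is standard (diagonalize $B^{-1/2}AB^{-1/2}\leq I$) but deserves one line in a polished write-up.
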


\subsection{Sharp estimates on the fundamental
solution\label{sec sharp estimates fund sol}}

Taking into account all the results recalled so far, we now aim at proving
\emph{sharp Gaussian estimates} for the space de\-ri\-va\-ti\-ves of the
fundamental solution $\Gamma$ of the operator $\mathcal{L}$. As we shall see,
these estimates will play a key r\^{o}le in our argument.

In order to clearly state our results, we first introduce an \emph{ad-hoc}
multi-index notation which shall be useful to deal with differential operators
acting \emph{on the $2N$ variables $x,y\in\mathbb{R}^{N}$}. For a multi-index
\[
\bd\ell=(\ell_{1},\ldots,\ell_{2N})\in\N^{2N},
\]
let
\[
D_{(x,y)}^{\bd\ell}\,f(x,y):=(\partial_{x_{1}})^{\ell_{1}}\cdots
(\partial_{x_{N}})^{\ell_{N}}(\partial_{y_{1}})^{\ell_{N+1}}\cdots
(\partial_{y_{N}})^{\ell_{2N}}f(x,y).
\]
Moreover, setting $\upsilon=(q_{1},\ldots,q_{N},q_{1},\ldots,q_{N}%
)\in\mathbb{R}^{2N}$ (where the $q_{i}$'s are the exponents appearing in the
dilation $D_{0}(\lambda)$, see \eqref{dilations}), we define
\[
\textstyle|\bd\ell|:=\sum_{i=1}^{2N}\ell_{i}\qquad\text{and}\qquad
\omega(\bd\ell):=\textstyle\sum_{i=1}^{2N}\upsilon_{i}\ell_{i}.
\]
We will refer to $|\bd\ell|$ and $\omega(\bd\ell)$ as, respectively, the
\emph{length} and the \emph{order} of $\bd\ell$.

\begin{remark}
\label{rem:ordersoloN} Throughout the rest of the paper, we will sometimes
need to give a meaning to $\omega(\bd\alpha)$ when $\bd\alpha$ is a
multi-index in $\N^{N}$, that is, $\bd\alpha= (\alpha_{1},\ldots,\alpha_{N})$.
By analogy, if this is the case we agree to define
\[
\textstyle
\omega(\bd\alpha) := \omega(\bd\alpha^{\prime}= (\bd\alpha,\bd 0)) = \sum_{i =
1}^{N}\alpha_{i}q_{i}.
\]

\end{remark}

Using the notion of \emph{length}, we can introduce an \emph{order relation}
between multi-in\-dex\-es: if $\bd\ell= (\ell_{1},\ldots,\ell_{2N}),\,
\bd\kappa= (\kappa_{1},\ldots,\kappa_{2N})\in\N^{2N}$, we say that
\[
\bd\ell\prec\bd\kappa
\]
if one of the following conditions is satisfied:

\begin{itemize}
\item[(i)] $|\bd\ell| < |\bd\kappa|$;

\item[(ii)] $|\bd\ell| = |\bd\kappa|$ and $\ell_{1} < \kappa_{1}$;

\item[(iii)] $|\bd\ell| = |\bd\kappa|$ and there exists $1\leq i\leq2N-1$ such
that
\[
\text{$\ell_{1} = \kappa_{1},\ldots,\ell_{i} = \kappa_{i}$\quad and\quad
$\ell_{i+1}<\kappa_{j+1}$}.
\]

\end{itemize}

After all these preliminaries, we can state our first main result.

\begin{theorem}
\label{Thm bound derivatives} Let $\Gamma$ be as in Theorem
\ref{Thm fund sol coeff t dip}, and let $\nu>0$ be as in \eqref{nu}. Moreover,
let $\bd\alpha=(\boldsymbol{\alpha}_{1},\boldsymbol{\alpha}_{2})\in\N^{2N}$ be
a fixed multi-index. Then, there exist $c=c(\nu,\bd\alpha)>0$ and a constant
$c_{1}>0$, independent of $\nu$ and $\bd\alpha$, such that
\begin{equation}%
\begin{split}
\left\vert D_{(x,y)}^{\bd\alpha}\,\Gamma(\xi;\eta)\right\vert  &  =\left\vert
D_{x}^{\bd\alpha_{1}}D_{y}^{\bd\alpha_{2}}\Gamma(\xi;\eta)\right\vert \\
&  \leq\frac{{c}}{(t-s)^{\omega(\bd\alpha)/2}}\,\Gamma_{c_{1}\nu^{-1}}%
(\xi;\eta)\\
&  \leq\frac{c}{d(\xi,\eta)^{Q+\omega(\bd\alpha)}}%
\end{split}
\label{eq:mainestim}%
\end{equation}
for every $\xi,\eta\in\mathbb{R}^{N}$ with $t\neq s$. The resulting inequality%
\[
\left\vert D_{\left(  x,y\right)  }^{\bd\alpha}\Gamma(\xi;\eta)\right\vert
\leq\frac{c}{d(\xi,\eta)^{Q+\omega(\bd\alpha)}}%
\]
actually holds for every $\xi,\eta\in\mathbb{R}^{N}$ with $\xi\neq\eta$.
\end{theorem}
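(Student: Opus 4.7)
Since $\Gamma(\xi;\eta)=0$ whenever $t\leq s$, both sides vanish in that range and it suffices to treat $\tau:=t-s>0$, where $\Gamma$ is given by the explicit Gaussian formula \eqref{eq.exprGammapernoi}. Writing $z=x-E(\tau)y$, a direct induction shows that each $\partial_{x_i}$ applied to $\Gamma$ pulls out the linear factor $-\tfrac{1}{2}(C(t,s)^{-1}z)_i$, and each $\partial_{y_j}$ pulls out $+\tfrac{1}{2}(E(\tau)^{T}C(t,s)^{-1}z)_j$ (since $\partial_{y_j}z=-E(\tau)e_j$). Hence, for any $\bd\alpha=(\bd\alpha_1,\bd\alpha_2)$,
\[
D_{x}^{\bd\alpha_1}D_{y}^{\bd\alpha_2}\Gamma(\xi;\eta)=\mathcal{P}_{\bd\alpha}\!\big(z,C(t,s)^{-1},E(\tau)\big)\cdot\Gamma(\xi;\eta),
\]
where $\mathcal{P}_{\bd\alpha}$ is a polynomial of degree $|\bd\alpha|$ in the linear forms $(C(t,s)^{-1}z)_i$ and $(E(\tau)^{T}C(t,s)^{-1}z)_j$.

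The second step is to extract the correct $\tau$-homogeneity from these linear forms. Setting $w:=D_{0}(1/\sqrt{\tau})z$ and using the homogeneity $C_{0}(\tau)^{-1}=D_{0}(1/\sqrt{\tau})C_{0}(1)^{-1}D_{0}(1/\sqrt{\tau})$ together with the sandwich bound $\nu C_{0}(\tau)^{-1}\leq C(t,s)^{-1}\leq\nu^{-1}C_{0}(\tau)^{-1}$ from Theorem \ref{Thm comparison Gammas}, Cauchy--Schwarz yields
\[
|(C(t,s)^{-1}z)_i|\leq\sqrt{\langle C(t,s)^{-1}z,z\rangle}\,\sqrt{\langle C(t,s)^{-1}e_i,e_i\rangle}\leq \frac{c(\nu)}{\tau^{q_i/2}}\sqrt{\langle C_{0}(1)^{-1}w,w\rangle}.
\]
For the $y$-derivatives, the analogous bound with exponent $q_j/2$ is obtained by applying Cauchy--Schwarz to the vector $E(\tau)e_j$ and then using \eqref{LP 2.20} in the form $D_{0}(1/\sqrt{\tau})E(\tau)e_j=\tau^{-q_j/2}E(1)e_j$. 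Combining and multiplying the factors, one obtains
\[
|D_{x}^{\bd\alpha_1}D_{y}^{\bd\alpha_2}\Gamma(\xi;\eta)|\leq \frac{c(\nu,\bd\alpha)}{\tau^{\omega(\bd\alpha)/2}}\,P_{|\bd\alpha|}(|w|)\,\Gamma(\xi;\eta),
\]
where $P_{|\bd\alpha|}$ is a polynomial of degree $|\bd\alpha|$.

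The third step is to absorb $P_{|\bd\alpha|}(|w|)$ into a slightly weaker Gaussian. Using again the sandwich estimate for $C(t,s)^{-1}$, the comparability $\det C(t,s)\asymp \tau^{Q}$, and the standard bound $P_{|\bd\alpha|}(|w|)\exp(-\tfrac{\nu}{4}\langle C_{0}(1)^{-1}w,w\rangle)\leq c\,\exp(-\tfrac{\nu}{8}\langle C_{0}(1)^{-1}w,w\rangle)$ (polynomials are tamed by Gaussians), one gets
\[
|D_{x}^{\bd\alpha_1}D_{y}^{\bd\alpha_2}\Gamma(\xi;\eta)|\leq \frac{c}{\tau^{\omega(\bd\alpha)/2}}\cdot\frac{c}{\tau^{Q/2}}\exp\Bigl(-\tfrac{\nu}{8}\langle C_{0}(1)^{-1}w,w\rangle\Bigr),
\]
and the right-hand side is, up to multiplicative constants, $\Gamma_{c_{1}\nu^{-1}}(\xi;\eta)$ for $c_{1}:=2$, using formula \eqref{eq.exprGammaalfa}. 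This proves the first inequality in \eqref{eq:mainestim}.

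Finally, for the $d$-bound, recall from \eqref{eq:explicitd} that $d(\xi,\eta)=\|z\|+\sqrt{\tau}$, and the scaling $\|z\|=\sqrt{\tau}\,\|w\|$ (since $\|\cdot\|$ is $D_{0}$-homogeneous of degree $1$) gives $d(\xi,\eta)=\sqrt{\tau}\,(1+\|w\|)$. Multiplying the previous estimate by $d(\xi,\eta)^{Q+\omega(\bd\alpha)}$ reduces matters to showing boundedness of
\[
(1+\|w\|)^{Q+\omega(\bd\alpha)}\exp\Bigl(-\tfrac{\nu}{8}\langle C_{0}(1)^{-1}w,w\rangle\Bigr),
\]
which holds because $C_{0}(1)^{-1}$ is positive definite and $\|w\|$ grows only polynomially in $|w|$. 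The case $\xi\neq\eta$ with $t\leq s$ is trivial, since $\Gamma$ (and all its space derivatives) vanish identically. The main technical obstacle is the rescaling step: one must carefully match the $\tau$-weights $q_i/2,q_j/2$ coming from the differentiation to the exponents in $w=D_{0}(1/\sqrt{\tau})z$, exploiting both the homogeneity of $C_{0}$ and of $E$; once this is done, everything else reduces to the well-known polynomial-times-Gaussian absorption.
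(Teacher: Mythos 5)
Your argument is essentially correct and follows the same broad outline as the paper's proof (differentiate the Gaussian, track the $\tau$-homogeneity of the polynomial prefactor, absorb into a weaker Gaussian, then convert to a $d$-bound); the genuine novelty in your proposal is the use of Cauchy--Schwarz in the inner product $\langle C^{-1}\cdot,\cdot\rangle$ to extract the $\tau^{-q_i/2}$ (resp.\ $\tau^{-q_j/2}$) factors, whereas the paper invokes the multivariate Fa\`a di Bruno formula and then estimates the matrix entries of $C(t,s)^{-1}$ via an explicit $\varepsilon$-optimization. Your Cauchy--Schwarz step replaces the paper's five-case analysis and is arguably cleaner and more systematic, while the paper's route makes the combinatorics of the derivative more explicit through $p_m(\lambda,\bd\alpha)$. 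One imprecision you should fix: $\mathcal{P}_{\bd\alpha}$ is \emph{not} a polynomial in the linear forms $(C(t,s)^{-1}z)_i$ and $(E(\tau)^T C(t,s)^{-1}z)_j$ alone. Since each further derivative distributes by the product rule over both the linear prefactor and the exponential, $\mathcal{P}_{\bd\alpha}$ also contains ``constant'' (i.e.\ $z$-independent) factors of the form $(C(t,s)^{-1})_{ij}$, $(E^T C^{-1})_{ij}$, $(E^T C^{-1}E)_{ij}$, coming from differentiating the linear prefactors themselves. These too must be shown to scale like $\tau^{-(q_i+q_j)/2}$ for the $\omega(\bd\alpha)$-accounting to close; fortunately the same Cauchy--Schwarz trick handles them, e.g.\ $|(C^{-1})_{ij}|=|\langle C^{-1}e_i,e_j\rangle|\le \sqrt{\langle C^{-1}e_i,e_i\rangle}\,\sqrt{\langle C^{-1}e_j,e_j\rangle}\le c(\nu)\tau^{-(q_i+q_j)/2}$. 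With that correction spelled out, every step of your scheme goes through.
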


\begin{remark}
\label{rem:regulGammaMV} Let $(y,s)\in\mathbb{R}^{N+1}$ be fixed. Since we
know from Theorem \ref{Thm fund sol coeff t dip}-(3) that $(\mathcal{L}%
\Gamma(\cdot;y,s))(x,t)=0$ for every $x\in\mathbb{R}^{N}$ and a.e.\thinspace
$t$, we can express
\[
\partial_{t}(D_{(x,y)}^{\bd\alpha}\,\Gamma)\qquad(\text{for every
$\bd\alpha\in\N^{2N}$})
\]
as a combination of quantities that, by \eqref{eq:mainestim} and the
exponential decay of the right-hand of this inequality as $t\rightarrow s^{+}$
(with $x\neq y$), are locally essentially bounded in $\mathbb{R}%
^{N+1}\setminus\{(y,s)\}$. In particular, the same is true of $Y(D_{(x,y)}%
^{\bd\alpha}\,\Gamma)$.
\end{remark}

Before proving Theorem \ref{Thm bound derivatives}, we establish the following
technical lemma.

\begin{lemma}
\label{lem:formeqdis} Let $A=(a_{ij})_{i,j = 1}^{N}$ and $B=(b_{ij})_{i,j =
1}^{N}$ be two $N\times N$ \emph{symmetric and positive definite matrices}
such that, in the sense of quadratic forms, one has%
\begin{equation}
A\leq cB\qquad\text{ for some $c>0$}. \label{eq:hpAleqB}%
\end{equation}
Then, denoting by $\Vert\cdot\Vert$ the maximum norm of a matrix, we have
\begin{equation}
\Vert A\Vert\leq2c\Vert B\Vert. \label{eq:disnormAnormB}%
\end{equation}
In particular, if $G$ is any $N\times N$ matrix with real coefficients, then
\begin{equation}
\Vert G\,A\,G^{T}\Vert\leq2c\Vert G\,B\,G^{T}\Vert. \label{eq:normawithG}%
\end{equation}

\end{lemma}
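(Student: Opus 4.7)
The plan is to establish \eqref{eq:disnormAnormB} by combining two elementary facts about symmetric positive semidefinite matrices, and then to deduce \eqref{eq:normawithG} as a corollary. First, for any symmetric positive semidefinite matrix $M=(m_{ij})_{i,j=1}^N$, each principal $2\times 2$ submatrix is itself positive semidefinite, so its determinant is nonnegative: $m_{ij}^2\leq m_{ii}m_{jj}\leq (\max_k m_{kk})^2$. Consequently the maximum norm $\|M\|=\max_{i,j}|m_{ij}|$ is controlled by the diagonal entries: $\|M\|\leq \max_k m_{kk}$.

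Applying this observation to $A$ yields $\|A\|\leq\max_k a_{kk}$. Next I test the hypothesis $A\leq cB$ on the standard basis vectors, obtaining $a_{kk}=\langle Ae_k,e_k\rangle\leq c\langle Be_k,e_k\rangle = c\,b_{kk}\leq c\,\|B\|$ for each $k$. Combining the two estimates produces $\|A\|\leq c\|B\|$, which is even slightly stronger than the claimed \eqref{eq:disnormAnormB} (the factor $2$ is there only as comfortable slack, so any reader who prefers to bound $|m_{ij}|$ via $2|m_{ij}|\leq m_{ii}+m_{jj}$ — the PSD-ness of $M$ applied to $e_i\pm e_j$ — also recovers exactly the stated constant).

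For \eqref{eq:normawithG} the key point is that the matrix inequality $A\leq cB$ is preserved under the two-sided congruence $M\mapsto GMG^T$: for every $v\in\R^N$,
\[
\langle GAG^T v,v\rangle = \langle A G^T v,G^T v\rangle \leq c\langle BG^T v, G^T v\rangle = c\langle GBG^T v, v\rangle.
\]
Both $GAG^T$ and $GBG^T$ are symmetric and positive semidefinite (positive definiteness may fail only when $G$ is singular). Since the proof of the first part used nothing beyond positive semidefiniteness, it applies verbatim with $A,B$ replaced by $GAG^T,GBG^T$, giving \eqref{eq:normawithG}.

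I do not anticipate any genuine obstacle: the result is purely algebraic and the computations are immediate. The only point deserving a little care is to phrase the first step so that it tolerates merely positive semidefinite (rather than strictly positive definite) matrices, since that is what is required for the application to possibly singular $G$ in \eqref{eq:normawithG}.
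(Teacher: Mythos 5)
Your proof is correct and follows essentially the same route as the paper: test the quadratic-form inequality on coordinate vectors to bound the diagonal entries, then control the off-diagonal entries via positive semidefiniteness. The paper obtains $|a_{ij}|\leq 2c\|B\|$ from the $e_i\pm e_j$ test, whereas your primary version uses the $2\times 2$ principal-minor inequality $|a_{ij}|\leq\sqrt{a_{ii}a_{jj}}$, which actually yields the sharper bound $\|A\|\leq c\|B\|$. You also make explicit — and the paper glosses over — that the argument must tolerate merely positive \emph{semi}definite matrices for the congruence step, since $GAG^T$ and $GBG^T$ can be singular when $G$ is; this is a genuine, if minor, point of care that strengthens the write-up.
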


\begin{proof}
First of all, since \eqref{eq:hpAleqB} holds in the sense of quadratic forms,
we have
\begin{equation}
\langle B\xi,\xi\rangle\leq c\,\langle A\xi,\xi\rangle\qquad\forall\,\,\xi
\in\mathbb{R}^{N}. \label{eq:disugQFLemma}%
\end{equation}
As a consequence, choosing $\xi=e_{i}$ (for $i=1,\ldots,N$) and reminding that
both $A$ and $B$ are \emph{positive definite}, we readily have
\begin{equation}
0<a_{ii}\leq cb_{ii}\leq c\max_{h,k}|b_{hk}|=c\Vert B\Vert.
\label{eq:diseqaiiLemma}%
\end{equation}
On the other hand, choosing $\xi=e_{i}\pm e_{j}$ in \eqref{eq:disugQFLemma}
(with $i\neq j$), we get
\[
a_{ii}+a_{jj}\pm2a_{ij}\leq c(b_{ii}+b_{jj}\pm2b_{ij})\leq4c\Vert B\Vert;
\]
from this, since $a_{ii},\,a_{jj}>0$, we derive
\begin{equation}
|a_{ij}|\leq2c\Vert B\Vert. \label{eq:diseqaijLemma}%
\end{equation}
Gathering \eqref{eq:diseqaiiLemma}-\eqref{eq:diseqaijLemma}, we immediately
obtain \eqref{eq:disnormAnormB}. To prove \eqref{eq:normawithG} we observe
that, if $G$ is any $N\times N$ matrix, from \eqref{eq:hpAleqB} it easily
follows that
\[
GAG^{T}\leq c\,GBG^{T};
\]
hence, the desired \eqref{eq:normawithG} is an immediate consequence of \eqref{eq:disnormAnormB}.
\end{proof}

Using Lemma \ref{lem:formeqdis}, we can now give the proof of Theorem
\ref{Thm bound derivatives}.

\begin{proof}
[Proof (of Theorem \ref{Thm bound derivatives})]We first observe that, if
$\bd{\alpha}=\bd{0}$, estimate \eqref{eq:mainestim} is al\-re\-a\-dy contained
in Theorem \ref{Thm comparison Gammas}; hence, we can assume in what follows
that
\[
\bd{\alpha}\neq\bd{0}.
\]
We now fix \emph{once and for all} $s,t\in\mathbb{R}$ satisfying $s<t$ and we
notice that, by using the explicit expression of $\Gamma$ given in
\eqref{eq.exprGammapernoi}, we can write
\begin{equation}
\Gamma(x,t;y,s)=(f_{t,s}\circ p_{t,s})(x,y)\qquad\forall\,\,x,y\in
\mathbb{R}^{N}, \label{eq:Gammacompfp}%
\end{equation}
where the functions $f_{t,s}$ and $p_{t,s}$ are given, respectively, by
\begin{align*}
&  f_{t,s}(z)=\frac{1}{(4\pi)^{N/2}\sqrt{\det C(t,s)}}e^{z}\qquad\text{and}\\
&  p_{t,s}(x,y)=-\frac{1}{4}\langle C(t,s)^{-1}(x-E(t-s)y),x-E(t-s)y\rangle.
\end{align*}
Starting from \eqref{eq:Gammacompfp}, and exploiting the multivariate version
of the Fa\`{a} di Bruno formula established in \cite[formula (2.1)]{CoSa}, we
obtain
\begin{equation}%
\begin{split}
&  D_{(x,y)}^{\bd\alpha}\,\Gamma(x,t;y,s)=D_{(x,y)}^{\bd\alpha}(f_{t,s}\circ
p_{t,s})(x,y)\\
&  \qquad=\Gamma(x,t;y,s)\cdot\sum_{\lambda=1}^{r}\sum_{m=1}^{r}\sum
_{p_{m}(\lambda,\bd\alpha)}\prod_{i=1}^{m}\frac{\bd\alpha!}{k_{i}%
!\,(\bd\ell_{i}!)^{k_{i}}}\,\big[D_{(x,y)}^{\bd\ell_{i}}\,p_{t,s}%
(x,y)\big]^{k_{i}},
\end{split}
\label{eq:DGammafirst}%
\end{equation}
where $r:=|\bd\alpha|\geq1$ and
\begin{align*}
p_{m}(\lambda,\bd\alpha)  &  =\big\{(k_{1},\ldots,k_{m};\bd\ell_{1}%
,\ldots,\bd\ell_{m})\in\N^{m}\times(\N^{2N})^{m}:\,k_{i}>0,\\[0.05cm]
&  \qquad\,\,\bd0\prec\bd\ell_{1}\cdots\prec\bd\ell_{m}\,\,\text{and}%
\,\,\textstyle\sum_{i=1}^{m}k_{i}=\lambda,\,\,\sum_{i=1}^{m}k_{i}\bd\ell
_{i}=\bd\alpha\big\}.
\end{align*}
We now observe that, since the function $p_{t,s}$ is a homogeneous polynomial
of degree $2$ in the variables $x,y$, one obviously has
\[
D_{(x,y)}^{\bd\ell}\,p_{t,s}\equiv0\qquad\text{$\forall\,\,\bd\ell\in\N^{2N}$
with $|\bd\ell|\geq3$};
\]
hence, formula \eqref{eq:DGammafirst} can be rewritten as follows
\begin{equation}%
\begin{split}
&  D_{(x,y)}^{\bd\alpha}\,\Gamma(x,t;y,s)\\
&  \qquad=\Gamma(x,t;y,s)\cdot\sum_{(\lambda,m)\in\mathcal{S}}\,\,\sum
_{p_{m}(\lambda,\bd\alpha)}\prod_{i=1}^{m}\frac{\bd\alpha!}{k_{i}%
!\,(\bd\ell_{i}!)^{k_{i}}}\,\big[D_{(x,y)}^{\bd\ell_{i}}\,p_{t,s}%
(x,y)\big]^{k_{i}},
\end{split}
\label{eq:DGammasec}%
\end{equation}
where $\mathcal{S}$ is the subset of $\{1,\ldots,r\}\times\{1,\ldots,r\}$
defined as
\[
\mathcal{S}:=\big\{(\lambda,m):\,\text{$|\bd\ell_{i}|\leq2$ for all
$(k_{1},\ldots,k_{m};\bd\ell_{1},\ldots,\bd\ell_{m})\in p_{m}(\lambda,\alpha
)$}\big\}.
\]
Then, by combining formula \eqref{eq:DGammasec} with the \emph{global
pointwise estimates} for $\Gamma$ con\-tai\-ned in Theorem
\ref{Thm comparison Gammas}, for every $x,y\in\mathbb{R}^{n}$ we obtain
\begin{equation}%
\begin{split}
&  |D_{(x,y)}^{\bd\alpha}\,\Gamma(x,t;y,s)|\\
&  \qquad\leq c\,\Gamma_{\nu^{-1}}(x,t;y,s)\cdot\sum_{(\lambda,m)\in
\mathcal{S}}\,\sum_{p_{m}(\lambda,\bd\alpha)}\prod_{i=1}^{m}\big|D_{(x,y)}%
^{\bd\ell_{i}}\,p_{t,s}(x,y)\big|^{k_{i}},
\end{split}
\label{eq:DGammater}%
\end{equation}
where $c>0$ is a constant only depending on $\bd\alpha$ and $\nu$. On account
of \eqref{eq:DGammater}, in order to prove \eqref{eq:mainestim} we need to
provide precise estimates for
\[
|D_{(x,y)}^{\bd\ell}\,p_{t,s}(x,y)|\qquad(\text{when $0<|\bd\ell|\leq2$}).
\]
To this end, we distinguish some different cases. In what follows, we denote
by the same $c$ any positive constant which depends only on $\nu$ and
$\bd\alpha$. \medskip

\noindent\textbf{Case I: $\bd\ell=(\bd e_{i},\bd0)$}. In this case, a direct
computation gives
\[
D_{(x,y)}^{\bd\ell}\,p_{t,s}(x,y)=\partial_{x_{i}}p_{t,s}(x,y)=-\frac{1}%
{2}\big[C(t,s)^{-1}(x-E(t-s)y)\big]_{i};
\]
hence, setting $v:=x-E(t-s)y$ and reminding that
\[
\big[D_{0}(\lambda)v\big]_{i}=\lambda^{q_{i}}v_{i},
\]
we obtain the following chain of inequalities:
\begin{align*}
|D_{(x,y)}^{\bd\ell}\,p_{t,s}(x,y)|  &  =\frac{1}{2}\big|\big[C(t,s)^{-1}%
v\big]_{i}\big|=\frac{c}{(t-s)^{q_{i}/2}}\big|\big[D_{0}(\sqrt{t-s}%
)C(t,s)^{-1}v\big]_{i}\big|\\
&  (\text{setting $M(t,s):=D_{0}(\sqrt{t-s})C(t,s)^{-1}D_{0}(\sqrt{t-s})$})\\
&  =\frac{c}{(t-s)^{q_{i}/2}}\Big|\Big[M(t,s)\cdot D_{0}\Big(\frac{1}%
{\sqrt{t-s}}\Big)v\Big]_{i}\Big|\\
&  \leq\frac{c}{(t-s)^{q_{i}/2}}\Vert M(t,s)\Vert\cdot\Big|D_{0}\Big(\frac
{1}{\sqrt{t-s}}\Big)v\Big|=:(\bigstar).
\end{align*}
Now, by combining \eqref{eq:estimCCzero} with Lemma \ref{lem:formeqdis}, we
readily infer that
\begin{align*}
\Vert M(t,s)\Vert &  \leq2\nu^{-1}\Vert D_{0}(\sqrt{t-s})C_{0}(t-s)^{-1}%
D_{0}(\sqrt{t-s})\Vert\\
&  (\text{see identity \eqref{C omogenea}})\\
&  =2\nu^{-1}\Vert C_{0}(1)^{-1}\Vert;
\end{align*}
as a consequence, we obtain
\[
(\bigstar)\leq\frac{c}{(t-s)^{q_{i}/2}}\,\Big|D_{0}\Big(\frac{1}{\sqrt{t-s}%
}\Big)(x-E(t-s)y)\Big|.
\]
In particular, since $q_{i}=\omega(\bd\ell)$, we conclude that
\begin{equation}
|D_{(x,y)}^{\bd\ell}\,p_{t,s}(x,y)|\leq\frac{c}{(t-s)^{\omega(\bd\ell)/2}%
}\,\Big|D_{0}\Big(\frac{1}{\sqrt{t-s}}\Big)(x-E(t-s)y)\Big|.
\label{eq:mainestimDexi}%
\end{equation}
\medskip

\noindent\textbf{Case II: $\bd\ell=(\bd0,\bd e_{i})$}. In this case, we first
rewrite $p_{t,s}$ as follows:
\begin{equation}%
\begin{split}
p_{t,s}(x,y)  &  =-\frac{1}{4}\langle C(t,s)^{-1}%
E(t-s)(y-E(s-t)x),E(t-s)(y-E(s-t)x)\rangle\\
&  (\text{setting $\widehat{C}(t,s)=E(t-s)^{T}C(t,s)^{-1}E(t-s)$})\\
&  =-\frac{1}{4}\langle\widehat{C}(t,s)(y-E(s-t)x),y-E(s-t)x\rangle;
\end{split}
\label{eq:exprptdery}%
\end{equation}
hence, by proceeding {exactly} as in Case I, we get
\begin{align*}
|D_{(x,y)}^{\bd\ell}\,p_{t,s}(x,y)|  &  =|\partial_{y_{i}}p_{t,s}(x,y)|\\
&  \leq\frac{c}{(t-s)^{q_{i}/2}}\Vert\widehat{M}(t,s)\Vert\cdot\Big|D_{0}%
\Big(\frac{1}{\sqrt{t-s}}\Big)w\Big|=:(\bigstar),
\end{align*}
where $w:=y-E(s-t)x$ and
\[
\widehat{M}(t,s):=D_{0}(\sqrt{t-s})\widehat{C}(t,s)D_{0}(\sqrt{t-s}).
\]
Now, using again \eqref{eq:estimCCzero} and Lemma \ref{lem:formeqdis}, we get
\begin{align*}
\Vert\widehat{M}(t,s)\Vert &  =\big\|\big[(D_{0}(\sqrt{t-s})E(t-s)^{T}%
\big]\,C(t,s)^{-1}\big[E(t-s)D_{0}(\sqrt{t-s})\big]\big\|\\
&  \leq2\nu^{-1}\big\|\big[(D_{0}(\sqrt{t-s})E(t-s)^{T}\big]\,C_{0}%
(t-s)^{-1}\big[E(t-s)D_{0}(\sqrt{t-s})\big]\big\|\\
&  (\text{see identities \eqref{LP 2.20} and \eqref{C omogenea}})\\
&  =2\nu^{-1}\Vert E(1)^{T}C_{0}(1)^{-1}E(1)\Vert;
\end{align*}
as a consequence, we obtain
\begin{align*}
(\bigstar)  &  \leq\frac{c}{(t-s)^{q_{i}/2}}\,\Big|D_{0}\Big(\frac{1}%
{\sqrt{t-s}}\Big)(y-E(s-t)x)\Big|\\
&  =\frac{c}{(t-s)^{q_{i}/2}}\,\Big|D_{0}\Big(\frac{1}{\sqrt{t-s}%
}\Big)E(s-t)\cdot(x-E(t-s)y)\Big|\\[0.15cm]
&  (\text{again by \eqref{LP 2.20}})\\
&  =\frac{c}{(t-s)^{q_{i}/2}}\,\Big|E(-1)D_{0}\Big(\frac{1}{\sqrt{t-s}%
}\Big)\cdot(x-E(t-s)y)\Big|\\
&  \leq\frac{c}{(t-s)^{q_{i}/2}}\Big|D_{0}\Big(\frac{1}{\sqrt{t-s}}%
\Big)\cdot(x-E(t-s)y)\Big|
\end{align*}
In particular, since $q_{i}=\omega(\bd\ell)$, we conclude that
\begin{equation}
|D_{(x,y)}^{\bd\ell}\,p_{t,s}(x,y)|\leq\frac{c}{(t-s)^{\omega(\bd\ell)/2}%
}\,\Big|D_{0}\Big(\frac{1}{\sqrt{t-s}}\Big)(x-E(t-s)y)\Big|.
\label{eq:mainestimDeyi}%
\end{equation}
\medskip

\noindent\textbf{Case III: $\bd\ell=(\bd e_{i}+\bd e_{j},\bd0)$}. In this
case, a direct computation gives
\[
D_{(x,y)}^{\bd\ell}\,p_{t,s}(x,y)=\partial_{x_{i}x_{j}}^{2}p_{t,s}%
(x,y)=-\frac{1}{2}C(t,s)_{ij}^{-1};
\]
hence, setting $C(t,s)^{-1}:=\big(\gamma_{hk}(t,s)\big)_{h,k=1}^{N}$, we get
\begin{equation}
|D_{(x,y)}^{\bd\ell}\,p_{t,s}(x,y)|\leq c\,|\gamma_{ij}(t,s)|.
\label{eq:estimDellp}%
\end{equation}
Now, taking into account \eqref{eq:estimCCzero}, for every $\varepsilon>0$ we
have
\begin{align*}
&  \gamma_{ii}(t,s)+\varepsilon^{2}\gamma_{jj}(t,s)\pm2\varepsilon\gamma
_{ij}(t,s)=\langle C(t,s)^{-1}(e_{i}\pm\varepsilon e_{j}),e_{i}\pm\varepsilon
e_{j}\rangle\\
&  \qquad\leq\nu^{-1}\big(\theta_{ii}(t-s)+\varepsilon^{2}\theta_{jj}%
(t-s)\pm2\varepsilon\theta_{ij}(t-s)\big),
\end{align*}
where we have used the notation
\[
C_{0}(\tau)^{-1}=\big(\theta_{hk}(\tau)\big)_{h,k=1}^{N}.
\]
From this, since $C(t,s)^{-1}$ and $C_{0}(t-s)^{-1}$ are positive definite, we
obtain
\begin{equation}
|\gamma_{ij}(t,s)|\leq\frac{1}{2\nu}\Big(\frac{1}{\varepsilon}\theta
_{ii}(t-s)+\varepsilon\theta_{jj}(t-s)+2|\theta_{ij}(t-s)|\Big).
\label{eq:estigammaij}%
\end{equation}
To estimate the rhs of \eqref{eq:estigammaij} we remind that, by
\eqref{C omogenea}, one has
\[
C_{0}(t-s)^{-1}=D_{0}\Big(\frac{1}{\sqrt{t-s}}\Big)C_{0}(1)^{-1}%
D_{0}\Big(\frac{1}{\sqrt{t-s}}\Big);
\]
as a consequence, we obtain
\begin{equation}
|\theta_{hk}(t-s)|=\frac{\theta_{hk}(1)}{(t-s)^{(q_{h}+q_{k})/2}}\qquad
\forall\,\,1\leq h,k\leq N. \label{eq:exprgammazeroij}%
\end{equation}
Gathering \eqref{eq:estigammaij}-\eqref{eq:exprgammazeroij}, and choosing
$\varepsilon:=(t-s)^{(q_{j}-q_{i})/2}$, we then derive
\begin{equation}
|\gamma_{ij}(t,s)|\leq\frac{c}{(t-s)^{(q_{i}+q_{j})/2}}.
\label{eq:estimgammaijlast}%
\end{equation}
Finally, since $q_{i}+q_{j}=\omega(\bd\ell)$, from \eqref{eq:estimDellp} and
\eqref{eq:estimgammaijlast} we conclude that
\begin{equation}
|D_{(x,y)}^{\bd\ell}\,p_{t,s}(x,y)|\leq\frac{c}{(t-s)^{\omega(\bd\ell)/2}}.
\label{eq:mainestimDexixj}%
\end{equation}
\medskip

\noindent\textbf{Case IV:} $\bd\ell=(\bd0,\bd e_{i}+\bd e_{j})$. In this case,
using the expression of $p_{t,s}$ given in \eqref{eq:exprptdery} (where
$\widehat{C}(t,s)=E(t-s)^{T}C(t,s)^{-1}E(t-s)$), we readily infer that
\[
D_{(x,y)}^{\bd\ell}\,p_{t,s}(x,y)=\partial_{y_{i}y_{j}}^{2}p_{t,s}%
(x,y)=-\frac{1}{2}\widehat{C}(t,s)_{ij};
\]
hence, setting $\widehat{C}(t,s):=\big(\widehat{\gamma}_{hk}(t,s)\big)_{h,k=1}%
^{N},$ we get
\begin{equation}
|D_{(x,y)}^{\bd\ell}\,p_{t,s}(x,y)|\leq c\,|\widehat{\gamma}_{ij}(t,s)|.
\label{eq:estimDellpy}%
\end{equation}
Now, taking into account \eqref{eq:estimCCzero}, it is easy to see that
\begin{align*}
\widehat{C}(t,s)  &  =E(t-s)^{T}C(t,s)^{-1}E(t-s)\\
&  \leq\nu^{-1}E(t-s)^{T}C_{0}(t-s)^{-1}E(t-s)\equiv\nu^{-1}\widehat{C}%
_{0}(t-s);
\end{align*}
from this, by arguing \emph{exactly} as in Case III, for every $\varepsilon>0$
we obtain
\begin{equation}
|\widehat{\gamma}_{ij}(t,s)|\leq\frac{1}{2\nu}\Big(\frac{1}{\varepsilon
}\widehat{\theta}_{ii}(t-s)+\varepsilon\widehat{\theta}_{jj}%
(t-s)+2|\widehat{\theta}_{ij}(t-s)|\Big), \label{eq:estigammaprimeij}%
\end{equation}
where we have used the notation
\[
\widehat{C}_{0}(\tau)=E(\tau)^{T}C_{0}(\tau)^{-1}E(\tau)=\big(\widehat{\theta
}_{hk}(\tau)\big)_{h,k=1}^{N}.
\]
In order to estimate the rhs of \eqref{eq:estigammaprimeij}, we observe that
\begin{align*}
\widehat{C}_{0}(t-s)  &  =E(t-s)^{T}C_{0}(t-s)^{-1}E(t-s)\\
&  (\text{see \eqref{C omogenea}})\\
&  =E(t-s)^{T}\Big[D_{0}\Big(\frac{1}{\sqrt{t-s}}\Big)C_{0}(1)^{-1}%
D_{0}\Big(\frac{1}{\sqrt{t-s}}\Big)\Big]E(t-s)\\
&  (\text{see \eqref{LP 2.20}})\\
&  =D_{0}\Big(\frac{1}{\sqrt{t-s}}\Big)\widehat{C}_{0}(1)D_{0}\Big(\frac
{1}{\sqrt{t-s}}\Big);
\end{align*}
as a consequence, we obtain
\begin{equation}
|\widehat{\theta}_{hk}(t-s)|=\frac{\widehat{\theta}_{hk}(1)}{(t-s)^{(q_{h}%
+q_{k})/2}}\qquad\forall\,\,1\leq h,k\leq N. \label{eq:exprthetazeroij}%
\end{equation}
Gathering \eqref{eq:estigammaprimeij}-\eqref{eq:exprthetazeroij}, and choosing
$\varepsilon:=(t-s)^{(q_{j}-q_{i})/2}$, we then derive
\begin{equation}
|\widehat{\gamma}_{ij}(t,s)|\leq\frac{c}{(t-s)^{(q_{i}+q_{j})/2}}.
\label{eq:estimgammaprimeijlast}%
\end{equation}
Finally, since $q_{i}+q_{j}=\omega(\bd\ell)$, from \eqref{eq:estimDellpy} and
\eqref{eq:estimgammaprimeijlast} we conclude that
\begin{equation}
|D_{(x,y)}^{\bd\ell}\,p_{t,s}(x,y)|\leq\frac{c}{(t-s)^{\omega(\bd\ell)/2}}.
\label{eq:mainestimDeyiyj}%
\end{equation}
\medskip

\noindent\textbf{Case V:} $\bd\ell=(\bd e_{i},\bd e_{j})$. In this last case,
a direct computation gives
\begin{equation}%
\begin{split}
D_{(x,y)}^{\bd\ell}\,p_{t,s}(x,y)  &  =\partial_{x_{i}y_{j}}^{2}%
p_{t,s}(x,y)=\partial_{y_{j}}\big(\partial_{x_{i}}p_{t,s}\big)(x,y)\\
&  =\partial_{y_{j}}\Big(\frac{1}{2}\big[C(t,s)^{-1}(x-E(t-s)y)\big]_{i}%
\Big)\\
&  =\frac{1}{2}\big[C(t,s)^{-1}E(t-s)\big]_{ij}\\
&  =\frac{1}{2}\sum_{k=1}^{n}\gamma_{ik}(t,s)\,e_{kj}(t-s),
\end{split}
\end{equation}
where we have used the notation
\[
C(t,s)=\big(\gamma_{hk}(t,s)\big)_{h,k=1}^{N}\qquad\text{and}\qquad
E(\tau)=\big(e_{hk}(\tau)\big)_{h,k=1}^{N}.
\]
We now observe that, on account of \eqref{LP 2.20}, we have
\begin{equation}%
\begin{split}
e_{hk}(t-s)  &  =\big[E(t-s)\big]_{hk}=\Big[D_{0}(\sqrt{t-s})E(1)D_{0}%
\Big(\frac{1}{\sqrt{t-s}}\Big)\Big]_{hk}\\
&  =(t-s)^{(q_{h}-q_{k})/2}e_{hk}(1)\qquad\forall\,\,1\leq h,k\leq N;
\end{split}
\label{eq:estimehk}%
\end{equation}
thus, by combining \eqref{eq:estimehk} with \eqref{eq:estimgammaijlast}, we
get
\begin{align*}
&  \Big|\sum_{k=1}^{n}\gamma_{ik}(t,s)\,e_{kj}(t-s)\Big|\leq\sum_{k=1}%
^{n}|\gamma_{ik}(t,s)|\,|e_{kj}(t-s)|\\
&  \qquad\leq c\sum_{k=1}^{n}\frac{1}{(t-s)^{(q_{i}+q_{k})/2}}\cdot
(t-s)^{(q_{k}-q_{j})/2}e_{kj}(1)\\
&  \qquad\leq\frac{c}{(t-s)^{(q_{i}+q_{j})/2}}.
\end{align*}
From this, since $q_{i}+q_{j}=\omega(\bd\ell)$, we immediately conclude that
\begin{equation}
|D_{(x,y)}^{\bd\ell}\,p_{t,s}(x,y)|\leq\frac{1}{2}\Big|\sum_{k=1}^{n}%
\gamma_{ik}(t,s)\,e_{kj}(t-s)\Big|\leq\frac{c}{(t-s)^{\omega(\bd\ell)/2}}.
\label{eq:mainestimDxiyj}%
\end{equation}
Now we have estimated all the non-vanishing derivatives of $p_{t,s}$ (with
respect to both $x$ and $y$), we are ready to complete the proof. Namely, by
combining estimate \eqref{eq:DGammater} with \eqref{eq:mainestimDexi},
\eqref{eq:mainestimDeyi}, \eqref{eq:mainestimDexixj},
\eqref{eq:mainestimDeyiyj} and \eqref{eq:mainestimDxiyj}, we get
\begin{align*}
&  |D_{(x,y)}^{\bd\alpha}\,\Gamma(x,t;y,s)|\\
&  \qquad\leq c\,\Gamma_{\nu^{-1}}(x,t;y,s)\sum_{(\lambda,m)\in\mathcal{S}%
}\,\sum_{p_{m}(\lambda,\bd\alpha)}\prod_{i=1}^{m}\frac{1}{(t-s)^{k_{i}%
\omega(\bd\ell_{i})/2}}\,|v|^{k_{i}(2-|\bd\ell_{i}|)}\\
&  \qquad\leq c\,\Gamma(x,t;y,s)\sum_{(\lambda,m)\in\mathcal{S}}\,\sum
_{p_{m}(\lambda,\bd\alpha)}\frac{1}{(t-s)^{\sum_{i=1}^{m}k_{i}\omega
(\bd\ell_{i})/2}}\,|v|^{\sum_{i=1}^{m}(2k_{i}-k_{i}|\bd\ell_{i}|)},
\end{align*}
where we have used the simplified notation
\[
v:=D_{0}\Big(\frac{1}{\sqrt{t-s}}\Big)(x-E(t-s)y).
\]
On the other hand, owing to the very definition of $p_{m}(\lambda,\bd\alpha)$,
we have
\begin{align*}
&  \mathrm{(a)}\,\,\sum_{i=1}^{m}k_{i}\omega(\bd\ell_{i})/2=\frac{1}%
{2}\,\omega\Big(\sum_{i=1}^{m}k_{i}\bd\ell_{i}\Big)=\omega(\bd\alpha)/2;\\
&  \mathrm{(b)}\,\,\sum_{i=1}^{m}(2k_{i}-k_{i}|\bd\ell_{i}|)=2\sum_{i=1}%
^{m}k_{i}-\Big|\sum_{i=1}^{m}k_{i}\bd\ell_{i}\Big|=2\lambda-|\bd\alpha|.
\end{align*}
As a consequence, we obtain
\begin{equation}%
\begin{split}
&  |D_{(x,y)}^{\bd\alpha}\,\Gamma(x,t;y,s)|\leq\frac{c}{(t-s)^{\omega
(\bd\alpha)/2}}\times\\
&  \qquad\times\Gamma_{\nu^{-1}}(x,t;y,s)\sum_{(\lambda,m)\in\mathcal{S}%
}\Big|D_{0}\Big(\frac{1}{\sqrt{t-s}}\Big)(x-E(t-s)y)\Big|^{2\lambda
-|\bd\alpha|}.
\end{split}
\label{eq:estimDGammalast}%
\end{equation}
We explicitly stress that, if $(\lambda,m)\in\mathcal{S}$, one has
$2\lambda-|\bd\alpha|\geq0$. In fact, taking into account the very definition
of $\mathcal{S}$, we know that
\[
\text{$k_{i}>0$ and $0<|\bd\ell_{i}|\leq2$}\quad\forall\,\,(k_{1},\ldots
,k_{m};\bd\ell_{1},\ldots,\bd\ell_{m})\in p_{m}(\lambda,\bd\alpha);
\]
this, together with identity (b), immediately implies that $2\lambda
-|\bd\alpha|\geq0$. \vspace*{0.1cm}

Now, using the explicit expression of $\Gamma_{\rho}$ given in Theorem
\ref{Thm fund sol cost coeff}, together with the fact that the matrix
$C_{0}(1)^{-1}$ is positive definite, we easily see that
\begin{equation}
\Gamma_{\nu^{-1}}(x,t;y,s)\cdot\Big|D_{0}\Big(\frac{1}{\sqrt{t-s}%
}\Big)(x-E(t-s)y)\Big|^{2\lambda-|\bd\alpha|}\leq c\,\Gamma_{c_{1}\nu^{-1}%
}(x,t;y,s), \label{eq:rimangio}%
\end{equation}
where $c_{1}>0$ is an absolute constant independent of $\nu$ and $\bd\alpha$.
Then, by gathering \eqref{eq:estimDGammalast} and \eqref{eq:rimangio}, we
obtain the first inequality in \eqref{eq:mainestim}.

To prove the second inequality in \eqref{eq:mainestim} we will show that for
every $\alpha>0$ and $\omega\geq0$ there exists a constant $c>0$ such that,
for every $\left(  x,t\right)  ,\left(  y,s\right)  $ with $t\neq s$ one has
\[
\frac{1}{(t-s)^{\omega/2}}\Gamma_{\alpha}(x,t;y,s)\leq\frac{c}%
{d((x,t),(y,s))^{\omega+Q}},
\]
where $Q>0$ is the homogeneous dimension of $\mathbb{R}^{N}$, see \eqref{eq:defQhomdim}.

To this aim, we first observe that, since the matrix $C_{0}(1)^{-1}$ is
(symmetric and) positive definite, by combining
\eqref{eq:Gammaalfaconvolution} with \eqref{eq.exprGammaalfa} we get
\begin{align*}
\Gamma_{\alpha}(x,t;y,s)  &  =\Gamma_{\alpha}\big(x-E(t-s)y,t-s;0,0)\\
&  \leq\frac{c_{0}}{(t-s)^{Q/2}}\exp\Big(-c_{0}\,\Big|D_{0}\Big(\frac{1}%
{\sqrt{t-s}}\Big)(x-E(t-s)y)\Big|^{2}\Big),
\end{align*}
where $c_{0}>0$ is a suitable constant depending on $\alpha$; as a
consequence, taking into account the explicit expression of $d$ provided in
\eqref{eq:explicitd} (and since $\Vert\cdot\Vert$ is $D_{0}$%
\--ho\-mo\-ge\-neous of degree $1$), we obtain the following estimate
\begin{align*}
&  \frac{d((x,t),(y,s))^{\omega+Q}}{(t-s)^{\omega/2}}\cdot\Gamma_{\alpha
}(x,t;y,s)\\
&  \qquad=\frac{\big(\Vert x-E(t-s)y\Vert+\sqrt{|t-s|}\big)^{\omega+Q}%
}{(t-s)^{\omega/2}}\cdot\Gamma_{\alpha}(x,t;y,s)\\
&  \qquad=(t-s)^{Q/2}\,\Big(\Big\|D_{0}\Big(\frac{1}{\sqrt{t-s}}%
\Big)(x-E(t-s)y)\Big\|+1\Big)^{\omega+Q}\Gamma_{\alpha}(x,t;y,s)\\
&  \qquad\leq c_{0}\,\mathcal{U}\Big(D_{0}\Big(\frac{1}{\sqrt{t-s}%
}\Big)(x-E(t-s)y)\Big),
\end{align*}
where we have introduced the notation
\[
\mathcal{U}(z):=(\Vert z\Vert+1)^{\omega+Q}e^{-c_{0}|z|^{2}}\qquad
(z\in\mathbb{R}^{N}).
\]
To complete the proof it suffices to show that the function $\mathcal{U}$ is
\emph{globally bounded} in $\mathbb{R}^{N}$. To this end, bearing in mind the
explicit definition of $\Vert\cdot\Vert$, we notice that
\begin{align*}
0\leq\mathcal{U}(z)  &  \leq\Big(\sum_{i=1}^{N}|z|^{1/q_{i}}+1\Big)^{\omega
+Q}e^{-c_{0}|z|^{2}}\\
&  =\Big[\Big(\sum_{i=1}^{N}|z|^{1/q_{i}}+1\Big)e^{-\frac{c_{0}}{\omega
+Q}|z|^{2}})\Big]^{\omega+Q};
\end{align*}
from this, since the map $\tau\mapsto\tau^{\alpha}e^{-\beta\tau^{2}}$ is
globally bounded on $[0,+\infty)$ for every choice of $\alpha\geq0$ and
$\beta>0$, we conclude that $\mathcal{U}\in L^{\infty}(\mathbb{R}^{N})$, as desired.

Finally, combining the two inequalities in \eqref{eq:mainestim} we get%
\[
\left\vert D_{\left(  x,y\right)  }^{\bd\alpha}\Gamma(x,t;y,s)\right\vert
\leq\frac{c}{d(x,t;y,s)^{Q+\omega(\bd\alpha)}}%
\]
for every $\left(  x,t\right)  ,\left(  y,s\right)  $ with $t\neq s$. However,
for $x\neq y$ and $s\rightarrow t^{-}$, the first bound in
\eqref{eq:mainestim} shows that $D_{\left(  x,y\right)  }^{\bd\alpha}%
\Gamma(x,t;y,t)=0$, hence the above inequality actually holds for every
$\left(  x,t\right)  \neq\left(  y,s\right)  $, and we are done.
\end{proof}

We highlight a simple consequence of Theorem \ref{Thm bound derivatives} and
of \eqref{eq:integralGamma1} which will be repeatedly exploited in the sequel.

\begin{lemma}
\label{lem:integralvanishinggamma} Let $\Gamma$ be as in Theorem
\ref{Thm bound derivatives}, and let $\bd\alpha=(\alpha_{1},\ldots,\alpha
_{N})\in\N^{N}$ be a fixed non-zero multi-index. Then, we have
\begin{equation}
\int_{\mathbb{R}^{N}}D_{x}^{\bd\alpha}\Gamma(x,t;y,s)\,dy=0\qquad\text{for
every $x\in\mathbb{R}^{N}$ and every $s<t$}. \label{eq:intDxGammazero}%
\end{equation}

\end{lemma}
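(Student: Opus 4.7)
The plan is to start from the identity $\int_{\mathbb{R}^N}\Gamma(x,t;y,s)\,dy = 1$ provided by Theorem \ref{Thm fund sol coeff t dip}-(4), and to differentiate under the integral sign with respect to $x$ using the operator $D_x^{\bd\alpha}$. Since the right-hand side is the constant $1$ and $|\bd\alpha|\geq 1$, we obtain $0$ on the right; meanwhile, swapping $D_x^{\bd\alpha}$ with the integral yields exactly $\int_{\mathbb{R}^N}D_x^{\bd\alpha}\Gamma(x,t;y,s)\,dy$, which is the desired identity \eqref{eq:intDxGammazero}.

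The only real work is to justify the exchange of the derivative and the integral. The natural way is by induction on $|\bd\alpha|$, reducing at each step to showing that, for fixed $t>s$ and for $x$ varying in a small neighborhood $U$ of any given point $x_0\in\mathbb{R}^N$, the family of functions
\[
y \longmapsto D_x^{\bd\beta}\Gamma(x,t;y,s), \qquad x\in U,
\]
admits an integrable majorant $h\in L^1(\mathbb{R}^N)$ which is independent of $x\in U$, for each $\bd\beta$ with $|\bd\beta|\leq|\bd\alpha|$. Granted this, a standard application of dominated convergence to the difference quotients yields both the required continuity of $\int D_x^{\bd\beta}\Gamma\,dy$ in $x$ and the ability to pass one more derivative inside the integral.

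To produce the dominator $h$, I would invoke the sharp estimate \eqref{eq:mainestim} of Theorem \ref{Thm bound derivatives}: for all $x\in U$ and $y\in\mathbb{R}^N$,
\[
|D_x^{\bd\beta}\Gamma(x,t;y,s)| \;\leq\; \frac{c}{(t-s)^{\omega(\bd\beta)/2}}\,\Gamma_{c_1\nu^{-1}}(x,t;y,s),
\]
with $c$ depending only on $\bd\beta$ and $\nu$. The Gaussian kernel $\Gamma_{c_1\nu^{-1}}(x,t;y,s)$ is itself a fundamental solution, hence integrable in $y$ with integral $1$ by \eqref{eq:integralGamma1}. Using the explicit expression in \eqref{eq.exprGammaalfa} and writing $z = D_0(1/\sqrt{t-s})(x - E(t-s)y)$, the Gaussian decay in $z$ combined with the boundedness of $x$ on $U$ (so that $|E(t-s)y|$ must be large whenever $|y|$ is large) produces, by a routine argument, a single integrable function $h(y)$ dominating $\Gamma_{c_1\nu^{-1}}(x,t;y,s)$ uniformly in $x\in U$.

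The main (minor) obstacle is just this uniform-in-$x$ $L^1$ domination; once it is in place, the induction runs trivially, because each step requires only bounds on $D_x^{\bd\beta}\Gamma$ for $|\bd\beta|\leq|\bd\alpha|$, all of which are supplied by Theorem \ref{Thm bound derivatives}. The base case $|\bd\alpha|=1$ gives $\int \partial_{x_i}\Gamma\,dy = \partial_{x_i}(1) = 0$, and iterating in the coordinates of $\bd\alpha$ completes the argument.
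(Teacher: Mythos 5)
Your proposal is correct and follows essentially the same route as the paper: both differentiate the identity $\int_{\mathbb{R}^N}\Gamma(x,t;y,s)\,dy=1$ under the integral sign, justifying the exchange by a dominated-convergence argument based on the Gaussian upper bounds of Theorem \ref{Thm bound derivatives}. The paper simply compresses the iteration and the uniform $L^1$ domination into the phrase ``standard dominated-convergence argument,'' whereas you spell those steps out; the substance is identical.
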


\begin{proof}
Let $x,s,t$ be as in the statement. Using the global estimates for
$D_{x}^{\bd\alpha}\Gamma$ gi\-ven in Theorem \ref{Thm bound derivatives}, and
taking into account identity \eqref{eq:integralGamma1}, we can perform a
standard do\-mi\-na\-ted-convergence argument, yielding
\[
\int_{\mathbb{R}^{N}}D_{x}^{\bd\alpha}\Gamma(x,t;y,s)\,dy=D_{x}^{\bd\alpha
}\Big(x\mapsto\int_{\mathbb{R}^{N}}\Gamma(x,t;y,s)\,dy\Big)=0.
\]
This ends the proof.
\end{proof}

The next theorem will also be a key tool in our a-priori estimates.

\begin{theorem}
[Mean value inequality for fractional and singular kernels]%
\label{Thm mean value} Let $\Gamma$ be as in Theorem
\ref{Thm fund sol coeff t dip}, and let $\eta=(y,s)\in\mathbb{R}^{N+1}$ be
fixed. Moreover, let
\[
\bd\alpha=(\alpha_{1},\ldots,\alpha_{N})
\]
be a fixed multi-index. Then, there exists a constant $c=c(\bd\alpha)>0$ such
that
\[
|D_{x}^{\bd{\alpha}}\Gamma(\xi_{1},\eta)-D_{x}^{\bd{\alpha}}\Gamma(\xi
_{2},\eta)|\leq c\frac{d(\xi_{1},\xi_{2})}{d(\xi_{1},\eta)^{Q+\omega
(\bd\alpha)+1}}%
\]
for every $\xi_{1}=(x_{1},t_{1}),\xi_{2}=(x_{2},t_{2})\in\mathbb{R}^{N+1}$
such that
\[
d(\xi_{1},\eta)\geq4\bd{\kappa}d(\xi_{1},\xi_{2})>0.
\]

\end{theorem}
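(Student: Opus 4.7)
The idea is to apply the Lagrange-type inequality of Theorem \ref{Lagrange} to the single-variable function $f(\zeta) := D_x^{\bd\alpha}\Gamma(\zeta,\eta)$, with base point $\xi_1$. By Theorem \ref{Thm fund sol coeff t dip}-(1) together with Remark \ref{rem:regulGammaMV}, $f$ is Lipschitz on every closed ball that avoids $\eta$, so the Lagrange inequality is applicable, and the task reduces to bounding the horizontal derivatives $\de_{x_i}f$ (for $i=1,\ldots,q$) and the drift $Yf$ on a suitable ball.

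I would set $r:=d(\xi_1,\xi_2)>0$ and pick $R=c_{0}r$ with $c_{0}>0$ large enough so that $\xi_1,\xi_2\in B_{\delta R}(\xi_1)$, where $\delta\in(0,1)$ is the constant from Theorem \ref{Lagrange}. The hypothesis $d(\xi_1,\eta)\geq 4\bd{\kappa}\,r$ (with a harmless adjustment of $c_{0}$) guarantees $d(\xi_1,\eta)\geq 2\bd{\kappa}\,d(\xi_1,\zeta)$ for every $\zeta\in B_R(\xi_1)$, so Lemma \ref{lem:equivalentd} yields
\[
d(\zeta,\eta)\geq\bd{\vartheta}^{-1}\,d(\xi_1,\eta)\qquad\text{for every }\zeta\in B_R(\xi_1).
\]
In particular $\overline{B_R(\xi_1)}$ is disjoint from $\eta$, hence $f$ is Lipschitz there.

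On $B_R(\xi_1)$, since $\de_{x_i}f=D_x^{\bd\alpha+\bd e_i}\Gamma(\cdot,\eta)$ and $q_i=1$ for $i=1,\ldots,q$, Theorem \ref{Thm bound derivatives} gives
\[
|\de_{x_i}f(\zeta)|\leq\frac{c}{d(\zeta,\eta)^{Q+\omega(\bd\alpha)+1}}\leq\frac{c}{d(\xi_1,\eta)^{Q+\omega(\bd\alpha)+1}}.
\]
For the drift, I would use the equation $\mathcal{L}\Gamma(\cdot,\eta)=0$, which rewrites $Y\Gamma=-\sum_{i,j=1}^{q}a_{ij}(t)\de^{2}_{x_ix_j}\Gamma$, so that
\[
Yf=-\sum_{i,j=1}^{q}a_{ij}(t)\,D_x^{\bd\alpha+\bd e_i+\bd e_j}\Gamma+[Y,D_x^{\bd\alpha}]\Gamma.
\]
The first sum has pure-$x$-derivative terms of $\omega$-order $\omega(\bd\alpha)+2$. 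For the commutator, since $Y$ is $D$-homogeneous of degree $2$ and $D_x^{\bd\alpha}$ is $D$-homogeneous of degree $\omega(\bd\alpha)$, the operator $[Y,D_x^{\bd\alpha}]$ is a linear combination of pure $x$-derivatives $D_x^{\bd\beta}$ with $\omega(\bd\beta)=\omega(\bd\alpha)+2$; this uses the structure of $B$ (where $b_{j\ell}\neq 0$ forces $q_j=q_\ell+2$), which guarantees that each elementary commutation $[\de_{x_\ell},Y]=\sum_j b_{j\ell}\de_{x_j}$ preserves the total $\omega$-weight modulo the $+2$ coming from the degree of $Y$. Theorem \ref{Thm bound derivatives} then yields
\[
|Yf(\zeta)|\leq\frac{c}{d(\zeta,\eta)^{Q+\omega(\bd\alpha)+2}}\leq\frac{c}{d(\xi_1,\eta)^{Q+\omega(\bd\alpha)+2}}.
\]

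Plugging these two bounds into Theorem \ref{Lagrange} applied to $f$ on $B_R(\xi_1)$ (so that $\xi_1,\xi_2\in B_{\delta R}(\xi_1)$) yields
\[
|f(\xi_1)-f(\xi_2)|\leq c\!\left(\frac{r}{d(\xi_1,\eta)^{Q+\omega(\bd\alpha)+1}}+\frac{r^{2}}{d(\xi_1,\eta)^{Q+\omega(\bd\alpha)+2}}\right),
\]
and the hypothesis $r\leq(4\bd{\kappa})^{-1}d(\xi_1,\eta)$ absorbs the quadratic term into the linear one, giving the desired bound. The main technical obstacle is the $\omega$-homogeneity bookkeeping for $[Y,D_x^{\bd\alpha}]$: one must verify that no commutator contribution has $\omega$-order strictly greater than $\omega(\bd\alpha)+2$, a fact that rests precisely on the block structure imposed by (H2). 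Everything else reduces to direct applications of Theorems \ref{Lagrange} and \ref{Thm bound derivatives} and Lemma \ref{lem:equivalentd}.
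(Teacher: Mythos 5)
Your proposal is correct and follows essentially the same strategy as the paper's proof: apply the Lagrange-type inequality (Theorem \ref{Lagrange}) to $f=D_x^{\bd\alpha}\Gamma(\cdot,\eta)$, control the horizontal derivatives via Theorem \ref{Thm bound derivatives}, and control $Yf$ by combining $\mathcal{L}\Gamma(\cdot,\eta)=0$ with the explicit computation $[Y,D_x^{\bd\alpha}]=\sum_{j,k}b_{jk}\alpha_k D_x^{\bd\alpha+\bd e_j-\bd e_k}$, using the block structure of $B$ (which forces $q_j-q_k=2$ when $b_{jk}\neq0$) to see that every commutator term has $\omega$-weight exactly $\omega(\bd\alpha)+2$. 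The paper centers the ball at $\xi_2$ with radius $r=2d(\xi_1,\xi_2)$ and uses the first form of Theorem \ref{Lagrange}, whereas you center at $\xi_1$ and use the second form with the $\delta$-shrinkage; this is a cosmetic difference and does not change the argument.
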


\begin{proof}
Let $\xi_{1},\xi_{2}\in\mathbb{R}^{N+1}$ be as in the statement, and let
$r:=2d(\xi_{1},\xi_{2})>0$. Owing to \eqref{quasitriangle}, one can easily
recognize that $\eta\notin\overline{B}_{r}(\xi_{2})$; thus, taking into
account the \emph{regularity of $\Gamma$} stated in Theorem
\ref{Thm fund sol coeff t dip}-(1) and Remark \ref{rem:regulGammaMV}, we are
entitled to apply Theorem \ref{Lagrange} to the function $f:=D_{x}^{\bd\alpha
}\Gamma(\cdot;\eta)$ on the ball $\overline{B}_{r}(\xi_{2})\ni\xi_{1}$,
obtaining
\begin{equation}%
\begin{split}
|D_{x}^{\bd\alpha}  &  \Gamma(\xi_{1},\eta)-D_{x}^{\bd\alpha}\Gamma(\xi
_{2},\eta)|=|f(\xi_{1})-f(\xi_{2})|\\[0.1cm]
&  \leq c\Big(d(\xi_{1},\xi_{2})\cdot\sup_{B_{r}(\xi_{2})}\sqrt{\sum_{k=1}%
^{q}|\partial_{x_{k}}D_{x}^{\bd\alpha}\Gamma(\cdot;\eta)|^{2}}\\
&  \qquad\qquad+d(\xi_{1},\xi_{2})^{2}\cdot\sup_{B_{r}(\xi_{2})}%
|YD_{x}^{\bd\alpha}\Gamma(\cdot;\eta)|\Big).
\end{split}
\label{eq:lagrangefGamma}%
\end{equation}
Now, since $\xi_{1}\in B_{r}(\xi_{2})$ and $q_{k}=1$ for $1\leq k\leq q$, by
Theorem \ref{Thm bound derivatives} we have
\begin{equation}%
\begin{split}
\sup_{B_{r}(\xi_{2})}  &  \sqrt{\sum_{k=1}^{q}|\partial_{x_{k}}D_{x}%
^{\bd\alpha}\Gamma(\cdot;\eta)|^{2}}=\sup_{B_{r}(\xi_{2})}\sqrt{\sum_{k=1}%
^{q}|D_{x}^{\bd\alpha+\bd{e}_{k}}\Gamma(\cdot;\eta)|^{2}}\\
&  \qquad\qquad\leq c\sup_{\zeta\in B_{r}(\xi_{2})}\frac{1}{d(\zeta
,\eta)^{Q+\omega(\bd\alpha)+1}}\leq\frac{c}{d(\xi_{1},\eta)^{Q+\omega
(\bd\alpha)+1}}.
\end{split}
\label{eq:estimNablaf}%
\end{equation}
We then claim that we also have
\begin{equation}
\sup_{B_{r}(\xi_{2})}|YD_{x}^{\bd\alpha}\Gamma(\cdot;\eta)|\leq\frac{c}%
{d(\xi_{1},\eta)^{Q+\omega(\bd\alpha)+2}}. \label{eq:estimYf}%
\end{equation}
Taking this claim for granted for a moment, we can conclude the proof of
theorem: indeed, by combining \eqref{eq:lagrangefGamma},
\eqref{eq:estimNablaf} and \ref{eq:estimYf} we immediately obtain
\begin{align*}
|D_{x}^{\bd\alpha}\Gamma(\xi_{1},\eta)-D_{x}^{\bd\alpha}\Gamma(\xi_{2},\eta)|
&  \leq cd(\xi_{1},\xi_{2})\Big(\frac{1}{d(\xi_{1},\eta)^{Q+\omega
(\bd\alpha)+1}}+\frac{d(\xi_{1},\xi_{2})}{d(\xi_{1},\eta)^{Q+\omega
(\bd\alpha)+2}}\Big)\\[0.1cm]
&  (\text{since $d(\xi_{1},\eta)\geq4\bd\kappa d(\xi_{1},\xi_{2})$})\\
&  \leq c\frac{d(\xi_{1},\xi_{2})}{d(\xi_{1},\eta)^{Q+\omega(\bd\alpha)+1}},
\end{align*}
which is exactly what we wanted to prove.

Hence, we are left to prove the claimed \eqref{eq:estimYf}. To this end we
first notice that, since $\mathcal{L}\Gamma(\cdot;\eta)=0$ a.e.\thinspace in
$\mathbb{R}^{N+1}\setminus\{\eta\}$, we can write
\begin{equation}%
\begin{split}
YD_{x}^{\bd\alpha}\Gamma(\cdot;\eta)  &  =D_{x}^{\bd\alpha}\big(Y\Gamma
(\cdot;\eta)\big)+[Y,D_{x}^{\bd\alpha}]\Gamma(\cdot;\eta)\\
&  =-\sum_{i,j=1}^{q}a_{ij}(t)D_{x}^{\bd\alpha+\bd{e}_{i}+\bd{e}_{j}}%
\Gamma(\cdot;\eta)+[Y,D_{x}^{\bd\alpha}]\Gamma(\cdot;\eta),
\end{split}
\label{eq:YGammacommut}%
\end{equation}
where $[Y,D_{x}^{\bd\alpha}]=YD_{x}^{\bd\alpha}-D_{x}^{\bd\alpha}Y$. Moreover,
since the coefficients $a_{ij}$ are globally bounded (and $q_{k}=1$ for every
$1\leq k\leq q$), again by Theorem \ref{Thm bound derivatives} we get
\begin{equation}
\left\vert -\sum_{i,j=1}^{q}a_{ij}(t)D_{x}^{\bd\alpha+\bd{e}_{i}+\bd{e}_{j}%
}\Gamma(\zeta;\eta)\right\vert \leq\frac{c}{d(\zeta,\eta)^{Q+\omega
(\bd\alpha)+2}}\qquad\forall\,\,\zeta\in B_{r}(\xi_{2}).
\label{eq:estimYStepI}%
\end{equation}
We now turn to estimate the term $[Y,D_{x}^{\bd\alpha}]\Gamma(\cdot;\eta)$.
First of all, using the explicit expression of the vector field $Y$ in
\eqref{eq:driftY}, it is easy to see that
\[
\lbrack Y,D_{x}^{\bd\alpha}]=YD_{x}^{\bd\alpha}-D_{x}^{\bd\alpha}%
Y=\sum_{j,k=1}^{N}b_{jk}\alpha_{k}D_{x}^{\bd\alpha+\bd{e}_{j}-\bd{e}_{k}},
\]
where $\bd\alpha=\left(  \alpha_{1},\ldots,\alpha_{N}\right)  $ and the
$b_{jk}$'s are the entries of the matrix $B$. On the other hand, taking into
account the specific block form of $B$ in assumption (H2), it is not difficult
to recognize that
\[
q_{j}-q_{k}=2\qquad\text{for every $1\leq j,k\leq N$ such that $b_{jk}\neq0$%
}.
\]
As a consequence, using once again Theorem \ref{Thm bound derivatives}, we
get
\begin{equation}%
\begin{split}
\big|[Y,D_{x}^{\bd\alpha}]\Gamma(\zeta;\eta)\big|  &  \leq c\sum_{j,k=1}%
^{N}|b_{jk}|\cdot\frac{1}{d(\zeta,\eta)^{Q+\omega(\bd\alpha)+q_{j}-q_{k}}}\\
&  \leq\frac{c}{d(\zeta,\eta)^{Q+\omega(\bd\alpha)+2}}\qquad\forall
\,\,\zeta\in B_{r}(\xi_{2}).
\end{split}
\label{eq:estimYStepII}%
\end{equation}
Finally, by combining \eqref{eq:YGammacommut}, \eqref{eq:estimYStepI} and
\eqref{eq:estimYStepII} we obtain
\[
\sup_{\zeta\in B_{r}(\xi_{2})}|YD_{x}^{\bd\alpha}\Gamma(\zeta;\eta)|\leq
c\sup_{\zeta\in B_{r}(\xi_{2})}\frac{1}{d(\zeta,\eta)^{Q+\omega(\bd\alpha)+2}%
}\leq\frac{c}{d(\xi_{1},\eta)^{Q+\omega(\bd\alpha)+2}},
\]
which is precisely the claimed \ref{eq:estimYf}. This ends the proof.
\end{proof}

\subsection{Representation formulas for $u$ and $\partial_{x_{i}x_{j}}^{2}u$
in terms of $\mathcal{L}u$\label{sec repr formulas}}

We continue to consider an operator $\mathcal{L}$ with coefficients
$a_{ij}(t)$ satisfying (H1)-(H2), and its fun\-da\-men\-tal solution $\Gamma$
(see Theorem \ref{Thm fund sol coeff t dip}). Here, we are going to establish
some representation formulas for $u$ and for its derivatives in terms of
$\mathcal{L}u$. \medskip

We start with the following proposition.

\begin{proposition}
\label{Thm limit v epsi} Let $T\in\mathbb{R}$ be fixed, and let $g:S_{T}%
\rightarrow\mathbb{R}$ be \emph{continuous and bounded}. For every $\e>0$, we
consider the function
\[
v_{\varepsilon}:S_{T}\rightarrow\mathbb{R},\qquad v_{\varepsilon}%
(x,t):=\int_{\mathbb{R}^{N}}\Gamma(x,t;y,t-\e)\,g(y,t-\e)\,dy.
\]
Then, $v_{\varepsilon}\rightarrow g$ \emph{pointwise in $S_{T}$} as
$\e\rightarrow0^{+}$.
\end{proposition}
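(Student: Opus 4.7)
The plan is a standard approximate-identity argument, using the comparison with the constant-coefficient Gaussian $\Gamma_{\nu^{-1}}$ (Theorem \ref{Thm comparison Gammas}) to control the kernel $\Gamma(x,t;y,t-\e)$. Fix $\xi_0=(x,t)\in S_T$. By Theorem \ref{Thm fund sol coeff t dip}-(4) one has $\int_{\R^N}\Gamma(x,t;y,t-\e)\,dy=1$, so
\[
v_\e(x,t)-g(x,t)=\int_{\R^N}\Gamma(x,t;y,t-\e)\bigl[g(y,t-\e)-g(x,t)\bigr]\,dy.
\]
The strategy is to fix $\eta>0$, split the integral into a \emph{near region} $\{y:\|y-x\|<\delta\}$ and its complement (where $\|\cdot\|$ is the homogeneous norm \eqref{eq:defrhonorm}), choose $\delta=\delta(\eta)$ small using the continuity of $g$ at $(x,t)$, and then let $\e\to 0^+$ to kill the far region using Gaussian decay of $\Gamma$.

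For the near region, continuity of $g$ at $(x,t)$ gives $\delta>0$ such that $|g(y,s)-g(x,t)|<\eta$ whenever $\|y-x\|+\sqrt{|s-t|}<2\delta$. For $0<\e<\delta^{2}$ every $y$ with $\|y-x\|<\delta$ satisfies this condition (with $s=t-\e$), so by the positivity of $\Gamma$ and $\int\Gamma\,dy=1$ the near-region contribution is bounded by $\eta$.

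For the far region, apply Theorem \ref{Thm comparison Gammas} to bound $\Gamma(x,t;y,t-\e)\leq\nu^{-N}\Gamma_{\nu^{-1}}(x,t;y,t-\e)$ and use the explicit form \eqref{eq.exprGammaalfa} with the homogeneity \eqref{C omogenea} of $C_0$. The intended change of variable is $z:=D_0(1/\sqrt{\e})(x-E(\e)y)$, whose Jacobian is $\e^{Q/2}$ (recall $\det E(\e)=1$ since $B$ is nilpotent and $\det D_0(\lambda)=\lambda^{Q}$); this turns the Gaussian into a fixed standard Gaussian $e^{-c_0\langle C_0(1)^{-1}z,z\rangle}$ and makes the prefactor $\e^{-Q/2}$ cancel. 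The far region $\{\|y-x\|\geq\delta\}$ has to be translated into a statement about $z$: since $y-x=-E(-\e)x+x-E(-\e)D_0(\sqrt\e)^{-1}\cdot\sqrt{\e}\,z$ (i.e.\ $y=E(-\e)(x-D_0(\sqrt\e)z\cdot\sqrt\e)$ after unwinding the substitution), and since by Lemma \ref{Lemma E(s)x} one has $\|E(-\e)x-x\|\to 0$ as $\e\to 0^+$, for all sufficiently small $\e$ the condition $\|y-x\|\geq\delta$ forces the Euclidean norm of $z$ to be bounded below by $c\,\delta/\sqrt{\e}\to\infty$. Thus the far-region integral is dominated by $2\|g\|_{\infty}$ times a Gaussian tail $\int_{|z|\geq c\delta/\sqrt\e}e^{-c_0\langle C_0(1)^{-1}z,z\rangle}\,dz$, which tends to $0$ as $\e\to0^{+}$.

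Combining the two estimates yields $\limsup_{\e\to0^+}|v_\e(x,t)-g(x,t)|\leq\eta$; since $\eta>0$ is arbitrary the pointwise limit is $g(x,t)$. The main technical point is the change-of-variable bookkeeping in the far region: one must verify quantitatively that the dilation $D_0(1/\sqrt\e)$ blows up the region $\{\|y-x\|\geq\delta\}$ to a set retreating to infinity, and that the small deviation of $E(-\e)$ from the identity (controlled via Lemma \ref{Lemma E(s)x}) does not spoil this. Everything else is a routine approximate-identity argument.
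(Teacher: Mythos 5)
Your strategy is sound and reaches the correct conclusion, but it takes a genuinely different route from the paper. The paper performs the same change of variables $y = E(-\e)x - D_0(\sqrt\e)E(-1)z$ directly on the \emph{whole} integral (no near/far split), obtaining $|v_\e(x,t)-g(x,t)|\leq c_0\int_{\R^N}e^{-c_0|z|^2}h_\e(z)\,dz$ with $h_\e(z)=|g(E(-\e)x-D_0(\sqrt\e)E(-1)z,\,t-\e)-g(x,t)|$, and then invokes dominated convergence once: $h_\e(z)\to 0$ pointwise in $z$ by continuity of $g$ (since $E(-\e)\to\mathrm{Id}$ and $D_0(\sqrt\e)\to 0$), and $h_\e\leq 2\|g\|_\infty$. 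This folds your near-region estimate and your far-region tail estimate into a single DCT step and avoids the quantitative tracking of how $D_0(1/\sqrt\e)$ transports the set $\{\|y-x\|\geq\delta\}$. Your classical $\eta$--$\delta$ approximate-identity splitting is equally valid; it buys nothing here but is a perfectly fine alternative, and it does require exactly the bookkeeping you flag as the technical core.

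Two small inaccuracies worth correcting. First, Lemma \ref{Lemma E(s)x} bounds $\|E(t)x\|$ by $c(\|x\|+\sqrt{|t|})$; it says nothing about $\|E(-\e)x-x\|$. The fact you actually need, $E(-\e)x\to x$ for fixed $x$, is just continuity of the matrix exponential with $E(0)=\mathrm{Id}_N$ (and indeed this is all the paper uses too). Second, the unwound substitution should read $y=E(-\e)(x-D_0(\sqrt\e)z)$, hence $y-x=(E(-\e)-\mathrm{Id})x-E(-\e)D_0(\sqrt\e)z$; your displayed versions have an extraneous $\sqrt\e$ factor and a sign slip. None of this affects the validity of the argument, but in the quantitative far-region step it matters: one then uses the identity $E(-\e)D_0(\sqrt\e)=D_0(\sqrt\e)E(-1)$ from \eqref{LP 2.20} to get $\|E(-\e)D_0(\sqrt\e)z\|=\sqrt\e\,\|E(-1)z\|$, from which $\|y-x\|\geq\delta$ does force $|z|\geq c\delta/\sqrt\e$ for $\e$ small enough, as you claim.
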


\begin{proof}
Let $(x,t)\in S_{T}$. By combining \eqref{eq:integralGamma1} with
\eqref{G G_nu}, we can write
\begin{align*}
&  |v_{\varepsilon}(x,t)-g(x,t)|=\bigg|\int_{\mathbb{R}^{N}}\Gamma
(x,t;y,t-\e)\big(g(y,t-\e)-g(x,t)\big)\,dy\bigg|\\[0.1cm]
&  \qquad\leq\frac{1}{\nu^{N}}\int_{\mathbb{R}^{N}}\Gamma_{\nu^{-1}%
}(x,t;y,t-\e)\cdot|g(y,t-\e)-g(x,t)|\,dy\\[0.1cm]
&  \qquad\leq\frac{c_{0}}{\e^{Q/2}}\int_{\mathbb{R}^{N}}e^{-c_{0}%
\,\big|D_{0}\big(\frac{1}{\sqrt{\e}}\big)(x-E(\e)y)\big|^{2}}\cdot
|g(y,t-\e)-g(x,t)|\,dy=(\bigstar),
\end{align*}
where $c_{0}>0$ is a suitable constant only depending on $\nu>0$. On the other
hand, taking into account \eqref{LP 2.20} and performing the change of
variables
\[
y=E(-\e)x-D_{0}(\sqrt{\e})E(-1)z,
\]
we derive
\[
(\bigstar)=c_{0}\int_{\mathbb{R}^{N}}e^{-c_{0}\,|z|^{2}}\big|g(E(-\e)x-D_{0}%
(\sqrt{\e})E(-1)z,t-\e)-g(x,t)\big|\,dz,
\]
since $\det(E(-1))=1$. Summing up, we obtain the estimate
\begin{equation}
|v_{\varepsilon}(x,t)-g(x,t)|\leq c_{0}\int_{\mathbb{R}^{N}}e^{-c_{0}%
\,|z|^{2}}h_{\varepsilon}(z)\,dz, \label{eq:estimsupinthe}%
\end{equation}
where we have introduced the simplified notation
\[
h_{\varepsilon}(z):=\big|g(E(-\e)x-D_{0}(\sqrt{\e})E(-1)z,t-\e)-g(x,t)\big|.
\]
Now, since $E(-\e)\rightarrow E(0)=\mathrm{Id}_{N}$ and $D_{0}(\sqrt
{\e})\rightarrow\mathbb{O}_{N}$ as $\e\rightarrow0^{+}$ (see
\eqref{dilations}), from the continuity of $g$ we immediately derive that
\[
\lim_{\e\rightarrow0^{+}}h_{\varepsilon}(z)\rightarrow0\qquad\text{for every
fixed $z\in\mathbb{R}^{N}$}.
\]
Moreover, since $g$ is globally bounded on $S_{T}$, we have
\[
0\leq|h_{\varepsilon}(z)|\leq2\,\Vert g\Vert_{L^{\infty}(S_{T})}.
\]
Gathering these two facts, we can apply the dominated convergence the\-o\-rem
in the right-hand side of \eqref{eq:estimsupinthe}, yielding
\[
\text{$|v_{\varepsilon}(x,t)-g(x,t)|\rightarrow0$ as $\e\rightarrow0^{+}.$}%
\]
By the arbitrariness of $(x,t)\in S_{T}$, this completes the proof.
\end{proof}

Thanks to Proposition \ref{Thm limit v epsi}, we can now prove the next key
result. Throughout the sequel, when dealing with integral over strips we
tacitly understand that
\[
\int_{\mathbb{R}^{N}\times(a,b)}{\cdots}=-\int_{\mathbb{R}^{N}\times
(b,a)}\{\cdots\}\quad\text{when $b<a$}.
\]

\begin{theorem}
\label{Thm repr formula u} Let $T\in\mathbb{R}$ be fixed, and let $\tau<T$.
Moreover, let $u\in\mathcal{S}^{0}(\tau;T)$. Then, we have the following
\emph{representation formula}
\begin{equation}
\label{repr formula u}u(x,t)=-\int_{\mathbb{R}^{N}\times(\tau,t)}%
\Gamma(x,t;y,s)\mathcal{L}u(y,s)\,dy\,ds,
\end{equation}
for every point $(x,t)\in S_{T}$.
\end{theorem}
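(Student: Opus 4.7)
The strategy is to exhibit $u(x,t)$ as the limit, as $\varepsilon\to 0^+$, of the ``smoothed initial slice''
\[
w_\varepsilon(x,t) := \int_{\mathbb{R}^{N}}\Gamma(x,t;y,t-\varepsilon)\,u(y,t-\varepsilon)\,dy,
\]
and to establish, for every $\varepsilon>0$, the identity
\begin{equation*}
w_\varepsilon(x,t) = -\int_\tau^{t-\varepsilon}\int_{\mathbb{R}^N}\Gamma(x,t;y,s)\,\mathcal{L}u(y,s)\,dy\,ds.\qquad(\dagger)
\end{equation*}
Granted $(\dagger)$, the conclusion will follow at once: the left-hand side tends to $u(x,t)$ by Proposition \ref{Thm limit v epsi} with $g=u$ (continuous and bounded on $\overline{S_T}$), while on the right the integrand is dominated by $\|\mathcal{L}u\|_{L^\infty(S_T)}\,\Gamma(x,t;y,s)$, which by \eqref{eq:integralGamma1} is integrable on $\mathbb{R}^N\times(\tau,t)$ with total mass $t-\tau$; dominated convergence then delivers \eqref{repr formula u}.

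The formal derivation of $(\dagger)$ rests on the adjoint relation. Set $F(s):=\int_{\mathbb{R}^N}\Gamma(x,t;y,s)u(y,s)\,dy$ for $s\in[\tau,t-\varepsilon]$, so that $F(\tau)=0$ (because $u(\cdot,\tau)\equiv 0$) and $F(t-\varepsilon)=w_\varepsilon(x,t)$. The formal adjoint of $\mathcal{L}$, acting in $(y,s)$, is
\[
\mathcal{L}^*=\sum_{i,j=1}^q a_{ij}(s)\partial^2_{y_iy_j}-\sum_{k,j=1}^N b_{jk}y_k\partial_{y_j}+\partial_s,
\]
in which no zero-order term appears because the block structure of $B$ in (H2) forces $\mathrm{tr}(B)=0$; by Theorem \ref{Thm fund sol coeff t dip}, $\mathcal{L}^*\Gamma(x,t;\cdot,\cdot)=0$ a.e.\ on $\mathbb{R}^N\times(-\infty,t)$. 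Differentiating $F$ under the integral sign and integrating by parts in $y$ — the boundary terms at infinity vanishing by the Gaussian bounds in Theorems \ref{Thm comparison Gammas}--\ref{Thm bound derivatives} — will yield $F'(s)=-\int_{\mathbb{R}^N}\Gamma(x,t;y,s)\,\mathcal{L}u(y,s)\,dy$, and integrating in $s$ from $\tau$ to $t-\varepsilon$ produces $(\dagger)$.

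The main obstacle is to make this computation rigorous: for $u\in\mathcal{S}^0(\tau;T)$ only the \emph{distributional} derivatives $\partial^2_{y_iy_j}u$ ($1\le i,j\le q$) and $Yu$ are known to lie in $L^\infty(S_T)$, so $u$ need not be classically $C^2$ in $y$ or differentiable in $s$, and $F$ need not be pointwise differentiable. I would circumvent this by a double mollification: letting $\rho_\delta$ and $\sigma_\delta$ be standard mollifiers on $\mathbb{R}^N$ and $\mathbb{R}$ (extending $u$ by zero for $s>T$ so the $s$-convolution is defined and taking $\delta\le\varepsilon$ to stay inside $S_T$), set $u^\delta:=u\ast(\rho_\delta\otimes\sigma_\delta)$. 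Then $u^\delta\in C^\infty(\mathbb{R}^{N+1})$, it is uniformly bounded by $\|u\|_{L^\infty}$, and $u^\delta\to u$ uniformly on compact subsets. A direct commutator computation — which crucially exploits $b_{jj}=0$ so that the commutator kernel $z_k\partial_{y_j}(\rho_\delta\otimes\sigma_\delta)$ has vanishing integral after summing in $j,k$ against $b_{jk}$ — yields $\partial^2_{y_iy_j}u^\delta\to\partial^2_{y_iy_j}u$ and $Yu^\delta\to Yu$ in $L^p_{\mathrm{loc}}$ for every $1\le p<\infty$, and hence $\mathcal{L}u^\delta\to\mathcal{L}u$ in the same sense. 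For the smooth function $u^\delta$, the formal argument of the previous paragraph is rigorous and produces $(\dagger)$ with $u$ replaced by $u^\delta$; dominated convergence, based once more on the Gaussian bounds of Section \ref{sec sharp estimates fund sol}, then permits passage to the limit $\delta\to 0^+$ and establishes $(\dagger)$ for $u$, completing the argument.
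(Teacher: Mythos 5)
Your proposal is correct and follows the same broad strategy as the paper: derive $(\dagger)$ first for smooth $u$ via the adjoint relation $\mathcal{L}^*\Gamma(x,t;\cdot)=0$ (which you encode through the slice function $F(s)=\int_{\mathbb{R}^N}\Gamma(x,t;y,s)u(y,s)\,dy$, where indeed $F'(s)=-\int\Gamma\,\mathcal{L}u\,dy$ after an integration by parts in $y$ that uses $\mathrm{tr}(B)=0$ and the Gaussian decay of $\Gamma$), then mollify, pass to the limit $\delta\to 0^+$, and finally let $\varepsilon\to 0^+$ invoking Proposition \ref{Thm limit v epsi} with $g=u$.

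The one genuinely different ingredient is your choice of mollifier. The paper regularizes via a group-adapted mollifier $u_\varepsilon(\xi)=\int J_\varepsilon(\xi\circ\eta^{-1})u(\eta)\,d\eta$: because $\partial_{x_1},\dots,\partial_{x_q}$ and $Y$ are left-invariant under $\circ$, this mollification commutes \emph{exactly} with all the relevant vector fields, so that $\partial^2_{x_ix_j}u_\varepsilon=(\partial^2_{x_ix_j}u)_\varepsilon$ and $Yu_\varepsilon=(Yu)_\varepsilon$, and there is no commutator to estimate. You instead use a Euclidean product mollifier, which commutes with $\partial_{x_i}\partial_{x_j}$ but not with the drift $Y$; you then have to show the commutator
\[
Yu^\delta-(Yu)^\delta=\sum_{j,k}b_{jk}\int\bigl[u(y,s)-u(x,t)\bigr](x_k-y_k)\,(\partial_j\rho_\delta)(x-y)\,\sigma_\delta(t-s)\,dy\,ds
\]
tends to zero, which is where your observation about $\mathrm{tr}(B)=0$ is used: after subtracting $u(x,t)$ the identity $\int z_k(\partial_j\rho_\delta)(z)\,dz=-\delta_{jk}$ makes the constant term drop out, and the remainder is controlled by the modulus of continuity of $u$ and the uniform $L^1$-bound on $z_k\partial_j\rho_\delta$. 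This works, but it is a genuine extra step, and it is worth noting that it is \emph{not} an optional convenience: for $u\in\mathcal{S}^0(\tau;T)$ the derivatives $\partial_{x_j}u$ with $j>q$ are not controlled, so one cannot brute-force the commutator with first-derivative bounds; the trace-zero cancellation plus continuity of $u$ is essential. The group-adapted mollifier sidesteps this entirely (and also automatically preserves the vanishing of $u$ for $t\le\tau$ up to a small shift, which is slightly more awkward with a Euclidean time-mollifier near $s=T$, as you noted). In short: both routes are valid; the paper's choice is structurally cleaner, yours is off-the-shelf at the cost of the commutator estimate you correctly identified.
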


\begin{proof}
Since $u\in\mathcal{S}^{0}(\tau;T)$, then $\mathcal{L}u\in L^{\infty}(S_{T})$.
Thus, ta\-king into account \eqref{eq:integralGamma1} in Theorem
\ref{Thm fund sol coeff t dip}, for every $(x,t)\in S_{T}$ we get
\begin{equation}%
\begin{split}
&  \bigg|\int_{\mathbb{R}^{N}\times(\tau,t)}\big|\Gamma(x,t;y,s)\mathcal{L}%
u(y,s)\big|\,dy\,ds\bigg|\\
&  \qquad\leq\Vert\mathcal{L}u\Vert_{L^{\infty}(S_{T})}\bigg|\int_{\tau}%
^{t}\bigg(\int_{\mathbb{R}^{N}}\Gamma(x,t;y,s)\,dy\bigg)ds\bigg|=|t-\tau
|<\infty,
\end{split}
\label{eq:summGammaLLRF}%
\end{equation}
and this proves that the right-hand side of \ref{repr formula u} is
\emph{finite}. Now, in order to e\-sta\-bli\-sh the representation formula
\ref{repr formula u}, we proceed by steps. \medskip

\textsc{Step I.} Let us first prove \ref{repr formula u} by assuming that
$u\in\mathcal{S}^{0}(\tau;T)$ satisfies the following \emph{additional
properties}:

\begin{itemize}
\item[(i)] $u\in C^{\infty}(S_{T})$;

\item[(ii)] there exists $r > 0$ such that
\[
\text{$u(x,t) = 0$ for every $(x,t)\in S_{T}$ with $|x| > r$}.
\]

\end{itemize}

Then, owing to \eqref{eq:summGammaLLRF} we have
\begin{equation}
\int_{\mathbb{R}^{N}\times(\tau,t)}\Gamma(x,t;\cdot)\,\mathcal{L}%
u\,dy\,ds=\lim_{\e\rightarrow0^{+}}\int_{\mathbb{R}^{N}\times(\tau
,t-\e)}\Gamma(x,t;\cdot)\,\mathcal{L}u\,dy\,ds; \label{eq:topasslimitRFStepI}%
\end{equation}
moreover, since we are assuming that $u\in C^{\infty}(S_{T})$, we can write
\begin{equation}%
\begin{split}
&  \int_{\mathbb{R}^{N}\times(\tau,t-\e)}\Gamma(x,t;\cdot)\,\mathcal{L}%
u\,dy\,ds\\[0.1cm]
&  \qquad=\int_{\tau}^{t-\e}\bigg(\int_{\mathbb{R}^{N}}\Gamma(x,t;\cdot
)\,\mathcal{L}_{0}u\,dy\bigg)ds-\int_{\mathbb{R}^{N}}\bigg(\int_{\tau}%
^{t-\e}\Gamma(x,t;\cdot)\,\partial_{s}u\,ds\bigg)dy,
\end{split}
\label{eq:whereinsertFE}%
\end{equation}
where we have written $\mathcal{L}=\mathcal{L}_{0}-\partial_{s}$, that is,
\[
\textstyle\mathcal{L}_{0}=\sum_{i,j=1}^{q}a_{ij}(s)\partial_{y_{i}y_{j}}%
+\sum_{j,k=1}^{N}b_{jk}y_{k}\partial_{y_{j}}.
\]
Now, owing to Theorem \ref{Thm fund sol coeff t dip}-(1), we readily see that
$y\mapsto\Gamma(x,t;y,s)\in C^{\infty}(\mathbb{R}^{N})$ for every {fixed}
point $(x,t)\in S_{T}$ and every $s<t-\e$; as a consequence, taking into
account the additional assumptions (i)-(ii), we have
\begin{equation}
\int_{\mathbb{R}^{N}}\Gamma(x,t;\cdot)\,\mathcal{L}_{0}u\,dy=\int%
_{\mathbb{R}^{N}}(\mathcal{L}_{0})^{\ast}\Gamma(x,t;\cdot)\,u\,dy,
\label{eq:intbypartsRF}%
\end{equation}
where $\mathcal{L}_{0}^{\ast}$ denotes the formal adjoint of $\mathcal{L}_{0}%
$, that is,
\[
\textstyle\mathcal{L}_{0}^{\ast}=\sum_{i,j=1}^{q}a_{ij}(s)\partial_{y_{i}%
y_{j}}-\sum_{j,k=1}^{N}b_{jk}y_{k}\partial_{y_{j}}.
\]
On the other hand, since from Theorem \ref{Thm fund sol coeff t dip}-(1) we
also derive that $s\mapsto\Gamma(x,t;y,s)$ is Lipschitz-continuous on
$(\tau,t-\e)$, again by (i)-(ii) we have
\begin{equation}
\int_{\tau}^{t-\e}\Gamma(x,t;\cdot)\,\partial_{s}u\,ds=\Gamma
(x,t;y,t-\e)u(y,t-\e)-\int_{\tau}^{t-\e}\partial_{s}\Gamma(x,t;\cdot)\,u\,ds,
\label{eq:TFCRF}%
\end{equation}
where we have also used the fact that $u\in\mathcal{S}^{0}(\tau;T)$. Gathering
\eqref{eq:intbypartsRF}-\eqref{eq:TFCRF}, from the above
\eqref{eq:whereinsertFE} we then obtain the following identity
\begin{align*}
\int_{\mathbb{R}^{N}\times(\tau,t-\e)}\Gamma(x,t;\cdot)\,  &  \mathcal{L}%
u\,dy\,ds=-\int_{\mathbb{R}^{N}}\Gamma(x,t;y,t-\e)u(y,t-\e)\,dy\\
&  +\int_{\mathbb{R}^{N}\times(\tau,t-\e)}(\mathcal{L}_{0}^{\ast}+\partial
_{s})\Gamma(x,t;\cdot)\,u\,dy\,ds.
\end{align*}
Then, taking into account \eqref{eq:topasslimitRFStepI}, in order to establish
formula \eqref{repr formula u} it is enough to prove the following fact:
\begin{equation}%
\begin{split}
\bigg(\int_{\mathbb{R}^{N}\times(\tau,t-\e)}\!\!\!(\mathcal{L}_{0}^{\ast
}\,+\,  &  \partial_{s})\Gamma(x,t;\cdot)\,u\,dy\,ds-\int_{\mathbb{R}^{N}%
}\Gamma(x,t;y,t-\e)u(y,t-\e)\,dy\bigg)\\
&  \rightarrow\,\,-u(x,t)\qquad\text{for every $(x,t)\in S_{T}$ as
$\e\rightarrow0^{+}$}.
\end{split}
\label{eq:toprovelungaRF}%
\end{equation}
To this end we first notice that, owing to Theorem
\ref{Thm fund sol coeff t dip}, we have
\begin{equation}
(\mathcal{L}_{0}^{\ast}+\partial_{s})\Gamma(x,t;\cdot)=\mathcal{L}^{\ast
}\Gamma(x,t;\cdot)=0\quad\text{a.e.\thinspace on $\mathbb{R}^{N}\times
(\tau,t-\e)$}; \label{eq:LLzerostarGammazeroRF}%
\end{equation}
moreover, since $u\in\mathcal{S}^{0}(\tau;T)$ (hence, in particular, $u$ is
continuous and bounded on the strip $S_{T}$), from Proposition
\ref{Thm limit v epsi} we infer that
\begin{equation}
\lim_{\e\rightarrow0^{+}}\int_{\mathbb{R}^{N}}\Gamma
(x,t;y,t-\e)u(y,t-\e)\,dy=u(x,t)\quad\text{pointwise on $S_{T}$}.
\label{eq:limitbyPropveRF}%
\end{equation}
By combining \eqref{eq:LLzerostarGammazeroRF}-\eqref{eq:limitbyPropveRF}, we
immediately obtain \eqref{eq:toprovelungaRF}. \medskip

\textsc{Step II.} Let us now prove the representation formula
\eqref{repr formula u} by dropping the additional assumption (ii) on $u$, that
is, we only suppose that
\[
u\in\mathcal{S}^{0}(\tau;T)\cap C^{\infty}(S_{T}).
\]
To begin with, we fix a cut-off function $\phi_{0}\in C_{0}^{\infty
}(\mathbb{R}^{N})$ such that

\begin{itemize}
\item[(a)] $0\leq\phi_{0}\leq1$ in $\mathbb{R}^{N}$; \vspace*{0.1cm}

\item[(b)] $\phi_{0}\equiv1$ on $\{|x| < 1\}$ and $\phi_{0}\equiv0$ on $\{|x|
> 2\}$.
\end{itemize}

Moreover, for every $n\geq1$ we set $\phi_{n}(x):=\phi_{0}(x/n)$, and we
define
\[
u_{n}:=u\cdot\phi_{n}.
\]
Owing to (a)-(b), it is readily seen that $u_{n}\in\mathcal{S}^{0}(\tau;T)\cap
C^{\infty}(S_{T})$ and $u_{n}(x,t)=0$ for every $(x,t)\in S_{T}$ with $|x|>n$;
hence, by Step I we can write
\begin{equation}
u_{n}(x,t)=-\int_{\mathbb{R}^{N}\times(\tau,t)}\Gamma(x,t;\cdot)\,\mathcal{L}%
u_{n}\,dy\,ds\quad\text{for every $(x,t)\in S_{T}$}.
\label{eq:topasslimitnRFStepII}%
\end{equation}
We now aim to pass to the limit as $n\rightarrow\infty$ in the above
\eqref{eq:topasslimitnRFStepII}. By definition of $\phi_{n}$, we have
\begin{equation}
\lim_{n\rightarrow\infty}u_{n}(x,t)=u(x,t)\quad\text{for every fixed $(x,t)\in
S_{T}$}. \label{eq:pointwiselimunRF}%
\end{equation}
As to the right-hand side, instead, we rely on the dominated convergence
th\-e\-o\-rem. First of all, since $u\in C^{\infty}(S_{T})$ and $\phi_{n}\in
C_{0}^{\infty}(\mathbb{R}^{N})$, we have
\[
\mathcal{L}u_{n}=\mathcal{L}(u\phi_{n})=(\mathcal{L}u)\cdot\phi_{n}%
+u\cdot(\mathcal{L}\phi_{n})+2\sum_{i,j=1}^{q}a_{ij}(t)\partial_{x_{i}%
}u\,\partial_{x_{j}}\phi_{n};
\]
moreover, since $u\in\mathcal{S}^{0}(\tau;T)$ and $\phi_{n}=\phi_{0}(\cdot
/n)$, there exists a constant $\mathbf{c}>0$, depending on $u$ and $\phi_{0}$
but {independent of $n$}, such that
\[
\Big|u\cdot(\mathcal{L}\phi_{n})+2\sum_{i,j=1}^{q}a_{ij}(t)\partial_{x_{i}%
}u\,\partial_{x_{j}}\phi_{n}\Big|\leq\frac{\mathbf{c}}{n}\qquad\text{pointwise
on $S_{T}$}.
\]
This, together with the fact that $\phi_{n}\equiv1$ on $\{|x|<n\}$, implies
\[
\lim_{n\rightarrow\infty}\mathcal{L}u_{n}=\mathcal{L}u\qquad\text{pointwise on
$S_{T}$}.
\]
On the other hand, since $\mathcal{L}u\in L^{\infty}(S_{T})$ and $0\leq
\phi_{n}\leq1$, we also have
\[
|\mathcal{L}u_{n}|\leq\Vert\mathcal{L}u\Vert_{L^{\infty}(S_{T})}%
+\frac{\mathbf{c}}{n}\leq\Vert\mathcal{L}u\Vert_{L^{\infty}(S_{T})}%
+\mathbf{c}=:\mathbf{c}^{\prime}\qquad\text{for every $n\geq1$};
\]
gathering these facts, and taking into account \eqref{eq:integralGamma1}, we
can then apply the dominated convergence theorem in the right-hand side of
\eqref{eq:topasslimitnRFStepII}, yielding
\begin{equation}
\lim_{n\rightarrow\infty}\int_{\mathbb{R}^{N}\times(\tau,t)}\Gamma
(x,t;\cdot)\,\mathcal{L}u_{n}\,dy\,ds=\int_{\mathbb{R}^{N}\times(\tau
,t)}\Gamma(x,t;\cdot)\,\mathcal{L}u\,dy\,ds. \label{eq:limitintegralRF}%
\end{equation}
Finally, by combining \eqref{eq:pointwiselimunRF} and
\eqref{eq:limitintegralRF} we can let $n\rightarrow\infty$ in
\eqref{eq:topasslimitnRFStepII}, thus obtaining the desired representation
formula \eqref{repr formula u} for $u$. \medskip

\textsc{Step III:} Let us finally prove the representation formula
\eqref{repr formula u} for every $u\in\mathcal{S}^{0}(\tau;T)$.

To begin with, we fix a point $\xi_{0}=(x_{0},t_{0})\in S_{T}$ and we choose
$0<\e_{0}<1$ in such a way that $\xi_{0}\in S_{T-\e_{0}}$. Moreover, we choose
a function $J\in C_{0}^{\infty}(\mathbb{R}^{N+1})$ such that $J\geq0$
pointwise in $\mathbb{R}^{N+1}$, $\mathrm{supp}(J)\subseteq B_{1}(0)$ and
\begin{equation}
\int_{\mathbb{R}^{N+1}}J(\eta)\,d\eta=\int_{B_{1}}J(\eta)\,d\eta=1,
\label{eq:integralJone}%
\end{equation}
where $B_{1}(0)=\{\eta:\,d(\eta,0)<1\}$ is the $d$-ball with centre $0$ and
radius $1$. We then define, for every fixed $0<\e<\e_{0}$, the \emph{$(\e,\G)$%
-convolution kernel}
\[
J_{\varepsilon}(\eta):=\e^{-Q-2}J\big(D(1/\e)\eta\big)
\]
(where $D(\cdot)$ and $Q>0$ are as in \eqref{dilations} and
\eqref{eq:defQhomdim}, respectively), and we consider the so-called
\emph{mollifier of $u$ related to the kernel $J_{\varepsilon}$}, that is,
\begin{align*}
&  u_{\varepsilon}:S_{T-\e_{0}}\rightarrow\mathbb{R},\\[0.1cm]
&  u_{\varepsilon}(\xi):=\int_{S_{T}}J_{\varepsilon}(\xi\circ\eta
^{-1})\,u(\eta)\,d\eta=\int_{B_{1}(0)}J(\zeta)\,u\big((D(\e)\zeta^{-1}%
)\circ\xi\big)\,d\zeta.
\end{align*}
We explicitly point out, for the sake of completeness, that the definition of
$u_{\varepsilon}$ is \emph{meaningful}: in fact, using
\eqref{eq:convolutionG}, \eqref{dilations} and \eqref{d} we easily see that

\begin{itemize}
\item[(a)] for every fixed $\xi=(x,t)\in S_{T-\e_{0}}$, one has
\begin{equation}
\mathrm{supp}\big(\eta\mapsto J_{\varepsilon}(\xi\circ\eta^{-1})\big)\subseteq
\{\eta=(y,s):\,|t-s|<\e\}\subseteq S_{T}; \label{eq:supportJe}%
\end{equation}

\item[(b)] for every $\zeta\in B_{1}(0)$ and $\xi\in S_{T-\e_{0}}$, one has
$(D(\e)\zeta^{-1})\circ\xi\in S_{T}$.
\end{itemize}

We now claim that:%
\begin{equation}
u_{\varepsilon}\in\mathcal{S}^{0}(\tau-\e_{0};T-\e_{0})\cap C^{\infty
}(S_{T-\e_{0}}). \label{eq:toproveueRFStepIII}%
\end{equation}
Indeed, since $J\in C_{0}^{\infty}(\mathbb{R}^{N+1})$, by a standard
dominated-convergence argument we easily infer that $u_{\varepsilon}\in
C^{\infty}(S_{T-\e_{0}})$; moreover, taking into account that $u(x,t)\equiv0$
for every $(x,t)\in S_{T}$ with $t\leq\tau$, by \eqref{eq:supportJe} we derive
that
\[
u_{\varepsilon}(\xi)=\int_{\{|t-s|<\e\}}J_{\varepsilon}((x,t)\circ
(y,s)^{-1})\,u(y,s)\,dy\,ds=0
\]
for every $\xi=(x,t)\in S_{T-\e_{0}}$ with $t\leq\tau-\e_{0}$. Hence, to prove
the claimed \eqref{eq:toproveueRFStepIII} we are left to show that the
derivatives $\partial_{x_{i}x_{j}}^{2}u_{\varepsilon},Yu_{\varepsilon}$, which
exist {pointwise and in the classical sense on $S_{T-\e_{0}}$}, are globally
bounded in $S_{T-\e_{0}}$ (for $1\leq i,j\leq q$).

To this end it suffices to observe that, since $u\in\mathcal{S}^{0}(\tau;T)$
and since the vector fields $\partial_{x_{1}},\ldots,\partial_{x_{q}},Y$ are
{left\--in\-va\-riant} with respect to $\circ$, we can write
\begin{equation}%
\begin{split}
\partial_{x_{i}x_{j}}^{2}u_{\varepsilon}\left(  \xi\right)   &  =\int%
_{B_{1}(0)}J(\zeta)\,(\partial_{x_{i}x_{j}}^{2}u)\big(D(\e)\zeta^{-1})\circ
\xi\big)\,d\zeta\qquad(\text{for $i=1,\ldots,q$}),\\
Yu_{\varepsilon}\left(  \xi\right)   &  =\int_{B_{1}(0)}J(\zeta
)\,(Yu)\big(D(\e)\zeta^{-1})\circ\xi\big)\,d\zeta,
\end{split}
\label{eq:deprdexiuYuRFStepIII}%
\end{equation}
thus, since $\partial_{x_{i}x_{j}}^{2}u,Yu\in L^{\infty}(S_{T})$, from
\eqref{eq:integralJone} we obtain
\begin{equation}%
\begin{split}
\Vert\partial_{x_{i}x_{j}}^{2}u_{\varepsilon}\Vert_{L^{\infty}(S_{T-\e_{0}})}
&  \leq\Vert\partial_{x_{i}x_{j}}^{2}u\Vert_{L^{\infty}(S_{T})}\quad(\text{for
$1\leq i,j\leq q$});\\
\Vert Yu_{\varepsilon}\Vert_{L^{\infty}(S_{T-\e_{0}})}  &  \leq\Vert
Yu\Vert_{L^{\infty}(S_{T})},
\end{split}
\label{eq:boundederue}%
\end{equation}
and this completes the proof of \eqref{eq:toproveueRFStepIII}.

Now we have established \eqref{eq:toproveueRFStepIII}, thanks to Step II we
know that the representation formula \eqref{repr formula u} holds {for the
function $u_{\varepsilon}$ on the strip $S_{T-\e_{0}}$}: in particular, since
we have that $\xi_{0}=(x_{0},t_{0})\in S_{T-\e_{0}}$, we can write
\begin{equation}
u_{\varepsilon}(x_{0},t_{0})=-\int_{\mathbb{R}^{N}\times(\tau-\e_{0},t_{0}%
)}\Gamma(x_{0},t_{0};\cdot)\,\mathcal{L}u_{\varepsilon}\,dy\,ds.
\label{eq:RFforueStepIII}%
\end{equation}
We then pass to the limit as $\e\rightarrow0^{+}$ in
\eqref{eq:RFforueStepIII}. As to the left-hand, since $u$ is continuous and
bounded on $S_{T}$, it is easily seen that
\begin{equation}
\lim_{\varepsilon\rightarrow0^{+}}u_{\varepsilon}(x_{0},t_{0})=u(x_{0},t_{0}).
\label{eq:limitueuRFStepIII}%
\end{equation}
As to the right-hand side, taking into account \eqref{eq:deprdexiuYuRFStepIII}
and the fact that
\[
\partial_{x_{i}x_{j}}^{2}u,Yu\in L^{\infty}(S_{T}),
\]
we can use a classical approximation argument to prove that $\partial
_{x_{i}x_{j}}^{2}u_{\varepsilon}\rightarrow\partial_{x_{i}x_{j}}^{2}u$ (for
e\-ve\-ry $1\leq i,j\leq q$) and $Yu_{\varepsilon}\rightarrow Y_{u}$ in
$L_{\mathrm{loc}}^{1}(S_{T-\e_{0}})$ as $\e\rightarrow0^{+}$; as a
consequence, by possibly choosing a sequence $\e_{n}\rightarrow0$ as
$n\rightarrow\infty$, we get
\[
\lim_{\e\rightarrow0^{+}}\mathcal{L}u_{\varepsilon}=\lim_{\e\rightarrow0^{+}%
}\bigg(\sum_{i,j=1}^{q}a_{ij}(\cdot)\partial_{x_{i}x_{j}}^{2}u_{\varepsilon
}+Yu_{\varepsilon}\bigg)=\mathcal{L}u\quad\text{a.e.\thinspace in
$S_{T-\e_{0}}$}.
\]
On the other hand, using \eqref{eq:boundederue} and the fact that the
coefficients $a_{ij}$ are globally bounded, we also have the following
estimate
\[%
\begin{split}
|\mathcal{L}u_{\varepsilon}|  &  \leq\sum_{i,j=1}^{q}\Vert a_{ij}%
\Vert_{L^{\infty}(\mathbb{R})}\cdot\Vert\partial_{x_{i}x_{j}}^{2}%
u\Vert_{L^{\infty}(S_{T})}+\Vert Yu\Vert_{L^{\infty}(S_{T})}\\
&  =:\mathbf{c},\qquad\text{for every $0<\e<\e_{0}$}.
\end{split}
\]
Gathering these facts, and recalling that $J\in C_{0}^{\infty}(\mathbb{R}%
^{N+1})$, we can then apply the dominated con\-ver\-gen\-ce theorem in the
right-hand side of \eqref{eq:RFforueStepIII}, getting
\begin{equation}%
\begin{split}
&  \lim_{\varepsilon}\int_{\mathbb{R}^{N}\times(\tau-\e_{0},t_{0})}%
\Gamma(x_{0},t_{0};\cdot)\,\mathcal{L}u_{\varepsilon}\,dy\,ds\\
&  \qquad\qquad=\int_{\mathbb{R}^{N}\times(\tau-\e_{0},t_{0})}\Gamma
(x_{0},t_{0};\cdot)\,\mathcal{L}u\,dy\,ds
\end{split}
\label{eq:limitLLueRFStepIII}%
\end{equation}
Finally, by combining
\eqref{eq:limitueuRFStepIII}-\eqref{eq:limitLLueRFStepIII} and by taking into
account that
\[
\text{$u=\mathcal{L}u\equiv0$ a.e.\thinspace on $\mathbb{R}^{N}\times
(-\infty,\tau)$},
\]
we can pass to the limit as $\e\rightarrow0^{+}$ in \eqref{eq:RFforueStepIII},
thus obtaining the desired representation formula \eqref{repr formula u} for
$u$. This completes the proof.
\end{proof}

Starting from the representation formula \eqref{repr formula u}, we easily
obtain the following representation formula for the \emph{first-order
derivatives} of $u$.

\begin{corollary}
\label{cor:firtderivrepr} Let $T\in\mathbb{R}$ be fixed, and let $\tau<T$.
Moreover, let $u\in\mathcal{S}^{0}(\tau;T)$ and let $1\leq i\leq q$ be fixed.
Then, we have the representation formula
\begin{equation}
\partial_{x_{i}}u(x,t)=-\int_{\mathbb{R}^{N}\times(\tau,t)}\partial_{x_{i}%
}\Gamma(x,t;\cdot)\,\mathcal{L}u\,dy\,ds, \label{eq:reprudexi}%
\end{equation}
for every $(x,t)\in S_{T}$. Moreover,
\begin{equation}
\Vert\partial_{x_{i}}u\Vert_{L^{\infty}(S_{T})}\leq c\,\Vert\mathcal{L}%
u\Vert_{L^{\infty}(S_{T})}\cdot\sqrt{T-\tau}. \label{eq:dexiuStatement}%
\end{equation}

\end{corollary}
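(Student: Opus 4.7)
My plan is to derive \eqref{eq:reprudexi} by starting from the representation formula \eqref{repr formula u} just established in Theorem \ref{Thm repr formula u} and differentiating in $x_i$ under the integral sign, so that the $\partial_{x_i}$ lands on $\Gamma$ rather than on $u$ (which I cannot assume smooth in $x$). This is legitimate because, by Theorem \ref{Thm fund sol coeff t dip}-(1), the kernel $\Gamma(\cdot;y,s)$ is $C^\infty$ in $x$ away from $(y,s)$; the pointwise $x_i$-derivative of the integrand therefore exists, and the only real work is to produce an $L^1$ majorant, locally uniformly in $x$, so as to justify the exchange of $\partial_{x_i}$ with the integral via dominated convergence.

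The sharp Gaussian bound of Theorem \ref{Thm bound derivatives} is exactly what I need. Taking the multi-index $\bd\alpha = \bd e_i$ with $1\le i\le q$, so that $\omega(\bd\alpha)=q_i=1$, the theorem gives
\[
|\partial_{x_i}\Gamma(x,t;y,s)| \le \frac{c}{\sqrt{t-s}}\,\Gamma_{c_1\nu^{-1}}(x,t;y,s)\qquad(s<t),
\]
with $c_1$ independent of $\nu$. Using that $\mathcal{L}u\in L^\infty(S_T)$ (Remark \ref{rem:regulLLu}) and that, by Theorem \ref{Thm fund sol coeff t dip}-(4) applied to the constant-coefficient operator $\mathcal{L}_{c_1\nu^{-1}}$, one has $\int_{\mathbb{R}^N}\Gamma_{c_1\nu^{-1}}(x,t;y,s)\,dy=1$ for every $s<t$, the $y$-integral of the right-hand side is $c/\sqrt{t-s}$, which is integrable on $(\tau,t)$. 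This is the dominating function I require; a standard dominated-convergence argument then yields \eqref{eq:reprudexi}.

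The $L^\infty$ bound \eqref{eq:dexiuStatement} falls out of \eqref{eq:reprudexi} with no extra effort: pulling $\|\mathcal{L}u\|_{L^\infty(S_T)}$ out of the integral and carrying out the same $y$-then-$s$ integration already used above produces the factor $\int_\tau^t (t-s)^{-1/2}\,ds = 2\sqrt{t-\tau}\le 2\sqrt{T-\tau}$, absorbed in the constant $c$. I do not foresee any substantial obstacle in this argument; the only delicate point is the behavior of $\partial_{x_i}\Gamma$ as $s\to t^-$, and it is precisely neutralized by the square-root singularity produced by Theorem \ref{Thm bound derivatives} together with the mass-one normalization of $\Gamma_{c_1\nu^{-1}}$.
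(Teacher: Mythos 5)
The key estimates you identify — the Gaussian bound $|\de_{x_i}\Gamma|\le c\,(t-s)^{-1/2}\,\Gamma_{c_1\nu^{-1}}$ from Theorem \ref{Thm bound derivatives}, the mass-one normalization of $\Gamma_{c_1\nu^{-1}}$, and the resulting integrability of $(t-s)^{-1/2}$ on $(\tau,t)$ — are exactly the ingredients the paper uses, and the $L^\infty$ bound \eqref{eq:dexiuStatement} does follow immediately once \eqref{eq:reprudexi} is in hand. However, the assertion that ``a standard dominated-convergence argument then yields \eqref{eq:reprudexi}'' hides a genuine gap. The function $c\,(t-s)^{-1/2}$, which you call ``the dominating function I require,'' is \emph{not} a pointwise majorant of the integrand $|\de_{x_i}\Gamma(x,t;y,s)\,\mathcal{L}u(y,s)|$ on $\mathbb{R}^N\times(\tau,t)$; it is the $dy$-integral of such a majorant. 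The actual pointwise majorant $c\,(t-s)^{-1/2}\,\Gamma_{c_1\nu^{-1}}(x,t;y,s)\,\|\mathcal{L}u\|_{L^\infty(S_T)}$ depends on $x$ and cannot be fed directly into the Leibniz rule. Worse, the supremum over $x'$ in a ball $B_\delta(x_0)$ is not in $L^1(\mathbb{R}^N\times(\tau,t))$: near $s=t$ the suprema of the shifted Gaussian derivatives yield, after the $y$-integration, a factor of order $\delta/(t-s)$, whose $s$-integral diverges logarithmically. So a single application of dominated convergence to the double integral does not work.

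What actually closes the gap is one of two routes. You could do an iterated exchange: for each fixed $s<t$ differentiate under the inner $dy$-integral (legitimate, since for fixed $s<t$ the Gaussian has positive width and an $L^1(\mathbb{R}^N)$ majorant uniform for $x'$ near $x$), then observe that
\[
\Big|\de_{x_i}\!\int_{\mathbb{R}^N}\Gamma(x',t;y,s)\,\mathcal{L}u(y,s)\,dy\Big|\le \frac{c\,\|\mathcal{L}u\|_{L^\infty(S_T)}}{\sqrt{t-s}}
\]
is uniform in $x'$ and $s$-integrable, and apply dominated convergence to the $h\to0$ difference quotient of the outer $ds$-integral. Alternatively — and this is what the paper does — define $u_\varepsilon(x,t)=-\int_{\mathbb{R}^N\times(\tau,t-\varepsilon)}\Gamma(x,t;\cdot)\,\mathcal{L}u\,dy\,ds$, use \eqref{repr formula u} to see $u_\varepsilon\to u$ pointwise, note that $u_\varepsilon$ is $C^1$ in $x_i$ with $\de_{x_i}u_\varepsilon=-\int_{\mathbb{R}^N\times(\tau,t-\varepsilon)}\de_{x_i}\Gamma\,\mathcal{L}u$ (a genuine dominated-convergence statement, since $t-s\ge\varepsilon>0$ on the truncated region), and then show $\de_{x_i}u_\varepsilon\to g$ uniformly on $S_T$ using precisely your tail estimate $\int_{t-\varepsilon}^t(t-s)^{-1/2}\,ds=2\sqrt{\varepsilon}$; uniform convergence of the derivatives together with pointwise convergence of the functions gives $\de_{x_i}u\equiv g$. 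Either route is fine, but your write-up as stated conflates the two and does not, by itself, justify the exchange of differentiation and integration.
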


\begin{proof}
We start noting that, combining the global estimates for $\partial_{x_{i}%
}\Gamma$ contained in Theorem \ref{Thm bound derivatives}, see
\eqref{eq:mainestim}, with identity \eqref{eq:integralGamma1}, we have, for
every $x\in\mathbb{R}^{N}$ and every $\tau<t$,%
\begin{align}
\int_{\mathbb{R}^{N}\times(\tau,t)}|\partial_{x_{i}}\Gamma(x,t;\cdot
)|\,dy\,ds  &  \leq c\int_{\tau}^{t}\frac{1}{\sqrt{t-s}}\bigg(\int%
_{\mathbb{R}^{N}}\Gamma_{c_{1}\nu^{-1}}(x,t;\cdot)\,dy\bigg)ds\nonumber\\
&  =c\int_{\tau}^{t}\frac{1}{\sqrt{t-s}}\,ds=2c\sqrt{t-\tau}.
\label{eq:integraldexi}%
\end{align}

Let us now prove formula \eqref{eq:reprudexi}. To begin with, since
$u\in\mathcal{S}^{0}(\tau;T)$, we have $\mathcal{L}u\in L^{\infty}(S_{T})$;
thus, from \eqref{eq:integraldexi} we get
\begin{equation}%
\begin{split}
&  \bigg|\int_{\mathbb{R}^{N}\times(\tau,t)}|\partial_{x_{i}}\Gamma
(x,t;\cdot)|\,|\mathcal{L}u|dy\,ds\bigg|\\
&  \qquad\leq c\,\Vert\mathcal{L}u\Vert_{L^{\infty}(S_{T})}\cdot\sqrt
{|t-\tau|}\qquad\forall\,\,(x,t)\in S_{T}%
\end{split}
\label{eq:dexiuglobalbd}%
\end{equation}
(where $c>0$ only depends on $\nu$), and this shows that the function
\[
g(x,t):=-\int_{\mathbb{R}^{N}\times(\tau,t)}\partial_{x_{i}}\Gamma
(x,t;\cdot)\,\mathcal{L}u\,dy\,ds,
\]
is {well-defined on $S_{T}$}. We then turn to prove that $\partial_{x_{i}%
}u\equiv g$ pointwise in $S_{T}$ by an approximation argument. To this end, we
fix $0<\e\ll1$ and we define
\[
u_{\varepsilon}(x,t):=-\int_{\mathbb{R}^{N}\times(\tau,t-\e)}\Gamma
(x,t;\cdot)\,\mathcal{L}u\,dy\,ds.
\]
Owing to the representation formula \eqref{repr formula u}, it is readily seen
that $u_{\varepsilon}\rightarrow u$ point\-wi\-se on $S_{T}$ as $\e\rightarrow
0^{+}$; moreover, since $t-s\geq\e>0$ when $s<t-\e$, by simple
do\-mi\-na\-ted-con\-ver\-gen\-ce arguments based on \eqref{eq:mainestim} (and
on the regularity of $\Gamma$, see Theorem \ref{Thm fund sol coeff t dip}-(1))
we easily infer that

\begin{itemize}
\item[(i)] $u_{\varepsilon}\in C(S_{T})$;

\item[(ii)] $u_{\varepsilon}$ is continuously differentiable w.r.t.\thinspace
$x_{i}$ on $S_{T}$, and
\[
\partial_{x_{i}}u_{\varepsilon}(x,t)=-\int_{\mathbb{R}^{N}\times(\tau
,t-\e)}\partial_{x_{i}}\Gamma(x,t;\cdot)\,\mathcal{L}u\,dy\,ds\quad
\forall\,\,(x,t)\in S_{T}.
\]

\end{itemize}

Finally, by \eqref{eq:integraldexi} we also have
\begin{align*}
|\partial_{x_{i}}u_{\varepsilon}(x,t)-g(x,t)|  &  =\int_{\mathbb{R}^{N}%
\times(t-\e,t)}|\partial_{x_{i}}\Gamma(x,t;\cdot)|\,\Vert\mathcal{L}%
u\Vert_{L^{\infty}(S_{T})}\,dy\,ds\\
&  \leq c\,\Vert\mathcal{L}u\Vert_{L^{\infty}(S_{T})}\sqrt{\e}\qquad
\text{uniformly for $(x,t)\in S_{T}$},
\end{align*}
from which we derive that $\partial_{x_{i}}u_{\varepsilon}\rightarrow g$
{uniformly on $S_{T}$ as $\e\rightarrow0^{+}$}. As is well-know, all the above
facts are enough to conclude that
\[
\text{$\partial_{x_{i}}u\equiv g$ on $S_{T}$},
\]
and this is precisely \eqref{eq:reprudexi}. By \eqref{eq:dexiuglobalbd}, this
also implies \eqref{eq:dexiuStatement}.
\end{proof}

With the representation formula \eqref{eq:reprudexi} at hand, we now aim to
prove a representation formula for the \emph{de\-ri\-va\-ti\-ves}
$\partial_{x_{i}x_{j}}u$ of a function $u\in\mathcal{S}^{0}(\tau;T)$. \medskip

To this end, we first establish the following proposition.

\begin{proposition}
\label{Proposition Claim} Let $\alpha\in(0,1)$ be fixed, and let $1\leq
i,j\leq q$. Then, there exists a constant ${c}={c}(\alpha)>0$ such that, for
every $x\in\mathbb{R}^{N}$ and every $\tau<t$, one has
\begin{equation}
\int_{\mathbb{R}^{N}\times(\tau,t)}|\partial_{x_{i}x_{j}}^{2}\Gamma
(x,t;y,s)|\cdot\Vert E(s-t)x-y\Vert^{\alpha}\,dy\,ds\leq{c}(t-\tau)^{\alpha
/2}. \label{eq:integraldexixjconv}%
\end{equation}
As a consequence, we have
\begin{equation}%
\begin{split}
\int_{\mathbb{R}^{N}\times(t-\e,t)}  &  |\partial_{x_{i}x_{j}}^{2}%
\Gamma(x,t;y,s)|\cdot\Vert E(s-t)x-y\Vert^{\alpha}\,dy\,ds\rightarrow0\\
&  \text{\emph{uniformly w.r.t.\thinspace$(x,t)\in\mathbb{R}^{N+1}$} as
$\e\rightarrow0^{+}$}.
\end{split}
\label{eq:integraldexixjunifconv}%
\end{equation}

\end{proposition}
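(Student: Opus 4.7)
The plan is to first handle the pointwise spatial integral by changing variables to reduce it to a translation/dilation-invariant Gaussian integral, and then integrate in time.

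First, since $1\le i,j\le q$ we have $q_i=q_j=1$, so the multi-index $\bd\alpha=\bd e_i+\bd e_j\in\N^N$ has $\omega(\bd\alpha)=2$. By Theorem~\ref{Thm bound derivatives} we obtain
\[
|\de^2_{x_ix_j}\Gamma(x,t;y,s)|\le \frac{c}{t-s}\,\Gamma_{c_1\nu^{-1}}(x,t;y,s)\qquad (s<t).
\]
Next I would reduce the weight $\|E(s-t)x-y\|^\alpha$ to a clean form by the change of variable
\[
v=D_0\!\left(\tfrac{1}{\sqrt{t-s}}\right)(y-E(s-t)x),\qquad dy=(t-s)^{Q/2}\,dv,
\]
whose Jacobian is $(t-s)^{Q/2}$ because $\det D_0(\sqrt{t-s})=(t-s)^{Q/2}$. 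Since $\|\cdot\|$ is $D_0$-homogeneous of degree $1$, the weight becomes $\|y-E(s-t)x\|^\alpha=(t-s)^{\alpha/2}\|v\|^\alpha$. Moreover, using $E(s-t)=E(t-s)^{-1}$ and the homogeneity \eqref{LP 2.20}, one readily checks that
\[
D_0\!\left(\tfrac{1}{\sqrt{t-s}}\right)(x-E(t-s)y)=-E(1)\,v,
\]
so by the explicit expression \eqref{eq.exprGammaalfa} of $\Gamma_{c_1\nu^{-1}}$ we have
\[
\Gamma_{c_1\nu^{-1}}(x,t;y,s)\le \frac{c}{(t-s)^{Q/2}}\exp\!\bigl(-c'|E(1)v|^2\bigr).
\]

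Putting these two facts together and performing the substitution yields
\[
\int_{\R^N}|\de^2_{x_ix_j}\Gamma(x,t;y,s)|\,\|E(s-t)x-y\|^\alpha\,dy\le \frac{c\,(t-s)^{\alpha/2}}{t-s}\int_{\R^N}\! e^{-c'|E(1)v|^2}\|v\|^\alpha\,dv.
\]
Since $B$ is nilpotent, $E(1)=\exp(-B)$ is invertible, hence the rightmost integral is a finite constant (the Gaussian decay dominates the polynomial growth of $\|v\|^\alpha$). Therefore the spatial integral is bounded by $c\,(t-s)^{\alpha/2-1}$. Integrating in $s$ over $(\tau,t)$,
\[
\int_\tau^t (t-s)^{\alpha/2-1}\,ds=\frac{2}{\alpha}(t-\tau)^{\alpha/2},
\]
which proves \eqref{eq:integraldexixjconv}. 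For \eqref{eq:integraldexixjunifconv} the exact same computation, but with the range $s\in(t-\e,t)$, produces the bound $\tfrac{2c}{\alpha}\e^{\alpha/2}$, which is independent of $(x,t)$ and vanishes as $\e\to 0^+$.

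No step looks truly delicate; the only subtle point is organizing the two changes of variable so that (i) the weight $\|E(s-t)x-y\|^\alpha$ and (ii) the Gaussian argument $x-E(t-s)y$ simultaneously become intrinsic quantities in the variable $v$. This is where the homogeneity identities \eqref{LP 2.20} and \eqref{C omogenea}, already exploited in Theorem~\ref{Thm bound derivatives}, are essential: they are what allow one to extract the clean scaling factor $(t-s)^{\alpha/2-1}$ that makes the singularity at $s=t$ integrable for any $\alpha>0$.
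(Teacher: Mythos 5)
Your proof is correct and follows essentially the same route as the paper: both start from the pointwise bound $|\de^2_{x_ix_j}\Gamma|\le c(t-s)^{-1}\Gamma_{c_1\nu^{-1}}$, use the homogeneity identity \eqref{LP 2.20} to rewrite the Gaussian argument $D_0(1/\sqrt{t-s})(x-E(t-s)y)$ as $E(1)$ applied to an intrinsic scaled variable, perform the substitution $y\mapsto E(s-t)x+D_0(\sqrt{t-s})v$ (the paper writes $z=-v$, which is immaterial), and then integrate $(t-s)^{\alpha/2-1}$ in $s$. The only cosmetic difference is that the paper first records the comparison $e^{-c_0|D_0(1/\sqrt{t-s})(x-E(t-s)y)|^2}\le e^{-c_0'|D_0(1/\sqrt{t-s})(E(s-t)x-y)|^2}$ as a separate inequality before changing variables, whereas you absorb it directly into the substitution; the substance is identical.
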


\begin{proof}
Let $x,\tau,t$ be as in the statement. Owing to the global estimates for
$\partial_{x_{i}x_{j}}^{2}\Gamma$ in Theorem \ref{Thm bound derivatives}, see
\eqref{eq:mainestim}, and taking into account
\eqref{eq:Gammaalfaconvolution}-\eqref{eq.exprGammaalfa}, we have
\begin{equation}%
\begin{split}
|\partial_{x_{i}x_{j}}^{2}\Gamma(x,t;y,s)|  &  \leq\frac{c}{t-s}\Gamma
_{c_{1}\nu^{-1}}(x,t;y,s)\\
&  \leq\frac{c_{0}}{(t-s)^{Q/2+1}}e^{-c_{0}\big|D_{0}\big(\frac{1}{\sqrt{t-s}%
}\big)(x-E(t-s)y)\big|^{2}},
\end{split}
\label{eq:estimdexixjGamma}%
\end{equation}
where $c_{0}>0$ is a suitable constant only depending on the number $\nu>0$.
On the other hand, taking into account \eqref{LP 2.20}, for every $s<t$ we can
write
\begin{align*}
D_{0}\Big(\frac{1}{\sqrt{t-s}}\Big)(x-E(t-s)y)  &  =\Big[D_{0}\Big(\frac
{1}{\sqrt{t-s}}\Big)E(t-s)\Big](E(s-t)x-y)\\
&  =E(1)\Big[D_{0}\Big(\frac{1}{\sqrt{t-s}}\Big)(E(s-t)x-y)\Big].
\end{align*}
As a consequence, since $E(1)$ is non-singular, we get
\begin{equation}
e^{-c_{0}\big|D_{0}\big(\frac{1}{\sqrt{t-s}}\big)(x-E(t-s)y)\big|^{2}}\leq
e^{-c_{0}^{\prime}\big|D_{0}\big(\frac{1}{\sqrt{t-s}}\big)(E(s-t)x-y)\big|^{2}%
}, \label{eq:estimexpLP}%
\end{equation}
where $c_{0}^{\prime}>0$ is another constant only depending on $\nu$. Then, by
combining \eqref{eq:estimdexixjGamma} with \eqref{eq:estimexpLP}, we obtain
the following estimate:
\begin{align*}
&  \int_{\mathbb{R}^{N}\times(\tau,t)}|\partial_{x_{i}x_{j}}^{2}%
\Gamma(x,t;y,s)|\cdot\Vert E(s-t)x-y\Vert^{\alpha}\,dy\,ds\\
&  \quad\leq c_{0}\int_{\mathbb{R}^{N}\times(\tau,t)}\frac{1}{(t-s)^{Q/2+1}%
}e^{-c_{0}^{\prime}\big|D_{0}\big(\frac{1}{\sqrt{t-s}}%
\big)(E(s-t)x-y)\big|^{2}}\Vert E(s-t)x-y\Vert^{\alpha}\,dy\,ds\\
&  \quad=c_{0}\int_{\tau}^{t}\frac{1}{(t-s)^{Q/2+1}}\bigg(\int_{\mathbb{R}%
^{N}}e^{-c_{0}^{\prime}\big|D_{0}\big(\frac{1}{\sqrt{t-s}}%
\big)(E(s-t)x-y)\big|^{2}}\Vert E(s-t)x-y\Vert^{\alpha}\,dy\bigg)ds\\[0.1cm]
&  \quad=:(\bigstar).
\end{align*}
To proceed further, we perform in the $dy$-integral the change of variables
\[
y=E(s-t)x-D_{0}(\sqrt{t-s})z.
\]
Reminding that $\det(D_{0}(\lambda))=\lambda^{Q}$ for every $\lambda>0$ (see
\eqref{dilations}-\eqref{eq:defQhomdim}), and since the norm $\Vert\cdot\Vert$
is $D_{0}$-homogeneous of degree $1$, we get
\begin{align*}
(\bigstar)  &  =c_{0}\int_{\tau}^{t}\frac{1}{(t-s)^{1-\frac{\alpha}{2}}%
}\bigg(\int_{\mathbb{R}^{N}}e^{-c_{0}^{\prime}\left\vert z\right\vert ^{2}%
}\Vert z\Vert^{\alpha}\,dz\bigg)ds\\
&  =\frac{c_{0}}{\alpha}(t-\tau)^{\alpha/2}\bigg(\int_{\mathbb{R}^{N}%
}e^{-c_{0}^{\prime}\left\vert z\right\vert ^{2}}\Vert z\Vert^{\alpha
}\,dz\bigg).
\end{align*}
To complete the proof of \eqref{eq:integraldexixjconv} we only need to show
that the $dz$-integral is finite. To this end we observe that, by definition
of $\Vert\cdot\Vert$, we have
\begin{align*}
I  &  :=\int_{\mathbb{R}^{N}}e^{-c_{0}^{\prime}\left\vert z\right\vert ^{2}%
}\Vert z\Vert^{\alpha}\,dz=\int_{\mathbb{R}^{N}}e^{-c_{0}^{\prime}\left\vert
z\right\vert ^{2}}\Big(\sum_{i=1}^{N}|z_{i}|^{1/q_{i}}\Big)^{\alpha}\,dz\\
&  \leq c(\alpha)\sum_{i=1}^{N}\int_{\mathbb{R}^{N}}e^{-c_{0}^{\prime
}\left\vert z\right\vert ^{2}}|z_{i}|^{\alpha/q_{i}}\,dz\leq c(\alpha
)\sum_{i=1}^{N}\int_{\mathbb{R}^{N}}e^{-c_{0}^{\prime}\left\vert z\right\vert
^{2}}|z|^{\alpha/q_{i}}\,dz;
\end{align*}
from this, we immediately see that $I<\infty$, and the proof is complete.
\end{proof}

With Proposition \ref{Proposition Claim} at hand, we can now prove the
following theorem.

\begin{theorem}
\label{Thm repr formula derivatives}For $T>\tau>-\infty\ $and $\alpha\in
(0,1)$, let $u\in\mathcal{S}^{0}(\tau;T)$ be such that $\mathcal{L}u\in
C_{x}^{\alpha}(S_{T})$. Then, we have
\begin{equation}
\partial_{x_{i}x_{j}}^{2}u(x,t)=\int_{\mathbb{R}^{N}\times(\tau,t)}%
\partial_{x_{i}x_{j}}^{2}\Gamma(x,t;y,s)\big[\mathcal{L}%
u(E(s-t)x,s)-\mathcal{L}u(y,s)\big]\,dy\,ds, \label{repr formula u_xx}%
\end{equation}
for {every} $(x,t)\in S_{T}$ and every $1\leq i,j\leq q$.
\end{theorem}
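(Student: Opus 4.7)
The plan is to start from the first-derivative representation in Corollary \ref{cor:firtderivrepr}, introduce a time-truncation parameter $\varepsilon>0$ to avoid the singularity of $\de_{x_i x_j}^2\Gamma$ near the diagonal $s=t$, and then use the cancellation property of Lemma \ref{lem:integralvanishinggamma} combined with the $C_x^\alpha$-regularity of $\mathcal{L}u$ and the integral estimate of Proposition \ref{Proposition Claim} to pass to the limit.

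More concretely, I would first define, for every $\varepsilon>0$ small enough,
\[
g_\varepsilon(x,t):=-\int_{\mathbb{R}^N\times(\tau,t-\varepsilon)}\de_{x_i}\Gamma(x,t;y,s)\,\mathcal{L}u(y,s)\,dy\,ds.
\]
The estimate \eqref{eq:dexiuglobalbd} used in the proof of Corollary \ref{cor:firtderivrepr} already shows $g_\varepsilon\to\de_{x_i}u$ uniformly on $S_T$ as $\varepsilon\to0^+$. On the truncated region $s<t-\varepsilon$ the kernel $\de_{x_i}\Gamma(x,t;\cdot)$ is smooth and bounded jointly in $(x,t)$ together with all its derivatives (by Theorem \ref{Thm bound derivatives} and Remark \ref{rem:regulGammaMV}, each factor is $O((t-s)^{-k/2})$ times a Gaussian), so a standard dominated-convergence argument lets us differentiate under the integral sign, giving
\[
\de_{x_j}g_\varepsilon(x,t)=-\int_{\mathbb{R}^N\times(\tau,t-\varepsilon)}\de_{x_i x_j}^2\Gamma(x,t;y,s)\,\mathcal{L}u(y,s)\,dy\,ds.
\]

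Next, I invoke the cancellation identity \eqref{eq:intDxGammazero} of Lemma \ref{lem:integralvanishinggamma} with $\bd\alpha=\mathbf{e}_i+\mathbf{e}_j$: for each fixed $s<t$, the quantity $\mathcal{L}u(E(s-t)x,s)$ is constant in $y$, and therefore
\[
\int_{\mathbb{R}^N}\de_{x_i x_j}^2\Gamma(x,t;y,s)\,\mathcal{L}u(E(s-t)x,s)\,dy=0.
\]
Subtracting this zero quantity inside the integrand, I can rewrite
\[
\de_{x_j}g_\varepsilon(x,t)=\int_{\mathbb{R}^N\times(\tau,t-\varepsilon)}\de_{x_i x_j}^2\Gamma(x,t;y,s)\,\bigl[\mathcal{L}u(E(s-t)x,s)-\mathcal{L}u(y,s)\bigr]\,dy\,ds.
\]
By assumption $\mathcal{L}u\in C_x^\alpha(S_T)$, so
\[
|\mathcal{L}u(E(s-t)x,s)-\mathcal{L}u(y,s)|\le|\mathcal{L}u|_{C_x^\alpha(S_T)}\,\|E(s-t)x-y\|^\alpha.
\]

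Finally, I combine this pointwise bound with the uniform integral estimate \eqref{eq:integraldexixjconv}-\eqref{eq:integraldexixjunifconv} of Proposition \ref{Proposition Claim}: the full-time integrand is absolutely integrable on $\mathbb{R}^N\times(\tau,t)$ and the tail on $(t-\varepsilon,t)$ goes to $0$ uniformly in $(x,t)$. Hence $\de_{x_j}g_\varepsilon$ converges uniformly on $S_T$ to
\[
G(x,t):=\int_{\mathbb{R}^N\times(\tau,t)}\de_{x_i x_j}^2\Gamma(x,t;y,s)\bigl[\mathcal{L}u(E(s-t)x,s)-\mathcal{L}u(y,s)\bigr]\,dy\,ds.
\]
Since $g_\varepsilon\to\de_{x_i}u$ uniformly and $\de_{x_j}g_\varepsilon\to G$ uniformly on $S_T$, the standard result on uniform convergence of derivatives yields $\de_{x_j}(\de_{x_i}u)=G$ in the distributional (and hence classical, by continuity of $G$) sense, which is exactly \eqref{repr formula u_xx}.

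The main technical difficulty is the one isolated in Proposition \ref{Proposition Claim}: the second-order spatial derivatives of $\Gamma$ fail to be integrable in time near $s=t$, and only the cancellation coming from the H\"older seminorm of $\mathcal{L}u$ at the "drifted" base point $E(s-t)x$ (rather than at $x$) restores absolute convergence. Choosing this particular base point is essential, because the natural cancellation available from Lemma \ref{lem:integralvanishinggamma} is with respect to constants in $y$, and the Gaussian decay of $\de_{x_i x_j}^2\Gamma$ is concentrated near $y=E(s-t)x$, not near $y=x$; this is also the reason the proof of Proposition \ref{Proposition Claim} hinges on the change of variables $y=E(s-t)x-D_0(\sqrt{t-s})z$.
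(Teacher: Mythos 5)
Your proposal is correct and follows essentially the same route as the paper's proof: truncate the time integral at $t-\varepsilon$, differentiate under the integral sign on the truncated region (justified by the Gaussian bounds of Theorem \ref{Thm bound derivatives}), use Lemma \ref{lem:integralvanishinggamma} to insert the vanishing term $\mathcal{L}u(E(s-t)x,s)$ and restore integrability, pass to the limit $\varepsilon\to0^+$ via Proposition \ref{Proposition Claim}, and conclude by uniform convergence of the functions together with their derivatives. The only cosmetic difference is the order in which the two partial derivatives $\de_{x_i},\de_{x_j}$ are taken (the paper truncates $\de_{x_j}\Gamma$ and then applies $\de_{x_i}$), which is immaterial since $\Gamma$ is smooth in $x$.
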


\begin{proof}
We first observe that, since $\mathcal{L}u\in C_{x}^{\alpha}(S_{T})$, by
definition we have
\begin{equation}
|\mathcal{L}u(E(s-t)x,s)-\mathcal{L}u(y,s)|\leq|\mathcal{L}u|_{C_{x}^{\alpha
}(S_{T})}\cdot\Vert E(s-t)x-y\Vert^{\alpha}, \label{eq:LLuholderestim}%
\end{equation}
for every $x,y\in\mathbb{R}^{N}$ and every $s,t<T$. Thus, by Proposition
\ref{Proposition Claim} we get
\begin{align*}
&  \bigg|\int_{\mathbb{R}^{N}\times(\tau,t)}|\partial_{x_{i}x_{j}}^{2}%
\Gamma(x,t;y,s)|\cdot|\mathcal{L}u(E(s-t)x,s)-\mathcal{L}%
u(y,s)|\,dy\,ds\bigg|\\
&  \quad\leq|\mathcal{L}u|_{C_{x}^{\alpha}(S_{T})}\bigg|\int_{\mathbb{R}%
^{N}\times(\tau,t)}|\partial_{x_{i}x_{j}}^{2}\Gamma(x,t;y,s)|\cdot\Vert
E(s-t)x-y\Vert^{\alpha}\,dy\,ds\bigg|\\
&  \quad\leq\mathbf{c}\,|\mathcal{L}u|_{C_{x}^{\alpha}(S_{T})}\cdot
|t-\tau|^{\alpha/2}\quad\forall\,\,(x,t)\in S_{T}%
\end{align*}
(where $\mathbf{c}>0$ only depends on $\alpha$), and this shows that the
function
\[
g(x,t):=\int_{\mathbb{R}^{N}\times(\tau,t)}\partial_{x_{i}x_{j}}^{2}%
\Gamma(x,t;y,s)\big[\mathcal{L}u(E(s-t)x,s)-\mathcal{L}u(y,s)\big]\,dy\,ds
\]
is {well-defined} on $S_{T}$. We then turn to prove that $\partial_{x_{i}%
x_{j}}^{2}u=g$ {po\-int\-wi\-se in $S_{T}$} by an approximation argument. To
this end, we fix $0<\e\ll1$ and we define
\[
v_{\varepsilon}(x,t):=-\int_{\mathbb{R}^{N}\times(\tau,t-\e)}\partial_{x_{j}%
}\Gamma(x,t;\cdot)\,\mathcal{L}u\,dy\,ds.
\]
Now, arguing as in the proof of Corollary \ref{cor:firtderivrepr} and taking
into ac\-cou\-nt \eqref{eq:reprudexi}, we see that

\begin{itemize}
\item[(i)] $v_{\varepsilon}\in C(S_{T})$ and $v_{\varepsilon}\rightarrow
\partial_{x_{j}}u$ pointwise in $S_{T}$ as $\e\rightarrow0^{+}$;

\item[(ii)] $v_{\varepsilon}$ is continuously differentiable w.r.t.\thinspace
$x_{i}$ on $S_{T}$, and
\[
\partial_{x_{i}}v_{\varepsilon}(x,t)=-\int_{\mathbb{R}^{N}\times(\tau
,t-\e)}\partial_{x_{i}x_{j}}^{2}\Gamma(x,t;\cdot)\,\mathcal{L}u\,dy\,ds\quad
\forall\,\,(x,t)\in S_{T}.
\]

\end{itemize}

On the other hand, owing to Lemma \ref{lem:integralvanishinggamma}, we have
\begin{align*}
&  \partial_{x_{i}}v_{\varepsilon}(x,t)=-\int_{\mathbb{R}^{N}\times
(\tau,t-\e)}\partial_{x_{i}x_{j}}^{2}\Gamma(x,t;\cdot)\,\mathcal{L}u\,dy\,ds\\
&  \qquad=\int_{\mathbb{R}^{N}\times(\tau,t-\e)}\partial_{x_{i}x_{j}}%
^{2}\Gamma(x,t;y,s)\big[\mathcal{L}u(E(s-t)x,s)-\mathcal{L}%
u(y,s)\big]\,dy\,ds.
\end{align*}
As a consequence, by combining \eqref{eq:LLuholderestim} with Proposition
\ref{Proposition Claim} we obtain
\begin{align*}
&  |\partial_{x_{i}}v_{\varepsilon}(x,t)-g(x,t)|=\int_{\mathbb{R}^{N}%
\times(t-\e,t)}|\partial_{x_{i}x_{j}}^{2}\Gamma(x,t;\cdot)|\,|\mathcal{L}%
u(E(s-t)x,s)-\mathcal{L}u|\,dy\,ds\\
&  \qquad\leq|\mathcal{L}u|_{C_{x}^{\alpha}(S_{T})}\int_{\mathbb{R}^{N}%
\times(t-\e,t)}|\partial_{x_{i}x_{j}}^{2}\Gamma(x,t;\cdot)|\ \cdot\Vert
E(s-t)x-y\Vert^{\alpha}\,dy\,ds\\
&  \qquad\leq c\,|\mathcal{L}u|_{C_{x}^{\alpha}(S_{T})}\e^{\alpha/2}%
\qquad\text{uniformly for $(x,t)\in S_{T}$},
\end{align*}
from which we derive that $\partial_{x_{i}}v_{\varepsilon}\rightarrow g$
{uniformly on $S_{T}$ as $\e\rightarrow0^{+}$}. As in the proof of Corollary
\ref{cor:firtderivrepr} we then conclude that
\[
\partial_{x_{i}x_{j}}^{2}u=\partial_{x_{i}}(\partial_{x_{j}}u)=g\quad
\text{pointwise in $S_{T}$},
\]
and this gives \eqref{repr formula u_xx}.
\end{proof}

\subsection{Schauder estimates in space\label{sec schauder space model op}}

We now want to prove the following result:

\begin{theorem}
[Global Schauder estimates in space]\label{Thm Schauder space} Let
$T>\tau>-\infty$ and $\alpha\in(0,1)$. Then, there exists ${c}>0$, only
depending on $(T-\tau),\alpha,\nu,B$, such that
\begin{align}
&  \sum_{i,j=1}^{q}\Vert\partial_{x_{i}x_{j}}^{2}u\Vert_{C_{x}^{\alpha}%
(S_{T})}\leq c\,|\mathcal{L}u|_{C_{x}^{\alpha}(S_{T})}%
\label{eq:Schauderspaceaijt}\\[0.1cm]
&  \Vert Yu\Vert_{C_{x}^{\alpha}(S_{T})}\leq{c}\,\Vert\mathcal{L}u\Vert
_{C_{x}^{\alpha}(S_{T})}, \label{eq:SchauderspaceaijtDRIFT}%
\end{align}
{for every $u\in\mathcal{S}^{0}(\tau;T)$ with $\mathcal{L}u\in C_{x}^{\alpha
}(S_{T})$}.
\end{theorem}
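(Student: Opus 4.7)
The plan is to treat the representation formula \eqref{repr formula u_xx} as a singular integral of $\mathcal{L}u$ and run a classical Calder\'on--Zygmund / Schauder argument, using the size bound of Theorem \ref{Thm bound derivatives}, the mean-value estimate of Theorem \ref{Thm mean value}, the cancellation $\int_{\mathbb{R}^N}\partial^2_{x_ix_j}\Gamma(\xi, (y,s))\,dy = 0$ from Lemma \ref{lem:integralvanishinggamma}, and the doubling bounds of Lemma \ref{lem:doubling}. For the sup-norm part of \eqref{eq:Schauderspaceaijt}, I plug into \eqref{repr formula u_xx}, use $|\mathcal{L}u(E(s-t)x,s) - \mathcal{L}u(y,s)| \le |\mathcal{L}u|_{C_x^\alpha}\|E(s-t)x - y\|^\alpha$, and apply Proposition \ref{Proposition Claim} to obtain $\|\partial^2_{x_ix_j}u\|_{L^\infty(S_T)} \le c(T-\tau)^{\alpha/2}\,|\mathcal{L}u|_{C_x^\alpha(S_T)}$.

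For the $C_x^\alpha$-seminorm, I fix $t \le T$ and $x_1 \neq x_2$, set $\xi_k = (x_k, t)$ and $r = d(\xi_1, \xi_2) = \|x_1 - x_2\|$, and split $\mathbb{R}^N \times (\tau, t)$ into a near set $N = \{(y,s): d(\xi_1, (y,s)) < 4\bd{\kappa}r\}$ and its complement $F$. Writing the difference as $I_1^N - I_2^N + [I_1 - I_2]_F$, where $I_k^N$ denotes the representation integral for $\partial^2_{x_ix_j}u(\xi_k)$ restricted to $N$, I handle the near part by estimating each $|I_k^N|$ separately: using $|\partial^2_{x_ix_j}\Gamma(\xi_k, \eta)| \le c/d(\xi_k, \eta)^{Q+2}$, Lemma \ref{Lemma E(s)x} to bound $\|E(s-t)x_k - y\| \le c\,d(\xi_k, \eta)$, and the H\"older continuity of $\mathcal{L}u$ centred at $E(s-t)x_k$, each $|I_k^N|$ reduces to a fractional integral of order $\alpha$. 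Since the quasi-triangle inequality forces $N$ inside a $d$-ball of radius $cr$ around $\xi_2$ as well, Lemma \ref{lem:doubling}\eqref{doubling <R} gives $|I_k^N| \le c\,r^\alpha |\mathcal{L}u|_{C_x^\alpha}$ for $k = 1,2$.

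On $F$, I add and subtract $\partial^2_{x_ix_j}\Gamma(\xi_2, \eta)[\mathcal{L}u(E(s-t)x_1, s) - \mathcal{L}u(y, s)]$ to split $[I_1 - I_2]_F = A_1 + A_2$. For $A_1$, where the kernel difference is paired with the $x_1$-centred H\"older increment, Theorem \ref{Thm mean value} (applicable on $F$ since $d(\xi_1, \eta) \ge 4\bd{\kappa}\,d(\xi_1, \xi_2)$) combined with Lemma \ref{lem:doubling}\eqref{doubling >R} at parameter $1-\alpha > 0$ yields $|A_1| \le c\,r^\alpha |\mathcal{L}u|_{C_x^\alpha}$. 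The hard part will be the term
\[
A_2 = \int_F \partial^2_{x_ix_j}\Gamma(\xi_2, \eta)\bigl[\mathcal{L}u(E(s-t)x_1, s) - \mathcal{L}u(E(s-t)x_2, s)\bigr]\,d\eta:
\]
its integrand is constant in $y$, so I integrate in $y$ first and, using Lemma \ref{lem:integralvanishinggamma}, replace $\int_{F(s)}\partial^2_{x_ix_j}\Gamma(\xi_2, (y,s))\,dy$ by $-\int_{N(s)}\partial^2_{x_ix_j}\Gamma(\xi_2, (y,s))\,dy$. The slice $N(s)$ is nonempty only for $t-s < (4\bd{\kappa})^2 r^2$, where Lemma \ref{Lemma E(s)x} gives $\|E(s-t)(x_1-x_2)\|^\alpha \le c\,r^\alpha$, and I bound the inner integral by $\min(c/r^2,\, c/(t-s))$: the first via the pointwise estimate with $d(\xi_2, \eta) \ge c\,r$ on $F$ (from Lemma \ref{lem:equivalentd}) together with a change of variable $w = x_2 - E(t-s)y$ and polar integration in the homogeneous norm, the second via the Gaussian trivial bound $\int_{\mathbb{R}^N}|\partial^2_{x_ix_j}\Gamma(\xi_2, (y,s))|\,dy \le c/(t-s)$. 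Integrating in $s$ over the effective range $(t - c r^2, t)$ produces $|A_2| \le c\,r^\alpha |\mathcal{L}u|_{C_x^\alpha}$.

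Finally, \eqref{eq:SchauderspaceaijtDRIFT} is immediate: from $Yu = \mathcal{L}u - \sum_{i,j=1}^q a_{ij}(t)\partial^2_{x_ix_j}u$, the fact that $a_{ij}$ depends only on $t$ (so its $C_x^\alpha$-seminorm vanishes and its $L^\infty$-norm is bounded by $\nu^{-1}$ by (H1)) combined with \eqref{eq:Schauderspaceaijt} closes the argument.
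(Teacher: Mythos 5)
Your proposal is correct, and in its broad outline (representation formula \eqref{repr formula u_xx}, split of the Hölder quotient into a near part and a far part, further split of the far part into a kernel-difference term and a term constant in $y$) it follows the same route as the paper, which wraps this singular-integral computation in Theorem \ref{Thm holder singular integrals} and then invokes it. The genuine difference is in how you handle the hard piece $A_2$, which in the paper corresponds to the term $\mathrm{A}_{12}$. The paper isolates this into a standalone \emph{cancellation property} (Theorem \ref{Thm cancellation property}), whose proof is a fairly elaborate Gaussian calculation with several changes of variables; you instead replace $\int_{F(s)}$ by $-\int_{N(s)}$ via Lemma \ref{lem:integralvanishinggamma}, observe that the slice is empty unless $t-s\lesssim r^2$, and bound the slice integral by $\min(c/r^2,\,c/(t-s))$ using only the already-established power-law bound from Theorem \ref{Thm bound derivatives} (for the $c/r^2$ piece, via Lemma \ref{lem:equivalentd} and homogeneous polar coordinates) and the normalization $\int_{\mathbb{R}^N}\Gamma_{c_1\nu^{-1}}(\xi_2;y,s)\,dy=1$ (for the $c/(t-s)$ piece). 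This in-line estimate buys you a shorter and somewhat more transparent argument that avoids re-deriving Gaussian estimates, but at the cost of a slightly delicate case split: the polar-coordinate calculation only yields $c/r^{2}$ outright when $\sqrt{t-s}\le c_*r/2$, and for $\sqrt{t-s}$ comparable to $r$ one must fall back on the $c/(t-s)$ bound (which is then itself of order $1/r^2$); once that is noted, the claimed pointwise $\min$ bound holds with an adjusted constant and the $s$-integral over the effective range $(t-16\boldsymbol\kappa^2 r^2,t)\cap(\tau,t)$ is $O(1)$, uniformly in $r$. With that small clarification, your estimates for $I_k^N$, $A_1$, the sup-norm bound, and the derivation of \eqref{eq:SchauderspaceaijtDRIFT} from \eqref{eq:Schauderspaceaijt} all match the paper's and close the argument.
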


The estimates in the above theorem will be generalized, in Section
\ref{Sec operators a(x,t)}, in the context of operators with coefficients
$a_{ij}(x,t)$; hence, the core of this section consists more in the
development of the \emph{tools} necessary to prove the above theorem, then in
the result itself. Actually, these tools will be useful also in the following
parts of the paper. Also, it is worth noting that the proof of global Schauder
estimates in the situation considered in this section is much more
straightforward than for coefficients also depending on $x$. However, note
that for the moment we do not prove global estimates on the lower order
derivatives $\partial_{x_{k}}u$ and on $u$ itself. \medskip

To prove Theorem \ref{Thm Schauder space}, we need the following auxiliary results.

\begin{theorem}
[Cancellation property of the singular kernel]%
\label{Thm cancellation property} There exists a con\-stant $c>0$ such that,
for every $1\leq i,j\leq q$, one has the estimate
\begin{equation}
I_{r,\tau}(x,t):=\int_{\tau}^{t}\bigg|\int_{\{y\in\mathbb{R}^{N}%
:\,d((x,t),(y,s))\,\geq\,r\}}\partial_{x_{i}x_{j}}^{2}\Gamma
(x,t;y,s)\,dy\,\bigg|\,ds\leq c, \label{eq:cancelprop}%
\end{equation}
{for every $x\in\mathbb{R}^{N}$, $\tau<t$ and $r>0$}.
\end{theorem}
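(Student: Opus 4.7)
My plan is to split the $s$-integration according to the size of $\sigma := t-s$ relative to $r^{2}$, exploiting two complementary mechanisms: the vanishing identity of Lemma~\ref{lem:integralvanishinggamma} when $\sigma$ is large, and the Gaussian tail decay of $\partial^{2}_{x_ix_j}\Gamma$ when $\sigma$ is small. Observe first that by \eqref{eq:explicitd} the condition $d((x,t),(y,s))\geq r$ is equivalent to $\|x-E(\sigma)y\|\geq r-\sqrt{\sigma}$; hence, when $\sigma\geq r^{2}$ the set $\{y:d((x,t),(y,s))\geq r\}$ coincides with the whole $\mathbb{R}^{N}$, so the inner integral in \eqref{eq:cancelprop} vanishes by Lemma~\ref{lem:integralvanishinggamma}. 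It therefore suffices to control the contribution from $\sigma\in(0,\min\{r^{2},t-\tau\})\subseteq(0,r^{2})$.

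For such $\sigma$, I would combine the pointwise estimate $|\partial^{2}_{x_ix_j}\Gamma(x,t;y,s)|\leq (c/\sigma)\,\Gamma_{c_{1}\nu^{-1}}(x,t;y,s)$ from Theorem~\ref{Thm bound derivatives} with the change of variable
\[
y = E(-\sigma)x - D_{0}(\sqrt{\sigma})E(-1)z,\qquad dy = \sigma^{Q/2}\,dz,
\]
already employed in the proof of Proposition~\ref{Thm limit v epsi}. A single application of \eqref{LP 2.20} in the form $E(\sigma)D_{0}(\sqrt{\sigma})=D_{0}(\sqrt{\sigma})E(1)$, combined with $E(1)E(-1)=I$, gives $x-E(\sigma)y=D_{0}(\sqrt{\sigma})z$ and hence $\|x-E(\sigma)y\|=\sqrt{\sigma}\|z\|$. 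The explicit form \eqref{eq.exprGammaalfa} of $\Gamma_{c_{1}\nu^{-1}}$, together with the positivity of $C_{0}(1)^{-1}$, reduces $\Gamma_{c_{1}\nu^{-1}}(x,t;y,s)\,dy$ to a constant multiple of $e^{-c_{0}|z|^{2}}\,dz$, so that
\[
\left|\int_{\{y:\,d((x,t),(y,s))\geq r\}}\!\partial^{2}_{x_ix_j}\Gamma(x,t;y,s)\,dy\right|\leq \frac{c}{\sigma}\int_{\{\|z\|\geq R(\sigma)\}}e^{-c_{0}|z|^{2}}\,dz,
\]
with $R(\sigma):=(r-\sqrt{\sigma})/\sqrt{\sigma}$.

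The final step is to integrate this bound in $\sigma\in(0,r^{2})$ uniformly in $r,x,t,\tau$. I would fix a constant $C_{*}=C_{*}(N)$ sufficiently large and split. On $\sigma\in(r^{2}/C_{*}^{2},r^{2})$, where $R(\sigma)$ stays bounded, I drop the constraint $\|z\|\geq R(\sigma)$ and obtain the logarithmic contribution $\int_{r^{2}/C_{*}^{2}}^{r^{2}}(c/\sigma)\,d\sigma = 2c\log C_{*}$. On $\sigma\in(0,r^{2}/C_{*}^{2})$, $R(\sigma)$ is large, and a direct comparison between the homogeneous and Euclidean norms (the bound $\|z\|\leq C_{N}(|z|+1)$ is immediate from $\|z\|=\sum|z_{i}|^{1/q_{i}}$, so that $|z|\geq R(\sigma)/(2C_{N})$ on $\{\|z\|\geq R(\sigma)\}$ once $C_{*}$ is taken large enough) yields a Gaussian tail estimate $\int_{\{\|z\|\geq R\}}e^{-c_{0}|z|^{2}}\,dz\leq C'e^{-c'R^{2}}$. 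The substitution $u=r^{2}/\sigma$ converts the remaining contribution into $\int_{C_{*}^{2}}^{\infty}e^{-c''u}\,du/u$, which is finite independently of $r$.

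The main obstacle is that the naive $L^{1}$-bound $\int_{\mathbb{R}^{N}}|\partial^{2}_{x_ix_j}\Gamma(x,t;\cdot,s)|\,dy\leq c/\sigma$ is \emph{not} integrable in $\sigma$ near $0$: cancellation (Lemma~\ref{lem:integralvanishinggamma}) is genuinely indispensable, and its role enters here through the constraint $d\geq r$, which restricts the inner integration to a region where the Gaussian profile has already decayed fast enough to beat the $1/\sigma$ singularity coming from the second-order derivative.
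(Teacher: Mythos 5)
Your proof is correct and follows essentially the same strategy as the paper's: kill the contribution from $t-s\geq r^2$ via the vanishing identity of Lemma \ref{lem:integralvanishinggamma}, convert the remaining piece to a Gaussian tail integral by a $D_0$-homogeneity change of variables, and then split the $\sigma$-integration into a bounded-lower-bound region (giving a $\log$) and a small-$\sigma$ region (where Gaussian decay beats $1/\sigma$). The paper organizes this as two cases ($t-\tau\gtrless r^2$) and rescales to a fixed integral $\mathbf{J}$ with threshold $\sigma=1/4$; your version dispenses with the case split by restricting to $\sigma\in(0,\min\{r^2,t-\tau\})$ at the outset and chooses the cut-point $C_*$ explicitly from the norm-comparison constants, but this is the same estimate after normalization.
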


\begin{proof}
Let $x,\tau,t$ and $r$ be as in the statement. We then distinguish two cases.
\medskip

\textsc{Case I:} $t-\tau>r^{2}$. In this case we first observe that, taking
into account the e\-xpli\-cit expression of the quasi-distance $d$ given in
\eqref{eq:explicitd}, we have
\[
d((x,t),(y,s))=\Vert x-E(t-s)y\Vert+\sqrt{t-s}\geq\sqrt{t-s}\geq r,
\]
for every $\tau<s<t-r^{2}$; thus, by Lemma \ref{lem:integralvanishinggamma} we
can write
\begin{align*}
I_{r,\tau}(x,t)  &  =\int_{\tau}^{t-r^{2}}\bigg|\int_{\mathbb{R}^{N}}%
\partial_{x_{i}x_{j}}^{2}\Gamma(x,t;y,s)\,dy\,\bigg|\,ds\\[0.1cm]
&  \qquad+\int_{t-r^{2}}^{t}\bigg|\int_{\{y\in\mathbb{R}^{N}%
:\,d((x,t),(y,s))\,\geq\,r\}}\partial_{x_{i}x_{j}}^{2}\Gamma
(x,t;y,s)\,dy\,\bigg|\,ds\\[0.1cm]
&  =\int_{t-r^{2}}^{t}\bigg|\int_{\{y\in\mathbb{R}^{N}:\,d((x,t),(y,s))\,\geq
\,r\}}\partial_{x_{i}x_{j}}^{2}\Gamma(x,t;y,s)\,dy\,\bigg|\,ds=:J_{r,\tau
}(x,t).
\end{align*}
In order to prove \eqref{eq:cancelprop}, we then turn to bound the integral
$J_{r}(x,t)$. \vspace*{0.05cm}

First of all, by combining the global upper estimates for $\partial
_{x_{i}x_{j}}^{2}\Gamma$ in Theorem \ref{Thm bound derivatives} with
\eqref{eq:Gammaalfaconvolution}-\eqref{eq.exprGammaalfa} (see also
\eqref{eq:estimdexixjGamma} in the proof of Prop.\thinspace
\ref{Proposition Claim}), we get
\begin{align*}
J_{r,\tau}(x,t)  &  \leq\int_{t-r^{2}}^{t}\bigg(\int_{\{y\in\mathbb{R}%
^{N}:\,d((x,t),(y,s))\geq r\}}|\partial_{x_{i}x_{j}}^{2}\Gamma
(x,t;y,s)|\,dy\bigg)ds\\
&  \leq c_{0}\int_{t-r^{2}}^{t}\frac{ds}{(t-s)^{Q/2+1}}\times\\
&  \qquad\,\,\times\bigg(\int_{\{y\in\mathbb{R}^{N}:\,d((x,t),(y,s))\geq
r\}}e^{-c_{0}\big|D_{0}\big(\frac{1}{\sqrt{t-s}}\big)(x-E(t-s)y)\big|^{2}%
}\,dy\bigg)=:(\bigstar),
\end{align*}
where $c_{0}>0$ is a constant only de\-pen\-ding on $\nu$. From this,
recalling \eqref{eq:explicitd} and using the change of variables
\begin{equation}
y=E(s-t)x-E(s-t)z \label{eq:changeydyCaseI}%
\end{equation}
in the $dy$-integral, we obtain
\begin{align*}
(\bigstar)  &  =c_{0}\int_{t-r^{2}}^{t}\frac{|\det(E(s-t))|}{(t-s)^{Q/2+1}%
}\bigg(\int_{\{z\in\mathbb{R}^{N}:\,\Vert z\Vert+\sqrt{t-s}\geq r\}}%
\!\!\!e^{-c_{0}\big|D_{0}\big(\frac{1}{\sqrt{t-s}}\big)z\big|^{2}%
}\,dz\bigg)ds\\[0.1cm]
&  (\text{since $\det(E(s-t))=e^{(t-s)\det B}=1$, see \eqref{B}})\\
&  =c_{0}\int_{\{t-r^{2}\leq s\leq t,\,\Vert z\Vert+\sqrt{t-s}\geq r\}}%
\frac{1}{(t-s)^{Q/2+1}}e^{-c_{0}\big|D_{0}\big(\frac{1}{\sqrt{t-s}%
}\big)z\big|^{2}}\,dz\,ds=:(2\bigstar).
\end{align*}
To proceed further, we now perform another change of variables, this time
involving {both $z$ and $s$}: taking into account the $D_{0}$-homogeneity of
$\Vert\cdot\Vert$, we set
\begin{equation}
(z,s)=\big(D_{0}(r)w,t-r^{2}\sigma\big). \label{eq:changezsCaseI}%
\end{equation}
Recalling that $\det(D_{0}(r))=r^{Q}$, we then get
\[
(2\bigstar)=c_{0}\int_{0}^{1}\frac{1}{\sigma^{Q/2+1}}\bigg(\int_{\{w\in
\mathbb{R}^{N}:\,\Vert w\Vert+\sqrt{\sigma}\geq1\}}e^{-c_{0}\big|D_{0}%
\big(\frac{1}{\sqrt{\sigma}}\big)w\big|^{2}}\,dw\bigg)d\sigma\equiv
c_{0}\,\mathbf{J}.
\]
Since the integral $\mathbf{J}$ is a constant, to complete the proof of
\eqref{eq:cancelprop} in this case it suffices to show that $\mathbf{J}%
<\infty$. To this end, we perform yet a\-no\-ther change of variables in the
$dw$-integral: setting
\[
w=D_{0}(\sqrt{\sigma})u,
\]
and taking into account that $\det(D_{0}(\sqrt{\sigma}))=\sigma^{Q/2}$, we
obtain
\begin{align*}
\mathbf{J}  &  =\int_{0}^{1}\frac{1}{\sigma}\bigg(\int_{\{u\in\mathbb{R}%
^{N}:\,\Vert u\Vert\geq\frac{1}{\sqrt{\sigma}}-1\}}e^{-c_{0}\,|u|^{2}%
}\,du\bigg)d\sigma\\[0.1cm]
&  =\int_{0}^{1/4}\frac{h(\sigma)}{\sigma}\,d\sigma++\int_{1/4}^{1}%
\frac{h(\sigma)}{\sigma}\,d\sigma=:\mathbf{J}_{1}+\mathbf{J}_{2},
\end{align*}
where we have introduced the shorthand notation
\[
h(\sigma):=\int_{\{u\in\mathbb{R}^{N}:\,\Vert u\Vert\geq\frac{1}{\sqrt{\sigma
}}-1\}}e^{-c_{0}\,|u|^{2}}\,du.
\]
We then turn to show that {both the integrals} $\mathbf{J}_{1},\,\mathbf{J}%
_{2}$ are finite. As to $\mathbf{J}_{1}$ we first notice that, since
{$\theta_{N}\Vert u\Vert\leq|u|$} when $\Vert u\Vert\geq1$ (here, $\theta
_{N}>0$ is a constant only depending on the dimension $N$), and since
\[
\frac{1}{\sqrt{\sigma}}-1\geq1\quad\text{when $0<\sigma\leq\frac{1}{4}$},
\]
we have the following estimate on the function $h$:
\begin{align*}
h(\sigma)  &  \leq\int_{\{u\in\mathbb{R}^{N}:\,|u|\geq\theta_{N}(\frac
{1}{\sqrt{\sigma}}-1)\}}e^{-c_{0}\,|u|^{2}}\,du=\omega_{N}\int_{\theta
_{N}(\frac{1}{\sqrt{\sigma}}-1)}^{+\infty}e^{-c_{0}\rho^{2}}\rho^{N-1}%
\,d\rho\\[0.1cm]
&  (\text{since $e^{-c_{0}\rho^{2}}\rho^{N-1}\leq\gamma\rho e^{-\frac{c_{0}%
}{2}\rho^{2}}$ when $N\geq2$})\\
&  =\gamma\,\omega_{N}\,\int_{\theta_{N}(\frac{1}{\sqrt{\sigma}}-1)}^{+\infty
}\rho e^{-\frac{c_{0}}{2}\rho^{2}}\,d\rho=c_{N}e^{-\frac{c_{0}\theta_{N}^{2}%
}{2}(\frac{1}{\sqrt{\sigma}}-1)^{2}},
\end{align*}
where $c_{N}:=\gamma\,\omega_{N}/c_{0}$. As a consequence, we easily obtain
\[
\mathbf{J}_{1}\leq c_{N}\int_{0}^{1/4}\frac{1}{\sigma}e^{-\frac{c_{0}%
\theta_{N}^{2}}{2}(\frac{1}{\sqrt{\sigma}}-1)^{2}}\,d\sigma<\infty.
\]
As to $\mathbf{J}_{2}$, instead, taking into account that the map $u\mapsto
e^{-c_{0}|u|^{2}}$ is integrable on $\mathbb{R}^{N}$, we immediately get
\[
\mathbf{J}_{2}\leq\int_{1/4}^{1}\frac{1}{\tau}\bigg(\int_{\mathbb{R}^{N}%
}e^{-c_{0}|u|^{2}}\,du\bigg)d\sigma\leq4\int_{\mathbb{R}^{N}}e^{-c_{0}|u|^{2}%
}\,du<\infty.
\]
Gathering these facts, we then conclude that $\mathbf{J}<\infty$, as desired.
\medskip

\textsc{Case II:} $t-\tau\leq r^{2}$. In this case, using once again the
global upper estimates for $\partial_{x_{i}x_{j}}^{2}\Gamma$ in Theorem
\ref{Thm bound derivatives}, and taking into account
\eqref{eq:Gammaalfaconvolution}-\eqref{eq.exprGammaalfa}, we get
\begin{align*}
I_{r,\tau}(x,t)  &  \leq\int_{\tau}^{t}\bigg(\int_{\{y\in\mathbb{R}%
^{N}:\,d((x,t),(y,s))\geq r\}}|\partial_{x_{i}x_{j}}^{2}\Gamma
(x,t;y,s)|\,dy\bigg)ds\\
&  \leq c_{0}\int_{\tau}^{t}\frac{ds}{(t-s)^{Q/2+1}}\times\\
&  \qquad\,\,\times\bigg(\int_{\{y\in\mathbb{R}^{N}:\,d((x,t),(y,s))\geq
r\}}e^{-c_{0}\big|D_{0}\big(\frac{1}{\sqrt{t-s}}\big)(x-E(t-s)y)\big|^{2}%
}\,dy\bigg)=:(\star).
\end{align*}
Starting from this estimate, and performing the change of variables
\eqref{eq:changeydyCaseI}-\eqref{eq:changezsCaseI}, we then obtain
\begin{align*}
(\star)  &  =c_{0}\int_{\tau}^{t}\frac{1}{(t-s)^{Q/2+1}}\bigg(\int%
_{\{z\in\mathbb{R}^{N}:\,\Vert z\Vert+\sqrt{t-s}\geq r\}}\!\!\!e^{-c_{0}%
\big|D_{0}\big(\frac{1}{\sqrt{t-s}}\big)z\big|^{2}}\,dz\bigg)ds\\[0.1cm]
&  =c_{0}\int_{0}^{\frac{t-\tau}{r^{2}}}\frac{1}{\sigma^{Q/2+1}}%
\bigg(\int_{\{w\in\mathbb{R}^{N}:\,\Vert w\Vert+\sqrt{\sigma}\geq1\}}%
e^{-c_{0}\big|D_{0}\big(\frac{1}{\sqrt{\sigma}}\big)w\big|^{2}}%
\,dw\bigg)d\sigma=:(2\star).
\end{align*}
Now, since are assuming that $t-\tau\leq r^{2}$, we have
\begin{equation}
(2\star)\leq c_{0}\int_{0}^{1}\frac{1}{\sigma^{Q/2+1}}\bigg(\int%
_{\{w\in\mathbb{R}^{N}:\,\Vert w\Vert+\sqrt{\sigma}\geq1\}}e^{-c_{0}%
\big|D_{0}\big(\frac{1}{\sqrt{\sigma}}\big)w\big|^{2}}\,dw\bigg)d\sigma
=c_{0}\,\mathbf{J}, \label{eq:touseperconcludere}%
\end{equation}
where $\mathbf{J}$ is {the same integral} considered in the previous case; as
a consequence, since we have already recognized that $\mathbf{J}<\infty$, from
\eqref{eq:touseperconcludere} we immediately derive \eqref{eq:cancelprop} also
in this case, and the proof is complete.
\end{proof}

\begin{theorem}
[H\"{o}lder continuity of singular integrals]%
\label{Thm holder singular integrals} For $T>\tau>-\infty$ and $\alpha
\in(0,1)$, let us introduce the function space
\[
C_{x}^{\alpha}(\tau;T):=\{f\in C_{x}^{\alpha}(S_{T}):\,\text{$f(x,t)=0$ for
every $t\leq\tau$}\},
\]
and define, on this space $C_{x}^{\alpha}(\tau;T)$, the linear operator
\[
f\mapsto T_{ij}f(x,t):=\int_{\mathbb{R}^{N}\times(\tau,t)}\partial_{x_{i}%
x_{j}}^{2}\Gamma(x,t;y,s)\,\big[f(E(s-t)x,s)-f(y,s)\big]\,dy\,ds.
\]
Then, there exists a constant $c>0$, depending on $(T-\tau)$ and $\alpha$,
such that
\begin{equation}
\Vert T_{ij}f\Vert_{C_{x}^{\alpha}(S_{T})}\leq c\,|f|_{C_{x}^{\alpha}(S_{T}%
)}\quad\text{for every $f\in C_{x}^{\alpha}(\tau;T)$}.
\label{eq:toproveboundTfSI}%
\end{equation}

\end{theorem}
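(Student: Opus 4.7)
The $L^{\infty}$ bound is immediate: using $|f(E(s-t)x,s)-f(y,s)|\leq|f|_{C_{x}^{\alpha}(S_{T})}\|E(s-t)x-y\|^{\alpha}$, Proposition~\ref{Proposition Claim} yields
\[
\Vert T_{ij}f\Vert_{L^{\infty}(S_{T})}\leq c(T-\tau)^{\alpha/2}\,|f|_{C_{x}^{\alpha}(S_{T})}.
\]
For the $x$-H\"older seminorm, fix $t<T$ and $x_{1},x_{2}\in\mathbb{R}^{N}$ with $r:=\Vert x_{1}-x_{2}\Vert>0$, and set $\xi_{k}:=(x_{k},t)$ so that $d(\xi_{1},\xi_{2})=r$ by \eqref{d stesso t}. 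I split the $(y,s)$-domain along $R:=4\bd{\kappa}r$, following the classical template for H\"older continuity of singular integrals.

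\textbf{Near part and splitting of the far part.} On $\{d(\xi_{1},\eta)<R\}$ I estimate $T_{ij}f(x_{1},t)$ and $T_{ij}f(x_{2},t)$ separately: for each $k$, $|f(E(s-t)x_{k},s)-f(y,s)|\leq|f|_{C_{x}^{\alpha}}d(\xi_{k},\eta)^{\alpha}$ combined with $|\de_{x_{i}x_{j}}^{2}\Gamma(\xi_{k};\eta)|\leq c/d(\xi_{k},\eta)^{Q+2}$ (Theorem~\ref{Thm bound derivatives}) and Lemma~\ref{lem:doubling} applied on the ball of radius $\sim r$ around $\xi_{k}$ (using the quasi-triangle inequality to locate $\eta$ near $\xi_{2}$) gives an $O(r^{\alpha}|f|_{C_{x}^{\alpha}})$ bound. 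On the far region $\{d(\xi_{1},\eta)\geq R\}$ I decompose the integrand of the difference as $B_{1}+B_{2}$ with
\begin{align*}
B_{1}&:=[\de_{x_{i}x_{j}}^{2}\Gamma(\xi_{1};\eta)-\de_{x_{i}x_{j}}^{2}\Gamma(\xi_{2};\eta)]\,[f(E(s-t)x_{1},s)-f(y,s)],\\
B_{2}&:=\de_{x_{i}x_{j}}^{2}\Gamma(\xi_{2};\eta)\,[f(E(s-t)x_{1},s)-f(E(s-t)x_{2},s)].
\end{align*}
For $B_{1}$, the condition $d(\xi_{1},\eta)\geq R=4\bd{\kappa}\,d(\xi_{1},\xi_{2})$ lets Theorem~\ref{Thm mean value} bound the kernel difference by $cr/d(\xi_{1},\eta)^{Q+3}$; combined with the H\"older factor $|f|_{C_{x}^{\alpha}}d(\xi_{1},\eta)^{\alpha}$, the ``far'' case of Lemma~\ref{lem:doubling} (integrating $d^{-(Q+2)-(1-\alpha)}$ outside a ball of radius $\sim r$) gives $\leq c|f|_{C_{x}^{\alpha}}r^{\alpha}$.

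\textbf{Main obstacle: the term $B_{2}$.} The scalar factor $[f(E(s-t)x_{1},s)-f(E(s-t)x_{2},s)]$ is independent of $y$ but, via Lemma~\ref{Lemma E(s)x}, only controlled by $c|f|_{C_{x}^{\alpha}}(r^{\alpha}+(t-s)^{\alpha/2})$; the $(t-s)^{\alpha/2}$ contribution cannot be absorbed by any pointwise bound on the kernel over the unbounded far region. The resolution is to pull this factor out of the $y$-integral and use cancellation. Set
\[
\Psi(s):=\int_{\{y:\,d(\xi_{1},(y,s))\geq R\}}\de_{x_{i}x_{j}}^{2}\Gamma(\xi_{2};y,s)\,dy;
\]
when $\sqrt{t-s}>R$ the integration region is all of $\mathbb{R}^{N}$ (by the explicit form \eqref{eq:explicitd} of $d$), so Lemma~\ref{lem:integralvanishinggamma} forces $\Psi(s)=0$. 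Hence the effective $s$-integration is confined to $t-s<R^{2}$, where $(t-s)^{\alpha/2}\leq c\,r^{\alpha}$. To bound $\int_{\tau}^{t}|\Psi(s)|\,ds$, I write
\[
\Psi(s)=\int_{\{d(\xi_{2},\eta)\geq R\}}\de_{x_{i}x_{j}}^{2}\Gamma(\xi_{2};y,s)\,dy+\int_{A\setminus B}\de_{x_{i}x_{j}}^{2}\Gamma(\xi_{2};\cdot)\,dy-\int_{B\setminus A}\de_{x_{i}x_{j}}^{2}\Gamma(\xi_{2};\cdot)\,dy,
\]
with $A:=\{d(\xi_{1},\eta)\geq R\}$ and $B:=\{d(\xi_{2},\eta)\geq R\}$: Theorem~\ref{Thm cancellation property} controls the $s$-integral of the first summand, while on both $A\setminus B$ and $B\setminus A$ the quasi-triangle inequality and Lemma~\ref{lem:equivalentd} force $d(\xi_{2},\eta)\sim r$, the $y$-measure is $O(r^{Q})$ (via the unimodular change of variables $z=x_{2}-E(t-s)y$ and $D_{0}$-homogeneity of $\Vert\cdot\Vert$), and the kernel bound $O(r^{-Q-2})$ integrated over an interval of length $O(r^{2})$ in $s$ contributes $O(1)$. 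Putting everything together gives $|B_{2}|\leq c|f|_{C_{x}^{\alpha}}r^{\alpha}$, which combined with the near-part and $B_{1}$ estimates delivers $|T_{ij}f|_{C_{x}^{\alpha}(S_{T})}\leq c|f|_{C_{x}^{\alpha}(S_{T})}$.
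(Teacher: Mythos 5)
Your proof is correct and follows the same essential strategy as the paper's: the $L^\infty$ bound via Proposition~\ref{Proposition Claim}, the near/far split, the $B_{1}$ term via Theorem~\ref{Thm mean value} and the fractional-integral doubling estimate, and the key treatment of $B_{2}$ by pulling out the $y$-independent factor, using Lemma~\ref{Lemma E(s)x}, observing that the truncated $y$-integral vanishes identically once $\sqrt{t-s}$ exceeds the cutoff radius (so that $(t-s)^{\alpha/2}$ is absorbed into $r^{\alpha}$), and invoking the cancellation property. The one structural difference is your choice to center the far/near split at $\xi_{1}$ rather than at $\xi_{2}$ (the point where the kernel in $B_{2}$ lives). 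The paper instead cuts along $d(\xi_{2},\eta)\geq 4\bd{\kappa}\rho$, so that the truncated integral $\mathcal{J}(s)$ is \emph{exactly} the object controlled by Theorem~\ref{Thm cancellation property}, with no further correction needed; your choice forces you to compare $\Psi(s)$ with the cancellation integral and handle the symmetric-difference sets $A\setminus B$ and $B\setminus A$ by the extra geometric argument (slices of $y$-measure $O(r^{Q})$, $s$-interval of length $O(r^{2})$, kernel $O(r^{-Q-2})$). That argument is sound, but it is genuinely extra work that the paper's choice of cutoff center avoids entirely, while your centering does not buy anything in the $B_{1}$ estimate (the mean-value bound works equally well from either base point, by Lemma~\ref{lem:equivalentd}). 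Apart from this, the two proofs coincide.
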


\begin{proof}
Let $f\in C_{x}^{\alpha}(\tau;T)$ be arbitrarily fixed. Since $f(\cdot
,t)\equiv0$ for every $t\leq\tau$, we have $T_{ij}f(x,t)=0$ for every
$x\in\mathbb{R}^{N}$ and $t\leq\tau$. Thus, we derive that
\[
\Vert T_{ij}f\Vert_{C_{x}^{\alpha}(S_{T})}=\Vert T_{ij}f\Vert_{C_{x}^{\alpha
}(\Omega)},\quad\text{where $\Omega:=\mathbb{R}^{N}\times(\tau,T)$}.
\]
Hence, to prove \eqref{eq:toproveboundTfSI} it suffices to study
$T_{ij}f\left(  x,t\right)  $ for $(x,t)\in\Omega$.

First of all, owing to Pro\-po\-si\-tion \ref{Proposition Claim}, for every
$(x,t)\in\Omega$ we have
\begin{align*}
|T_{ij}f(x,t)|  &  \leq\int_{\mathbb{R}^{N}\times(\tau,t)}|\partial
_{x_{i}x_{j}}^{2}\Gamma(x,t;y,s)|\cdot|f(E(s-t)x,s)-f(y,s)|\,dy\,ds\\
&  \leq|f|_{C_{x}^{\alpha}(S_{T})}\,\int_{\mathbb{R}^{N}\times(\tau
,t)}|\partial_{x_{i}x_{j}}^{2}\Gamma(x,t;y,s)|\cdot\Vert E(s-t)x-y\Vert
^{\alpha}\,dy\,ds\\
&  \leq c\,|f|_{C_{x}^{\alpha}(S_{T})}\cdot(t-\tau)^{\alpha/2}\leq
c\,|f|_{C_{x}^{\alpha}(S_{T})}\cdot(T-\tau)^{\alpha/2},
\end{align*}
where $c>0$ is a constant only depending on $\alpha$. From this, we derive
\begin{equation}
\Vert T_{ij}f\Vert_{L^{\infty}(S_{T})}\leq c(T-\tau,\alpha)\,|f|_{C_{x}%
^{\alpha}(S_{T})}. \label{eq:Tfbound}%
\end{equation}
On the other hand, if $(x_{1},t),\,(x_{2},t)\in\Omega$ are such that $\Vert
x_{1}-x_{2}\Vert\geq1$, thanks to estimate \eqref{eq:Tfbound} we also obtain
the following bound
\[
|T_{ij}f(x_{1},t)-T_{ij}f(x_{2},t)|\leq2\Vert Tf\Vert_{L^{\infty}(S_{T})}\leq
c(T-\tau,\alpha)|f|_{C_{x}^{\alpha}(S_{T})}\Vert x_{1}-x_{2}\Vert^{\alpha},
\]
Thus, to prove \eqref{eq:toproveboundTfSI} we are left to show that
\begin{equation}
\begin{gathered} |T_{ij}f(x_1,t)-T_{ij}f(x_2,t)|\leq c(T-\tau,\alpha)\|x_1-x_2\|^\alpha \\[0.05cm] \text{for every $(x_1,t),\,(x_2,t)\in \Omega$ with $\|x_1-x_2\|<1$}. \end{gathered} \label{eq:toprovex1x2leqone}%
\end{equation}
To this end, taking into account the definition of $Tf$, we write
\begin{equation}%
\begin{split}
&  T_{ij}f(x_{1},t)-T_{ij}f(x_{2},t)\\
&  \qquad=\int_{\mathbb{R}^{N}\times(\tau,t)}\Big\{\partial_{x_{i}x_{j}}%
^{2}\Gamma(x_{1},t;y,s)\big[f(E(s-t)x_{1},s)-f(y,s)\big]\\
&  \qquad\qquad\quad-\partial_{x_{i}x_{j}}^{2}\Gamma(x_{2}%
,t;y,s)\big[f(E(s-t)x_{2},s)-f(y,s)\big]\Big\}\,dy\,ds\\
&  \qquad=\int_{\{(y,s):\,d((x_{2},t),(y,s))\geq4\bd{\kappa}\rho\}}%
\{\cdots\}\,dy\,ds\\
&  \qquad\qquad\quad+\int_{\{(y,s):\,d((x_{2},t),(y,s))<4\bd{\kappa}\rho
\}}\{\cdots\}\,dy\,ds\\[0.1cm]
&  \qquad=:\mathrm{A}_{1}+\mathrm{A}_{2},
\end{split}
\label{eq:spliTijSTART}%
\end{equation}
where $\bd\kappa>0$ is as in \eqref{quasitriangle}-\eqref{quasisymmetric} and
\[
\rho:=d((x_{2},t),(x_{1},t))=\Vert x_{1}-x_{2}\Vert.
\]
We then turn to estimate $\mathrm{A}_{1}$ and $\mathrm{A}_{2}$. \medskip

\noindent-\thinspace\thinspace\textsc{Estimate of $\mathrm{A}_{1}$.} To begin
with, we write $\mathrm{A}_{1}$ as follows:
\begin{align*}
\mathrm{A}_{1}  &  =\int_{\{(y,s):\,d((x_{2},t),(y,s))\geq4\bd{\kappa}\rho
\}}\Big\{\big[f(E(s-t)x_{1},s)-f(y,s)\big]\times\\[0.1cm]
&  \qquad\qquad\times\big[\partial_{x_{i}x_{j}}^{2}\Gamma(x_{1}%
,t;y,s)-\partial_{x_{i}x_{j}}^{2}\Gamma(x_{2}%
,t;y,s)\big]\Big\}\,dy\,ds\\[0.15cm]
&  \qquad+\int_{\{(y,s):\,d((x_{2},t),(y,s))\geq4\bd{\kappa}\rho
\}}\Big\{\partial_{x_{i}x_{j}}^{2}\Gamma(x_{2},t;y,s)\times\\
&  \qquad\qquad\times\big[f(E(s-t)x_{1},s)-f(E(s-t)x_{2}%
,s)\big]\Big\}\,dy\,ds\\[0.1cm]
&  =:\mathrm{A}_{11}+\mathrm{A}_{12}.
\end{align*}
\medskip

\emph{Estimate of $\mathrm{A}_{11}$}. First of all we observe that, owing to
the mean value i\-ne\-qua\-li\-ties in Theorem \ref{Thm mean value} (and
taking into account the definition of $\rho$), we have
\begin{align*}
&  \big|\partial_{x_{i}x_{j}}^{2}\Gamma(x_{1},t;y,s)-\partial_{x_{i}x_{j}}%
^{2}\Gamma(x_{2},t;y,s)\big|\\
&  \qquad\leq c\,\frac{d((x_{2},t),(x_{1},t))}{d((x_{2},t),(y,s))^{Q+3}%
}=c\,\frac{\Vert x_{1}-x_{2}\Vert}{d((x_{2},t),(y,s))^{Q+3}},
\end{align*}
for every $(y,s)\in\Omega$ such that $d((x_{2},t),(y,s))\geq4\bd\kappa\rho$.
Moreover, using the explicit expression of $d$ in \eqref{eq:explicitd} and the
quasi-symmetry property \eqref{quasisymmetric}, we get
\begin{align*}
\big|f(E(s-t)x_{1},s)-f(y,s)\big|  &  \leq|f|_{C_{x}^{\alpha}(S_{T})}\Vert
E(s-t)x_{1}-y\Vert^{\alpha}\\
&  \leq|f|_{C_{x}^{\alpha}(S_{T})}d((y,s),(x_{1},t))^{\alpha}\\
&  \leq\bd\kappa^{\alpha}\,|f|_{C_{x}^{\alpha}(S_{T})}d((x_{1}%
,t),(y,s))^{\alpha},
\end{align*}
where we have also used the fact that $f\in C_{x}^{\alpha}(\tau;T)$. Hence, by
combining these estimates and by using Lemma \ref{lem:equivalentd}, we get
\begin{equation}%
\begin{split}
&  \big|f(E(s-t)x_{1},s)-f(y,s)\big|\cdot\big|\partial_{x_{i}x_{j}}^{2}%
\Gamma(x_{1},t;y,s)-\partial_{x_{i}x_{j}}^{2}\Gamma(x_{2},t;y,s)\big|\\
&  \qquad\quad\leq c\,|f|_{C_{x}^{\alpha}(S_{T})}\Vert x_{1}-x_{2}\Vert
\cdot\frac{d((x_{1},t),(y,s))^{\alpha}}{d((x_{2},t),(y,s))^{Q+3}}\\
&  \qquad\quad\leq c\,|f|_{C_{x}^{\alpha}(S_{T})}\Vert x_{1}-x_{2}\Vert
\cdot\frac{1}{d((x_{2},t),(y,s))^{Q+3-\alpha}},
\end{split}
\label{eq:stimaconLemmatheta}%
\end{equation}
for every $(y,s)\in\mathbb{R}^{N}\times(\tau,t)$ satisfying $d((x_{2}%
,t),(y,s)\geq4\bd\kappa\rho>2\bd\kappa\rho$. Owing to
\eqref{eq:stimaconLemmatheta}, and exploiting \eqref{doubling >R} in Lemma
\ref{lem:doubling}, we finally obtain
\begin{equation}%
\begin{split}
|\mathrm{A}_{11}|  &  \leq c\,|f|_{C_{x}^{\alpha}(S_{T})}\,\Vert x_{1}%
-x_{2}\Vert\int_{\{\eta:\,d(\xi,\eta)\geq4\bd{\kappa}\rho\}}\frac{1}%
{d(\xi,\eta)^{Q+3-\alpha}}\,d\eta\\[0.1cm]
&  \leq c\,|f|_{C_{x}^{\alpha}(S_{T})}\,\Vert x_{1}-x_{2}\Vert\cdot
\rho^{\alpha-1}=c\,|f|_{C_{x}^{\alpha}(S_{T})}\,\Vert x_{1}-x_{2}\Vert
^{\alpha},
\end{split}
\label{eq:estimA11final}%
\end{equation}
where $c>0$ is a constant only depending on $\alpha$. \medskip

\emph{Estimate of $\mathrm{A}_{12}$}. First of all, using once again the fact
that $f\in C_{x}^{\alpha}(S_{T})$, jointly with Lemma \ref{Lemma E(s)x}, we
can bound the integral $\mathrm{A}_{12}$ as follows:
\begin{equation}%
\begin{split}
|\mathrm{A}_{12}|  &  \leq\int_{\tau}^{t}\big|f(E(s-t)x_{1},s)-f(E(s-t)x_{2}%
,s)\big|\cdot\mathcal{J}(s)\,ds\\
&  \leq|f|_{C_{x}^{\alpha}(S_{T})}\int_{\tau}^{t}\Vert E(s-t)(x_{1}%
-x_{2})\Vert^{\alpha}\cdot\mathcal{J}(s)\,ds\\
&  \leq c\,|f|_{C_{x}^{\alpha}(S_{T})}\int_{\tau}^{t}\big(\Vert x_{1}%
-x_{2}\Vert+\sqrt{t-s}\big)^{\alpha}\cdot\mathcal{J}(s)\,ds,
\end{split}
\label{eq:tostartperdistinguere}%
\end{equation}
where $c>0$ is an absolute constant and
\[
\mathcal{J}(s):=\bigg|\int_{\{y\in\mathbb{R}^{N}:\,d((x_{2},t),(y,s))\geq
4\bd{\kappa}\rho\}}\partial_{x_{i}x_{j}}^{2}\Gamma(x_{2},t;y,s)\,dy\bigg|.
\]
We now distinguish two cases, according to the value of $\theta:=t-16\bd\kappa
^{2}\rho^{2}$. \medskip

(i)\thinspace\thinspace$\theta>\tau$. In this case, we start from
\eqref{eq:tostartperdistinguere} and we write
\begin{equation}%
\begin{split}
|\mathrm{A}_{12}|  &  \leq c\,|f|_{C_{x}^{\alpha}(S_{T})}\int_{\tau}^{\theta
}\big(\Vert x_{1}-x_{2}\Vert+\sqrt{t-s}\big)^{\alpha}\cdot\mathcal{J}(s)\,ds\\
&  \qquad+c\,|f|_{C_{x}^{\alpha}(S_{T})}\int_{\theta}^{t}\big(\Vert
x_{1}-x_{2}\Vert+\sqrt{t-s}\big)^{\alpha}\cdot\mathcal{J}(s)\,ds.
\end{split}
\label{eq:splitA12ABM}%
\end{equation}
We now observe that, when $\theta\leq s\leq t$, we have $0\leq t-s\leq
16\bd\kappa^{2}\rho^{2}$; thus, by using the {cancellation property of
$\mathcal{J}$} in Theorem \ref{Thm cancellation property}, we get
\begin{equation}%
\begin{split}
&  \int_{\theta}^{t}\big(\Vert x_{1}-x_{2}\Vert+\sqrt{t-s}\big)^{\alpha}%
\cdot\mathcal{J}(s)\,ds\\
&  \qquad\leq(1+4\bd\kappa)^{\alpha}\Vert x_{1}-x_{2}\Vert^{\alpha}%
\int_{\theta}^{t}\mathcal{J}(s)\,ds\\
&  \qquad\leq(1+4\bd\kappa)^{\alpha}\,\Vert x_{1}-x_{2}\Vert^{\alpha}%
\int_{\tau}^{t}\mathcal{J}(s)\,ds\\[0.1cm]
&  \qquad=(1+4\bd\kappa)^{\alpha}\,\Vert x_{1}-x_{2}\Vert^{\alpha}\cdot
I_{4\bd\kappa\rho,\tau}(x_{2},t)\\[0.1cm]
&  \qquad\leq c\,\Vert x_{1}-x_{2}\Vert^{\alpha},
\end{split}
\label{eq:estimintegralAM}%
\end{equation}
where $c>0$ is a suitable constant only depending on $\alpha$. \vspace
*{0.05cm}

On the other hand, when $\tau\leq s<\theta$, by \eqref{eq:explicitd} we infer
that
\[
d((x_{2},t),(y,s))\geq\sqrt{t-s}\geq4\bd\kappa\rho\quad\forall\,\,y\in
\mathbb{R}^{N};
\]
as a consequence, from Lemma \ref{lem:integralvanishinggamma} we obtain
\begin{equation}%
\begin{split}
&  \int_{\tau}^{\theta}\big(\Vert x_{1}-x_{2}\Vert+\sqrt{t-s}\big)^{\alpha
}\cdot\mathcal{J}(s)\,ds\\
&  \qquad=\int_{\tau}^{\theta}\big(\Vert x_{1}-x_{2}\Vert+\sqrt{t-s}%
\big)^{\alpha}\cdot\bigg|\int_{\mathbb{R}^{N}}\partial_{x_{i}x_{j}}^{2}%
\Gamma(\xi_{1};y,s)\,dy\bigg|\,ds=0.
\end{split}
\label{eq:integralBMzero}%
\end{equation}
Summing up, by combining \eqref{eq:estimintegralAM}-\eqref{eq:integralBMzero}
with \eqref{eq:splitA12ABM}, we conclude that
\begin{equation}
|A_{12}|\leq c\,|f|_{C_{x}^{\alpha}(S_{T})}\,\Vert x_{1}-x_{2}\Vert^{\alpha},
\label{eq:estimA12Casei}%
\end{equation}
for a suitable constant $c>0$ only depending on $\alpha$. \medskip

(ii)\thinspace\thinspace$\theta\leq\tau$. In this case, starting from
\eqref{eq:tostartperdistinguere} and using once again the cancellation
property of $\mathcal{J}$ in Theorem \ref{Thm cancellation property}, we
immediately get
\begin{equation}%
\begin{split}
|\mathrm{A}_{12}|  &  \leq c\,|f|_{C_{x}^{\alpha}(S_{T})}\int_{\tau}%
^{t}\big(\Vert x_{1}-x_{2}\Vert+\sqrt{t-\tau}\big)^{\alpha}\cdot
\mathcal{J}(s)\,ds\\
&  \leq c\,|f|_{C_{x}^{\alpha}(S_{T})}\Vert x_{1}-x_{2}\Vert^{\alpha}%
\int_{\tau}^{t}\mathcal{J}(s)\,ds\\[0.1cm]
&  \leq c\,|f|_{C_{x}^{\alpha}(S_{T})}\Vert x_{1}-x_{2}\Vert^{\alpha}\cdot
I_{4\bd\kappa\rho,\tau}(x_{2},t)\\[0.1cm]
&  \leq c\,|f|_{C_{x}^{\alpha}(S_{T})}\Vert x_{1}-x_{2}\Vert^{\alpha},
\end{split}
\label{eq:estimA12Caseii}%
\end{equation}
where $c>0$ is another constant only depending on $\alpha$. \vspace*{0.05cm}

All in all, by combining \eqref{eq:estimA11final} with
\eqref{eq:estimA12Casei}-\eqref{eq:estimA12Caseii}, we conclude that
\begin{equation}
\label{eq:estimA1FINAL}|\mathrm{A}_{1}| \leq c\,|f|_{C^{\alpha}_{x}(S_{T}%
)}\|x_{1}-x_{2}\|^{\alpha},
\end{equation}
for a suitable constant $c > 0$ {only depending on $\alpha$}. \medskip

\noindent-\thinspace\thinspace\textsc{Estimate of $A_{2}$.} We first observe
that, since $f\in C_{x}^{\alpha}(\tau;T)$, one has
\begin{equation}
|\mathrm{A}_{2}|\leq|f|_{C_{x}^{\alpha}(S_{T})}\cdot\big(\mathrm{A}%
_{21}+\mathrm{A}_{22}\big), \label{eq:estimA2splitted}%
\end{equation}
where, for $k=1,2$, we have introduced the notation
\[
\mathrm{A}_{2k}:=\int_{\{(y,s):\,d((x_{2},t),(y,s))<4\bd{\kappa}\rho
\}}|\partial_{x_{i}x_{j}}^{2}(x_{k},t;y,s)\Gamma|\cdot\Vert E(s-t)x_{k}%
-y\Vert^{\alpha}\,dy\,ds.
\]
We then proceed by estimating the two integrals $\mathrm{A}_{21}%
,\,\mathrm{A}_{22}$ separately. \vspace*{0.05cm}

\emph{Estimate of $\mathrm{A}_{21}$}. First of all, by using the estimates for
$\partial_{x_{i}x_{j}}^{2}\Gamma$ gi\-ven in Theo\-rem
\ref{Thm bound derivatives}, jointly with \eqref{eq:explicitd}, we get
\[%
\begin{split}
\mathrm{A}_{21}  &  \leq c\,\int_{\{(y,s):\,d((x_{2}%
,t),(y,s))<4\bd{\kappa}\rho\}}\frac{\Vert E(s-t)x_{1}-y\Vert^{\alpha}%
}{d((x_{1},t),(y,s))^{Q+2}}\,dy\,ds\\
&  \leq c\,\int_{\{(y,s):\,d((x_{2},t),(y,s))<4\bd{\kappa}\rho\}}%
\frac{d((y,s),(x_{1},t))^{\alpha}}{d((x_{1},t),(y,s))^{Q+2}}\,dy\,ds\\
&  \leq c\,\int_{\{(y,s):\,d((x_{2},t),(y,s))<4\bd{\kappa}\rho\}}\frac
{1}{d((x_{1},t),(y,s))^{Q+2-\alpha}}\,dy\,ds=:(\bigstar).
\end{split}
\]
On the other hand, by the quasi-triangular inequality \eqref{quasitriangle},
we have
\begin{equation}%
\begin{split}
d((x_{1},t),(y,s))  &  \leq\bd\kappa\big(d((x_{1},t),(x_{2},t))+d((y,s),(x_{2}%
,t))\big)\\
&  \leq\bd\kappa^{2}\big(d((x_{2},t),(x_{1},t))+d((x_{2},t),(y,s))\big)\\
&  =\bd\kappa^{2}(1+4\bd\kappa)\rho,
\end{split}
\label{eq:triangularineq}%
\end{equation}
for every $(y,s)\in\mathbb{R}^{N+1}$ such that $d((x_{2}%
,t),(y,s))<4\bd{\kappa}\rho$. On account of \eqref{eq:triangularineq}, and
exploiting \eqref{doubling <R} in Lemma \ref{lem:doubling}, we finally obtain
\begin{equation}%
\begin{split}
(\bigstar)  &  \leq c\,\int_{\{(y,s):\,d((x_{1},t),(y,s))<\bd\kappa
^{2}(1+4\bd\kappa)\rho\}}\frac{1}{d((x_{1},t),(y,s))^{Q+2-\alpha}}\,dy\,ds\\
&  =c\,\int_{\{\eta:\,d(\xi,\eta)<\bd\kappa^{2}(1+4\bd\kappa)\rho\}}\frac
{1}{d(\xi,\eta)^{Q+2-\alpha}}\,d\eta\\[0.2cm]
&  \leq c\rho^{\alpha}=c\,\Vert x_{1}-x_{2}\Vert^{\alpha}.
\end{split}
\label{eq:estimA21final}%
\end{equation}

\emph{Estimate of $\mathrm{A}_{22}$}. Using once again the estimates for
$\partial_{x_{i}x_{j}}^{2}\Gamma$ in Theorem \ref{Thm bound derivatives},
together with \eqref{quasisymmetric}-\eqref{eq:explicitd} and
\eqref{doubling <R} in Lemma \ref{lem:doubling}, we readily obtain
\begin{equation}%
\begin{split}
\mathrm{A}_{22}  &  \leq c\,\int_{\{(y,s):\,d((x_{2}%
,t),(y,s))<4\bd{\kappa}\rho\}}\frac{\Vert E(s-t)x_{2}-y\Vert^{\alpha}%
}{d((x_{2},t),(y,s))^{Q+2}}\,dy\,ds\\
&  \leq c\,\int_{\{(y,s):\,d((x_{2},t),(y,s))<4\bd{\kappa}\rho\}}%
\frac{d((y,s),(x_{2},t))^{\alpha}}{d((x_{2},t),(y,s))^{Q+2}}\,dy\,ds\\
&  =c\,\int_{\{\eta:\,d(\xi,\eta)<4\bd{\kappa}\rho\}}\frac{1}{d(\xi
,\eta)^{Q+2-\alpha}}\,d\eta\\[0.2cm]
&  \leq c\rho^{\alpha}=c\,\Vert x_{1}-x_{2}\Vert^{\alpha},
\end{split}
\label{eq:estimA22final}%
\end{equation}
Summing up, by combining \eqref{eq:estimA21final}-\eqref{eq:estimA22final}
with \eqref{eq:estimA2splitted}, we conclude that
\begin{equation}
|\mathrm{A}_{2}|\leq c\,|f|_{C_{x}^{\alpha}(S_{T})}\Vert x_{1}-x_{2}%
\Vert^{\alpha}, \label{eq:estimA2FINAL}%
\end{equation}
where $c>0$ is a suitable constant only depending on $\alpha$. \vspace*{0.1cm}

Now we have estimated $\mathrm{A}_{1}$ and $\mathrm{A}_{2}$, we are finally
ready to complete the proof: in fact, gathering
\eqref{eq:estimA1FINAL}-\eqref{eq:estimA2FINAL}, and recalling
\eqref{eq:spliTijSTART}, we conclude that
\[
|T_{ij}f(x_{1},t)-T_{ij}f(x_{2},t)|\leq|\mathrm{A}_{1}|+|\mathrm{A}_{2}| \leq
c\,|f|_{C^{\alpha}_{x}(S_{T})}\|x_{1}-x_{2}\|^{\alpha},
\]
which is exactly the desired \eqref{eq:toprovex1x2leqone}.
\end{proof}

Thanks to all the results established so far, we can finally give the

\begin{proof}
[Proof of Theorem \ref{Thm Schauder space}]Let $T,\tau,\alpha$ be as in the
statement, and let $u\in\mathcal{S}^{0}(\tau;T)$ be such that $\mathcal{L}u\in
C_{x}^{\alpha}(S_{T})$. By the representation formula
\eqref{repr formula u_xx}, we have
\begin{align*}
\partial_{x_{i}x_{j}}u(x,t)  &  =\int_{\mathbb{R}^{N}\times(\tau,t)}%
\partial_{x_{i}x_{j}}^{2}\Gamma(x,t;y,s)\cdot\big[\mathcal{L}%
u(E(s-t)x,s)-\mathcal{L}u(y,s)\big]\,dy\,ds\\[0.1cm]
&  =T_{ij}(\mathcal{L}u)(x,t)\qquad\text{for every $(x,t)\in S_{T}$ and $1\leq
i,j\leq q$},
\end{align*}
where $T_{ij}$ is as in Theorem \ref{Thm holder singular integrals}. Then,
from \eqref{eq:toproveboundTfSI} we infer that
\begin{equation}
\Vert\partial_{x_{i}x_{j}}u\Vert_{C_{x}^{\alpha}(S_{T})}=\Vert T_{ij}%
(\mathcal{L}u)\Vert_{C_{x}^{\alpha}(S_{T})}\leq c\,|\mathcal{L}u|_{C_{x}%
^{\alpha}(S_{T})}, \label{eq:estimdexixjuprovaScht}%
\end{equation}
where $c>0$ is a constant only depending on $(T-\tau)$ and $\alpha$, and this
is \eqref{eq:Schauderspaceaijt}.

On the other hand, using the definition of $\mathcal{L}$, and recalling that
the coefficients $a_{ij}(\cdot)$ are globally bounded on $\mathbb{R}$ and
\emph{independent of $x$}, from \eqref{eq:estimdexixjuprovaScht} we also get
\begin{equation}%
\begin{split}
\Vert Yu\Vert_{C_{x}^{\alpha}(S_{T})}  &  =\Big\|\mathcal{L}u-\sum_{i,j=1}%
^{q}a_{ij}\,\partial_{x_{i}x_{j}}u\Big\|_{C_{x}^{\alpha}(S_{T})}\\
&  \leq\Vert\mathcal{L}u\Vert_{C_{x}^{\alpha}(S_{T})}+\sum_{i,j=1}^{q}\Vert
a_{ij}\Vert_{L^{\infty}(\mathbb{R})}\cdot\Vert\partial_{x_{i}x_{j}}%
u\Vert_{C_{x}^{\alpha}(S_{T})}\\
&  \leq c\,\Vert\mathcal{L}u\Vert_{C_{x}^{\alpha}(S_{T})}.
\end{split}
\label{eq:estimYuprovaScht}%
\end{equation}
This is \eqref{eq:SchauderspaceaijtDRIFT}, and we are done.
\end{proof}

\subsection{Schauder estimates in space and time\label{section Schauder time}}

Theorem \ref{Thm Schauder space} shows that, for the derivatives
$\partial_{x_{i}x_{j}}^{2}u$ (with $1\leq i,j\leq q$) we can bound the
$C_{x}^{\alpha}$-norm in terms of the quantity $|\mathcal{L}u|_{C_{x}^{\alpha
}(S_{T})}$. We now aim to show how to improve the previous result, giving a
control on the H\"{o}lder norm of $\partial_{x_{i}x_{j}}^{2}u$ with respect to
both space and time, without strengthening the assumptions on $\mathcal{L}u$.

\begin{theorem}
[Local Schauder estimates in space-time]\label{Thm local Schauder time} Let
$T>\tau>-\infty$, $\alpha\in(0,1)$, and let $K\subseteq\mathbb{R}^{N}$ be a
\emph{compact set}.

Then, there exists a constant ${c}={c}(K,\tau,T)>0$ such that, for every
$u\in\mathcal{S}^{0}(\tau;T)$ such that $\mathcal{L}u\in C_{x}^{\alpha}%
(S_{T})$, one has
\begin{equation}%
\begin{split}
&  |\partial_{x_{i}x_{j}}^{2}u(x_{1},t_{1})-\partial_{x_{i}x_{j}}^{2}%
u(x_{2},t_{2})|\\
&  \qquad\leq{c}\,|\mathcal{L}u|_{C_{x}^{\alpha}(S_{T})}\big(d((x_{1}%
,t_{1}),(x_{2},t_{2}))^{\alpha}+|t_{1}-t_{2}|^{\alpha/q_{N}}\big)
\end{split}
\label{eq:schauderspacetimeuxixj}%
\end{equation}
for every $1\leq i,j\leq q$ and every $(x_{1},t_{1}),(x_{2},t_{2})\in
K\times\lbrack\tau,T]$. We recall that $q_{N}\geq3$ is the largest exponent in
the dilations $D_{0}(\lambda)$, see \eqref{dilations}.
\end{theorem}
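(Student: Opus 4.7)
The plan is to reduce the general space-time estimate to a purely temporal one and then handle the latter via the representation formula \eqref{repr formula u_xx}. By the triangle inequality
\[
|\de^{2}_{x_{i}x_{j}}u(x_{1},t_{1})-\de^{2}_{x_{i}x_{j}}u(x_{2},t_{2})|\leq |\de^{2}_{x_{i}x_{j}}u(x_{1},t_{1})-\de^{2}_{x_{i}x_{j}}u(x_{1},t_{2})|+|\de^{2}_{x_{i}x_{j}}u(x_{1},t_{2})-\de^{2}_{x_{i}x_{j}}u(x_{2},t_{2})|,
\]
the spatial-only piece is controlled by Theorem \ref{Thm Schauder space}, which gives $c|\mathcal{L}u|_{C^{\alpha}_{x}}\|x_{1}-x_{2}\|^{\alpha}$. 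Writing $\|x_{1}-x_{2}\|\le\|x_{1}-E(t_{1}-t_{2})x_{2}\|+\|(I-E(t_{1}-t_{2}))x_{2}\|$, the first summand is bounded by $d((x_{1},t_{1}),(x_{2},t_{2}))$; a Taylor expansion of $E(h)=\exp(-hB)$ combined with the nonisotropic definition of $\|\cdot\|$ and the compactness of $K$ gives $\|(I-E(h))x_{2}\|\le c(K,T,\tau)|h|^{1/q_{N}}$, hence $\|x_{1}-x_{2}\|^{\alpha}\le c(d^{\alpha}+|t_{1}-t_{2}|^{\alpha/q_{N}})$. It thus remains to prove $|\de^{2}_{x_{i}x_{j}}u(x,t_{2})-\de^{2}_{x_{i}x_{j}}u(x,t_{1})|\le c|\mathcal{L}u|_{C^{\alpha}_{x}}|t_{1}-t_{2}|^{\alpha/q_{N}}$ for $x\in K$ and $\tau\le t_{1}<t_{2}\le T$; one may assume $|t_{1}-t_{2}|$ is small, the bounded case following from Theorem \ref{Thm Schauder space} and the $L^{\infty}$ bound on $u$.

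Via \eqref{repr formula u_xx} I would decompose this temporal difference as $I+II$, where $I$ is the integral on $\mathbb{R}^{N}\times(t_{1},t_{2})$ against $\de^{2}_{x_{i}x_{j}}\Gamma((x,t_{2});\cdot)$, and $II$ is the cross-integral on $\mathbb{R}^{N}\times(\tau,t_{1})$ involving the two kernels $\de^{2}_{x_{i}x_{j}}\Gamma((x,t_{k});\cdot)$ each paired with its corresponding shift $\mathcal{L}u(E(s-t_{k})x,s)$. Proposition \ref{Proposition Claim} applied on $(t_{1},t_{2})$ with base point $(x,t_{2})$ gives $|I|\le c|\mathcal{L}u|_{C^{\alpha}_{x}}(t_{2}-t_{1})^{\alpha/2}\le c|\mathcal{L}u|_{C^{\alpha}_{x}}|t_{1}-t_{2}|^{\alpha/q_{N}}$, the latter inequality holding because $q_{N}\ge 2$ and $|t_{1}-t_{2}|\le 1$.

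To bound $II$ I split the $s$-integration at the threshold $s^{\ast}:=t_{1}-r_{0}^{2}$, where $r_{0}:=4\bd{\kappa}\,d((x,t_{1}),(x,t_{2}))\le c|t_{1}-t_{2}|^{1/q_{N}}$. On the near interval $(\max\{\tau,s^{\ast}\},t_{1})$, of length $\le r_{0}^{2}$, I bound the two summands of the integrand in absolute value and apply Proposition \ref{Proposition Claim} twice: once with base point $(x,t_{1})$ on $(t_{1}-r_{0}^{2},t_{1})$ yielding $\le cr_{0}^{\alpha}$, and once with base point $(x,t_{2})$ on the slightly larger interval $(t_{1}-r_{0}^{2},t_{2})$ yielding $\le c(t_{2}-t_{1}+r_{0}^{2})^{\alpha/2}$. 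Both are $\le c|\mathcal{L}u|_{C^{\alpha}_{x}}|t_{1}-t_{2}|^{\alpha/q_{N}}$, since $r_{0}\le c|t_{1}-t_{2}|^{1/q_{N}}$ and $t_{2}-t_{1}+r_{0}^{2}\le c|t_{1}-t_{2}|^{2/q_{N}}$. On the far interval $(\tau,s^{\ast}]$ the inequality $\sqrt{t_{1}-s}\ge r_{0}$ forces $d((x,t_{1}),(y,s))\ge r_{0}=4\bd{\kappa}\,d((x,t_{1}),(x,t_{2}))$ for \emph{every} $y$. I then add and subtract $\de^{2}_{x_{i}x_{j}}\Gamma((x,t_{2});\cdot)[\mathcal{L}u(E(s-t_{1})x,s)-\mathcal{L}u(y,s)]$ to split the integrand as $T_{1}+T_{2}$: the mean value Theorem \ref{Thm mean value} applied to $T_{1}$ gives the pointwise bound $|T_{1}|\le c|\mathcal{L}u|_{C^{\alpha}_{x}}\,d((x,t_{1}),(x,t_{2}))\,d((x,t_{1}),(y,s))^{\alpha-Q-3}$, which integrates via Lemma \ref{lem:doubling} to $c|\mathcal{L}u|_{C^{\alpha}_{x}}d((x,t_{1}),(x,t_{2}))^{\alpha}\le c|\mathcal{L}u|_{C^{\alpha}_{x}}|t_{1}-t_{2}|^{\alpha/q_{N}}$; while $T_{2}$ carries the factor $[\mathcal{L}u(E(s-t_{2})x,s)-\mathcal{L}u(E(s-t_{1})x,s)]$, which depends only on $s$, so Lemma \ref{lem:integralvanishinggamma}---giving $\int_{\mathbb{R}^{N}}\de^{2}_{x_{i}x_{j}}\Gamma((x,t_{2});y,s)\,dy=0$ for $s<t_{2}$---makes $T_{2}$ vanish identically.

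The key obstacle is precisely the choice of the splitting threshold $s^{\ast}=t_{1}-r_{0}^{2}$. A naive spatial near/far decomposition around $(x,t_{1})$ breaks down for the kernel $\de^{2}_{x_{i}x_{j}}\Gamma((x,t_{2});\cdot)$ because its singularity of order $d^{-(Q+2)}$ is borderline non-integrable and the $C^{\alpha}_{x}$-increment of $\mathcal{L}u$ centered at $t_{1}$ is not geometrically compatible with a kernel based at $t_{2}$. The time-only truncation at $s^{\ast}$ resolves both difficulties simultaneously: on the near part the interval is short enough for Proposition \ref{Proposition Claim} alone to supply the sharp exponent $\alpha/q_{N}$, while on the far part every $y$ is automatically in the mean value regime and the only residual $y$-independent obstruction is annihilated by the full-space cancellation of $\de^{2}_{x_{i}x_{j}}\Gamma$. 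The exponent $\alpha/q_{N}$ (rather than the parabolic $\alpha/2$) ultimately emerges from the nonisotropic bound $\|(I-E(h))x\|\le c|h|^{1/q_{N}}$ for $x$ in the compact $K$, which reflects the interplay between the degenerate dilations and the drift $Y$.
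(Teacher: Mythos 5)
Your proof is correct, and it uses a genuinely different decomposition from the paper. Both arguments share the same skeleton: triangle inequality plus Lemma \ref{lem:stimaEx} to reduce to a pure-time increment $|\de^2_{x_ix_j}u(x,t_1)-\de^2_{x_ix_j}u(x,t_2)|$, representation formula \eqref{repr formula u_xx}, and the term $I$ over $\mathbb{R}^N\times(t_1,t_2)$ controlled by Proposition \ref{Proposition Claim}. The divergence is in how the remaining integral over $\mathbb{R}^N\times(\tau,t_1)$ is split. The paper splits by \emph{distance}, writing $\mathrm{A}_1+\mathrm{A}_2$ according to whether $d((x,t_2),(y,s))$ exceeds $4\bd\kappa\,d((x,t_1),(x,t_2))$; as a result, on the far part the inner $y$-integral of $\de^2_{x_ix_j}\Gamma((x,t_2);\cdot,s)$ is \emph{truncated}, and the $y$-independent factor $[\mathcal{L}u(E(s-t_1)x,s)-\mathcal{L}u(E(s-t_2)x,s)]$ must be paired with the uniform truncated cancellation estimate of Theorem \ref{Thm cancellation property}. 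You instead split by \emph{time} at $s^*=t_1-r_0^2$ with $r_0=4\bd\kappa\,d((x,t_1),(x,t_2))$. Since $s\le s^*$ forces $d((x,t_1),(y,s))\ge\sqrt{t_1-s}\ge r_0$ for \emph{every} $y\in\mathbb{R}^N$, your far part has unrestricted $y$-integration, so the $y$-independent term $T_2$ is annihilated directly by the elementary full-space identity of Lemma \ref{lem:integralvanishinggamma}, and $T_1$ is handled by Theorem \ref{Thm mean value} plus \eqref{doubling >R} exactly as the paper handles $\mathrm{A}_{11}$. Your near part, a strip of thickness $r_0^2$, is controlled by two direct applications of Proposition \ref{Proposition Claim} (one with base $(x,t_1)$, one with the integration interval harmlessly enlarged to $(t_1-r_0^2,t_2)$ to use base $(x,t_2)$); this replaces the paper's $\mathrm{A}_2$ doubling computation. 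The trade-off: your temporal split sidesteps the truncated cancellation theorem entirely, at the cost of two applications of Proposition \ref{Proposition Claim} on the near strip; the paper's distance split is chosen to run parallel to the proof of Theorem \ref{Thm holder singular integrals}, reusing Theorem \ref{Thm cancellation property} already in hand. You also prove a slightly weaker pure-time bound ($|t_1-t_2|^{\alpha/q_N}$ rather than $d((x,t_1),(x,t_2))^{\alpha}$), which is fine since it is exactly what enters the final statement, and you correctly reduce to $|t_1-t_2|$ small via the $L^\infty$ bound on $\de^2_{x_ix_j}u$ from Theorem \ref{Thm Schauder space}.
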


To prove Theorem \ref{Thm local Schauder time}, we first establish the
following technical lemma.

\begin{lemma}
\label{lem:stimaEx} Let $K\subseteq\mathbb{R}^{N}$ be a fixed compact set, and
let $T>\tau>-\infty$. There exists a constant $c=c(K,\tau,T)>0$ such that
\begin{align}
&  \Vert x-E(t-s)x\Vert\leq c\,|t-s|^{1/q_{N}} &  &  \text{for every $x\in K$
and $t\in\lbrack\tau,T]$}\label{eq:stimaExHolder}\\
&  \Vert(E(t)-E(s))x\Vert\leq c\,|t-s|^{1/q_{N}} &  &  \text{for every $x\in
K$ and $t,s\in\lbrack\tau,T]$}. \label{eq:stimaEtEsHolder}%
\end{align}

\end{lemma}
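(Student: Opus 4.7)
The two estimates are essentially equivalent up to the group property of $E(\cdot)$, so the main task is the first one. The key observation is a \emph{gap between the anisotropic norm and the smoothness of $E(\cdot)x$}: the norm $\Vert\cdot\Vert$ in \eqref{eq:defrhonorm} is defined as $\Vert v\Vert=\sum_{i=1}^N|v_i|^{1/q_i}$, whereas the map $t\mapsto E(t)x$ is genuinely Lipschitz (indeed polynomial) in the Euclidean sense because $E(t)=\exp(-tB)$ and $B$ is nilpotent. The $1/q_N$-Hölder exponent in the statement is therefore an artifact of the definition of $\Vert\cdot\Vert$, not of any true loss of regularity.

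For the first estimate, since $E(0)=\mathrm{Id}_N$ and $t\mapsto E(t)x$ is smooth (uniformly for $x\in K$, $t\in[\tau,T]$), each Euclidean component satisfies
\[
\bigl|[\,x-E(t)x\,]_i\bigr|\le c_0\,|t|,\qquad c_0=c_0(K,\tau,T).
\]
Raising to $1/q_i$ and recalling $q_i\le q_N$ for every $i$, I want $|[x-E(t)x]_i|^{1/q_i}\le c\,|t|^{1/q_N}$. For $|t|\le 1$ this follows from $|t|^{1/q_i}\le |t|^{1/q_N}$ (since $1/q_i\ge 1/q_N$); for $1\le |t|\le T-\tau$ both sides are bounded above and below by constants depending only on $T-\tau$. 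Summing in $i$ gives the stated inequality (reading the right-hand side of \eqref{eq:stimaExHolder} as $|t|^{1/q_N}$, which is what is needed in the applications).

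For the second estimate I exploit the group law $E(t)=E(t-s)E(s)$, which holds because $E(\sigma)=\exp(-\sigma B)$ and $B$ commutes with itself; this yields
\[
(E(t)-E(s))x=(E(t-s)-\mathrm{Id}_N)\,E(s)x = y-E(t-s)y,
\qquad y:=E(s)x.
\]
As $(x,s)\mapsto E(s)x$ is continuous, the set $K':=\{E(s)x:x\in K,\,s\in[\tau,T]\}$ is compact. Applying the first estimate to $y\in K'$ with time parameter $t-s\in[-(T-\tau),T-\tau]$ gives $\Vert y-E(t-s)y\Vert\le c(K',\tau,T)|t-s|^{1/q_N}$, which is \eqref{eq:stimaEtEsHolder}. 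There is no serious obstacle here; the only subtlety is book-keeping of the compact set $K'$ and verifying the exponent inequality $1/q_i\ge 1/q_N$ to pass from Euclidean Lipschitz control to Hölder control in $\Vert\cdot\Vert$.
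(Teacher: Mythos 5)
Your proof is correct. For the first estimate you follow essentially the same route as the paper: linear (in fact Lipschitz) control of the Euclidean quantity $x-E(\cdot)x$ via the power-series expansion of the exponential, then raising to the powers $1/q_i$ and using $q_i\leq q_N$ together with the boundedness of $|t-s|$ on $[\tau,T]$. You are more explicit than the paper about the case split $|t|\leq 1$ versus $|t|>1$, which is a genuine point the paper leaves to the reader, and you correctly observe that the $1/q_N$ exponent is an artifact of the anisotropic norm rather than a true loss of smoothness. For the second estimate your argument differs from the paper's: the paper applies the Mean Value Theorem directly to $\sigma\mapsto E(\sigma)x$, bounds $\|BE(\theta)\|_{\mathrm{Op}}$ uniformly on $[\tau,T]$, and then repeats the $1/q_i$-raising step; you instead use the group law $E(t)=E(t-s)E(s)$ to write $(E(t)-E(s))x=(E(t-s)-\mathrm{Id})E(s)x$ and reduce to the first estimate applied on the compact set $K'=\{E(s)x:x\in K,\,s\in[\tau,T]\}$ with time increment $t-s\in[-(T-\tau),T-\tau]$. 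Your reduction is cleaner and avoids redoing the norm bookkeeping, at the small cost of introducing $K'$; the paper's direct computation is more self-contained. Both prove the same statement with the same constant dependence. (You also correctly interpret the statement of \eqref{eq:stimaExHolder}: as the paper's own proof makes clear, the intended inequality involves $E(t-s)$ with $t,s\in[\tau,T]$, even though the displayed statement only mentions $E(t)$.)
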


\begin{proof}
We begin with the proof \eqref{eq:stimaExHolder}. To this end, we fix $x\in K$
and $t\in[\tau,T]$, and we choose $\rho= \rho(K) \geq1$ such that
$K\subseteq\{|z|\leq\rho\}$. Taking into ac\-count the explicit expression of
$\|\cdot\|$ given in \eqref{eq:exprthetazeroij}, we have
\begin{equation}
\label{eq:stimaExPartI}%
\begin{split}
\|x-E(t-s)x\|  &  \leq\sum_{i = 1}^{N}\big|\big(\mathrm{Id}_{N}%
-E(t-s)\big)x\big|^{1/q_{i}}\\
&  \leq\rho\sum_{i = 1}^{N}\|\mathrm{Id}_{N}-E(t-s)\|_{\mathrm{Op}}^{1/q_{i}},
\end{split}
\end{equation}
where $\|\cdot\|_{\mathrm{Op}}$ denotes the \emph{operator norm} of a matrix.
On the other hand, recalling that $E(\sigma) = e^{-\sigma B}$ (and since
$\tau\leq t\leq T$), we also have
\begin{equation}
\label{eq:stimanormE}%
\begin{split}
&  \|\mathrm{Id}_{N}-E(t-s)\|_{\mathrm{Op}} \leq\sum_{k = 1}^{\infty}%
\frac{|t-s|^{k}\,\|B\|_{\mathrm{Op}}^{k}}{k!}\\
&  \qquad\leq|t-s|\sum_{k = 1}^{\infty} \frac{(T-\tau)^{k-1}%
\,\|B\|_{\mathrm{Op}}^{k}}{k!} = c(\tau,T)\cdot|t-s|.
\end{split}
\end{equation}
Gathering \eqref{eq:stimaExPartI}-\eqref{eq:stimanormE}, and recalling that $1
= q_{1}\leq\ldots\leq q_{N}$, we then get
\begin{align*}
\|x-E(t-s)x\| \leq c(\tau,T)\rho\cdot\sum_{i = 1}^{N}|t-s|^{1/q_{i}} \leq
c\,|t-s|^{1/q_{N}},
\end{align*}
where $c > 0$ only depends on $K,\tau,T$. This completes the proof of
\eqref{eq:stimaExHolder}. \vspace*{0.05cm}

We now turn to establish \eqref{eq:stimaEtEsHolder}. To this end, we fix $x\in
K$ and $t,s\in[\tau,T]$. By applying the Mean Value Theorem to the function
$\gamma(\sigma) = E(\sigma)x$ (and taking into account that $E(\sigma) =
e^{-\sigma B}$), we have the estimate
\begin{equation}
\label{eq:EtEsxfirst}%
\begin{split}
|(E(t)-E(s))x|  &  = |\gamma(t)-\gamma(s)| \leq|\gamma^{\prime}(\theta
)|\cdot|t-s|\\
&  = |t-s|\cdot|BE(\theta)x|\\
&  \leq\rho\,|t-s|\cdot\|BE(\theta)\|_{\mathrm{Op}},
\end{split}
\end{equation}
where $\theta$ is a suitable point between $t$ and $s$, and $\rho\geq1$ is as
before. On the other hand, observing that $\tau\leq\theta\leq T$ (as the same
is true of $t,s$), we also get
\begin{equation}
\label{eq:OpnormBEtheta}%
\begin{split}
\|BE(\theta)\|_{\mathrm{Op}}  &  \leq\sum_{k = 0}^{\infty}\frac{|\theta
|^{k}\,\|B\|^{k+1}_{\mathrm{Op}}}{k!}\\
&  \leq\sum_{k = 0}^{\infty}\frac{\max\{|\tau|,|T|\}^{k}\cdot\|B\|^{k+1}%
_{\mathrm{Op}}}{k!} =: c(\tau,T).
\end{split}
\end{equation}
Gathering \eqref{eq:EtEsxfirst}-\eqref{eq:OpnormBEtheta}, we then obtain
\begin{align*}
\|((E(t)-E(s))x\|  &  \leq\sum_{i = 1}^{N} |(E(t)-E(s))x|^{1/q_{i}}\\
&  \leq c(\tau,T)\rho\cdot\sum_{i = 1}^{N}|t-s|^{1/q_{i}} \leq
c\,|t-s|^{1/q_{N}},
\end{align*}
where $c > 0$ only depends on $K,\tau,T$. This completes the proof.
\end{proof}

With Lemma \ref{lem:stimaEx}, we can now prove Theorem
\ref{Thm local Schauder time}.

\begin{proof}
[Proof (of Theorem \ref{Thm local Schauder time})]Let $u\in\mathcal{S}%
^{0}(\tau;T)$ be such that $\mathcal{L}u\in C_{x}^{\alpha}(S_{T})$. First of
all we observe that, owing to Theorem \ref{Thm Schauder space} (and taking
into account the expression of $d$ given in \eqref{eq:explicitd}), there
exists an absolute constant ${c}>0$ such that
\begin{equation}%
\begin{split}
&  |\partial_{x_{i}x_{j}}^{2}u(x_{1},t)-\partial_{x_{i}x_{j}}^{2}%
u(x_{2},t)|\leq|\partial_{x_{i}x_{j}}^{2}u|_{C_{x}^{\alpha}(S_{T})}\Vert
x_{1}-x_{2}\Vert^{\alpha}\\
&  \qquad\leq{c}\,|\mathcal{L}u|_{C_{x}^{\alpha}(S_{T})}\Vert x_{1}-x_{2}%
\Vert^{\alpha},
\end{split}
\label{eq:estimuxixjtfisso}%
\end{equation}
{for every $(x_{1},t),(x_{2},t)\in S_{T}$}. As a consequence of
\eqref{eq:estimuxixjtfisso}, and ta\-king into account Lemma \ref{lem:stimaEx}%
, to prove \eqref{eq:schauderspacetimeuxixj} it suffices to show that
\begin{equation}
|\partial_{x_{i}x_{j}}^{2}u(x,t_{1})-\partial_{x_{i}x_{j}}^{2}u(x,t_{2})|\leq
c\,|\mathcal{L}u|_{C_{x}^{\alpha}(S_{T})}\,\left\{  d((x,t_{1}),(x,t_{2}%
))^{\alpha}+|t_{1}-t_{2}|^{\alpha/q_{N}}\right\}  ,
\label{eq:toproveuxixjxfissoGENERALE}%
\end{equation}
for every $(x,t_{1}),(x,t_{2})\in K\times\lbrack\tau,T]$, where $c>0$ is an
absolute constant independent of $u$ (but possibly depending on the fixed
$K,\tau,T$). In fact, once \eqref{eq:toproveuxixjxfissoGENERALE} has been
established, by combining
\eqref{eq:estimuxixjtfisso}-\eqref{eq:toproveuxixjxfissoGENERALE} with Lemma
\ref{lem:stimaEx} we get
\begin{align*}
&  |\partial_{x_{i}x_{j}}^{2}u(x_{1},t_{1})-\partial_{x_{i}x_{j}}^{2}%
u(x_{2},t_{2})|\\
&  \qquad\leq|\partial_{x_{i}x_{j}}^{2}u(x_{1},t_{1})-\partial_{x_{i}x_{j}%
}^{2}u(x_{2},t_{1})|+|\partial_{x_{i}x_{j}}^{2}u(x_{2},t_{1})-\partial
_{x_{i}x_{j}}^{2}u(x_{2},t_{2})|\\
&  \qquad\leq c\,|\mathcal{L}u|_{C_{x}^{\alpha}(S_{T})}\big(\Vert x_{1}%
-x_{2}\Vert^{\alpha}+d((x_{2},t_{1}),(x_{2},t_{2}))^{\alpha}+|t_{1}%
-t_{2}|^{\alpha/q_{N}}\big)\\[0.1cm]
&  \qquad(\text{by the explicit expression of $d$, see \eqref{eq:explicitd}}%
)\\
&  \qquad\leq c\,|\mathcal{L}u|_{C_{x}^{\alpha}(S_{T})}\big(\Vert x_{1}%
-x_{2}\Vert^{\alpha}+\Vert x_{2}-E(t_{1}-t_{2})x_{2}\Vert^{\alpha} \\
& \qquad\qquad\qquad\qquad\qquad+|t_{1}-t_{2}|^{\alpha/2}+|t_{1}-t_{2}|^{\alpha/q_{N}}\big)\\[0.1cm]
&  \qquad(\text{recalling that, by assumption, $q_{N}\geq3$})\\[0.1cm]
&  \qquad\leq c\,|\mathcal{L}u|_{C_{x}^{\alpha}(S_{T})}\big(\Vert x_{1}%
-x_{2}\Vert^{\alpha}+|t_{1}-t_{2}|^{\alpha/q_{N}}\big)=:(\bigstar);
\end{align*}
from this, using the quasi-triangle inequality \eqref{quasitriangle}, we
obtain
\begin{align*}
(\bigstar)  &  \leq c\,|\mathcal{L}u|_{C_{x}^{\alpha}(S_{T})}\times\\
&  \qquad\times\big(\Vert x_{1}-E(t_{1}-t_{2})x_{2}\Vert^{\alpha}+\Vert
x_{2}-E(t_{1}-t_{2})x_{2}\Vert^{\alpha}+|t_{1}-t_{2}|^{\alpha/q_{N}}\big)\\
&  \leq c\,|\mathcal{L}u|_{C_{x}^{\alpha}(S_{T})}\big(\Vert x_{1}%
-E(t_{1}-t_{2})x_{2}\Vert^{\alpha}+|t_{1}-t_{2}|^{\alpha/q_{N}}\big)\\[0.1cm]
&  (\text{again by the expression of $d$ in \eqref{eq:explicitd}})\\
&  \leq c\,|\mathcal{L}u|_{C_{x}^{\alpha}(S_{T})}\big(d((x_{1},t_{1}%
),(x_{2},t_{2}))^{\alpha}+|t_{1}-t_{2}|^{\alpha/q_{N}}\big),
\end{align*}
which is exactly \eqref{eq:schauderspacetimeuxixj}. Hence, we turn to prove
\eqref{eq:toproveuxixjxfissoGENERALE}. \vspace*{0.05cm}

This can be done adapting several computations exploited in the proof of
Theorem \ref{Thm holder singular integrals}. We will point out just the
relevant differences.

Let us fix two points $(x,t_{1}),(x,t_{2})\in K\times\lbrack\tau,T]$ and
exploit the representation formula \eqref{repr formula u_xx} for
$\partial_{x_{i}x_{j}}^{2}u$: assuming, to fix the ideas, that $t_{2}\geq
t_{1}$, we can write
\begin{equation}%
\begin{split}
&  \partial_{x_{i}x_{j}}^{2}u(x,t_{1})-\partial_{x_{i}x_{j}}^{2}u(x,t_{2})\\
&  \quad=\int_{\mathbb{R}^{N}\times(\tau,t_{1})}\Big\{\partial_{x_{i}x_{j}%
}^{2}\Gamma(x,t_{1};y,s)\big[\mathcal{L}u(E(s-t_{1})x,s)-\mathcal{L}%
u(y,s)\big]\\
&  \qquad\qquad\quad-\partial_{x_{i}x_{j}}^{2}\Gamma(x,t_{2}%
;y,s)\big[\mathcal{L}u(E(s-t_{2})x,s)-\mathcal{L}u(y,s)\big]\Big\}\,dy\,ds\\
&  \qquad-\int_{\mathbb{R}^{N}\times(t_{1},t_{2})}\partial_{x_{i}x_{j}}%
^{2}\Gamma(x,t_{2};y,s)\big[\mathcal{L}u(E(s-t_{2})x,s)-\mathcal{L}%
u(y,s)\big]\,dy\,ds\\
&  \quad=\int_{\{(y,s):\,d((x,t_{2}),(y,s))\geq4\bd{\kappa}\rho\}}%
\{\cdots\}\,dy\,ds\\
&  \qquad\qquad+\int_{\{(y,s):\,d((x,t_{2}),(y,s))<4\bd{\kappa}\rho\}}%
\{\cdots\}\,dy\,ds\\
&  \qquad\qquad-\int_{\mathbb{R}^{N}\times(t_{1},t_{2})}\{\cdots
\}\,dy\,ds\\[0.1cm]
&  \quad=:\mathrm{A}_{1}+\mathrm{A}_{2}-\mathrm{A}_{3},
\end{split}
\label{eq:spliuxixjSTART}%
\end{equation}
where $\bd\kappa>0$ is as in \eqref{quasitriangle}-\eqref{quasisymmetric} and
\[
\rho:=d((x,t_{2}),(x,t_{1})).
\]
We now turn to estimate the integrals $\mathrm{A}_{k}$ (for $k=1,2,3$).
\medskip

\noindent-\thinspace\thinspace\textsc{Estimate of $\mathrm{A}_{1}$.} To begin
with, we write $\mathrm{A}_{1}$ as follows:
\begin{align*}
\mathrm{A}_{1}  &  =\int_{\{(y,s):\,d((x,t_{2}),(y,s))\geq4\bd{\kappa}\rho
\}}\Big\{\big[\mathcal{L}u(E(s-t_{1})x,s)-\mathcal{L}u(y,s)\big]\times
\\[0.1cm]
&  \qquad\qquad\times\big[\partial_{x_{i}x_{j}}^{2}\Gamma(x,t_{1}%
;y,s)-\partial_{x_{i}x_{j}}^{2}\Gamma(x,t_{2}%
;y,s)\big]\Big\}\,dy\,ds\\[0.15cm]
&  \qquad+\int_{\{(y,s):\,d((x,t_{2}),(y,s))\geq4\bd{\kappa}\rho
\}}\Big\{\partial_{x_{i}x_{j}}^{2}\Gamma(x,t_{2};y,s)\times\\
&  \qquad\qquad\times\big[\mathcal{L}u(E(s-t_{1})x,s)-\mathcal{L}%
u(E(s-t_{2})x,s)\big]\Big\}\,dy\,ds\\[0.1cm]
&  =:\mathrm{A}_{11}+\mathrm{A}_{12}.
\end{align*}

\emph{Estimate of $\mathrm{A}_{11}$}. This can be done analogously to what
done in the proof of Theorem \ref{Thm holder singular integrals} for
$\mathrm{A}_{11}$, with $d\left(  \left(  x,t_{1}\right)  ,\left(
x,t_{2}\right)  \right)  $ now replacing $\left\Vert x_{1}-x_{2}\right\Vert $,
getting%
\begin{equation}
\left\vert \mathrm{A}_{11}\right\vert \leq c\left\vert \mathcal{L}u\right\vert
_{C_{x}^{\alpha}\left(  S_{T}\right)  }d\left(  \left(  x,t_{1}\right)
,\left(  x,t_{2}\right)  \right)  ^{\alpha} \label{A11y}%
\end{equation}
where $c>0$ is a constant only depending on $\alpha$. \medskip

\emph{Estimate of $\mathrm{A}_{12}$}. First of all, using once again the fact
that $\mathcal{L}u\in C_{x}^{\alpha}(S_{T})$, jointly with Lemma
\ref{lem:stimaEx}, we can bound the integral $\mathrm{A}_{12}$ as follows:
\[%
\begin{split}
|\mathrm{A}_{12}|  &  \leq\int_{\tau}^{t_{1}}\big|\mathcal{L}u(E(s-t_{1}%
)x,s)-\mathcal{L}u(E(s-t_{2})x,s)\big|\cdot\mathcal{J}(s)\,ds\\
&  \leq|\mathcal{L}u|_{C_{x}^{\alpha}(S_{T})}\int_{\tau}^{t_{1}}%
|(E(s-t_{1})-E(s-t_{2}))x|^{\alpha}\cdot\mathcal{J}(s)\,ds\\
&  (\text{since $|s-t_{1}|,|s-t_{2}|\leq T-\tau$ for all $\tau\leq s\leq
t_{1}$})\\
&  \leq c\,|\mathcal{L}u|_{C_{x}^{\alpha}(S_{T})}\cdot|t_{1}-t_{2}%
|^{\alpha/q_{N}}\int_{\tau}^{t_{1}}\mathcal{J}(s)\,ds=:(\bigstar)
\end{split}
\]
where $c>0$ is an absolute constant and
\[
\mathcal{J}(s):=\bigg|\int_{\{y\in\mathbb{R}^{N}:\,d((x,t_{2}),(y,s))\geq
4\bd{\kappa}\rho\}}\partial_{x_{i}x_{j}}^{2}\Gamma(x,t_{2};y,s)\,dy\bigg|.
\]
From this, using the cancellation property of $\mathcal{J}$ in Theorem
\ref{Thm cancellation property}, we obtain
\begin{equation}
(\bigstar)\leq c\,|\mathcal{L}u|_{C_{x}^{\alpha}(S_{T})}|t_{1}-t_{2}%
|^{\alpha/q_{N}} \label{eq:estimA12Scht}%
\end{equation}
for a suitable constant $c>0$ only depending on $\alpha$. \vspace*{0.05cm}

By combining \eqref{A11y} with \eqref{eq:estimA12Scht}, we conclude that
\begin{equation}
\left\vert \mathrm{A}_{1}\right\vert \leq c\,|\mathcal{L}u|_{C_{x}^{\alpha
}(S_{T})}\left\{  d((x,t_{1}),(x,t_{2}))^{\alpha}+|t_{1}-t_{2}|^{\alpha/q_{N}%
}\right\}  , \label{eq:estimA1FINALScht}%
\end{equation}
for a suitable constant $c>0$ {only depending on $\alpha$}. \medskip

\noindent-\thinspace\thinspace\textsc{Estimate of $A_{2}$.} This can be done
analogously to what done in the proof of Theorem
\ref{Thm holder singular integrals} for $\mathrm{A}_{2}$, with $d\left(
\left(  x,t_{1}\right)  ,\left(  x,t_{2}\right)  \right)  $ now replacing
$\left\Vert x_{1}-x_{2}\right\Vert $, getting
\begin{equation}
\left\vert \mathrm{A}_{2}\right\vert \leq c\,|\mathcal{L}u|_{C_{x}^{\alpha
}(S_{T})}\,d((x,t_{1}),(x,t_{2}))^{\alpha}, \label{eq:estimA2FINALScht}%
\end{equation}
where $c>0$ is a suitable constant only depending on $\alpha$. \medskip

-\thinspace\thinspace\textsc{Estimate of $\mathrm{A}_{3}$.} Using once again
the fact that $\mathcal{L}u\in C_{x}^{\alpha}(S_{T})$, together with the
estimate \eqref{eq:integraldexixjconv} in Proposition \ref{Proposition Claim},
we immediately obtain
\begin{equation}%
\begin{split}
|\mathrm{A}_{3}|  &  \leq|\mathcal{L}u|_{C_{x}^{\alpha}(S_{T})}\int%
_{\mathbb{R}^{N}\times(t_{1},t_{2})}|\partial_{x_{i}x_{j}}^{2}\Gamma
(x,t_{2};y,s)|\cdot\Vert E(s-t_{2})x-y\Vert^{\alpha}\,dy\,ds\\
&  \leq c\,(t_{2}-t_{1})^{\alpha/2}\leq c\,|t_{1}-t_{2}|^{\alpha/q_{N}}%
\end{split}
\label{eq:estimA3Scht}%
\end{equation}
where $c>0$ only depends on $\alpha$. \vspace*{0.1cm}

Now we have estimated $\mathrm{A}_{1},\mathrm{A}_{2}$ and $\mathrm{A}_{3}$, we
can complete the proof: in fact, gathering
\eqref{eq:estimA1FINALScht},\eqref{eq:estimA2FINALScht} and
\eqref{eq:estimA3Scht}, and recalling \eqref{eq:spliuxixjSTART}, we conclude
that
\begin{align*}
&  |\partial_{x_{i}x_{j}}^{2}u(x,t_{1})-\partial_{x_{i}x_{j}}^{2}%
u(x,t_{2})|\leq|\mathrm{A}_{1}|+|\mathrm{A}_{2}|+|\mathrm{A}_{3}|\\
&  \qquad\leq c\,|\mathcal{L}u|_{C_{x}^{\alpha}(S_{T})}\big(d((x,t_{1}%
),(x,t_{2}))^{\alpha}+|t_{1}-t_{2}|^{\alpha/2}\big),
\end{align*}
which is exactly the desired \eqref{eq:toproveuxixjxfissoGENERALE}.
\end{proof}

\section{Schauder estimates for operators with coefficients depending on
$\left(  x,t\right)  $\label{Sec operators a(x,t)}}

Throughout this section we study operators \eqref{L} with coefficients
$a_{ij}\left(  x,t\right)  $ depending on both space and time, fulfilling
assumptions (H1), (H2), (H3) stated in section \ref{sec intro}.

Here we will prove our main result, Theorem \ref{Thm main a priori estimates},
exploiting all the results proved so far.

\subsection{Local Schauder estimates in space\label{sec local schauder space}}

Throughout this section we will consider metric balls $B_{r}\left(
\xi\right)  $ centered at points $\xi\in\mathbb{R}^{N}\times(0,T)$ but
possibly overlapping the hyperplanes $t=0$ and $t=T$ (since these balls will
eventually build a covering of $\mathbb{R}^{N}\times(0,T)$). Our functions
$u\in\mathcal{S}^{\alpha}(0;T)$, so that they are actually defined and jointly
continuous in the whole ball $B_{r}\left(  \xi\right)  \cap S_{T}$; however,
the derivative $Yu$ is merely an $L^{\infty}$ function of the joint variables.
\medskip

\noindent\textbf{Notation.} Throughout this section, we will set
\[
B_{\rho}^{T}({\xi}):=B_{\rho}({\xi})\cap S_{T}\qquad\text{for every $\xi
\in\mathbb{R}^{N+1}$ and $\rho>0$}.
\]

\begin{theorem}
\label{Thm local schauder x} Let $\mathcal{L}$ be an operator of type
\eqref{L} satisfying assumptions \emph{(H1), (H2), (H3)} stated in Section
\ref{sec intro}, for some $\alpha\in(0,1)$.

Then, there exist constants $c,r_{0}>0$ depending on $T$, $\alpha$, the matrix
$B$ in \eqref{B} and the numbers $\nu$ and $\Lambda$ in \eqref{nu} and
\eqref{Lambda}, respectively, such that, for every point $\overline{\xi}\in
S_{T}$, $r\leq r_{0}$ and $u\in\mathcal{S}^{\alpha}(S_{T})$ with
$\mathrm{supp}(u)\subseteq B_{r}(\overline{\xi})\cap\overline{S_{T}}$, one
has
\begin{equation}
\Vert\partial_{x_{k}x_{h}}^{2}u\Vert_{C_{x}^{\alpha}(B_{r}^{T}(\overline{\xi
}))}\leq c\,|\mathcal{L}u|_{C_{x}^{\alpha}(B_{r}^{T}(\overline{\xi}))},
\label{local holder}%
\end{equation}
for every $1\leq h,k\leq q$. We stress that the constant $c$ in
\eqref{local holder} is independent of the ball $B_{r}(\overline{\xi})$.
\end{theorem}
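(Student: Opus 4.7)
The approach will be the classical perturbative method: compare $\mathcal{L}$ on the ball $B_{r}(\overline{\xi})$ with an operator whose coefficients depend only on $t$, and use Theorem \ref{Thm Schauder space} together with the smallness of $r$ to absorb the perturbation. First I would reduce to the case $\overline{\xi}=0$ by setting $\tilde u(\xi):=u(\overline{\xi}\circ\xi)$ and $\tilde a_{ij}(\xi):=a_{ij}(\overline{\xi}\circ\xi)$: since $\de_{x_{1}},\ldots,\de_{x_{q}},Y$ are left-invariant on $\G$, the function $\tilde u$ is supported in $B_{r}(0)$ and the translated operator $\tilde{\mathcal{L}}=\sum\tilde a_{ij}\de_{x_{i}x_{j}}^{2}+Y$ satisfies \emph{(H1)-(H3)} with the same constants $\nu,\Lambda$ and matrix $B$. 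The key point is that, for fixed $t$, the group operation acts on the $x$-variable as the Euclidean shift $x\mapsto x+E(t)\overline{x}$, leaving the $C_{x}^{\alpha}$-seminorm and $L^{\infty}$-norms invariant. Because $B_{r}(0)$ has time-projection in $(-r^{2},r^{2})$, we get $\tilde u\in\mathcal{S}^{0}(-r^{2};r^{2})$, truncating at $T-\overline{t}$ if $\overline{t}$ is close to $T$.

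Next I would freeze the coefficients at $\overline{x}=0$, defining
\[
\hat{\mathcal{L}}v:=\sum_{i,j=1}^{q}\tilde a_{ij}(0,t)\,\de_{x_{i}x_{j}}^{2}v+Yv,
\]
which has coefficients depending only on $t$ and so fits within Theorem \ref{Thm Schauder space}. Applied to $\tilde u$ on $S_{r^2}$, that theorem yields
\[
\sum_{h,k=1}^{q}\Vert\de_{x_{h}x_{k}}^{2}\tilde u\Vert_{C_{x}^{\alpha}}\leq c_{0}\,|\hat{\mathcal{L}}\tilde u|_{C_{x}^{\alpha}},
\]
with $c_{0}=c_{0}(r_{0},\alpha,\nu,B)$ uniform for $r\leq r_{0}$. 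Writing $\hat{\mathcal{L}}\tilde u=\tilde{\mathcal{L}}\tilde u+\sum_{i,j}[\tilde a_{ij}(0,t)-\tilde a_{ij}(x,t)]\de_{x_{i}x_{j}}^{2}\tilde u$, and noting that $\tilde{\mathcal{L}}\tilde u$ corresponds to $\mathcal{L}u$ under translation, the perturbation is controlled by a product estimate: on the support of $\tilde u$ one has $\Vert x\Vert<r$, whence $|\tilde a_{ij}(0,t)-\tilde a_{ij}(x,t)|\leq\Lambda r^{\alpha}$, while the global $C_{x}^{\alpha}$-seminorm of this difference is bounded by $2\Lambda$. Using the compact support of $\de_{x_{i}x_{j}}^{2}\tilde u$, a direct estimate gives
\[
\bigl|[\tilde a_{ij}(0,t)-\tilde a_{ij}(x,t)]\,\de_{x_{i}x_{j}}^{2}\tilde u\bigr|_{C_{x}^{\alpha}}\leq c\,\Lambda\, r^{\alpha}\,\Vert\de_{x_{i}x_{j}}^{2}\tilde u\Vert_{C_{x}^{\alpha}}.
\]
Summing over $i,j$ and choosing $r_{0}$ so that $c_{0}c\Lambda r_{0}^{\alpha}\leq\tfrac12$, the perturbation is absorbed into the left-hand side, and undoing the translation recovers \eqref{local holder}.

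The main technical obstacle is this product estimate. Under the weak regularity of the coefficients (Hölder only in $x$, measurable in $t$) one has to control $|(fg)(x_{1},t)-(fg)(x_{2},t)|$ where $f=\tilde a_{ij}(0,t)-\tilde a_{ij}(x,t)$ is small on $B_{r}(0)$ but not outside, while $g=\de_{x_{i}x_{j}}^{2}\tilde u$ vanishes outside $B_{r}(0)$. The delicate case is when $x_{1},x_{2}$ straddle the support boundary: there one uses the vanishing of $g$ at the outside point to rewrite the corresponding value as $g(x_{\mathrm{in}},t)-g(x_{\mathrm{out}},t)$, converting an $L^{\infty}$-factor into a Hölder-seminorm factor and gaining the extra $\Vert x_{1}-x_{2}\Vert^{\alpha}$ needed to bound the seminorm of $fg$ by $r^{\alpha}\Vert g\Vert_{C_{x}^{\alpha}}$. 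This gain, together with the smallness $\Lambda r^{\alpha}$, is what makes the absorption go through; without it the perturbative scheme would not close.
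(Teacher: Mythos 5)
Your proposal follows essentially the same perturbative strategy as the paper's proof: freeze the coefficients at the center to get a time-only-dependent operator, apply the Schauder estimate for such operators (the paper invokes Theorem~\ref{Thm repr formula derivatives} and Theorem~\ref{Thm holder singular integrals} directly, which are exactly what underlie Theorem~\ref{Thm Schauder space}), write the error as a commutator term $[\mathcal{L}_{\overline{x}}-\mathcal{L}]u$, exploit the compact support to extract an $r^\alpha$ gain in the $C_x^\alpha$-seminorm of the perturbation, and absorb for $r$ small. The one genuine stylistic difference is that you perform the left translation $\tilde u(\xi)=u(\overline{\xi}\circ\xi)$ first, so that the spatial support becomes exactly $\{\Vert x\Vert<r\}$ for each time slice (since $E(t)\cdot 0=0$); this does make the boundary case of the product estimate cleaner than in the paper, where the slice-support is actually $\{\Vert x-E(t-\overline{t})\overline{x}\Vert<r\}$ rather than $\{\Vert x-\overline{x}\Vert<r\}$ as stated there. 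Both versions then handle the delicate case (one of $x_1,x_2$ inside the support, the other outside) by the same algebraic trick you describe -- inserting the vanishing value of $\de^2_{x_ix_j}u$ at the outside point to convert an $L^\infty$-factor into a H\"older-seminorm factor -- so the arguments are the same in substance.
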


\begin{proof}
Let $r\leq1$ to be chosen later. For a fixed $\overline{\xi}=(\overline
{x},\overline{t})$, we consider the o\-pe\-ra\-tor $\mathcal{L}_{\overline{x}%
}$ with coefficients $a_{ij}(\overline{x},t)$ (frozen in space, variable in
time). Let $\Gamma^{\overline{x}}$ be its fundamental solution, as described
in Theorem \ref{Thm fund sol coeff t dip}. Let $u\in\mathcal{S}^{\alpha}%
(S_{T})$ with $\mathrm{supp}(u)\subseteq B_{r}(\overline{\xi})\cap
\overline{S_{T}}$; then $\mathcal{L}_{\overline{x}}u\in C_{x}^{\alpha}(S_{T})$
and, by Theorem \ref{Thm repr formula derivatives}, we can write
\begin{align*}
&  \partial_{x_{k}x_{h}}^{2}u(x,t)\\
&  \quad=\int_{\overline{t}-1}^{t}\bigg(\int_{\mathbb{R}^{N}}\partial
_{x_{i}x_{j}}^{2}\Gamma^{\overline{x}}(x,t;y,s)\big[\mathcal{L}_{\overline{x}%
}u(E(s-t)x,s)-\mathcal{L}_{\overline{x}}u(y,s)\big]dy\bigg)ds,
\end{align*}
for every $(x,t)\in B_{r}^{T}(\overline{\xi})$ (so that, in particular,
$|t-\overline{t}|\leq r\leq1$). Writing
\[
\mathcal{L}_{\overline{x}}=\mathcal{L}+(\mathcal{L}_{\overline{x}}%
-\mathcal{L}),
\]
we then have%
\begin{align*}
\partial_{x_{k}x_{h}}^{2}u(x,t)  &  =\int_{\overline{t}-1}^{t}\bigg(\int%
_{\mathbb{R}^{N}}\partial_{x_{k}x_{h}}^{2}\Gamma^{\overline{x}}%
(x,t;y,s)\big[\mathcal{L}u(E(s-t)x,s)-\mathcal{L}u(y,s)\big]dy\bigg)ds\\
&  \qquad+\sum_{i,j=1}^{q}\int_{\overline{t}-1}^{t}\int_{\mathbb{R}^{N}%
}\partial_{x_{k}x_{h}}^{2}\Gamma^{\overline{x}}(x,t;y,s)\cdot\\
&  \qquad\qquad\cdot\Big\{\big[a_{ij}(\overline{x},s)-a_{ij}%
(E(s-t)x,s)\big]\partial_{x_{i}x_{j}}^{2}u(E(s-t)x,s)\\
&  \qquad\qquad-\big[a_{ij}(\overline{x},s)-a_{ij}(y,s)\big]\partial
_{x_{i}x_{j}}^{2}u(y,s)\Big\}dyds\\
&  \equiv A+\sum_{i,j=1}^{q}B_{ij}.
\end{align*}
For the term $A$ we have, by Theorem \ref{Thm holder singular integrals},
\begin{equation}
\Vert A\Vert_{C_{x}^{\alpha}(S_{T})}\leq c\,|\mathcal{L}u|_{C_{x}^{\alpha
}(S_{T})}. \label{eq:estimAlocalSch}%
\end{equation}
On the other hand,%
\begin{equation}
B_{ij}=\int_{\overline{t}-1}^{t}\int_{\mathbb{R}^{N}}\partial_{x_{k}x_{h}}%
^{2}\Gamma^{\overline{x}}(x,t;y,s)\big[f_{ij}(E(s-t)x,s)-f_{ij}(y,s)\big]dyds
\label{eq:estimBijlocalSch}%
\end{equation}
with%
\[
f_{ij}(y,s)=[a_{ij}(\overline{x},s)-a_{ij}(y,s)]\partial_{x_{i}x_{j}}%
^{2}u(y,s),
\]
hence, again by Theorem \ref{Thm holder singular integrals},%
\[
\Vert B_{ij}\Vert_{C_{x}^{\alpha}(S_{T})}\leq c\,|f_{ij}|_{C_{x}^{\alpha
}(S_{T})}.
\]
We point out that the constant $c$ in
\eqref{eq:estimAlocalSch}-\eqref{eq:estimBijlocalSch} is independent of the
ball $B_{r}(\overline{\xi})$, since $\mathrm{supp}(u)\subseteq B_{r}%
(\overline{\xi})\subseteq\{(x,t):\,|t-\overline{t}|\leq1\}$, so that we can
apply Theorem \ref{Thm holder singular integrals} with $T-\tau\leq2$.
\vspace*{0.1cm}

We then turn to bound $|f_{ij}|_{C_{x}^{\alpha}(S_{T})}$. We now exploit the
fact that $u$ has small support in space, namely $u(x,t)\neq0$ only if $\Vert
x-\overline{x}\Vert<r$; therefore we can assume that $\Vert x_{k}-\overline
{x}\Vert<r$ for $k=1,2.$ Hence, we have
\begin{align*}
&  f_{ij}(x_{1},s)-f(x_{2},s)\\
&  =[a_{ij}(\overline{x},s)-a_{ij}(x_{1},s)]\,\partial_{x_{i}x_{j}}^{2}%
u(x_{1},s)-[a_{ij}(\overline{x},s)-a_{ij}(x_{2},s)]\,\partial_{x_{i}x_{j}}%
^{2}u(x_{2},s)\\
&  =[a_{ij}(x_{2},s)-a_{ij}(x_{1},s)]\,\partial_{x_{i}x_{j}}^{2}u(x_{1},s)\\
&  \qquad\qquad+[a_{ij}(\overline{x},s)-a_{ij}(x_{2},s)]\,[\partial
_{x_{i}x_{j}}^{2}u(x_{1},s)-\partial_{x_{i}x_{j}}^{2}u(x_{2},s)].
\end{align*}
Then, writing briefly $|\cdot|_{\alpha}$ for $|\cdot|_{C_{x}^{\alpha}(S_{T})}$%
\begin{align*}
&  |f_{ij}(x_{1},s)-f(x_{2},s)|\\
&  \qquad\qquad\leq|a_{ij}|_{\alpha}\Vert x_{2}-x_{1}\Vert^{\alpha}\cdot
\sup|\partial_{x_{i}x_{j}}^{2}u|+|a_{ij}|_{\alpha}r^{\alpha}|\partial
_{x_{i}x_{j}}^{2}u|_{\alpha}\Vert x_{2}-x_{1}\Vert^{\alpha}%
\end{align*}
so that%
\[
|\partial_{x_{k}x_{h}}^{2}u|_{\alpha}+\sup|\partial_{x_{k}x_{h}}^{2}u|\leq
c\,|\mathcal{L}u|_{\alpha}+c\big\{|a_{ij}|_{\alpha}\sup|\partial_{x_{i}x_{j}%
}^{2}u|+|a_{ij}|_{\alpha}r^{\alpha}|\partial_{x_{i}x_{j}}^{2}u|_{\alpha
}\big\}.
\]
Exploiting again the fact that $u$ has compact support, we have
\[
\sup_{B_{r}(\overline{\xi})}|\partial_{x_{i}x_{j}}^{2}u|\leq|\partial
_{x_{i}x_{j}}^{2}u|_{\alpha}(cr)^{\alpha},
\]
so that%
\[
|\partial_{x_{k}x_{h}}^{2}u|_{\alpha}+\sup|\partial_{x_{k}x_{h}}^{2}u|\leq
c\,|\mathcal{L}u|_{\alpha}+c\,|a_{ij}|_{\alpha}r^{\alpha}|\partial_{x_{i}%
x_{j}}^{2}u|_{\alpha},
\]
and for $r$ small enough we get \eqref{local holder}. Note that the small
number $r$ and the constant $c$ are independent of the fixed point
$\overline{x}$. The independence of the constant on $\overline{x}$ also relies
on the uniformity (in $\overline{x}$) of the upper bounds on $\partial
_{x_{i}x_{j}}^{2}\Gamma^{\overline{x}}$. Actually these bounds depend on the
coefficients $a_{ij}(x,t)$ only through the number $\nu$.
\end{proof}

\subsection{Some interpolation inequalities\label{sec interpolation}}

Interpolation inequalities are a typical tool to deduce global estimates
starting with local estimates for compactly supported functions. We will need
the following:

\begin{theorem}
\label{Thm interpolaz}For every $r>0$ there exist $c>0$ and $\gamma>1$ such
that for every $\varepsilon\in\left(  0,1\right)  $, $\overline{\xi}\in S_{T}$
and $u\in\mathcal{S}^{0}\left(  S_{T}\right)  ,$%
\begin{align}
&  \sum_{h=1}^{q}\left\Vert \partial_{x_{h}u}\right\Vert _{C^{\alpha}\left(
B_{r}^{T}\left(  \overline{\xi}\right)  \right)  }+\left\Vert u\right\Vert
_{C^{\alpha}\left(  B_{r}^{T}\left(  \overline{\xi}\right)  \right)
}\nonumber\\
&  \leq\varepsilon\left\{  \sum_{h,k=1}^{q}\left\Vert \partial_{x_{k}x_{h}%
}^{2}u\right\Vert _{C^{0}\left(  B_{4r}^{T}\left(  \overline{\xi}\right)
\right)  }+\left\Vert Yu\right\Vert _{C^{0}\left(  B_{4r}^{T}\left(
\overline{\xi}\right)  \right)  }\right\}  +\frac{c}{\varepsilon^{\gamma}%
}\left\Vert u\right\Vert _{C^{0}\left(  B_{4r}^{T}\left(  \overline{\xi
}\right)  \right)  }. \label{disug interpolaz}%
\end{align}

\end{theorem}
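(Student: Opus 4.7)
The goal is to interpolate between $\|u\|_{C^0}$ and the second-order norms on the right-hand side of \eqref{disug interpolaz}, in the spirit of the classical proofs for non-degenerate parabolic operators but with Theorem \ref{Lagrange} playing the role of the mean value theorem. It suffices to prove \eqref{disug interpolaz} for $u \in \mathcal{S}^{0}(S_T) \cap C^{\infty}(S_T)$ (so that all differential operations are justified) and then pass to general $u \in \mathcal{S}^{0}(S_T)$ by the group mollification argument of Step III in the proof of Theorem \ref{Thm repr formula u}.

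First, I would bound $\|\de_{x_h} u\|_{C^0(B_r^T(\overline{\xi}))}$ pointwise. For $\overline{\zeta} = (\overline{y},\overline{s}) \in B_r^T(\overline{\xi})$ and $|\sigma|$ small enough that $(\overline{y} + \tau e_h,\overline{s}) \in B_{4r}^T(\overline{\xi})$ for every $\tau$ between $0$ and $\sigma$, the single-variable integral form of Taylor's theorem in $x_h$ gives
\begin{equation*}
|\de_{x_h} u(\overline{\zeta})| \leq \frac{2}{|\sigma|}\,\|u\|_{C^0(B_{4r}^T(\overline{\xi}))} + \frac{|\sigma|}{2}\,\|\de_{x_h x_h}^{2} u\|_{C^0(B_{4r}^T(\overline{\xi}))}.
\end{equation*}
Choosing $|\sigma| = \delta \in (0,1)$ yields a pointwise bound of the desired form $c\delta^{-1}\|u\|_{C^0} + c\delta \sum_{h,k}\|\de_{x_k x_h}^{2} u\|_{C^0}$, which handles the $C^0$ pieces of both left-hand side norms in \eqref{disug interpolaz}.

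For the joint H\"older seminorms I would split pairs $\xi,\eta \in B_r^T(\overline{\xi})$ into the regimes $d(\xi,\eta) \leq R$ (use Theorem \ref{Lagrange}) and $d(\xi,\eta) > R$ (use the trivial bound $2\|\cdot\|_{C^0}/R^{\alpha}$), with a free cutoff $R$ to be tuned. Applied to $f = u$ on small $d$-balls, Theorem \ref{Lagrange} yields $|u(\xi) - u(\eta)| \leq c\,d(\xi,\eta) \sum_h \|\de_{x_h} u\|_{C^0} + c\,d(\xi,\eta)^{2} \|Yu\|_{C^0}$; combined with the previous pointwise bound and an optimization in $R$, this controls $|u|_{C^\alpha(B_r^T)}$. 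For $|\de_{x_h} u|_{C^\alpha(B_r^T)}$ I would apply Theorem \ref{Lagrange} to $g = \de_{x_h} u$: the spatial part produces $\sum_k|\de_{x_k x_h}^{2} u|$, which is already of the desired form, while the degenerate-direction term $|Yg|$ must be decomposed via the commutator identity
\begin{equation*}
Y\de_{x_h} u = \de_{x_h}(Yu) - \sum_{j=1}^{N} b_{jh}\,\de_{x_j} u,
\end{equation*}
whose last piece is controlled by Step 1.

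The main obstacle is the term $\de_{x_h}(Yu)$, which is not bounded by $\|Yu\|_{C^0}$ alone. The plan is to apply a second horizontal Taylor expansion to the smooth function $Yu$ in the $x_h$ direction at a free scale $\eta$, giving $\|\de_{x_h}(Yu)\|_{C^0} \leq c\eta^{-1}\|Yu\|_{C^0} + c\eta\,\|\de_{x_h x_h}^{2}(Yu)\|_{C^0}$. For the mollified approximants $u_{\varepsilon}$, the unfavorable norm $\|\de_{x_h x_h}^{2}(Yu_{\varepsilon})\|_{C^0}$ blows up only like a negative power of the mollification parameter, and it enters the estimate multiplied by a small factor $R^{2-\alpha}\eta$ coming from the Lagrange step and the inner Taylor scale. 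A matched choice of $\eta$, $R$, and the mollification parameter as appropriate powers of the free $\varepsilon$ in \eqref{disug interpolaz} forces this contribution to vanish in the limit, leaving only the norms on the right-hand side of \eqref{disug interpolaz}. The exponent $\gamma > 1$ then emerges from the bookkeeping of powers in the optimization.
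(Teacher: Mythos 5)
There is a genuine gap in your plan, concentrated in the treatment of the seminorm $|\de_{x_h}u|_{C^\alpha(B_r^T(\overline{\xi}))}$. To bound this by Theorem \ref{Lagrange} applied to $g=\de_{x_h}u$, you must control $\sup|Yg|=\sup|Y\de_{x_h}u|$. Your commutator identity is correct:
\[
Y\de_{x_h}u=\de_{x_h}(Yu)-\sum_{j=1}^{N}b_{jh}\,\de_{x_j}u,
\]
but with $h\leq q=m_0$ and $B$ of the block form \eqref{B}, the coefficients $b_{jh}$ are nonzero precisely for $j\in\{m_0+1,\dots,m_0+m_1\}$. These are \emph{non-horizontal} directions: in the degenerate case $q<N$, the term $\sum_j b_{jh}\de_{x_j}u$ involves first derivatives $\de_{x_j}u$ with $j>q$, which are not on the right-hand side of \eqref{disug interpolaz}, cannot be reached by a one-dimensional Taylor expansion in the horizontal variables $x_1,\dots,x_q$, and in fact need not even exist (as $L^\infty$ distributional derivatives) for a general $u\in\mathcal{S}^0(S_T)$ — Definition \ref{def:spacesS} only demands $\de^2_{x_ix_j}u$ for $i,j\leq q$ and $Yu$. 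The assertion that "the last piece is controlled by Step 1" is therefore incorrect: Step 1 handles $\de_{x_h}u$ for $h\leq q$ only.

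The other commutator piece, $\de_{x_h}(Yu)$, is also out of reach. Your plan to Taylor-expand $Yu_\delta$ in $x_h$ at scale $\eta$ and absorb the blowup via a matched choice of $\eta,R,\delta$ does not close: $\|\de_{x_h}(Yu_\delta)\|_{C^0}\lesssim\delta^{-1}\|Yu\|_{C^0}$ (the inner Taylor expansion cannot improve this), so the Lagrange contribution is $R^{2-\alpha}\delta^{-1}\|Yu\|_{C^0}$; making this $\lesssim\varepsilon\|Yu\|_{C^0}$ forces $\delta\gtrsim R^{2-\alpha}/\varepsilon$ to stay \emph{bounded away from zero} for fixed $\varepsilon$, which is incompatible with sending $\delta\to0$ to recover $u$. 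You cannot let the mollification parameter go to zero and absorb the error simultaneously. The paper sidesteps both obstructions by proving Proposition \ref{Prop interpolaz sharp} through the representation formula $\de_{x_k}u=\int\de_{x_k}\Gamma^0(\xi,\eta)\,\mathcal{L}_0u(\eta)\,d\eta$, splitting the kernel into a near part (a fractional integral of $\mathcal{L}_0u$ with small norm) and a far part $K_2$ obtained by integration by parts that lands back on $u$; the derivatives $\de_{x_h}K_2$ and $Y^{(x,t)}K_2$ needed for Lagrange are then taken on the \emph{smooth, explicitly estimated} kernel $\de_{x_k}\Gamma^0$, not on $\de_{x_h}u$, so the problematic commutator term on $u$ never appears. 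Your Step 1 and your bound on $|u|_{C^\alpha}$ are fine and parallel the paper's Propositions \ref{Prop interp euclidea} and \ref{Prop interpolaz sharp}, but the elementary Lagrange-plus-commutator route to $|\de_{x_h}u|_{C^\alpha}$ does not go through in the degenerate setting.
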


The proof of the above inequality will be reached in several steps. The first
step is based on the analysis of fractional integral operators carried out in
Proposition \ref{Prop fractional generale} and has an independent interest,
since it contains a regularity result for functions in $\mathcal{S}^{0}\left(
S_{T}\right)  $.

\begin{proposition}
\label{Prop interpolaz sharp} \emph{(i)}\thinspace\thinspace Let
$\mathcal{L}_{0}$ be the constant-coefficient operator
\[
\textstyle\mathcal{L}_{0}=\sum_{i=1}^{q}\partial_{x_{i}x_{i}}^{2}+Y
\]
and let $R>0$ be fixed. For every $\alpha\in(0,1)$ there exists $\gamma>2$ and
$c>0$ such that, for every $\overline{\xi}\in S_{T},u\in\mathcal{S}^{0}%
(S_{T})$ with $\mathrm{supp}(u)\subseteq B_{R}(\overline{\xi})\cap
\overline{S_{T}}$ and every $\varepsilon\in(0,1)$ we have:%
\begin{align*}
&  \left\Vert \partial_{x_{k}}u\right\Vert _{C^{\alpha}\left(  B_{R}%
^{T}\left(  \overline{\xi}\right)  \right)  }+\left\Vert u\right\Vert
_{C^{\alpha}\left(  B_{R}^{T}\left(  \overline{\xi}\right)  \right)  }\\
&  \qquad\qquad\leq\varepsilon\left\Vert \mathcal{L}_{0}u\right\Vert
_{C^{0}\left(  B_{R}^{T}\left(  \overline{\xi}\right)  \right)  }+\frac
{c}{\varepsilon^{\gamma}}\left\Vert u\right\Vert _{C^{0}\left(  B_{R}%
^{T}\left(  \overline{\xi}\right)  \right)  }\quad\text{ for }k=1,2,...,q.
\end{align*}
\emph{(}The constant $c$ depends on $r$ and $\alpha$ but not on $\overline
{\xi},u$ and $\varepsilon$\emph{)}. \medskip

\emph{(ii)}\thinspace\thinspace Let $u\in\mathcal{S}^{0}(S_{T})$,
$\overline{\xi}\in S_{T}$ and $R>0$. Then, we have
\[
\text{$u,\,\partial_{x_{k}}u\in C^{\alpha}(B_{R}^{T}(\overline{\xi}))$ for
every $1\leq k\leq q$}.
\]

\end{proposition}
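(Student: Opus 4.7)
The compact support of $u$ in $B_R(\overline{\xi})\cap\overline{S_T}$ places $u\in\mathcal{S}^0(\tau;T)$ for any $\tau$ strictly less than the infimum of the time-coordinates of $\mathrm{supp}(u)$. I may therefore apply Theorem~\ref{Thm repr formula u} and Corollary~\ref{cor:firtderivrepr} to write
\[
u(\xi)=-\int \Gamma_0(\xi,\eta)\,\mathcal{L}_0 u(\eta)\,d\eta,\qquad \partial_{x_k}u(\xi)=-\int \partial_{x_k}\Gamma_0(\xi,\eta)\,\mathcal{L}_0 u(\eta)\,d\eta,
\]
where $\Gamma_0$ is the fundamental solution of $\mathcal{L}_0$. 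Both kernels satisfy the hypotheses of Proposition~\ref{Prop fractional generale}: the size bound \eqref{Frac1} comes from Theorem~\ref{Thm bound derivatives}, with $\beta=2$ for $\Gamma_0$ and $\beta=1$ for $\partial_{x_k}\Gamma_0$, while the H\"older-type difference bound \eqref{Frac2} comes from Theorem~\ref{Thm mean value}. Since $\mathcal{L}_0 u\in L^\infty$ is supported in $B_R(\overline{\xi})$, Proposition~\ref{Prop fractional generale} yields the preliminary estimates
\[
\|u\|_{C^\alpha(B_R^T(\overline{\xi}))}\leq c R^{2-\alpha}\|\mathcal{L}_0 u\|_{C^0(B_R^T(\overline{\xi}))},\quad \|\partial_{x_k}u\|_{C^\alpha(B_R^T(\overline{\xi}))}\leq c R^{1-\alpha}\|\mathcal{L}_0 u\|_{C^0(B_R^T(\overline{\xi}))}.
\]

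\textbf{Interpolation step.} These bounds are already of the required form on the left-hand side, but the right-hand side contains only $\|\mathcal{L}_0 u\|_{C^0}$. To produce the $\varepsilon$ versus $\varepsilon^{-\gamma}$ splitting, the plan is to apply the preliminary estimates not to $u$ itself but, for each $\xi_0\in B_R^T(\overline{\xi})$ and each scale $\rho\leq R$, to the localization $\phi_\rho u$, where $\phi_\rho$ is a smooth cutoff equal to $1$ on $B_{\rho/2}(\xi_0)$ and supported in $B_\rho(\xi_0)$, with $|\partial_{x_i}\phi_\rho|\leq c\rho^{-1}$ and $|\mathcal{L}_0\phi_\rho|\leq c\rho^{-2}$. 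The Leibniz identity
\[
\mathcal{L}_0(\phi_\rho u)=\phi_\rho\,\mathcal{L}_0 u+2\sum_{i=1}^q(\partial_{x_i}\phi_\rho)\,\partial_{x_i}u+u\,\mathcal{L}_0\phi_\rho,
\]
together with evaluation of the preliminary estimates at $\xi_0$, gives a Landau--Kolmogorov inequality of the form
\[
|\partial_{x_k} u(\xi_0)|\leq c\rho\,\|\mathcal{L}_0 u\|_{C^0}+c\sum_{i=1}^q\|\partial_{x_i}u\|_{C^0}+\frac{c}{\rho}\|u\|_{C^0},
\]
and an analogous $C^\alpha$-version via the H\"older bound in Proposition~\ref{Prop fractional generale}. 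Iterating this inequality at progressively smaller scales absorbs the middle term, and choosing $\rho=\varepsilon^{1/(1-\alpha)}$ then delivers the claimed $\varepsilon$-dependence with an explicit exponent $\gamma$ depending only on $\alpha$ (and $R$).

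\textbf{Part (ii) and main obstacle.} Part (ii) is then a direct corollary of part (i) via a standard cutoff: for $u\in\mathcal{S}^0(S_T)$ with no support restriction, pick $\phi\in C^\infty_c(\mathbb{R}^{N+1})$ with $\phi\equiv 1$ on $B_R^T(\overline{\xi})$ and $\mathrm{supp}(\phi)\subseteq B_{R'}^T(\overline{\xi})$ for some $R'>R$. Since $u\in\mathcal{S}^0$ gives $\partial^2_{x_ix_j}u,Yu,\partial_{x_i}u\in L^\infty$, the product $\phi u$ lies in $\mathcal{S}^0$ with compact support in $B_{R'}^T(\overline{\xi})$, and $\mathcal{L}_0(\phi u)\in L^\infty$. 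Applying part (i) to $\phi u$ on the ball $B_{R'}^T(\overline{\xi})$ then gives $\phi u,\partial_{x_k}(\phi u)\in C^\alpha(B_{R'}^T(\overline{\xi}))$, and since $\phi u=u$ on $B_R^T(\overline{\xi})$, the conclusion follows. The main technical difficulty is the iterative Landau--Kolmogorov step in part (i): because $\mathcal{L}_0$ is anisotropic under $D(\lambda)$---with $Y$ rescaling as $\lambda^2$ and $\partial_{x_i}$ as $\lambda$---balancing scales during the iteration so that the middle term is absorbed while keeping the constant in front of $\|u\|_{C^0}$ of the form $c\varepsilon^{-\gamma}$ requires careful bookkeeping. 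This anisotropy is what forces $\gamma>2$, in contrast with the elliptic value $\gamma=1$ of the classical Landau inequality.
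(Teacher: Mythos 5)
Your decomposition of the problem into (a) a representation formula, (b) fractional integral bounds from Proposition~\ref{Prop fractional generale}, and (c) a cutoff manipulation is broadly in the same spirit as the paper, and your part~(ii) (cutoff $\phi$ and apply part~(i) with $\varepsilon=1$) is exactly the paper's argument. However, the key step of part~(i) is handled by a genuinely different — and, as written, incomplete — mechanism.

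The paper does not localize the \emph{function}: it localizes the \emph{kernel}. Starting from $\de_{x_k}u=\int\de_{x_k}\Gamma^0\,\mathcal{L}_0u$, it inserts a cutoff $k_\varepsilon(\xi,\eta)$ supported where $d(\xi,\eta)<\varepsilon$ and splits into $T_1 f=\int \de_{x_k}\Gamma^0\,k_\varepsilon f$ and a far part. The near part is a fractional integral whose kernel gains a factor $\varepsilon^\delta$ because it vanishes outside a $d$-ball of radius $\varepsilon$. The far part is integrated by parts, moving $\mathcal{L}_0^\ast$ onto $\de_{x_k}\Gamma^0[1-k_\varepsilon]$; since $\mathcal{L}_0^\ast\Gamma^0(\xi,\cdot)=0$ off the diagonal, the $\mathcal{L}_0u$ factor is replaced directly by $u$, and the cost $\varepsilon^{-4}$ arises from differentiating $1-k_\varepsilon$ at scale $\varepsilon$ (Lemma~\ref{Lemma stima frazionario L0}). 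No intermediate derivative $\de_{x_i}u$ appears, so there is nothing to absorb.

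Your Landau--Kolmogorov route instead multiplies $u$ by a cutoff $\phi_\rho$ at scale $\rho$. The Leibniz expansion then unavoidably produces the cross term $2\sum_i(\de_{x_i}\phi_\rho)\de_{x_i}u$, and feeding this into the preliminary estimate yields
\[
\|\nabla u\|_{C^0}\ \leq\ c\rho\,\|\mathcal{L}_0u\|_{C^0}+c\,\|\nabla u\|_{C^0}+\frac{c}{\rho}\|u\|_{C^0},
\]
where the constant $c$ in the middle term comes from Proposition~\ref{Prop fractional generale} times a cutoff-derivative constant; there is no reason for it to be $<1$, and no choice of $\rho$ or of the inner radius of the cutoff reduces it, because the factor $\rho$ coming from the fractional integral exactly cancels the $\rho^{-1}$ from $\de_{x_i}\phi_\rho$. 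The claimed ``iterating at progressively smaller scales'' is not a working version of the absorption lemma here: that lemma requires the self-term coefficient to be strictly below $1$, which is not established and in general is false. This is a genuine gap — you have correctly identified that it is the delicate point, but the proposal does not close it, and the obstruction is structural rather than merely bookkeeping. The paper's integration-by-parts on the \emph{far} kernel, which converts $\mathcal{L}_0u$ to $u$ directly without ever introducing $\nabla u$ on the right, is precisely how this circularity is avoided.

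Two smaller remarks. First, your suggested scale choice $\rho=\varepsilon^{1/(1-\alpha)}$ together with the absorbed inequality would give $\gamma=1$, contradicting the paper's $\gamma=4/\delta>4$ — another sign the absorption cannot be free. Second, in part~(ii) you invoke $\de_{x_i}u\in L^\infty$ for $u\in\mathcal{S}^0(S_T)$; that is not in the definition of $\mathcal{S}^0$ and should be justified, e.g.\ via the representation formula of Corollary~\ref{cor:firtderivrepr} after a further cutoff making $u$ vanish for small $t$.
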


\begin{proof}
Point (ii) will simply follow applying point (i) with $\varepsilon=1$ to the
function $u\phi$, where $\phi\in C_{0}^{\infty}(B_{2R}(\overline{\xi}))$ and
$\phi\equiv1$ on $B_{R}(\overline{\xi})$. So, let us prove (i). This proof is
inspired to \cite[Prop.\,7.1]{BBJDE}. \medskip

Let $\Gamma^{0}$ be the fundamental solution of $\mathcal{L}_{0}$ and let us
write%
\begin{align*}
u\left(  \xi\right)   &  =\int\Gamma^{0}\left(  \xi,\eta\right)
\mathcal{L}_{0}u\left(  \eta\right)  d\eta\\
\partial_{x_{k}}u\left(  \xi\right)   &  =\int\partial_{x_{k}}\Gamma
^{0}\left(  \xi,\eta\right)  \mathcal{L}_{0}u\left(  \eta\right)  d\eta.
\end{align*}
For a fixed $\varepsilon>0$ (that we can assume $<\min\left(  1,R\right)  $)
let $k_{\varepsilon}\left(  \xi,\eta\right)  $ a cutoff function such that%
\[
B_{\varepsilon/2}\left(  \xi\right)  \prec k_{\varepsilon}\left(  \xi
,\cdot\right)  \prec B_{\varepsilon}\left(  \xi\right)  .
\]
We will prove the desired bound for $|\partial_{x_{k}}u|_{C^{\alpha}%
(B_{R}(\overline{\xi}))}.$ A completely analogous proof, starting from the
above representation formula for $u(\xi)$, gives an analogous bound for
$|u|_{C^{\alpha}(B_{R}(\overline{\xi}))}$, possibly with a different exponent
$\gamma$ in the constant $c/\varepsilon^{\gamma}$. Since $\varepsilon\in
(0,1)$, the assertion then follows choosing the bigger exponent.

Let us write%
\begin{equation}%
\begin{split}
\partial_{x_{k}}u\left(  \xi\right)   &  =\int\partial_{x_{k}}\Gamma
^{0}\left(  \xi,\eta\right)  k_{\varepsilon}\left(  \xi,\eta\right)
\mathcal{L}_{0}u\left(  \eta\right)  d\eta\\
&  \qquad\qquad+\int\partial_{x_{k}}\Gamma^{0}\left(  \xi,\eta\right)  \left[
1-k_{\varepsilon}\left(  \xi,\eta\right)  \right]  \mathcal{L}_{0}u\left(
\eta\right)  d\eta\\
&  =\int\partial_{x_{k}}\Gamma^{0}\left(  \xi,\eta\right)  k_{\varepsilon
}\left(  \xi,\eta\right)  \mathcal{L}_{0}u\left(  \eta\right)  d\eta\\
&  \qquad\qquad+\int\left(  \mathcal{L}_{0}^{\ast}\right)  ^{\eta}\left(
\partial_{x_{k}}\Gamma^{0}\left(  \xi,\eta\right)  \left[  1-k_{\varepsilon
}\left(  \xi,\eta\right)  \right]  \right)  u\left(  \eta\right)  d\eta\\
&  =T_{1}\left(  \mathcal{L}_{0}u\right)  +T_{2}\left(  u\right)
\end{split}
\label{uxk interpo}%
\end{equation}
where%
\[
\mathcal{L}_{0}^{\ast}=\sum_{i=1}^{q}\partial_{x_{i}x_{i}}^{2}-Y.
\]
Now we handle $T_{1}$ as a fractional integral. Since the kernel
\[
K_{1}\left(  \xi,\eta\right)  =\partial_{x_{k}}\Gamma^{0}\left(  \xi
,\eta\right)  k_{\varepsilon}\left(  \xi,\eta\right)
\]
does not vanish only if $d\left(  \xi,\eta\right)  <\varepsilon$, owing to
Theorems \ref{Thm bound derivatives}-\ref{Thm mean value} we see that, for
every $\delta\in(0,1)$, the kernel $K_{1}$ satisfies the bounds
\begin{align*}
\left\vert K_{1}\left(  \xi,\eta\right)  \right\vert  &  \leq\frac{c}{d\left(
\xi,\eta\right)  ^{Q+1}}\leq\frac{c\varepsilon^{\delta}}{d\left(  \xi
,\eta\right)  ^{Q+1+\delta}}\\
\left\vert K_{1}\left(  \xi_{1},\eta\right)  -K_{1}\left(  \xi_{2}%
,\eta\right)  \right\vert  &  \leq c\frac{d\left(  \xi_{1},\xi_{2}\right)
}{d\left(  \xi_{1},\eta\right)  ^{Q+2}}\leq c\varepsilon^{\delta}%
\frac{d\left(  \xi_{1},\xi_{2}\right)  }{d\left(  \xi_{1},\eta\right)
^{Q+2+\delta}}\\[0.1cm]
&  \qquad\text{when $d\left(  \xi_{1},\eta\right)  >4\bd{\kappa}d\left(
\xi_{1},\xi_{2}\right)  $}.
\end{align*}
For a fixed $\alpha\in\left(  0,1\right)  $, choosing $\delta<1-\alpha$, by
Proposition \ref{Prop fractional generale} (applied by extending our functions
equal to $0$ out of $S_{T}$) we get%
\begin{equation}
\left\Vert T_{1}\left(  \mathcal{L}_{0}u\right)  \right\Vert _{C^{\alpha
}\left(  B_{R}^{T}\left(  \overline{\xi}\right)  \right)  }\leq c\left(
R\right)  \varepsilon^{\delta}\left\Vert \mathcal{L}_{0}u\right\Vert
_{C^{0}\left(  B_{R}^{T}\left(  \overline{\xi}\right)  \right)  }.
\label{T1 interpo}%
\end{equation}
As to $T_{2}\left(  u\right)  $, let us consider the kernel
\begin{equation}
K_{2}\left(  \xi,\eta\right)  =\mathcal{L}_{0}^{\ast}\left(  \partial_{x_{k}%
}\Gamma^{0}\left(  \xi,\cdot\right)  \left[  1-k_{\varepsilon}\left(
\xi,\cdot\right)  \right]  \right)  \left(  \eta\right)  \text{.}
\label{K2 def}%
\end{equation}
We now claim that the kernel $K_{2}\left(  \xi,\eta\right)  $ satisfies the
following fractional integral estimates:%
\begin{align}
\left\vert K_{2}\left(  \xi,\eta\right)  \right\vert  &  \leq\frac
{c}{\varepsilon^{4}}\frac{1}{d\left(  \xi,\eta\right)  ^{Q-1}}\label{frac1 K2}%
\\
\left\vert K_{2}\left(  \xi_{1},\eta\right)  -K_{2}\left(  \xi_{2}%
,\eta\right)  \right\vert  &  \leq\frac{c}{\varepsilon^{4}}\frac{d\left(
\xi_{1},\xi_{2}\right)  }{d\left(  \xi_{1},\eta\right)  ^{Q}}\text{ for
}d\left(  \xi_{1},\eta\right)  >4\bd{\kappa}d\left(  \xi_{1},\xi_{2}\right)  .
\label{frac2 K2}%
\end{align}
These bounds will be proved in Lemma \ref{Lemma stima frazionario L0}. Taking
these bounds for granted, by Proposition \ref{Prop fractional generale} we get%
\[
\left\Vert T_{2}\left(  u\right)  \right\Vert _{C^{\alpha}\left(  B_{R}%
^{T}\left(  \overline{\xi}\right)  \right)  }\leq\frac{c\left(  R\right)
}{\varepsilon^{4}}\left\Vert u\right\Vert _{C^{0}\left(  B_{R}^{T}\left(
\overline{\xi}\right)  \right)  }%
\]
and then, by \eqref{uxk interpo} and \eqref{T1 interpo}, for some constants
$c_{1},c_{2}$ depending on $R$ but independent of $\varepsilon,\overline{\xi}$
and $u,$%
\[
\left\Vert \partial_{x_{k}}u\right\Vert _{C^{\alpha}\left(  B_{R}^{T}\left(
\overline{\xi}\right)  \right)  }\leq c_{1}\varepsilon^{\delta}\left\Vert
\mathcal{L}_{0}u\right\Vert _{C^{0}\left(  B_{R}^{T}\left(  \overline{\xi
}\right)  \right)  }+\frac{c_{2}}{\varepsilon^{4}}\left\Vert u\right\Vert
_{C^{0}\left(  B_{R}^{T}\left(  \overline{\xi}\right)  \right)  }.
\]
Rescaling $c_{1}\varepsilon^{\delta}=\varepsilon_{1}$ we get%
\[
\left\Vert \partial_{x_{k}}u\right\Vert _{C^{\alpha}\left(  B_{R}^{T}\left(
\overline{\xi}\right)  \right)  }\leq\varepsilon_{1}\left\Vert \mathcal{L}%
_{0}u\right\Vert _{C^{0}\left(  B_{R}^{T}\left(  \overline{\xi}\right)
\right)  }+\frac{c}{\varepsilon_{1}^{4/\delta}}\left\Vert u\right\Vert
_{C^{0}\left(  B_{R}^{T}\left(  \overline{\xi}\right)  \right)  }%
\]
for some $c$ depending on $R$ but not on $\varepsilon_{1}$. So the assertion
is proved, with $\gamma=4/\delta$ and some fixed $\delta\in\left(  0,1\right)
$.

The analogous bound on $\left\Vert u\right\Vert _{C^{\alpha}\left(  B_{R}%
^{T}\left(  \overline{\xi}\right)  \right)  }$ can be proved, with a
completely analogous reasoning, starting with the representation formula
\begin{align*}
u\left(  \xi\right)   &  =\int\Gamma^{0}\left(  \xi,\eta\right)
\mathcal{L}_{0}u\left(  \eta\right)  d\eta\\
&  =\int\Gamma^{0}\left(  \xi,\eta\right)  k_{\varepsilon}\left(  \xi
,\eta\right)  \mathcal{L}_{0}u\left(  \eta\right)  d\eta+\int\left(
\mathcal{L}_{0}^{\ast}\right)  ^{\eta}\left(  \Gamma^{0}\left(  \xi
,\eta\right)  \left[  1-k_{\varepsilon}\left(  \xi,\eta\right)  \right]
\right)  u\left(  \eta\right)  d\eta\\
&  =T_{1}^{\prime}\left(  \mathcal{L}_{0}u\right)  +T_{2}^{\prime}\left(
u\right)
\end{align*}
where $T_{1}^{\prime},T_{2}^{\prime}$ are fractional integral operators with
kernels $K_{1}^{\prime},K_{2}^{\prime},$ respectively, satisfying the
following bounds:%
\begin{align*}
\left\vert K_{1}^{\prime}\left(  \xi,\eta\right)  \right\vert  &  \leq\frac
{c}{d\left(  \xi,\eta\right)  ^{Q}}\leq\frac{c\varepsilon^{\delta}}{d\left(
\xi,\eta\right)  ^{Q+\delta}}\\
\left\vert K_{1}^{\prime}\left(  \xi_{1},\eta\right)  -K_{1}^{\prime}\left(
\xi_{2},\eta\right)  \right\vert  &  \leq c\frac{d\left(  \xi_{1},\xi
_{2}\right)  }{d\left(  \xi_{1},\eta\right)  ^{Q+1}}\leq c\varepsilon^{\delta
}\frac{d\left(  \xi_{1},\xi_{2}\right)  }{d\left(  \xi_{1},\eta\right)
^{Q+1+\delta}}\\[0.1cm]
&  \qquad\text{when $d\left(  \xi_{1},\eta\right)  >4\bd{\kappa}\left(
\xi_{1},\xi_{2}\right)  $}\\
\left\vert K_{2}^{\prime}\left(  \xi,\eta\right)  \right\vert  &  \leq\frac
{c}{\varepsilon^{4}}\frac{1}{d\left(  \xi,\eta\right)  ^{Q-2}}\\
\left\vert K_{2}^{\prime}\left(  \xi_{1},\eta\right)  -K_{2}^{\prime}\left(
\xi_{1},\eta\right)  \right\vert  &  \leq\frac{c}{\varepsilon^{4}}%
\frac{d\left(  \xi_{1},\xi_{2}\right)  }{d\left(  \xi_{1},\eta\right)  ^{Q-1}%
}\\
&  \qquad\text{when $d\left(  \xi_{1},\eta\right)  >4\bd{\kappa}d\left(
\xi_{1},\xi_{2}\right)  $}..
\end{align*}
The bounds on $K_{1}^{\prime}$ are immediate, while those on $K_{2}^{\prime}$
can be proved with the same reasoning used in the proof of Lemma
\ref{Lemma stima frazionario L0} here below, exploiting the corresponding
upper bounds on the derivatives of $\Gamma^{0}$. The upper bound on $\Vert
u\Vert_{C^{\alpha}(B_{R}^{T}(\overline{\xi}))}$ leads to an exponent
$\gamma^{\prime}$ possibly different from the exponent $\gamma$ found in the
bound on $\Vert\partial_{x_{k}}u\Vert_{C^{\alpha}(B_{R}^{T}(\overline{\xi}%
))},$ but since $\varepsilon\in\left(  0,1\right)  $ it is enough to choose
$\max\left(  \gamma,\gamma^{\prime}\right)  $.
\end{proof}

\begin{lemma}
\label{Lemma stima frazionario L0}For every $\varepsilon\in\left(  0,1\right)
$, the kernel $K_{2}$ defined in \eqref{K2 def} satisfies the bounds \eqref{frac1 K2}-\eqref{frac2 K2}.
\end{lemma}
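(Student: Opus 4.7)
The strategy is to expand $K_2(\xi,\eta)$ by the Leibniz rule for $\mathcal{L}_0^*$, exploit the cancellation $(\mathcal{L}_0^*)^\eta \Gamma^0(\xi,\cdot)=0$ off the pole, and control the remaining ``commutator-type'' terms using Theorem \ref{Thm bound derivatives} (for $\Gamma^0$ and its derivatives) together with the scaling estimates $|\de_\xi^{\bd\alpha}\de_\eta^{\bd\beta} k_\varepsilon|\leq c\,\varepsilon^{-\omega(\bd\alpha)-\omega(\bd\beta)}$ for the cutoff. Write $f(\xi,\eta):=\de_{x_k}\Gamma^0(\xi,\eta)$ and $g(\xi,\eta):=1-k_\varepsilon(\xi,\eta)$. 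Leibniz gives
\[
\mathcal{L}_0^*(fg)=(\mathcal{L}_0^*f)\cdot g+2\sum_{i=1}^q \de_{y_i} f\,\de_{y_i} g+f\cdot(\mathcal{L}_0^* g),
\]
and since $\de_{x_k}^\xi$ commutes with the $\eta$-operator $\mathcal{L}_0^*$, one has $(\mathcal{L}_0^*)^\eta f(\xi,\eta)=0$ for $\xi\neq\eta$; on $\mathrm{supp}_\eta g(\xi,\cdot)\subseteq\{d(\xi,\eta)\geq\varepsilon/2\}$ this is automatic, so the first summand vanishes. Hence
\[
K_2(\xi,\eta)=2\sum_{i=1}^q \de_{y_i} f(\xi,\eta)\cdot\de_{y_i} g(\xi,\eta)+f(\xi,\eta)\cdot\mathcal{L}_0^* g(\xi,\eta),
\]
and $K_2(\xi,\eta)$ is supported on the $d$-annulus $\{\varepsilon/2\leq d(\xi,\eta)\leq\varepsilon\}$, since $\de_\eta k_\varepsilon(\xi,\cdot)$ vanishes elsewhere.

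For the pointwise bound \eqref{frac1 K2}, Theorem \ref{Thm bound derivatives} yields $|f|\leq c/d^{Q+1}$ and $|\de_{y_i} f|\leq c/d^{Q+2}$ with $d:=d(\xi,\eta)$, while the cutoff scaling gives $|\de_{y_i} g|\leq c/\varepsilon$ and $|\mathcal{L}_0^* g|\leq c/\varepsilon^2$. Both Leibniz terms are then $\leq c\,\varepsilon^{-(Q+3)}$ on the annulus (where $d\sim\varepsilon$). Since $d\leq\varepsilon$ on the support, we rewrite
\[
|K_2(\xi,\eta)|\leq \frac{c}{\varepsilon^{Q+3}}=\frac{c}{\varepsilon^4}\cdot\frac{1}{\varepsilon^{Q-1}}\leq \frac{c}{\varepsilon^4}\cdot\frac{1}{d(\xi,\eta)^{Q-1}},
\]
which is \eqref{frac1 K2}.

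The H\"older-type bound \eqref{frac2 K2} proceeds by applying Theorem \ref{Lagrange} to the smooth map $\xi\mapsto K_2(\xi,\eta)$ on a ball $B_r(\xi_1)$ with $r\sim d(\xi_1,\xi_2)$. The hypothesis $d(\xi_1,\eta)\geq 4\bd{\kappa}d(\xi_1,\xi_2)$ together with Lemma \ref{lem:equivalentd} ensures $d(\xi,\eta)\sim d(\xi_1,\eta)$ uniformly for $\xi\in B_r(\xi_1)$; moreover the only non-trivial regime is $d(\xi_1,\eta)\sim\varepsilon$, since otherwise $K_2(\xi,\eta)\equiv 0$ for all $\xi$ in the ball. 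Differentiating the Leibniz expression in $\xi$ produces analogous terms in which each $\de_{x_j}^\xi$ (for $1\leq j\leq q$) costs an extra factor $1/d$ when acting on $\Gamma^0$ (by Theorem \ref{Thm bound derivatives}) or $1/\varepsilon$ when acting on $k_\varepsilon$, and each $Y^\xi$ costs a factor $1/d^2$ or $1/\varepsilon^2$. Using $\varepsilon\sim d(\xi_1,\eta)$ on the support, this yields
\[
\sup_{\xi\in B_r(\xi_1)}|\de_{x_j}^\xi K_2(\xi,\eta)|\leq \frac{c}{\varepsilon^4\, d(\xi_1,\eta)^Q},\quad \sup_{\xi\in B_r(\xi_1)}|Y^\xi K_2(\xi,\eta)|\leq \frac{c}{\varepsilon^4\, d(\xi_1,\eta)^{Q+1}},
\]
whence Theorem \ref{Lagrange}, combined with $d(\xi_1,\xi_2)\leq c\,d(\xi_1,\eta)$ to absorb the quadratic term, gives \eqref{frac2 K2}.

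The main technical obstacle is the careful bookkeeping of the $\varepsilon$-scaling of both $\xi$- and $\eta$-derivatives of $k_\varepsilon$, uniformly in $\xi$. Adopting the natural left-invariant choice $k_\varepsilon(\xi,\eta):=\phi(D(1/\varepsilon)(\xi^{-1}\circ\eta))$ with $\phi\in C_0^\infty$ suitably chosen, the bounds $|\de_\xi^{\bd\alpha}\de_\eta^{\bd\beta} k_\varepsilon|\leq c\,\varepsilon^{-\omega(\bd\alpha)-\omega(\bd\beta)}$ follow from the group law \eqref{eq:convolutionG}, the homogeneity \eqref{LP 2.20}, and the scaling \eqref{eq:estimehk}; some care is however needed because Euclidean $\xi$-derivatives of $\xi^{-1}\circ\eta$ involve the factor $E(s-t)$, which is uniformly bounded only on the annulus-time-slab $|s-t|\leq\varepsilon^2$ where these derivatives live. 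All remaining ingredients (the cancellation $\mathcal{L}_0^*\de_{x_k}\Gamma^0=0$ off the diagonal, the equivalence of distances from Lemma \ref{lem:equivalentd}, and the Lagrange-type mean-value inequality) are already in place in the paper.
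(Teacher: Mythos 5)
Your proof is correct and follows essentially the same route as the paper: the identical Leibniz expansion of $\mathcal{L}_0^*(\de_{x_k}\Gamma^0\cdot[1-k_\varepsilon])$ using the cancellation $\mathcal{L}_0^*\de_{x_k}\Gamma^0=0$ off the diagonal, the same pointwise bounds for \eqref{frac1 K2} via the annulus support $\varepsilon/2\leq d(\xi,\eta)\leq\varepsilon$, and the same application of Theorem \ref{Lagrange} after bounding $\de_{x_j}^\xi K_2$ and $Y^\xi K_2$ for \eqref{frac2 K2}. Your closing remark on the left-invariant construction of $k_\varepsilon$ and the role of the time-slab restriction $|s-t|\leq\varepsilon^2$ in taming the $E(s-t)$ factor is a worthwhile observation that the paper leaves implicit, but it does not change the argument.
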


\begin{proof}
Recalling that $\mathcal{L}_{0}^{\ast}\left(  \Gamma^{0}\left(  \xi
,\cdot\right)  \right)  =0$ and the $x$ and $y$ derivatives of $\Gamma_{0}$
commute, we have:%
\begin{equation}%
\begin{split}
K_{2}\left(  \xi,\eta\right)   &  =\left(  \sum_{i=1}^{q}\partial_{y_{i}y_{i}%
}^{2}-Y^{\left(  y,s\right)  }\right)  \left(  \partial_{x_{k}}\Gamma
^{0}\left(  \left(  x,t\right)  ,\left(  y,s\right)  \right)  \left[
1-k_{\varepsilon}\left(  \left(  x,t\right)  ,\left(  y,s\right)  \right)
\right]  \right) \\
&  =\partial_{x_{k}}\Gamma^{0}\left(  \left(  x,t\right)  ,\left(  y,s\right)
\right)  \left(  \sum_{i=1}^{q}\partial_{y_{i}y_{i}}^{2}-Y^{\left(
y,s\right)  }\right)  \left[  1-k_{\varepsilon}\left(  \left(  x,t\right)
,\left(  y,s\right)  \right)  \right] \\
&  +2\sum_{i=1}^{q}\partial_{x_{k}y_{i}}^{2}\Gamma^{0}\left(  \left(
x,t\right)  ,\left(  y,s\right)  \right)  \left[  1-k_{\varepsilon}\left(
\left(  x,t\right)  ,\left(  y,s\right)  \right)  \right]  _{y_{i}}.
\end{split}
\label{eq:exprK2explicit}%
\end{equation}
Exploiting the growth estimates of $\Gamma_{x_{k}}^{0},\Gamma_{x_{k}y_{i}}%
^{0}$ (see Theorem \ref{Thm bound derivatives}) we get%
\[
\left\vert K_{2}\left(  \xi,\eta\right)  \right\vert \leq\frac{c}{d\left(
\xi,\eta\right)  ^{Q+1}}\frac{c}{\varepsilon^{2}}+\frac{c}{d\left(  \xi
,\eta\right)  ^{Q+2}}\frac{c}{\varepsilon}\leq\frac{c}{\varepsilon^{4}}%
\frac{1}{d\left(  \xi,\eta\right)  ^{Q-1}}%
\]
since $K_{2}\left(  \xi,\eta\right)  $ vanishes for $d\left(  \xi,\eta\right)
<\varepsilon/2.$ So we have \eqref{frac1 K2}. In order to prove
\eqref{frac2 K2} we are going to bound $\partial_{x_{h}}K_{2}$ for
$h=1,2,...,q$ and $Y^{\left(  x,t\right)  }K_{2}$ and then apply Lagrange'
theorem with respect to the vector fields. Note that the operator
$\mathcal{L}_{0}$ has \emph{smooth }coefficients, independent of $t$.%
\begin{align}
\left\vert \partial_{x_{h}}K_{2}\left(  \xi,\eta\right)  \right\vert  &
\leq\frac{c}{d\left(  \xi,\eta\right)  ^{Q+2}}\frac{c}{\varepsilon^{2}}%
+\frac{c}{d\left(  \xi,\eta\right)  ^{Q+1}}\frac{c}{\varepsilon^{3}}+\frac
{c}{d\left(  \xi,\eta\right)  ^{Q+3}}\frac{c}{\varepsilon}\nonumber\\
&  \leq\frac{c}{d\left(  \xi,\eta\right)  ^{Q+1}}\frac{c}{\varepsilon^{3}}%
\leq\frac{c}{\varepsilon^{4}}\frac{1}{d\left(  \xi,\eta\right)  ^{Q}}
\label{DxK2}%
\end{align}
since $K_{2}\left(  \xi,\eta\right)  $ vanishes for $d\left(  \xi,\eta\right)
<\varepsilon/2.$ Moreover, using the bounds for $YD_{x}^{\bd\alpha}\Gamma$
established in the proof of Theorem \ref{Thm mean value}, we have
\[
|Y^{(x,t)}\partial_{x_{k}}\Gamma^{0}|\leq\frac{c}{d(\xi,\eta)^{Q+3}}%
\quad\text{and}\quad|Y^{(x,t)}\partial_{x_{k}y_{i}}^{2}\Gamma^{0}|\leq\frac
{c}{d(\xi,\eta)^{Q+4}}.
\]
Therefore, by \eqref{eq:exprK2explicit} we obtain
\begin{align}
\left\vert Y^{\left(  x,t\right)  }K_{2}\left(  \xi,\eta\right)  \right\vert
&  \leq\frac{c}{d\left(  \xi,\eta\right)  ^{Q+3}}\frac{c}{\varepsilon^{2}%
}+\frac{c}{d\left(  \xi,\eta\right)  ^{Q+1}}\frac{c}{\varepsilon^{4}%
}\nonumber\\
&  \qquad+\frac{c}{d\left(  \xi,\eta\right)  ^{Q+4}}\frac{c}{\varepsilon
}+\frac{c}{d\left(  \xi,\eta\right)  ^{Q+2}}\frac{c}{\varepsilon^{3}%
}\nonumber\\
&  \leq\frac{c}{d\left(  \xi,\eta\right)  ^{Q+1}}\frac{c}{\varepsilon^{4}}
\label{YK2}%
\end{align}
where we have used again the vanishing of $K_{2}\left(  \xi,\eta\right)  $ for
$d\left(  \xi,\eta\right)  <\varepsilon/2.$

Hence, by Lagrange' theorem (Theorem \ref{Lagrange}), \eqref{DxK2}-\eqref{YK2}
imply \eqref{frac2 K2}. This completes the proof of Lemma and therefore of
Proposition \ref{Prop interpolaz sharp}.
\end{proof}

The second ingredient of of proof of Theorem \ref{Thm interpolaz} is the
following inequality, which seems a standard Euclidean result.\ The only
difference is that the norms are based on \emph{metric }balls.

\begin{proposition}
\label{Prop interp euclidea} For every $r>\varepsilon>0$ and $u\in
C^{0}(\overline{B_{2r}(\overline{\xi})\cap S_{T}})$ possessing
con\-ti\-nuo\-us derivatives $\partial_{x_{h}}u$ and $\partial_{x_{h}x_{h}%
}^{2}u$ in $\overline{B_{2r}(\overline{\xi})\cap S_{T}}$ for some $1\leq h\leq
q$, we have
\[
\Vert\partial_{x_{h}}u\Vert_{C^{0}(B_{r}^{T}(\overline{\xi}))}\leq
\varepsilon\Vert\partial_{x_{h}x_{h}}^{2}u\Vert_{C^{0}(B_{2r}^{T}%
(\overline{\xi}))}+\frac{2}{\varepsilon}\Vert u\Vert_{C^{0}(B_{2r}%
^{T}(\overline{\xi}))}.
\]

\end{proposition}

\begin{proof}
For a fixed $\xi\in B_{r}^{T}\left(  \overline{\xi}\right)  $, let
\[
f\left(  t\right)  =u\left(  \xi+t\varepsilon e_{h}\right)  \text{ for }%
t\in\left[  0,1\right]  ,
\]
with $e_{h}$ the $h$-th unit vector. Then the identity%
\[
f\left(  1\right)  -f\left(  0\right)  =f^{\prime}\left(  0\right)  +\int%
_{0}^{1}\left(  1-s\right)  f^{\prime\prime}\left(  s\right)  ds
\]
gives%
\[
u\left(  \xi+\varepsilon e_{h}\right)  -u\left(  \xi\right)  =\varepsilon
\partial_{x_{h}}u\left(  \xi\right)  +\varepsilon^{2}\int_{0}^{1}\left(
1-s\right)  \partial_{x_{h}x_{h}}^{2}u\left(  \xi+s\varepsilon e_{h}\right)
ds.
\]
Moreover, for $\xi=\left(  x,t\right)  $ ranging in $B_{r}^{T}\left(
\overline{\xi}\right)  $ and $\varepsilon<r,$ $s\in\left(  0,1\right)  $, we
claim that
\[
\xi+s\varepsilon e_{h}\in B_{2r}^{T}\left(  \overline{\xi}\right)  .
\]
This fact is not trivial because $B_{r}$ are not Euclidean balls but balls
w.r.t. the quasidistance $d.$ Let us compute%
\begin{align*}
&  d\left(  \xi+s\varepsilon e_{h},\overline{\xi}\right)  =\left\Vert
x+s\varepsilon e_{h}-E\left(  t-\overline{t}\right)  \overline{x}\right\Vert
+\sqrt{\left\vert t-\overline{t}\right\vert }\\
&  \qquad=\sum_{i\neq h}\left\vert \left(  x-E\left(  t-\overline{t}\right)
\overline{x}\right)  _{i}\right\vert ^{1/q_{i}}+\left\vert \left(  x-E\left(
t-\overline{t}\right)  \overline{x}\right)  _{h}+s\varepsilon\right\vert
+\sqrt{\left\vert t-\overline{t}\right\vert }\\
&  \qquad\leq\left(  \sum_{i}\left\vert \left(  x-E\left(  t-\overline
{t}\right)  \overline{x}\right)  _{i}\right\vert ^{1/q_{i}}+\sqrt{\left\vert
t-\overline{t}\right\vert }\right)  +\varepsilon<2r,
\end{align*}
where we have exploited the fact that the $h$-th variable (for $h=1,2,...,q$)
has homogeneity $1.$ Hence $\xi+s\varepsilon e_{h}\in B_{2r}(\overline{\xi})$.
Note also that $\xi$ and $\xi+s\varepsilon e_{h}$ have the same $t$-component.
Therefore%
\[
\varepsilon\sup_{B_{r}^{T}}\left\vert \partial_{x_{h}}u\right\vert \leq
2\sup_{B_{2r}^{T}}\left\vert u\right\vert +\varepsilon^{2}\sup_{B_{2r}^{T}%
}\left\vert \partial_{x_{h}x_{h}}^{2}u\right\vert
\]
as desired.
\end{proof}

We can now come to the

\bigskip

\begin{proof}
[Proof of Theorem \ref{Thm interpolaz}]Let us first prove the result under the
additional assumption that $u\in C^{2,\alpha}(\overline{B_{4r}^{T}%
(\overline{\xi})})$. Let $\mathcal{L}_{0}$ be as in Proposition
\ref{Prop interpolaz sharp}, and choose $\phi\in C_{0}^{\infty}\left(
B_{2r}\left(  \overline{\xi}\right)  \right)  $ with $\phi=1$ in $B_{r}\left(
\overline{\xi}\right)  .$ To be more precise, we can fix a \textquotedblleft
mother function\textquotedblright\ $\Phi\in C_{0}^{\infty}\left(  B_{R}\left(
0\right)  \right)  $ with $\Phi=1$ in $B_{R/2}\left(  0\right)  $ and define
$\phi\left(  \xi\right)  =\Phi(\overline{\xi}^{-1}\circ\xi)$ so that, by left
invariance of $\mathcal{L}_{0}$ and $\partial_{x_{h}}$ ($h=1,2,..,q$) the
quantities
\[
\left\Vert \phi\right\Vert _{C^{0}\left(  B_{2r}\left(  \overline{\xi}\right)
\right)  },\left\Vert \partial_{x_{h}}\phi\right\Vert _{C^{0}\left(
B_{2r}\left(  \overline{\xi}\right)  \right)  },\left\Vert \mathcal{L}_{0}%
\phi\right\Vert _{C^{0}\left(  B_{2r}\left(  \overline{\xi}\right)  \right)  }%
\]
do not depend on the center $\overline{\xi}$ of the ball (they will depend on
$r$, which however is fixed).

Applying Proposition \ref{Prop interpolaz sharp} to $u\phi$ we get, for every
$\varepsilon\in\left(  0,1\right)  ,h=1,2,...,q$,
\begin{align*}
\Vert\partial_{x_{h}}u\Vert_{C^{\alpha}(B_{r}^{T}(\overline{\xi}))} &  +\Vert
u\Vert_{C^{\alpha}(B_{r}^{T}(\overline{\xi}))}\leq\Vert\partial_{x_{h}}%
(u\phi)\Vert_{C^{\alpha}(B_{2r}^{T}(\overline{\xi}))}+\Vert u\phi
\Vert_{C^{\alpha}(B_{2r}^{T}(\overline{\xi}))}\\[0.1cm]
&  \qquad\leq\varepsilon\Vert\mathcal{L}_{0}(u\phi)\Vert_{C^{0}(B_{2r}%
^{T}(\overline{\xi}))}+\frac{c}{\varepsilon^{\gamma}}\Vert u\phi\Vert
_{C^{0}(B_{2r}^{T}(\overline{\xi}))}\\
&  \qquad\leq c\varepsilon\bigg\{\sum_{h,k=1}^{q}\Vert\partial_{x_{k}x_{h}%
}^{2}u\Vert_{C^{0}(B_{2r}^{T}(\overline{\xi}))}+\Vert Yu\Vert_{C^{0}%
(B_{2r}^{T}(\overline{\xi}))}\\
&  \qquad\qquad\qquad+\sum_{h=1}^{q}\Vert\partial_{x_{h}}u\Vert_{C^{0}%
(B_{2r}^{T}(\overline{\xi}))}+\Vert u\Vert_{C^{0}(B_{2r}^{T}(\overline{\xi}%
))}\bigg\}\\[0.1cm]
&  \qquad\qquad+\frac{c}{\varepsilon^{\gamma}}\Vert u\Vert_{C^{0}(B_{2r}%
^{T}(\overline{\xi}))}%
\end{align*}
(with $c$ independent of $\overline{\xi}$, by the construction of $\phi$), by
Proposition \ref{Prop interp euclidea} (applied with $\varepsilon=1$), which
can be applied because we are assuming $u\in C^{2,\alpha}(\overline{B_{4r}%
^{T}(\overline{\xi})})$,
\begin{align*}
&  \leq c\varepsilon\left\{  \sum_{h,k=1}^{q}\left\Vert \partial_{x_{k}x_{h}%
}^{2}u\right\Vert _{C^{0}\left(  B_{4r}^{T}\left(  \overline{\xi}\right)
\right)  }+\left\Vert Yu\right\Vert _{C^{0}\left(  B_{2r}^{T}\left(
\overline{\xi}\right)  \right)  }+\left\Vert u\right\Vert _{C^{0}\left(
B_{4r}^{T}\left(  \overline{\xi}\right)  \right)  }\right\}  +\frac
{c}{\varepsilon^{\gamma}}\left\Vert u\right\Vert _{C^{0}\left(  B_{2r}%
^{T}\left(  \overline{\xi}\right)  \right)  }\\
&  \leq c\varepsilon\left\{  \sum_{h,k=1}^{q}\left\Vert \partial_{x_{k}x_{h}%
}^{2}u\right\Vert _{C^{0}\left(  B_{4r}^{T}\left(  \overline{\xi}\right)
\right)  }+\left\Vert Yu\right\Vert _{C^{0}\left(  B_{4r}^{T}\left(
\overline{\xi}\right)  \right)  }\right\}  +\frac{c}{\varepsilon^{\gamma}%
}\left\Vert u\right\Vert _{C^{0}\left(  B_{4r}^{T}\left(  \overline{\xi
}\right)  \right)  }.
\end{align*}
Next, let $u\in\mathcal{S}^{0}\left(  S_{T}\right)  $, extend it to $0$ out of
$S_{T}$ and define its mollified version $u_{\delta}$ as in the proof of
Theorem \ref{Thm repr formula u}. Then $u_{\delta}$ satisfies
\eqref{disug interpolaz}, however $\partial_{x_{k}}$ and $Y$ commute with the
mollification, so that
\begin{align*}
&  \sum_{h=1}^{q}\left\Vert \left(  \partial_{x_{h}}u\right)  _{\delta
}\right\Vert _{C^{\alpha}\left(  B_{r}^{T}\left(  \overline{\xi}\right)
\right)  }+\left\Vert u_{\delta}\right\Vert _{C^{\alpha}\left(  B_{r}%
^{T}\left(  \overline{\xi}\right)  \right)  }\\
&  \leq\varepsilon\left\{  \sum_{h,k=1}^{q}\left\Vert \left(  \partial
_{x_{k}x_{h}}^{2}u\right)  _{\delta}\right\Vert _{C^{0}\left(  B_{4r}%
^{T}\left(  \overline{\xi}\right)  \right)  }+\left\Vert \left(  Yu\right)
_{\delta}\right\Vert _{C^{0}\left(  B_{4r}^{T}\left(  \overline{\xi}\right)
\right)  }\right\}  +\frac{c}{\varepsilon^{\gamma}}\left\Vert u_{\delta
}\right\Vert _{C^{0}\left(  B_{4r}^{T}\left(  \overline{\xi}\right)  \right)
}\\
&  \leq\varepsilon\left\{  \sum_{h,k=1}^{q}\left\Vert \partial_{x_{k}x_{h}%
}^{2}u\right\Vert _{C^{0}\left(  B_{4r}^{T}\left(  \overline{\xi}\right)
\right)  }+\left\Vert Yu\right\Vert _{C^{0}\left(  B_{4r}^{T}\left(
\overline{\xi}\right)  \right)  }\right\}  +\frac{c}{\varepsilon^{\gamma}%
}\left\Vert u\right\Vert _{C^{0}\left(  B_{4r}^{T}\left(  \overline{\xi
}\right)  \right)  }.
\end{align*}
We already know that $u_{\delta}$ uniformly converges to $u$, which is a
priori continuous, on $S_{T-\e_{0}}$ for every $\e_{0}>0$. The uniform bound
on $\Vert(\partial_{x_{h}}u)_{\delta}\Vert_{C^{\alpha}(B_{r}^{T}(\overline
{\xi}))},\Vert u_{\delta}\Vert_{C^{\alpha}(B_{r}^{T}(\overline{\xi}))}$
implies that the functions $u_{\delta},$ $\left(  \partial_{x_{h}}u\right)
_{\delta}$ are equicontinuous and equibounded, then by Ascoli-Arzel\`{a}'s
theorem we can extract a sequence $(\partial_{x_{k}}u)_{\delta}$ uniformly
converging to some function $v_{k}$ which must coincide with $\partial_{x_{k}%
}u$.

The uniform convergence allows to get the bound
\begin{align*}
&  \sum_{h=1}^{q}\left\Vert \partial_{x_{h}}u\right\Vert _{C^{\alpha}%
(B_{r}^{T-\e_{0}}(\overline{\xi}))}+\left\Vert u\right\Vert _{C^{\alpha}%
(B_{r}^{T-\e_{0}}(\overline{\xi}))}\\
&  \leq\varepsilon\left\{  \sum_{h,k=1}^{q}\left\Vert \partial_{x_{k}x_{h}%
}^{2}u\right\Vert _{C^{0}\left(  B_{4r}^{T}\left(  \overline{\xi}\right)
\right)  }+\left\Vert Yu\right\Vert _{C^{0}\left(  B_{4r}^{T}\left(
\overline{\xi}\right)  \right)  }\right\}  +\frac{c}{\varepsilon^{\gamma}%
}\left\Vert u\right\Vert _{C^{0}\left(  B_{4r}^{T}\left(  \overline{\xi
}\right)  \right)  }.
\end{align*}
Since this holds for every $\e_{0}>0$ with a constant $c$ independent of
$\e_{0}$, we obtain \eqref{disug interpolaz}, and we are done.
\end{proof}

\subsection{Global Schauder estimates in
space\label{sec global schauder space}}

Here we want to get global Schauder estimates on the strip $S_{T}$, starting
with the local Schauder estimates proved in Theorem \ref{Thm local schauder x}
for functions which are compactly supported on small balls. To this aim, we
will basically make use of cutoff functions and the interpolation inequalities
proved in the previous section.

We start with a brief discussion about how a control of $C_{x}^{\alpha}(S_{T})
$-norm can be obtained starting with the control of norms $C_{x}^{\alpha
}(B_{r}(\xi_{i}))$ for a suitable family of balls $\{B_{r}(\xi_{i})\} _{i}.$

Let us start defining, for some fixed small $r>0,$ the seminorms%
\begin{align*}
\left\vert f\right\vert _{C_{x,r}^{\alpha}\left(  S_{T}\right)  }  &
\equiv\sup_{\substack{\left(  x_{1},t\right)  ,\left(  x_{2},t\right)  \in
S_{T}\\0<\left\Vert x_{1}-x_{2}\right\Vert \leq r}}\frac{\left\vert f\left(
x_{1},t\right)  -f\left(  x_{2},t\right)  \right\vert }{\left\Vert x_{1}%
-x_{2}\right\Vert ^{\alpha}}\\
\left\vert f\right\vert _{C_{r}^{\alpha}\left(  S_{T}\right)  }  &  \equiv
\sup_{\substack{\xi_{1},\xi_{2}\in S_{T}\\0<d\left(  \xi_{1},\xi_{2}\right)
\leq r}}\frac{\left\vert f\left(  \xi_{1}\right)  -f\left(  \xi_{2}\right)
\right\vert }{d\left(  \xi_{1},\xi_{2}\right)  ^{\alpha}}%
\end{align*}
and let
\[
\left\Vert f\right\Vert _{C_{x,r}^{\alpha}\left(  S_{T}\right)  }=\left\vert
f\right\vert _{C_{x,r}^{\alpha}\left(  S_{T}\right)  }+\left\Vert f\right\Vert
_{C^{0}\left(  S_{T}\right)  }.
\]
Then the following holds:

\begin{proposition}
\label{Prop covering} Let $r>0$ and $\alpha\in\left(  0,1\right)  $ be fixed, then:

\begin{enumerate}
\item[(i)] There exists $c>0$, depending on $\alpha$ and $r$, such that%
\begin{equation}
\left\Vert f\right\Vert _{C_{x}^{\alpha}\left(  S_{T}\right)  }\leq
c\left\Vert f\right\Vert _{C_{x,r}^{\alpha}\left(  S_{T}\right)  }.
\label{C x r - C x}%
\end{equation}

\item[(ii)] Moreover, let $\left\{  B_{r}\left(  \overline{\xi}_{i}\right)
\right\}  _{i=1}^{\infty}$ be a covering of $S_{T}$, then%
\begin{align}
\left\vert f\right\vert _{C_{x,r}^{\alpha}( S_{T}) }  &  \leq\sup
_{i}\left\vert f\right\vert _{C_{x}^{\alpha}( B^{T}_{\theta r}( \overline{\xi
}_{i})) }\label{C x r Sup}\\
\left\vert f\right\vert _{C_{r}^{\alpha}\left(  S_{T}\right)  }  &  \leq
\sup_{i}\left\vert f\right\vert _{C^{\alpha}( B^{T}_{\theta r}( \overline{\xi
}_{i}) ) } \label{C r C}%
\end{align}
where $\theta\geq1$ is an absolute constant.
\end{enumerate}
\end{proposition}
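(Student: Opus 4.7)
The idea is elementary: part (i) is a ``short vs.\ long distance'' argument, while part (ii) uses only the covering hypothesis and the quasi-triangle/quasi-symmetry inequalities \eqref{quasitriangle}--\eqref{quasisymmetric} satisfied by $d$.

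For (i), I would split the supremum defining $|f|_{C_x^\alpha(S_T)}$ according to whether $\|x_1-x_2\|\leq r$ or $\|x_1-x_2\|>r$. In the first case the difference quotient is dominated by $|f|_{C_{x,r}^\alpha(S_T)}$ by definition. In the second case one writes
\[
\frac{|f(x_1,t)-f(x_2,t)|}{\|x_1-x_2\|^\alpha}\leq\frac{2\Vert f\Vert_{C^0(S_T)}}{r^\alpha},
\]
which gives \eqref{C x r - C x} with $c=\max\{1,1+2/r^\alpha\}$.

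For (ii), the key observation is that if $(x_1,t),(x_2,t)\in S_T$ satisfy $\|x_1-x_2\|\leq r$ and $\overline{\xi}_i$ is chosen from the covering so that $(x_1,t)\in B_r(\overline{\xi}_i)$, then both points lie in $B^T_{\theta r}(\overline{\xi}_i)$ for $\theta:=\bd{\kappa}(1+\bd{\kappa})$. Indeed, by \eqref{d stesso t} we have $d((x_1,t),(x_2,t))=\|x_1-x_2\|\leq r$; then by the quasi-triangle inequality \eqref{quasitriangle} and the quasi-symmetry \eqref{quasisymmetric},
\[
d((x_2,t),\overline{\xi}_i)\leq\bd{\kappa}\bigl(d((x_2,t),(x_1,t))+d(\overline{\xi}_i,(x_1,t))\bigr)\leq\bd{\kappa}(r+\bd{\kappa}\,r)=\theta r.
\]
Since both points belong to $B^T_{\theta r}(\overline{\xi}_i)$, the difference quotient $|f(x_1,t)-f(x_2,t)|/\|x_1-x_2\|^\alpha$ is bounded by $|f|_{C_x^\alpha(B^T_{\theta r}(\overline{\xi}_i))}$, yielding \eqref{C x r Sup} after taking the supremum over $i$. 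The proof of \eqref{C r C} is identical, with $\xi_1,\xi_2$ in place of $(x_1,t),(x_2,t)$ and the $d$-distance in place of $\|\cdot\|$: if $d(\xi_1,\xi_2)\leq r$ and $\xi_1\in B_r(\overline{\xi}_i)$, the same chain of quasi-triangle/quasi-symmetry estimates puts $\xi_2$ into $B_{\theta r}(\overline{\xi}_i)$.

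There is no real obstacle here; the only subtlety worth noting is that the constant $\theta$ genuinely needs both factors of $\bd{\kappa}$, because $d$ is merely a \emph{quasi}-distance (asymmetric in general), so one cannot directly swap the roles of the two points in $d(\overline{\xi}_i,(x_1,t))$ without paying a factor $\bd{\kappa}$. Once this is accounted for, both (i) and (ii) are immediate.
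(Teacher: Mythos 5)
Your proof is correct and essentially reproduces the paper's argument: part (i) via a short- versus long-distance split of the supremum, and part (ii) by absorbing the second point into a dilated ball $B_{\theta r}(\overline{\xi}_i)$ using the quasi-triangle and quasi-symmetry inequalities. The only minor nuance is that in \eqref{C r C} the term $d(\xi_2,\xi_1)$ also picks up a quasi-symmetry factor (since $\xi_1,\xi_2$ may differ in $t$, so $d(\xi_2,\xi_1)\neq d(\xi_1,\xi_2)$ in general), making $\theta=2\bd{\kappa}^{2}$ the uniform choice for both \eqref{C x r Sup} and \eqref{C r C}; since only some absolute $\theta\geq1$ is claimed, this has no effect on the conclusion.
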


\begin{proof}
(i) Noting that
\[
\sup_{\substack{\left(  x_{1},t\right)  ,\left(  x_{2},t\right)  \in
S_{T}\\\left\Vert x_{1}-x_{2}\right\Vert >r}}\frac{\left\vert f\left(
x_{1},t\right)  -f\left(  x_{2},t\right)  \right\vert }{\left\Vert x_{1}%
-x_{2}\right\Vert ^{\alpha}}\leq\frac{2}{r^{\alpha}}\left\Vert f\right\Vert
_{C^{0}\left(  S_{T}\right)  }%
\]
we immediately derive%
\[
\left\vert f\right\vert _{C_{x}^{\alpha}\left(  S_{T}\right)  }\leq\max\left(
\left\vert f\right\vert _{C_{x,r}^{\alpha}\left(  S_{T}\right)  },\frac
{2}{r^{\alpha}}\left\Vert f\right\Vert _{C^{0}\left(  S_{T}\right)  }\right)
\]
which in turn implies \eqref{C x r - C x}.

(ii) Next, for any two points $\left(  x_{1},t\right)  ,\left(  x_{2}%
,t\right)  \in S_{T}$ such that $\left\Vert x_{1}-x_{2}\right\Vert \leq r$,
let $( x_{1},t) \in B_{r}( \overline{\xi}_{i_{1}}) $ for some $i_{1}$. Then
$\left(  x_{2},t\right)  \in B_{\theta r}( \overline{\xi}_{i_{1}}) $ for some
absolute $\theta\geq1 $, and hence%
\[
\frac{\left\vert f\left(  x_{1},t\right)  -f\left(  x_{2},t\right)
\right\vert }{\left\Vert x_{1}-x_{2}\right\Vert ^{\alpha}}\leq\left\vert
f\right\vert _{C_{x}^{\alpha}( B^{T}_{\theta r}( \overline{\xi}_{i_{1}})) }.
\]
Therefore%
\[
\sup_{\substack{\left(  x_{1},t\right)  ,\left(  x_{2},t\right)  \in
S_{T}\\0<\left\Vert x_{1}-x_{2}\right\Vert \leq r}}\frac{\left\vert f\left(
x_{1},t\right)  -f\left(  x_{2},t\right)  \right\vert }{\left\Vert x_{1}%
-x_{2}\right\Vert ^{\alpha}}\leq\sup_{i}\left\vert f\right\vert _{C_{x}%
^{\alpha}( B^{T}_{\theta r}( \overline{\xi}_{i})) },
\]
which is \eqref{C x r Sup}. Analogously one can prove \eqref{C r C}.
\end{proof}

We are now ready for

\begin{theorem}
[Global Schauder estimates]\label{Thm global Schauder in space} Let
$\mathcal{L}$ be the operator \eqref{L} in $S_{T}$ and assume \emph{(H1),
(H2), (H3)} hold, for some $\alpha\in(0,1)$.

Then, there exists a constant $c>0,$ depending on $T$, $\alpha$, the matrix
$B$ in \eqref{B} and the numbers $\nu$ and $\Lambda$ in \eqref{nu},
\eqref{Lambda}, respectively, such that%
\begin{align*}
&  \sum_{h,k=1}^{q}\left\Vert \partial_{x_{h}x_{k}}^{2}u\right\Vert
_{C_{x}^{\alpha}\left(  S_{T}\right)  }+\left\Vert Yu\right\Vert
_{C_{x}^{\alpha}\left(  S_{T}\right)  }+\sum_{k=1}^{q}\left\Vert
\partial_{x_{k}}u\right\Vert _{C^{\alpha}\left(  S_{T}\right)  }+\left\Vert
u\right\Vert _{C^{\alpha}\left(  S_{T}\right)  }\\
&  \qquad\qquad\leq c\left\{  \left\Vert \mathcal{L}u\right\Vert
_{C_{x}^{\alpha}\left(  S_{T}\right)  }+\left\Vert u\right\Vert _{C^{0}\left(
S_{T}\right)  }\right\}
\end{align*}
for every $u\in\mathcal{S}^{\alpha}\left(  S_{T}\right)  $.
\end{theorem}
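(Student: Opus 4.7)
The proof will assemble three ingredients already at our disposal: the local Schauder estimate (Theorem \ref{Thm local schauder x}) valid for compactly supported functions, the interpolation inequality (Theorem \ref{Thm interpolaz}) absorbing first-order H\"older norms by a small fraction of the second-order ones, and the covering argument (Proposition \ref{Prop covering}) that converts uniform local bounds into a global one. The strategy is the classical ``cutoff + absorption'' scheme for Schauder estimates in a perturbative setting.

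First I fix a small radius $r \leq r_0$, to be chosen later, and a mother bump $\Phi \in C_0^\infty(B_r(0))$ with $\Phi \equiv 1$ on $B_{r/2}(0)$; for each $\bar\xi \in S_T$ I set $\phi_{\bar\xi}(\xi) = \Phi(\bar\xi^{-1}\circ\xi)$. Because $\mathcal{L}$ and the vector fields $\de_{x_1},\ldots,\de_{x_q}, Y$ are left-invariant, the norms $\|\phi_{\bar\xi}\|_{C^0}$, $\|\de_{x_k}\phi_{\bar\xi}\|_{C^0}$, $\|\de^2_{x_i x_j}\phi_{\bar\xi}\|_{C_x^\alpha}$ and $\|\mathcal{L}\phi_{\bar\xi}\|_{C_x^\alpha}$ are controlled by constants depending only on $r$ (not on $\bar\xi$). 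Given $u \in \mathcal{S}^\alpha(S_T)$, the product $v = u\,\phi_{\bar\xi}$ is supported in $B_r(\bar\xi)$ and lies in $\mathcal{S}^\alpha(S_T)$, so Theorem \ref{Thm local schauder x} yields
\[
\|\de^2_{x_h x_k}(u\phi_{\bar\xi})\|_{C_x^\alpha(B_r^T(\bar\xi))} \leq c\,|\mathcal{L}(u\phi_{\bar\xi})|_{C_x^\alpha(B_r^T(\bar\xi))}.
\]
Expanding via the symmetry of $A_0$ as $\mathcal{L}(u\phi_{\bar\xi}) = \phi_{\bar\xi}\,\mathcal{L}u + u\,\mathcal{L}\phi_{\bar\xi} + 2\sum_{i,j=1}^q a_{ij}\,\de_{x_i} u\,\de_{x_j}\phi_{\bar\xi}$ and using that $a_{ij} \in C_x^\alpha(\mathbb{R}^{N+1})$ with $C_x^\alpha$-norm bounded by $\Lambda$, the right-hand side is dominated by
\[
c\bigl\{\|\mathcal{L}u\|_{C_x^\alpha(B_r^T(\bar\xi))} + \textstyle\sum_{i=1}^q\|\de_{x_i} u\|_{C_x^\alpha(B_r^T(\bar\xi))} + \|u\|_{C_x^\alpha(B_r^T(\bar\xi))}\bigr\}.
\]
On $B_{r/2}^T(\bar\xi)$ we have $u\phi_{\bar\xi} \equiv u$, so the left-hand side controls $\|\de^2_{x_h x_k} u\|_{C_x^\alpha(B_{r/2}^T(\bar\xi))}$.

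Next I absorb the first-order and zero-order $C_x^\alpha$ norms through Theorem \ref{Thm interpolaz}, noting that $\|f\|_{C^\alpha} \geq \|f\|_{C_x^\alpha}$: for every $\varepsilon\in(0,1)$,
\[
\textstyle\sum_{k=1}^q \|\de_{x_k}u\|_{C^\alpha(B_r^T(\bar\xi))} + \|u\|_{C^\alpha(B_r^T(\bar\xi))} \leq \varepsilon\bigl\{\sum\|\de^2_{x_h x_k}u\|_{C^0(B_{4r}^T(\bar\xi))} + \|Yu\|_{C^0(B_{4r}^T(\bar\xi))}\bigr\} + \tfrac{c}{\varepsilon^\gamma}\|u\|_{C^0(S_T)},
\]
and $\|Yu\|_{C^0(B_{4r}^T)}$ is in turn dominated by $\|\mathcal{L}u\|_{C^0(S_T)} + c\sum\|\de^2_{x_i x_j} u\|_{C^0}$ via $Yu = \mathcal{L}u - \sum a_{ij}\de^2_{x_i x_j}u$. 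Covering $S_T$ by balls $\{B_{r/(2\bd{\theta})}(\bar\xi_i)\}_i$ (with $\bd{\theta}\geq 1$ the constant of Proposition \ref{Prop covering}), taking the supremum over $i$ and using the two-part statement of Proposition \ref{Prop covering} give the global inequality
\[
\textstyle\sum\|\de^2_{x_h x_k} u\|_{C_x^\alpha(S_T)} + \sum\|\de_{x_k} u\|_{C^\alpha(S_T)} + \|u\|_{C^\alpha(S_T)} \leq A\,\varepsilon\sum\|\de^2_{x_h x_k} u\|_{C_x^\alpha(S_T)} + B_\varepsilon\bigl(\|\mathcal{L}u\|_{C_x^\alpha(S_T)} + \|u\|_{C^0(S_T)}\bigr).
\]
Choosing $\varepsilon$ with $A\varepsilon < 1/2$ absorbs the second-derivative term on the right into the left. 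The estimate on $\|Yu\|_{C_x^\alpha(S_T)}$ is then immediate from $Yu = \mathcal{L}u - \sum_{i,j=1}^q a_{ij}\de^2_{x_i x_j}u$, since the product of a bounded $C_x^\alpha$ function with a $C_x^\alpha$ function is $C_x^\alpha$.

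The main technical obstacle is ensuring that all constants remain independent of the center $\bar\xi_i$ of the covering; this is why the bump $\phi_{\bar\xi}$ must be obtained as a left translate of a fixed $\Phi$ and why the constants in Theorems \ref{Thm local schauder x} and \ref{Thm interpolaz} have been carefully established to be independent of $\bar\xi$. A secondary point is the two-level parameter choice: $r$ must be small enough that the local Schauder estimate applies, while $\varepsilon$ must then be chosen small enough to defeat the $A\varepsilon$ factor in the absorption; the only ``large'' norm of $u$ surviving on the right is the $C^0(S_T)$-norm, with coefficient $B_\varepsilon$, which is precisely the non-homogeneous term in the statement.
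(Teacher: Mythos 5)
Your proposal follows the same strategy as the paper's proof: cut off $u$ with a left-translated bump $\phi_{\bar\xi}$, apply the local Schauder estimate of Theorem \ref{Thm local schauder x} to $u\phi_{\bar\xi}$, expand $\mathcal{L}(u\phi_{\bar\xi})$ to bring in lower-order terms, absorb those via the interpolation inequality of Theorem \ref{Thm interpolaz} (converting $\|Yu\|_{C^0}$ through the equation), pass to the global seminorms by the covering argument of Proposition \ref{Prop covering}, and finally choose $\varepsilon$ small to defeat the absorption constant, with the $Yu$ estimate recovered from the equation at the end. The only differences from the paper are cosmetic choices of cutoff and covering radii; the decomposition, absorption step, and covering argument coincide.
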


\begin{proof}
For a fixed $r>0$, small enough so that the local Schauder estimates of
Theorem \ref{Thm local schauder x} hold on balls of radius $2\theta r$ (with
$\theta\geq1$ as in Proposition \ref{Prop covering}), let $\{B_{r}%
(\overline{\xi}_{i})\}_{i=1}^{\infty}$ be a covering of $S_{T}$.

Let $\Phi\in C_{0}^{\infty}(B_{2\theta r}(0))$ such that $\Phi\equiv1$ in
$B_{\theta r}(0)$, and let $\phi_{i}(\xi)=\Phi(\overline{\xi}_{i}^{-1}\circ
\xi),$ so that $\phi_{i}\in C_{0}^{\infty}\left(  B_{2\theta r}\left(
\overline{\xi}_{i}\right)  \right)  ,\phi_{i}=1$ in $B_{\theta r}\left(
\overline{\xi}_{i}\right)  $. Moreover, by construction of $\phi_{i}$ and left
invariance of $Y$ and $\partial_{x_{k}}$ for $k=1,2,...,q$, the $C^{\alpha}$
norms of $\phi_{i},\partial_{x_{k}}\phi_{i},\mathcal{L}\left(  \phi
_{i}\right)  $ are bounded independently of $i$. Throughout this proof the
constants involved may depend on $r$, which however is by now fixed.

To begin with, applying Theorem \ref{Thm local schauder x} to $u\phi_{i}$ on
$B_{2\theta r}(\overline{\xi}_{i})$ we have%
\begin{equation}
\left\Vert \partial_{x_{k}x_{h}}^{2}u\right\Vert _{C_{x}^{\alpha}\left(
B_{\theta r}^{T}\left(  \overline{\xi}_{i}\right)  \right)  }\leq\left\Vert
\partial_{x_{k}x_{h}}^{2}(u\phi_{i})\right\Vert _{C_{x}^{\alpha}\left(
B_{2\theta r}^{T}\left(  \overline{\xi}_{i}\right)  \right)  }\leq c\left\vert
\mathcal{L}\left(  u\phi_{i}\right)  \right\vert _{C_{x}^{\alpha}\left(
B_{2\theta r}^{T}\left(  \overline{\xi}_{i}\right)  \right)  ,}\label{Scha 1}%
\end{equation}
where the constant $c>0$ is independent of the ball. On the other hand,
\[
\mathcal{L}\left(  u\phi_{i}\right)  =\left(  \mathcal{L}u\right)  \phi
_{i}+u(\mathcal{L}\phi_{i})+2\sum_{h,k=1}^{q}a_{hk}\partial_{x_{h}}%
u\cdot\partial_{x_{k}}\phi_{i}%
\]
hence, for some constant $c$ independent of $\overline{\xi}_{i}$,
\begin{equation}%
\begin{split}
&  \left\vert \mathcal{L}\left(  u\phi_{i}\right)  \right\vert _{C_{x}%
^{\alpha}\left(  B_{2\theta r}^{T}\left(  \overline{\xi}_{i}\right)  \right)
}\\
&  \qquad\leq c\left\{  \left\Vert \mathcal{L}u\right\Vert _{C_{x}^{\alpha
}\left(  B_{2\theta r}^{T}\left(  \overline{\xi}_{i}\right)  \right)  }%
+\sum_{h=1}^{q}\left\Vert \partial_{x_{h}}u\right\Vert _{C_{x}^{\alpha}\left(
B_{2\theta r}^{T}\left(  \overline{\xi}_{i}\right)  \right)  }+\left\Vert
u\right\Vert _{C_{x}^{\alpha}\left(  B_{2\theta r}^{T}\left(  \overline{\xi
}_{i}\right)  \right)  }\right\}
\end{split}
\label{Scha 2}%
\end{equation}
Inserting \eqref{Scha 2} in \eqref{Scha 1} and adding to both sides
\[
\sum_{k=1}^{q}\left\Vert \partial_{x_{k}}u\right\Vert _{C^{\alpha}\left(
B_{\theta r}^{T}\left(  \overline{\xi}_{i}\right)  \right)  }+\left\Vert
u\right\Vert _{C^{\alpha}\left(  B_{\theta r}^{T}\left(  \overline{\xi}%
_{i}\right)  \right)  }%
\]
(note that this quantity is finite by Proposition \ref{Prop interpolaz sharp}
(ii) since $u\in\mathcal{S}^{0}\left(  S_{T}\right)  $) we get:
\begin{align*}
&  \sum_{h,k=1}^{q}\left\Vert \partial_{x_{k}x_{h}}^{2}u\right\Vert
_{C_{x}^{\alpha}\left(  B_{\theta r}^{T}\left(  \overline{\xi}_{i}\right)
\right)  }+\sum_{k=1}^{q}\left\Vert \partial_{x_{k}}u\right\Vert _{C^{\alpha
}\left(  B_{\theta r}^{T}\left(  \overline{\xi}_{i}\right)  \right)
}+\left\Vert u\right\Vert _{C^{\alpha}\left(  B_{\theta r}^{T}\left(
\overline{\xi}_{i}\right)  \right)  }\\
&  \leq c\left\{  \left\Vert \mathcal{L}u\right\Vert _{C_{x}^{\alpha}\left(
B_{2\theta r}^{T}\left(  \overline{\xi}_{i}\right)  \right)  }+\sum_{h=1}%
^{q}\left\Vert \partial_{x_{h}}u\right\Vert _{C^{\alpha}\left(  B_{2\theta
r}^{T}\left(  \overline{\xi}_{i}\right)  \right)  }+\left\Vert u\right\Vert
_{C^{\alpha}\left(  B_{2\theta r}^{T}\left(  \overline{\xi}_{i}\right)
\right)  }\right\}
\end{align*}
by Theorem \ref{Thm interpolaz}, for any $\varepsilon\in\left(  0,1\right)  $
(to be fixed later)
\begin{align*}
&  \leq c\left\{  \left\Vert \mathcal{L}u\right\Vert _{C_{x}^{\alpha}\left(
B_{2\theta r}^{T}\left(  \overline{\xi}_{i}\right)  \right)  }+\varepsilon
\left[  \sum_{h,k=1}^{q}\left\Vert \partial_{x_{k}x_{h}}^{2}u\right\Vert
_{C^{0}\left(  B_{8\theta r}^{T}\left(  \overline{\xi}_{i}\right)  \right)
}+\left\Vert Yu\right\Vert _{C^{0}\left(  B_{8\theta r}^{T}\left(
\overline{\xi}_{i}\right)  \right)  }\right]  \right.  \\
&  \left.  +\frac{1}{\varepsilon^{\gamma}}\left\Vert u\right\Vert
_{C^{0}\left(  B_{8\theta r}^{T}\left(  \overline{\xi}_{i}\right)  \right)
}\frac{{}}{{}}\right\}
\end{align*}
from the equation $Yu=\LL u-\sum_{h,k=1}^{q}a_{hk}\partial_{x_{h}x_{k}}^{2}u$
\begin{align*}
&  \leq c\,\bigg\{\left\Vert \mathcal{L}u\right\Vert _{C_{x}^{\alpha}\left(
B_{2\theta r}^{T}\left(  \overline{\xi}_{i}\right)  \right)  } \\
& \qquad+\varepsilon
\bigg[\left(  1+c\left(  \nu\right)  \right)  \sum_{h,k=1}^{q}\left\Vert
\partial_{x_{k}x_{h}}^{2}u\right\Vert _{C^{0}\left(  B_{8\theta r}^{T}\left(
\overline{\xi}_{i}\right)  \right)  }+\left\Vert \mathcal{L}u\right\Vert
_{C^{0}\left(  B_{8\theta r}^{T}\left(  \overline{\xi}_{i}\right)  \right)
}\bigg]\\
&  \qquad\qquad+\frac{1}{\varepsilon^{\gamma}}\left\Vert u\right\Vert
_{C^{0}\left(  B_{8\theta r}^{T}\left(  \overline{\xi}_{i}\right)  \right)
}\frac{{}}{{}}\bigg\}\\
&  \leq c\bigg\{\left\Vert \mathcal{L}u\right\Vert _{C_{x}^{\alpha}\left(
S_{T}\right)  }+c_{1}\varepsilon\sum_{h,k=1}^{q}\left\Vert \partial
_{x_{k}x_{h}}^{2}u\right\Vert _{C^{0}\left(  S_{T}\right)  }+\frac
{1}{\varepsilon^{\gamma}}\left\Vert u\right\Vert _{C^{0}\left(  S_{T}\right)
}\bigg\}
\end{align*}
We now fix $\varepsilon>0$ small enough so that $cc_{1}\varepsilon\leq1/2$, so
that for every ball $B_{r}\left(  \overline{\xi}_{i}\right)  $ of the fixed
covering we have%
\begin{align*}
&  \sum_{h,k=1}^{q}\left\Vert \partial_{x_{k}x_{h}}^{2}u\right\Vert
_{C_{x}^{\alpha}\left(  B_{\theta r}^{T}\left(  \overline{\xi}_{i}\right)
\right)  }+\sum_{k=1}^{q}\left\Vert \partial_{x_{k}}u\right\Vert _{C^{\alpha
}\left(  B_{\theta r}^{T}\left(  \overline{\xi}_{i}\right)  \right)
}+\left\Vert u\right\Vert _{C^{\alpha}\left(  B_{\theta r}^{T}\left(
\overline{\xi}_{i}\right)  \right)  }\\
&  \leq c\left\{  \left\Vert \mathcal{L}u\right\Vert _{C_{x}^{\alpha}\left(
S_{T}\right)  }+\left\Vert u\right\Vert _{C^{0}\left(  S_{T}\right)
}\right\}  +\frac{1}{2}\sum_{h,k=1}^{q}\left\Vert \partial_{x_{k}x_{h}}%
^{2}u\right\Vert _{C^{0}\left(  S_{T}\right)  }.
\end{align*}
Finally, taking the supremum for $i=1,2,3...$ we get, by
\eqref{C x r Sup}-\eqref{C x r Sup}
\begin{align*}
&  \sum_{h,k=1}^{q}\left\Vert \partial_{x_{k}x_{h}}^{2}u\right\Vert
_{C_{x,r}^{\alpha}\left(  S_{T}\right)  }+\sum_{k=1}^{q}\left\Vert
\partial_{x_{k}}u\right\Vert _{C_{r}^{\alpha}\left(  S_{T}\right)
}+\left\Vert u\right\Vert _{C_{r}^{\alpha}\left(  S_{T}\right)  }\\
&  \leq c\left\{  \left\Vert \mathcal{L}u\right\Vert _{C_{x}^{\alpha}\left(
S_{T}\right)  }+\left\Vert u\right\Vert _{C^{0}\left(  S_{T}\right)
}\right\}  +\frac{1}{2}\sum_{h,k=1}^{q}\left\Vert \partial_{x_{k}x_{h}}%
^{2}u\right\Vert _{C^{0}\left(  S_{T}\right)  }%
\end{align*}
so that%
\begin{align*}
&  \sum_{h,k=1}^{q}\left\Vert \partial_{x_{k}x_{h}}^{2}u\right\Vert
_{C_{x,r}^{\alpha}\left(  S_{T}\right)  }+\sum_{k=1}^{q}\left\Vert
\partial_{x_{k}}u\right\Vert _{C_{r}^{\alpha}\left(  S_{T}\right)
}+\left\Vert u\right\Vert _{C_{r}^{\alpha}\left(  S_{T}\right)  }\\
&  \qquad\qquad\leq c\left\{  \left\Vert \mathcal{L}u\right\Vert
_{C_{x}^{\alpha}\left(  S_{T}\right)  }+\left\Vert u\right\Vert _{C^{0}\left(
S_{T}\right)  }\right\}
\end{align*}
and by \eqref{C x r - C x} we conclude%
\begin{align}
&  \left\Vert \partial_{x_{k}x_{h}}^{2}u\right\Vert _{C_{x}^{\alpha}\left(
S_{T}\right)  }+\sum_{k=1}^{q}\left\Vert \partial_{x_{k}}u\right\Vert
_{C^{\alpha}\left(  S_{T}\right)  }+\left\Vert u\right\Vert _{C^{\alpha
}\left(  S_{T}\right)  }\nonumber\\
&  \qquad\leq c\left\{  \left\Vert \mathcal{L}u\right\Vert _{C_{x}^{\alpha
}\left(  S_{T}\right)  }+\left\Vert u\right\Vert _{C^{0}\left(  S_{T}\right)
}\right\}  .\label{Schauder 4}%
\end{align}
Finally, from the equation $Yu=\mathcal{L}u-\sum_{h,k=1}^{q}a_{hk}%
\partial_{x_{h}x_{k}}^{2}u$ we also get%
\[
\left\Vert Yu\right\Vert _{C_{x}^{\alpha}\left(  S_{T}\right)  }\leq c\left\{
\left\Vert \mathcal{L}u\right\Vert _{C_{x}^{\alpha}\left(  S_{T}\right)
}+\sum_{h,k=1}^{q}\left\Vert \partial_{x_{h}x_{k}}^{2}u\right\Vert
_{C_{x}^{\alpha}\left(  S_{T}\right)  }\right\}
\]
with $c$ also depending on the H\"{o}lder norms of the coefficients $a_{ij}$,
and by \eqref{Schauder 4}%
\[
\leq c\left\{  \left\Vert \mathcal{L}u\right\Vert _{C_{x}^{\alpha}\left(
S_{T}\right)  }+\left\Vert u\right\Vert _{C^{0}\left(  S_{T}\right)
}\right\}  .
\]
So we are done.
\end{proof}

\subsection{Schauder estimates in space and time\label{sec schauder time}}

For an arbitrary set $\Omega\subseteq\overline{S_{T}}$, let us define the
seminorms:%
\[
\left\vert f\right\vert _{C_{t}^{\alpha}\left(  \Omega\right)  }%
=\sup_{\substack{\left(  x_{1},t_{1}\right)  ,\left(  x_{2},t_{2}\right)
\in\Omega\\\left(  x_{1},t_{1}\right)  \neq\left(  x_{2},t_{2}\right)  }%
}\frac{\left\vert f\left(  x_{1},t_{1}\right)  -f\left(  x_{2},t_{2}\right)
\right\vert }{d\left(  \left(  x_{1},t_{1}\right)  ,\left(  x_{2}%
,t_{2}\right)  \right)  ^{\alpha}+\left\vert t_{1}-t_{2}\right\vert
^{\alpha/q_{N}}}%
\]%
\[
\left\vert f\right\vert _{C_{t,r}^{\alpha}\left(  \Omega\right)  }\equiv
\sup_{\substack{\left(  x_{1},t_{1}\right)  ,\left(  x_{2},t_{2}\right)
\in\Omega\\0<d\left(  \left(  x_{1},t_{1}\right)  ,\left(  x_{2},t_{2}\right)
\right)  \leq r}}\frac{\left\vert f\left(  x_{1},t_{1}\right)  -f\left(
x_{2},t_{2}\right)  \right\vert }{d\left(  \left(  x_{1},t_{1}\right)
,\left(  x_{2},t_{2}\right)  \right)  ^{\alpha}+\left\vert t_{1}%
-t_{2}\right\vert ^{\alpha/q_{N}}}.
\]
Here the number $q_{N}$ is the largest homogeneity exponent in the dilations,
see \eqref{dilations}. Let also:%
\begin{align*}
\left\Vert f\right\Vert _{C_{t}^{\alpha}\left(  \Omega\right)  }  &
=\left\vert f\right\vert _{C_{t}^{\alpha}\left(  \Omega\right)  }+\left\Vert
f\right\Vert _{C^{0}\left(  \Omega\right)  }\\
\left\Vert f\right\Vert _{C_{t,r}^{\alpha}\left(  \Omega\right)  }  &
=\left\vert f\right\vert _{C_{t,r}^{\alpha}\left(  \Omega\right)  }+\left\Vert
f\right\Vert _{C^{0}\left(  \Omega\right)  }.
\end{align*}
Then the following holds, with a proof perfectly analogoys to that of
Proposition \ref{Prop covering}:

\begin{proposition}
Let $r>0$ and $\alpha\in\left(  0,1\right)  $ be fixed, then:

\begin{enumerate}
\item[(i)] There exists $c>0$, depending on $\alpha$ and $r$, such that%
\begin{equation}
\left\Vert f\right\Vert _{C_{t}^{\alpha}\left(  \Omega\right)  }\leq
c\left\Vert f\right\Vert _{C_{t,r}^{\alpha}\left(  \Omega\right)  }.
\label{covering 1b}%
\end{equation}

\item[(ii)] Moreover, let $\{ B_{r}\left(  \overline{\xi}_{i}\right)  \}
_{i=1}^{\infty}$ be a covering of $\Omega$, then%
\begin{equation}
\left\vert f\right\vert _{C_{t,r}^{\alpha}( \Omega) }\leq\sup_{i}\left\vert
f\right\vert _{C_{t}^{\alpha}(B^{T}_{\theta r}(\overline{\xi}_{i}))}
\label{covering 2b}%
\end{equation}
where $\theta\geq1$ is an absolute constant.
\end{enumerate}
\end{proposition}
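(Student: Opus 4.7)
The plan is to mimic line by line the argument of Proposition \ref{Prop covering}, which handled the purely spatial seminorms $|\cdot|_{C^\alpha_x}$ and the joint seminorm $|\cdot|_{C^\alpha}$; the only genuine novelty is the presence of the additional additive term $|t_1-t_2|^{\alpha/q_N}$ in the denominator, but since this term is nonnegative it can simply be discarded from below whenever we want an upper bound of the denominator from the quasi-distance alone.

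For part (i), I will split the supremum defining $|f|_{C^\alpha_t(\Omega)}$ according to whether $d(\xi_1,\xi_2)\le r$ or $d(\xi_1,\xi_2)>r$. On the first piece, the supremum is by definition $|f|_{C^\alpha_{t,r}(\Omega)}$. On the second piece, I bound the numerator by $2\|f\|_{C^0(\Omega)}$ and discard $|t_1-t_2|^{\alpha/q_N}\ge 0$ from the denominator, obtaining
\[
\frac{|f(\xi_1)-f(\xi_2)|}{d(\xi_1,\xi_2)^\alpha+|t_1-t_2|^{\alpha/q_N}}
\le \frac{2\|f\|_{C^0(\Omega)}}{r^\alpha}.
\]
Combining the two and adding $\|f\|_{C^0(\Omega)}$ to both sides yields \eqref{covering 1b} with $c=\max(1,2r^{-\alpha})+1$.

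For part (ii), I fix two points $\xi_1,\xi_2\in\Omega$ with $0<d(\xi_1,\xi_2)\le r$ and pick an index $i_1$ such that $\xi_1\in B_r(\overline\xi_{i_1})$. The quasi-triangle inequality \eqref{quasitriangle} then yields
\[
d(\xi_2,\overline\xi_{i_1})\le \bd{\kappa}\bigl(d(\xi_2,\xi_1)+d(\xi_1,\overline\xi_{i_1})\bigr)\le 2\bd{\kappa}\,r,
\]
so, setting $\theta:=2\bd{\kappa}\ge 1$, both $\xi_1$ and $\xi_2$ lie in $B_{\theta r}(\overline\xi_{i_1})\cap \Omega\subseteq B^T_{\theta r}(\overline\xi_{i_1})$ (since $\Omega\subseteq \overline{S_T}$). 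Hence the corresponding difference quotient is dominated by $|f|_{C^\alpha_t(B^T_{\theta r}(\overline\xi_{i_1}))}$, and taking the supremum over all admissible pairs $(\xi_1,\xi_2)$ produces \eqref{covering 2b}.

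There is no real obstacle: the argument is essentially elementary and mirrors the spatial analogue already proved. The only point that requires a little care is verifying that the mixed denominator $d(\xi_1,\xi_2)^\alpha+|t_1-t_2|^{\alpha/q_N}$ behaves well in both steps, but in each case it suffices to use $|t_1-t_2|^{\alpha/q_N}\ge 0$ for lower bounds and the fact that this extra term is controlled by (a multiple of) $d(\xi_1,\xi_2)^\alpha$ when it matters, as indicated by \eqref{eq:explicitd}. No tool beyond the quasi-triangle inequality \eqref{quasitriangle} and the global boundedness of $f$ on $\Omega$ is required.
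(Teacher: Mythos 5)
Your argument is correct and is exactly what the paper intends: the paper in fact gives no proof for this proposition, merely stating that it is ``perfectly analogous'' to Proposition \ref{Prop covering}, and your write-up is precisely that analogue. The observation that the extra nonnegative term $|t_1-t_2|^{\alpha/q_N}$ can simply be discarded when bounding the denominator from below is the right way to handle the only new feature.

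One small imprecision worth flagging: in part (ii) you write $d(\xi_2,\overline{\xi}_{i_1})\le\bd{\kappa}\bigl(d(\xi_2,\xi_1)+d(\xi_1,\overline{\xi}_{i_1})\bigr)$ and conclude $\theta=2\bd{\kappa}$, but \eqref{quasitriangle} gives $d(\xi_2,\overline{\xi}_{i_1})\le\bd{\kappa}\bigl(d(\xi_2,\xi_1)+d(\overline{\xi}_{i_1},\xi_1)\bigr)$ and $d$ is only quasi-symmetric, so you must also invoke \eqref{quasisymmetric}, landing at something like $\theta=2\bd{\kappa}^2$. Since the claim only asserts the existence of some absolute $\theta\ge 1$, this does not affect the conclusion, but the stated constant is not quite right.
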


We can now state our H\"{o}lder estimate in space and time:

\begin{theorem}
\label{Thm global Schauder in space time 2}Let $\LL$ be the operator \eqref{L}
in $S_{T}$ and assume \emph{(H1), (H2), (H3)} hold, for some $\alpha\in\left(
0,1\right)  $. For every $T>\tau>-\infty$ and every compact set $K\subset
\mathbb{R}^{N}$ there exists $c>0$ depending on $K,\tau,T,\alpha,B,\nu
,\Lambda$ such that, for every $u\in\mathcal{S}^{\alpha}\left(  S_{T}\right)
$ the derivatives $\partial_{x_{h}x_{k}}^{2}u$ satisfy the following local
H\"{o}lder continuity in space-time:%
\[
\left\vert \partial_{x_{i}x_{j}}^{2}u\right\vert _{C_{t}^{\alpha}\left(
K\times\left[  \tau,T\right]  \right)  }+\Vert\partial_{x_{i}x_{j}}^{2}%
u\Vert_{C^{0}(S_{T})}\leq c\left\{  \left\Vert \mathcal{L}u\right\Vert
_{C_{x}^{\alpha}\left(  S_{T}\right)  }+\left\Vert u\right\Vert _{C^{0}\left(
S_{T}\right)  }\right\}  .
\]
In particular, even the second derivatives $\partial_{x_{i}x_{j}}^{2}u$ are
jointly continuous in $S_{T}$.
\end{theorem}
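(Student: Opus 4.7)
The strategy parallels the transition from Theorem \ref{Thm Schauder space} to Theorem \ref{Thm global Schauder in space}, but with Theorem \ref{Thm local Schauder time} (rather than Theorem \ref{Thm Schauder space}) providing the building-block estimate, and with a covering argument in space-time rather than only in space. The bound on $\|\partial^2_{x_i x_j}u\|_{C^0(S_T)}$ is already contained in Theorem \ref{Thm global Schauder in space}, so the work lies in the $|\cdot|_{C_t^\alpha}$ seminorm.

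\emph{Step 1 (freezing on a small ball).} Pick $\overline{\xi}=(\overline{x},\overline{t})\in S_T$ and a radius $r>0$ to be chosen small, depending only on $\alpha,\nu,\Lambda$. For $u\in\mathcal{S}^\alpha(S_T)$ with $\mathrm{supp}(u)\subseteq B_r(\overline{\xi})\cap\overline{S_T}$, consider the operator $\mathcal{L}_{\overline{x}}$ with coefficients $a_{ij}(\overline{x},t)$. Writing $\mathcal{L}_{\overline{x}}u=\mathcal{L}u+\sum_{i,j=1}^{q}[a_{ij}(\overline{x},t)-a_{ij}(x,t)]\,\partial^2_{x_ix_j}u$, the small-support argument of Theorem \ref{Thm local schauder x} gives
\[
|\mathcal{L}_{\overline{x}}u|_{C_x^\alpha(S_T)}\leq |\mathcal{L}u|_{C_x^\alpha(S_T)}+c\,\Lambda\, r^\alpha\sum_{i,j=1}^{q}|\partial^2_{x_ix_j}u|_{C_x^\alpha(S_T)}.
\]
Applying Theorem \ref{Thm local Schauder time} to $u$ with the operator $\mathcal{L}_{\overline{x}}$ on the compact set $\overline{B_r(\overline{\xi})}$ (whose spatial projection is bounded uniformly in $\overline{\xi}$ ranging over any fixed compact) yields
\[
|\partial^2_{x_ix_j}u|_{C_t^\alpha(\overline{B_r^T(\overline{\xi})})}\leq c\bigl(|\mathcal{L}u|_{C_x^\alpha(S_T)}+\Lambda\, r^\alpha\sum_{h,k=1}^{q}|\partial^2_{x_hx_k}u|_{C_x^\alpha(S_T)}\bigr).
\]

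\emph{Step 2 (cutoff/covering over $K\times[\tau,T]$).} Fix $K\subset\mathbb{R}^N$ compact and $\tau<T$. Choose $r>0$ small as in Step 1 and cover the compact set $K\times[\tau,T]$ by finitely many balls $\{B_r(\overline{\xi}_i)\}_i$ whose doubles still meet $S_T$; with cutoffs $\phi_i\in C_0^\infty(B_{2\theta r}(\overline{\xi}_i))$ equal to $1$ on $B_{\theta r}(\overline{\xi}_i)$ and constructed by left-translation from a single mother function, one has uniform bounds on $\|\phi_i\|_{C^\alpha}$, $\|\partial_{x_k}\phi_i\|_{C^\alpha}$, $\|\mathcal{L}\phi_i\|_{C^\alpha}$. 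Applying Step 1 to $u\phi_i$ on $B_{2\theta r}(\overline{\xi}_i)$, I control $|\partial^2_{x_ix_j}(u\phi_i)|_{C_t^\alpha(B_{\theta r}^T(\overline{\xi}_i))}$ by $|\mathcal{L}(u\phi_i)|_{C_x^\alpha}$ plus the absorbable perturbation. Expanding $\mathcal{L}(u\phi_i)=(\mathcal{L}u)\phi_i+u\,\mathcal{L}\phi_i+2\sum a_{hk}\partial_{x_h}u\,\partial_{x_k}\phi_i$, I estimate the right-hand side via $|\mathcal{L}u|_{C_x^\alpha}$, $\|u\|_{C^\alpha}$ and $\sum_k\|\partial_{x_k}u\|_{C^\alpha}$.

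\emph{Step 3 (absorption and global space estimate).} Applying the global Schauder estimates in space (Theorem \ref{Thm global Schauder in space}), I bound
\[
\sum_{h,k=1}^{q}\|\partial^2_{x_hx_k}u\|_{C_x^\alpha(S_T)}+\sum_{k=1}^{q}\|\partial_{x_k}u\|_{C^\alpha(S_T)}+\|u\|_{C^\alpha(S_T)}\leq c\bigl(\|\mathcal{L}u\|_{C_x^\alpha(S_T)}+\|u\|_{C^0(S_T)}\bigr).
\]
Choosing $r$ small enough so that $c\Lambda r^\alpha\leq 1/2$ absorbs the $C_x^\alpha$ perturbation on the right of Step 1 (the absorption happens inside the estimate of Step 1, after combining with this global bound). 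The patching is then done exactly as in the proof of Theorem \ref{Thm global Schauder in space}: a straightforward adaptation of Proposition \ref{Prop covering} to the $C_t^\alpha$-seminorm (the covering inequality \eqref{covering 2b}, which holds by the same argument) reduces the global $C_t^\alpha$ bound on $K\times[\tau,T]$ to the finitely many local bounds on the $B_{\theta r}^T(\overline{\xi}_i)$.

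\emph{Step 4 (joint continuity).} The finiteness of $|\partial^2_{x_ix_j}u|_{C_t^\alpha(K\times[\tau,T])}$ for every compact $K\times[\tau,T]\subseteq\overline{S_T}$ immediately implies that $\partial^2_{x_ix_j}u$ is jointly continuous on $S_T$, proving the last assertion.

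\emph{Main obstacle.} The delicate point is the absorption step: the local space-time estimate in Step 1 produces, on the right-hand side, the full seminorm $|\partial^2_{x_hx_k}u|_{C_x^\alpha(S_T)}$ (global in $x$, because the perturbation $a_{ij}(\overline{x},t)-a_{ij}(x,t)$ uses the spatial Hölder continuity on all of $S_T$), and not just a local quantity. Thus the smallness of $r$ alone does not suffice to absorb it inside a local inequality; one must first invoke the already-established global space Schauder estimate of Theorem \ref{Thm global Schauder in space} to turn this term into $\|\mathcal{L}u\|_{C_x^\alpha(S_T)}+\|u\|_{C^0(S_T)}$. Properly handling the cutoff contributions (which only affect intermediate-order derivatives) via the interpolation inequality of Theorem \ref{Thm interpolaz} completes this bookkeeping.
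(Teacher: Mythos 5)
Your proposal is correct and follows essentially the same path as the paper's proof: apply Theorem \ref{Thm local Schauder time} to the operator with frozen coefficients $a_{ij}(\overline{x},t)$ on a small ball, estimate the perturbation $|\mathcal{L}_{\overline{x}}u-\mathcal{L}u|_{C_x^\alpha}$ via the small support and H\"older continuity of $a_{ij}$, pass to general $u$ by a cutoff expansion of $\mathcal{L}(u\phi_i)$, bound the lower-order terms by Theorem \ref{Thm global Schauder in space}, and patch via the covering proposition. The one organizational difference is that the paper first closes the local loop for compactly supported $u$ (obtaining a clean estimate of $|\partial^2_{x_ix_j}u|_{C_t^\alpha}$ purely in terms of $|\mathcal{L}u|_{C_x^\alpha(B_r^T)}$, using the local space Schauder estimate of Theorem \ref{Thm local schauder x} to control the $|\partial^2_{x_hx_k}u|_{C_x^\alpha(B_r^T)}$ term arising from the perturbation) and only then applies it to $u\phi_i$, whereas you carry the $|\partial^2_{x_hx_k}u|_{C_x^\alpha}$ term along and close the loop at the end with Theorem \ref{Thm global Schauder in space}; both are valid, and your own ``Main obstacle'' paragraph already diagnoses correctly why absorption by smallness of $r$ alone would not be legitimate here.
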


\begin{proof}
Fix a compact set $K\subset\mathbb{R}^{N}$ , let $T>\tau>-\infty$ and let
$\psi\left(  t\right)  $ be a smooth function such that $\psi\left(  t\right)
=1$ for $t\geq\tau$, $\psi\left(  t\right)  =0$ for $t\leq\tau-1$, $0\leq
\psi\left(  t\right)  \leq1.$

For $\overline{\xi}=\left(  \overline{x},\overline{t}\right)  ,$ let us
consider the frozen operator $\mathcal{L}_{\overline{x}}$ with coefficients
$a_{ij}\left(  \overline{x},t\right)  $. Applying Theorem
\ref{Thm local Schauder time} to the operator $\mathcal{L}_{\overline{x}}$ we
get the existence of a constant, depending on $K,\tau,T,\alpha,B,\nu$ but not
on $\overline{\xi}$, such that for every $u\in\mathcal{S}^{\alpha}(S_{T})$,
since $u\psi\in\mathcal{S}^{0}(\tau-1,T)$,
\begin{align*}
&  \left\vert \partial_{x_{i}x_{j}}^{2}u\left(  x_{1},t_{1}\right)
-\partial_{x_{i}x_{j}}^{2}u\left(  x_{2},t_{2}\right)  \right\vert \\
&  \qquad\leq c\left\vert \mathcal{L}_{\overline{x}}\left(  u\psi\right)
\right\vert _{C_{x}^{\alpha}\left(  B_{r}^{T}\left(  \overline{\xi}%
_{i}\right)  \right)  }\left\{  d\left(  \left(  x_{1},t_{1}\right)  ,\left(
x_{2},t_{2}\right)  \right)  ^{\alpha}+\left\vert t_{1}-t_{2}\right\vert
^{\alpha/q_{N}}\right\}
\end{align*}
for $\left(  x_{1},t_{1}\right)  ,\left(  x_{2},t_{2}\right)  \in
K\times\left[  \tau,T\right]  $. However, since $\mathcal{L}_{\overline{x}%
}\left(  u\psi\right)  =\psi\mathcal{L}_{\overline{x}}u-\psi_{t}u$, we have%
\begin{align*}
& \left\vert \mathcal{L}_{\overline{x}}\left(  u\psi\right)  \right\vert
_{C_{x}^{\alpha}\left(  B_{r}^{T}\left(  \overline{\xi}_{i}\right)  \right)
}   \leq\left\vert \psi\mathcal{L}_{\overline{x}}u\right\vert _{C_{x}%
^{\alpha}\left(  S_{T}\right)  }+\left\vert \psi_{t}u\right\vert
_{C_{x}^{\alpha}\left(  S_{T}\right)  }\\
&  \qquad \leq\left\vert \mathcal{L}_{\overline{x}}u\right\vert _{C_{x}^{\alpha
}\left(  S_{T}\right)  }+c\left\vert u\right\vert _{C_{x}^{\alpha}\left(
S_{T}\right)  }\\
& \qquad \leq\left\vert \mathcal{L}u\right\vert _{C_{x}^{\alpha}\left(
S_{T}\right)  }+c\left\vert u\right\vert _{C_{x}^{\alpha}\left(  S_{T}\right)
}+\sum_{i,j=1}^{q}\left\vert \left[  a_{ij}\left(  \overline{x},t\right)
-a_{ij}\left(  \cdot,t\right)  \right]  \partial_{x_{i}x_{j}}^{2}u\right\vert
_{C_{x}^{\alpha}\left(  S_{T}\right)  }.
\end{align*}
On the other hand, since%
\begin{align*}
& \left\vert \left[  a_{ij}\left(  \overline{x},t\right)  -a_{ij}\left(
\cdot,t\right)  \right]  \partial_{x_{i}x_{j}}^{2}u\right\vert _{C_{x}%
^{\alpha}\left(  S_{T}\right)  }  \\
& \qquad \leq2\Lambda\left\vert \partial
_{x_{i}x_{j}}^{2}u\right\vert _{C_{x}^{\alpha}\left(  S_{T}\right)
}+\left\vert a_{ij}\left(  \cdot,t\right)  \right\vert _{C_{x}^{\alpha}\left(
S_{T}\right)  }\left\Vert \partial_{x_{i}x_{j}}^{2}u\right\Vert _{L^{\infty
}\left(  S_{T}\right)  }\\
&  \qquad \leq2\Lambda\left\Vert \partial_{x_{i}x_{j}}^{2}u\right\Vert _{C_{x}%
^{\alpha}\left(  S_{T}\right)  },
\end{align*}
by Theorem \ref{Thm global Schauder in space} we conclude%
\begin{align*}
&  \left\vert \partial_{x_{i}x_{j}}^{2}u\left(  x_{1},t_{1}\right)
-\partial_{x_{i}x_{j}}^{2}u\left(  x_{2},t_{2}\right)  \right\vert \\
&  \quad\leq c\left\{  \left\Vert \mathcal{L}u\right\Vert _{C_{x}^{\alpha}\left(
S_{T}\right)  }+\left\Vert u\right\Vert _{C^{0}\left(  S_{T}\right)
}\right\}  \left\{  d\left(  \left(  x_{1},t_{1}\right)  ,\left(  x_{2}%
,t_{2}\right)  \right)  ^{\alpha}+\left\vert t_{1}-t_{2}\right\vert
^{\alpha/q_{N}}\right\}  .
\end{align*}
So we are done.
\end{proof}

\newpage

\bigskip

\end{document}